\documentclass[a4paper,oneside,notitlepage,10pt]{article}%
\usepackage{amsmath}
\usepackage{amssymb}
\usepackage{amscd}
\usepackage{amsfonts}
\usepackage{amstext}
\usepackage{graphicx}

\setcounter{MaxMatrixCols}{30}
\providecommand{\U}[1]{\protect\rule{.1in}{.1in}}
\newtheorem{theorem}{Theorem}

\newtheorem{remark}[theorem]{Remark}

\newtheorem{lemma}[theorem]{Lemma}

\newtheorem{algorithm}[theorem]{Algorithm}

\newtheorem{corollary}[theorem]{Corollary}

\newtheorem{definition}[theorem]{Definition}

\newtheorem{example}[theorem]{Example}

\newtheorem{proposition}[theorem]{Proposition}

\newenvironment{proof}[1][Proof]{\textbf{#1.} }{\ \rule{0.5em}{0.5em}}

\hoffset=-1.54cm
\voffset=-1.54cm
\setcounter{page}{1}
\setlength{\textheight}{22.62truecm}
\setlength{\textwidth}{15.92truecm}
\linespread{1.6}
\setcounter{tocdepth}{3}

\renewcommand\appendix{\par
  \setcounter{section}{0}
  \setcounter{subsection}{0}
  \setcounter{figure}{0}
  \setcounter{table}{0}
  \renewcommand\thesection{Appendix \Alph{section}}
  \renewcommand\thefigure{\Alph{section}\arabic{figure}}
  \renewcommand\thetable{\Alph{section}\arabic{table}}
}

\begin{document}

\title{Simulations of multivariate gamma distributions and multifactor gamma distributions}

\author{Philippe Bernardoff
\thanks{Corresponding author: \texttt{philippe.bernardoff@univ-pau.fr}}
\and B\'en\'edicte Puig
\thanks{\texttt{benedicte.puig@univ-pau.fr}}
\\Universit\'{e} de Pau et des Pays de l'Adour
\\Laboratoire de Math\'{e}matiques et de leurs Applications, UMR 5142
\\avenue de l'Universit\'{e}, 64000 Pau, France
}

\maketitle

\begin{abstract}
This article provides a general expression for infinitely divisible
multivariate gamma distributions defined by their Laplace transforms, as well as the conditional Laplace transform of infinitely divisible multivariate gamma distributions. We give algorithms for simulating infinitely divisible gamma distributions
and infinitely divisible multifactor gamma distributions in dimension $2,3,4$ and for all dimensions greater than $2$ in the Markovian case. We give examples of simulations in dimension $2,3,4$ and in dimension $5$ in the Markovian case.\newline

\rule[0.02cm]{10cm}{0.02cm}

\textbf{KEY WORDS:} Conditional distribution, Lauricella function, Laplace transform, Markovian distribution

MSC: 60E07, 60E10

\end{abstract}

\section{Introduction}

The aim of this paper is to extend simulations of bivariate gamma
distributions, see \cite{Bernardoff(2023)}, to multivariate gamma
distributions and multifactor gamma distributions defined by their Laplace transforms. In this paper, we consider the following definitions given in
\cite{Bernardoff(2018)}. For more details see also \cite{Bar-Lev(1994)},
\cite{JKB(2000)} and \cite{Bernardoff(2006)}. We use the extension of the classical univariate definition to $\mathbb{R}^{n}$ obtained as follows: we consider an affine polynomial $P_{n}\left(
\boldsymbol{\theta}\right)  $ in $\boldsymbol{\theta}=\left(  \theta
_{1},\ldots,\theta_{n}\right)  $ where `affine' means that, for $j=1,\ldots
,n,$ $\partial^{2}P_{n}/\partial\theta_{j}^{2}=0$. We also assume that
$P_{n}\left(  \mathbf{0}\right)  =1.$ For instance, for $n=2,$ we have
$P_{2}\left(  \theta_{1},\theta_{2}\right)  =1+p_{\left\{  1\right\}  }%
\theta_{1}+p_{\left\{  2\right\}  }\theta_{2}+p_{\left\{  1,2\right\}  }%
\theta_{1}\theta_{2}$. We denote by $\mathfrak{P}_{n}=\mathfrak{P}\left(  \left[  n\right]  \right)
$ the family of all subsets of $\left[  n\right]  =\left\{  1,\ldots
,n\right\}  $ and $\mathfrak{P}_{n}^{\ast}$ the family of non-empty subsets of
$\left[  n\right]  .$ For simplicity, if $n$ is fixed and if there is no
ambiguity, we denote these families by $\mathfrak{P}$ and $\mathfrak{P}^{\ast
}$, respectively. Similarly, we denote by $\mathfrak{P}_{T}=\mathfrak{P}%
\left(  T\right)  $ the family of all subsets of $T=\left\{  1,\ldots
,n\right\}  $ and $\mathfrak{P}_{T}^{\ast}$ the family of non-empty subsets of
$T$. Similarly, if there is no ambiguity, we denote $P_{n}$ by $P$. We denote by $\mathbb{N}$ the set of non-negative integers. If $\boldsymbol{z}%
=\left(  z_{1},\ldots,z_{n}\right)  \in\mathbb{R}^{n}$ and $\boldsymbol{\alpha
}=\left(  \alpha_{1},\ldots,\alpha_{n}\right)  \in\mathbb{N}^{n},$ then
$\boldsymbol{\alpha}!=\alpha_{1}!\ldots\alpha_{n}!$, $\left\vert
\boldsymbol{\alpha}\right\vert =\alpha_{1}+\ldots+\alpha_{n}$,
$a_{\boldsymbol{\alpha}}=a_{\alpha_{1},\ldots,\alpha_{n}}$ and
\begin{equation}
\boldsymbol{z}^{\boldsymbol{\alpha}}=\prod\limits_{i=1}^{n}z_{i}^{\alpha_{i}%
}=z_{1}^{\alpha_{1}}\ldots z_{n}^{\alpha_{n}}. \label{zalpha2}%
\end{equation}
For $T\ $in $\mathfrak{P}_{n},$ we simplify the above notation by writing
$\boldsymbol{z}^{T}=\prod\nolimits_{t\in T}z_{t}$ instead of $\boldsymbol{z}%
^{\mathbf{1}_{T}}$ where
\begin{equation}
\mathbf{1}_{T}=\left(  \alpha_{1},\ldots,\alpha_{n}\right)  \text{ with
}\alpha_{i}=1\text{ if }i\in T\text{ and }\alpha_{i}=0\text{ if }i\notin T.
\label{1_T}%
\end{equation}
We also write $\boldsymbol{z}^{-T}$ for $\prod\nolimits_{t\in T}1/z_{t}$ if
$z_{t}\neq0$, $\forall t\in T.$ For a mapping $a:\mathfrak{P}\rightarrow
\mathbb{R}$, we shall use the notation $a:\mathfrak{P}\rightarrow\mathbb{R}$,
$T\mapsto a_{T}.$ In this notation, an affine polynomial with constant term
equal to $1$ is $P\left(  \boldsymbol{\theta}\right)  =\sum_{T\in\mathfrak{P}%
}p_{T}\boldsymbol{\theta}^{T},$ with $p_{\varnothing}=1.$ Now, if there is no
ambiguity, for simplicity, we omit the braces and, if $T=\left\{  t_{1}%
,\ldots,t_{k}\right\}  ,$ we denote $a_{\left\{  t_{1},\ldots,t_{k}\right\}
}=a_{t_{1},\ldots,t_{k}}$ and $a_{\emptyset}=a_{0}.$ The indicator function of a set $S$ is denoted by $\mathbf{1}_{S},$ that is, $\mathbf{1}_{S}\left(  x\right)  =1$ for $x\in S$ and $0$ for $x\notin S$. We fix $\lambda>0.$ If a random vector $\mathbf{X=}\left(  X_{1},\ldots
,X_{n}\right)  $ on $\mathbb{R}^{n}$ with probability distribution (pd)
$\mu_{\boldsymbol{X}}$ is such that its Laplace transform (Lt) is%
\begin{equation}
\mathbb{E}\left\{  \exp\left[  -\left(  \theta_{1}X_{1}+\cdots+\theta_{n}%
X_{n}\right)  \right]  \right\}  =\left[  P\left(  \boldsymbol{\theta}\right)
\right]  ^{-\lambda}, \label{TLMGD}%
\end{equation}
where $\mathbb{E}$ denotes the expectation, for a set of $\boldsymbol{\theta}$
with non-empty interior, then we denote $\mu_{\boldsymbol{X}}=$%
$\boldsymbol{\gamma}$$_{\left(  P,\lambda\right)  },$ and $\boldsymbol{\gamma
}$$_{\left(  P,\lambda\right)  }$ will be called the \textit{multivariate
gamma distribution }(\textit{mgd) associated with} $\left(  P,\lambda\right)
.$ If $\mathbf{X}$ has pd $\boldsymbol{\gamma}$$_{\left(  P,\lambda\right)
},$ we denote it by $\mathbf{X}\sim\boldsymbol{\gamma}$$_{\left(
P,\lambda\right)  },$ and $P,\lambda$ is called respectively the scale
parameter, the shape parameter. These mgds occur naturally in the
classification of natural exponential families in $\mathbb{R}^{n}$
\cite{Bar-Lev(1994)}. The marginal distributions of the \textit{mgd associated with} $\left(
P,\lambda\right)  $ are univariate
gamma distributions (ugd) of parameters $\left(  p_{i},\lambda\right)  $
for $i=1,\ldots,n,$ with Lt%
\begin{equation}
\left[  P\left(  0,\ldots,0,\theta_{i},0,\ldots,0\right)  \right]  ^{-\lambda
}=\left(  1+p_{i}\theta_{i}\right)  ^{-\lambda}, \label{marginalMGD}%
\end{equation}
and pd
\begin{equation}
\gamma_{\left(  p_{i},\lambda\right)  }\left(  \text{\emph{d}}x\right)
=x^{\lambda-1}p_{i}^{-\lambda}[\Gamma\left(  \lambda\right)  ]^{-1}\exp\left(
-x/p_{i}\right)  \mathbf{1}_{\left(  0,\infty\right)  }\left(  x\right)
\emph{d}x. \label{gamma_dim_1}%
\end{equation}
As in \cite{Bernardoff(2018)}, we extend the first definition to
the\textit{\ multifactor gamma distribution (mfgd) associated with} $\left(
P,\boldsymbol{\Lambda}\right)  $ where $\boldsymbol{\Lambda}=\left(
\lambda,\lambda_{1},\ldots,\lambda_{n}\right)  $ and $\lambda_{i}%
\geqslant\lambda>0$ for all $i=1,\ldots,n$ by its Lt%
\begin{equation}
\mathbb{E}\left\{  \exp\left[  -\left(  \theta_{1}X_{1}+\cdots+\theta_{n}%
X_{n}\right)  \right]  \right\}  =\left[  P\left(  \boldsymbol{\theta}\right)
\right]  ^{-\lambda}\prod_{i=1}^{n}\left(  1+p_{i}\theta_{i}\right)
^{-\left(  \lambda_{i}-\lambda\right)  }. \label{TLMMGD}%
\end{equation}
Using (\ref{marginalMGD}), the marginal distributions of the \textit{mgd
associated with} $\left(  P,\boldsymbol{\Lambda}\right)  $ are  ugds of parameters $\left(  p_{i},\lambda_{i}\right)  $
for $i=1,\ldots,n,$ with Lt $\left(  1+p_{i}\theta_{i}\right)  ^{-\lambda_{i}%
}$, and pd \newline $\gamma_{\left(  p_{i},\lambda_{i}\right)  }\left(  \text{\emph{d}%
}x\right)  =x^{\lambda_{i}-1}p_{i}^{-\lambda_{i}}[\Gamma\left(  \lambda
_{i}\right)  ]^{-1}\exp\left(  -x/p_{i}\right)  \mathbf{1}_{\left(
0,\infty\right)  }\left(  x\right)  $\emph{d}$x.$ We can denote either
$\gamma_{\left(  p_{i},\lambda_{i}\right)  }$ or $\gamma_{\left(
1+p_{i}\theta_{i},\lambda_{i}\right)  }.$

\cite{Bernardoff(2018)} gives a proposition for building a random vector whose
distribution is the mfgd associated with $\left(  P,\boldsymbol{\Lambda
}\right)  $.

\begin{proposition}
\label{buildMFGD}A random vector $\boldsymbol{X}$ with distribution
$\boldsymbol{\gamma}$$_{P,\boldsymbol{\Lambda}}$ can be obtained in the
following way: \newline Let $\boldsymbol{Y}$ be a random vector with
distribution $\boldsymbol{\gamma}$$_{\left(  P,\lambda\right)  }.$ Let
$\boldsymbol{Z}=\left(  Z_{1},\dots,Z_{n}\right)  $ be a random vector of
independent components for which its pds are $\gamma_{\left(  p_{i}%
,\lambda_{i}-\lambda\right)  },$ and such that $\boldsymbol{Z}$ and
$\boldsymbol{Y}$ are independent random vectors. Then the random vector
$\boldsymbol{X}=\boldsymbol{Y}+\boldsymbol{Z}$ has Lt (\ref{TLMMGD}), and
consequently has the\textit{\ mfgd }associated with $\left(
P,\boldsymbol{\Lambda}\right)  .$
\end{proposition}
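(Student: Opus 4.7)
The plan is essentially a direct Laplace transform computation exploiting the independence of $\boldsymbol{Y}$ and $\boldsymbol{Z}$. First I would write
\[
\mathbb{E}\!\left\{\exp[-(\theta_{1}X_{1}+\cdots+\theta_{n}X_{n})]\right\}
= \mathbb{E}\!\left\{\exp[-\langle\boldsymbol{\theta},\boldsymbol{Y}\rangle]\right\}\cdot \mathbb{E}\!\left\{\exp[-\langle\boldsymbol{\theta},\boldsymbol{Z}\rangle]\right\},
\]
using that $\boldsymbol{X}=\boldsymbol{Y}+\boldsymbol{Z}$ and that the two vectors are independent by hypothesis. The first factor is $[P(\boldsymbol{\theta})]^{-\lambda}$ by the defining relation \eqref{TLMGD} for $\boldsymbol{\gamma}_{(P,\lambda)}$.

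Next I would handle the $\boldsymbol{Z}$-factor. Because the components $Z_{1},\dots,Z_{n}$ are mutually independent, its joint Laplace transform factorizes:
\[
\mathbb{E}\!\left\{\exp[-\langle\boldsymbol{\theta},\boldsymbol{Z}\rangle]\right\}=\prod_{i=1}^{n}\mathbb{E}\!\left\{\exp(-\theta_{i}Z_{i})\right\}.
\]
For each $i$, $Z_{i}\sim\gamma_{(p_{i},\lambda_{i}-\lambda)}$ is a univariate gamma whose Laplace transform, by the standard one-dimensional formula recalled in \eqref{marginalMGD}--\eqref{gamma_dim_1}, equals $(1+p_{i}\theta_{i})^{-(\lambda_{i}-\lambda)}$. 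Multiplying gives exactly the second factor in \eqref{TLMMGD}.

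Combining the two factors yields
\[
\mathbb{E}\!\left\{\exp[-\langle\boldsymbol{\theta},\boldsymbol{X}\rangle]\right\}=[P(\boldsymbol{\theta})]^{-\lambda}\prod_{i=1}^{n}(1+p_{i}\theta_{i})^{-(\lambda_{i}-\lambda)},
\]
which is precisely \eqref{TLMMGD}, so by the uniqueness of the Laplace transform on a set with non-empty interior, $\boldsymbol{X}$ has distribution $\boldsymbol{\gamma}_{(P,\boldsymbol{\Lambda})}$.

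There is really no serious obstacle here: the hypotheses $\lambda_{i}\geqslant\lambda>0$ guarantee that each shape parameter $\lambda_{i}-\lambda$ is non-negative, so the marginal laws $\gamma_{(p_{i},\lambda_{i}-\lambda)}$ are well defined (the degenerate case $\lambda_{i}=\lambda$ corresponding to $Z_{i}\equiv 0$, which still fits the Lt formula as $(1+p_{i}\theta_{i})^{0}=1$). The only subtlety worth mentioning is verifying that the set of $\boldsymbol{\theta}$ where all Laplace transforms are simultaneously finite still has non-empty interior, so the identification of distributions from Laplace transforms remains valid; this is immediate because the intersection of the corresponding strips still contains a neighborhood of the origin.
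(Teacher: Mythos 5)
Your proof is correct and is exactly the expected argument: the paper itself does not reprove this proposition (it is quoted from Bernardoff (2018)), and the factorization of the Laplace transform over the independent summands $\boldsymbol{Y}$ and $\boldsymbol{Z}$, combined with the univariate gamma Laplace transforms $(1+p_{i}\theta_{i})^{-(\lambda_{i}-\lambda)}$, is the canonical route. Your remarks on the degenerate case $\lambda_{i}=\lambda$ and on the non-empty interior of the common domain of finiteness are appropriate and complete the argument.
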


\begin{remark}
This construction clearly allows us to simulate $\mathbf{\gamma}_{\left(
P,\mathbf{\Lambda}\right)  }$ by simulating $\boldsymbol{Z}\sim\mathbf{\gamma
}_{\left(  P,\lambda\right)  }$ and $\boldsymbol{Y}\mathbf{.}$
\end{remark}

For the bivariate case, see also \cite{Dussauchoy(1972)}. The Lt of mgd associated with $\left(  P,\lambda\right)  $ and the Lt of mfgd
associated with $\left(  P,\boldsymbol{\Lambda}\right)  $ are known by
definition. But, its pdfs are unknown, except for the bivariate gamma
distribution (bgd) associated with $\left(  P,\lambda\right)  $ and the
bivariate mfgd associated with $\left(  P,\left(  \lambda,\lambda_{1}%
,\lambda_{2}\right)  \right)  $. Let us recall in the Proposition below, the
known results. Let $F_{m}^{p}$ be the generalized hypergeometric function (see
\cite{Slater(1966)}) defined by%

\begin{equation}
F_{m}^{p}\left(  \alpha_{1},\ldots,\alpha_{p};\beta_{1},\ldots,\beta
_{m};z\right)  =\sum_{k=0}^{\infty}\frac{\left(  \alpha_{1}\right)  _{k}%
\cdots\left(  \alpha_{p}\right)  _{k}}{\left(  \beta_{1}\right)  _{k}%
\cdots\left(  \beta_{m}\right)  _{k}}\frac{z^{k}}{k!}, \label{Hypergeomconf}%
\end{equation}
where $\left(  a\right)  _{k}=\Gamma\left(  a+k\right)  /\Gamma\left(
a\right)  $ for $a>0$ and $k\in\mathbb{N}$ (or more generally by $\forall
n\in\mathbb{N}$, $\forall a\in\mathbb{R}$, $(a)_{0}=1$, $(a)_{n+1}%
=(a+n)(a)_{n}$) is the Pochhammer's symbol. For simplification, we denote
$F_{m}^{0}$ by $F_{m}$. \cite{Bernardoff(2006)} gives the following proposition.

\begin{proposition}
Let $P\left(  \theta_{1},\theta_{2}\right)  =1+p_{1}\theta_{1}+p_{2}\theta
_{2}+p_{12}\theta_{1}\theta_{2}$ be an affine polynomial where $p_{1},p_{2}>0$
and $p_{1,2}>0.$ Let $\mu=$$\boldsymbol{\gamma}$$_{\left(  P,\lambda\right)
}$ be the bgd associated with $\left(  P,\lambda\right)  .$ The measure $\mu$
exists if and only if $c=\left(  p_{1}p_{2}-p_{1,2}\right)  /p_{1,2}^{2}%
=p_{1}p_{2}/p_{1,2}^{2}\rho_{1,2}>0,$ where $\rho_{1,2}=1-p_{1,2}/(p_{1}%
p_{2})$ is the correlation coefficient between margins. Then we have%
\begin{equation}
\boldsymbol{\gamma}_{\left(  P,\lambda\right)  }\left(  \text{\emph{d}}%
x_{1},\text{\emph{d}}x_{2}\right)  =\frac{p_{1,2}^{-\lambda}}{\Gamma\left(
\lambda\right)  ^{2}}\mathbf{e}^{-\frac{p_{2}}{p_{12}}x_{1}-\frac{p_{1}%
}{p_{12}}x_{2}}\left(  x_{1}x_{2}\right)  ^{\lambda-1}F_{1}\left(
\lambda;cx_{1}x_{2}\right)  \mathbf{1}_{\left(  0,\infty\right)  ^{2}}\left(
\boldsymbol{x}\right)  \,\text{\emph{d}}\boldsymbol{x}. \label{mud2}%
\end{equation}
with 
$
F_{1}\left(  \lambda;z\right)  =\sum_{k=0}^{\infty}\frac{1}{\left(
\lambda\right)  _{k}}\frac{z^{k}}{k!}=\Gamma\left(  \lambda\right)
z^{-(\lambda-1)/2}I_{\lambda-1}\left(  2\sqrt{z}\right)  ,
$
where $I_{\lambda}$is the modified Bessel function of order $\lambda.$
\end{proposition}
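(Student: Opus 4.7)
The plan is to verify formula (\ref{mud2}) by computing the Laplace transform of the proposed density and matching it to $[P(\theta_1,\theta_2)]^{-\lambda}$; uniqueness of bivariate Laplace transforms then identifies the measure $\mu=\boldsymbol{\gamma}_{(P,\lambda)}$. The crucial algebraic move is to isolate $c$ via the factorization
\begin{equation*}
P(\theta_1,\theta_2) = p_{1,2}\bigl[(\theta_1 + q_1)(\theta_2 + q_2) - c\bigr],\qquad q_1 := p_2/p_{1,2},\ q_2 := p_1/p_{1,2},
\end{equation*}
so that, setting $u_i:=\theta_i+q_i$, the computation reduces to expanding $p_{1,2}^{-\lambda}(u_1u_2-c)^{-\lambda}$.

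For the sufficiency direction I would apply the binomial series
\begin{equation*}
(u_1u_2-c)^{-\lambda} = \sum_{k=0}^{\infty}\frac{(\lambda)_k}{k!}\,c^{k}\,(u_1u_2)^{-(\lambda+k)},
\end{equation*}
valid wherever $|c/(u_1u_2)|<1$. Each factor $u_i^{-(\lambda+k)}$ is the univariate Laplace transform on $(0,\infty)$ of $x_i^{\lambda+k-1}e^{-q_i x_i}/\Gamma(\lambda+k)$, so substituting, interchanging sum and integral via Tonelli (legitimate because $c>0$ makes every term non-negative), and using $(\lambda)_k/\Gamma(\lambda+k)=1/\Gamma(\lambda)$ collapses the resulting series in $x_1x_2$ to $F_1(\lambda;cx_1x_2)/\Gamma(\lambda)$. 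Restoring the prefactor $p_{1,2}^{-\lambda}$ recovers precisely the density of (\ref{mud2}), whose positivity is automatic since $c>0$ forces $F_1(\lambda;\cdot)$ to be a series with non-negative coefficients. The Bessel-function form of $F_1$ then follows by inserting $\nu=\lambda-1$ and $z=2\sqrt{cx_1x_2}$ into the classical identity $I_\nu(z) = \sum_{k\geq0}(z/2)^{\nu+2k}/[k!\,\Gamma(\nu+k+1)]$.

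For the necessity direction, computing the mixed second derivative of the cumulant generating function $-\lambda\ln P$ at $\theta=\mathbf{0}$ yields $\mathrm{Cov}(X_1,X_2) = \lambda(p_1p_2-p_{1,2}) = \lambda p_{1,2}^2 c$, so a genuinely correlated bgd forces $c\geq 0$; the boundary $c=0$ collapses $P$ into the product of the marginal Laplace transforms and yields independent gammas, while for $c<0$ the same inversion would replace $I_{\lambda-1}$ by the oscillatory Bessel $J_{\lambda-1}$, giving a sign-changing density, so uniqueness of the Laplace transform rules out any non-negative $\mu$ with Lt $P^{-\lambda}$. The principal obstacle is the justification of the term-by-term Laplace inversion: the binomial expansion is only convergent on a proper subdomain of the Laplace strip, but both sides are jointly holomorphic on $\{\mathrm{Re}(\theta_i)>-q_i\}$, and the positive-integrand formulation (thanks to $c>0$) makes Tonelli applicable on that common domain, so the identity extends by analytic continuation with no further analytic subtlety; the rest is bookkeeping with Pochhammer symbols.
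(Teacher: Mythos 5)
Your derivation is correct and coincides with the route the paper itself takes (the proposition is only quoted from Bernardoff 2006, but formula (\ref{mud2}) is rederived in Corollary \ref{cor_cor_n_2}): your factorization $P=p_{1,2}\bigl[(\theta_{1}+q_{1})(\theta_{2}+q_{2})-c\bigr]$ is exactly the $n=2$ case of (\ref{P_Taylor}) with $q_{i}=-\widetilde{p}_{i}$ and $c=\widetilde{b}_{1,2}=r_{1,2}$, and your binomial expansion followed by term-by-term Laplace inversion is precisely the proof of Theorem \ref{Th_gamma_P_lambda} specialized to $R_{2}(\mathbf{z})=\widetilde{b}_{1,2}z_{1}z_{2}$. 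One caveat on your necessity argument: the identity $\mathrm{Cov}(X_{1},X_{2})=\lambda p_{1,2}^{2}c$ does not by itself force $c\geqslant 0$ (a positive measure may well have negative covariance), so that step is a non sequitur and the burden rests entirely on your second observation, that for $c<0$ the uniquely determined inverse transform has a sign-changing density proportional to $J_{\lambda-1}$, which is the legitimate argument.
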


For the case $\boldsymbol{\Lambda}=\left(  \lambda,\lambda,\lambda_{2}\right)
$, the \textit{mfgd associated with} $\left(  P,\boldsymbol{\Lambda}\right)  $
is named by \cite{Chatelainetal2008} the\textit{\ multisensor gamma
distribution associated with} $\left(  P,\lambda,\lambda_{2}\right)  $ and
they have proved that its pd is given by the equality 
\begin{align}
\boldsymbol{\gamma}_{\left(  P,\boldsymbol{\Lambda}\right)  }\left(
\text{d}x_{1},\text{d}x_{2}\right)   &  =\frac{p_{12}^{-\lambda}%
p_{2}^{-\left(  \lambda_{2}-\lambda\right)  }1}{\Gamma\left(  \lambda\right)
\Gamma\left(  \lambda_{2}\right)  }x_{1}^{\lambda-1}x_{2}^{\lambda_{2}%
-1}\mathbf{e}^{-\frac{p_{2}}{p_{12}}x_{1}-\frac{p_{1}}{p_{12}}x_{2}}\Phi
_{3}(\lambda_{2}-\lambda;\lambda_{2};c\frac{p_{12}}{p_{2}}x_{2};cx_{1}%
x_{2})\nonumber\\
&  \times\mathbf{1}_{\left(  0,\infty\right)  ^{2}}\left(  x_{1},x_{2}\right)
\,\text{d}x_{1}\text{d}x_{2}\text{,} \label{BMGD}%
\end{align}
where 
$
\Phi_{3}\left(  a;b;x,y\right)  =\sum_{m,n\geqslant0}\frac{\left(  a\right)
_{m}}{\left(  b\right)  _{m+n}}\frac{x^{m}}{m!}\frac{y^{n}}{n!}
$ 
is the Horn function. For the bivariate general case, we have the following Theorem in
\cite{Bernardoff(2018)}. Let $F_{I}$ be the function defined by%
\begin{equation}
F_{I}\left(  a,b,c,z_{1},z_{2},z_{3}\right)  =\sum_{m_{1},m_{2},m_{3}=0}^{\infty
}\frac{\left(  a\right)  _{m_{1}}\left(  b\right)  _{m_{2}}\left(  c\right)
_{m_{3}}}{\left(  a+c\right)  _{m_{1}+m_{3}}\left(  b+c\right)  _{m_{2}+m_{3}%
}}\frac{z_{1}^{m_{1}}}{m_{1}!}\frac{z_{2}^{m_{2}}}{m_{2}!}\frac{z_{3}^{m_{3}}%
}{m_{3}!}; \label{genLauricella1}%
\end{equation}
it is a particular generalized Lauricella function defined, by example, in
\cite{Panda1973}.

\begin{theorem}
\label{Proposition_multifactorbig2}The pd of $\boldsymbol{\gamma}$$_{\left(
P,\left(  \lambda,\lambda_{1},\lambda_{2}\right)  \right)  }$ is given by the
equality%
\begin{align}
\boldsymbol{\gamma}_{\left(  P,\left(  \lambda,\lambda_{1},\lambda_{2}\right)
\right)  }\left(  \text{\emph{d}}x_{1},\text{\emph{d}}x_{2}\right)   &
=\frac{p_{12}^{-\lambda}p_{1}^{-\left(  \lambda_{1}-\lambda\right)  }%
p_{2}^{-\left(  \lambda_{2}-\lambda\right)  }}{\Gamma\left(  \lambda
_{1}\right)  \Gamma\left(  \lambda_{2}\right)  }x_{1}^{\lambda_{1}-1}%
x_{2}^{\lambda_{2}-1}\mathbf{e}^{-\frac{p_{2}}{p_{12}}x_{1}-\frac{p_{1}%
}{p_{12}}x_{2}}\times\nonumber\\
&  \hspace*{-3cm} F_{I}\left(  \lambda_{1}-\lambda,\lambda_{2}%
-\lambda,\lambda,\tfrac{p_{12}}{p_{1}}x_{1},\tfrac{p_{12}}{p_{2}}x_{2}%
,cx_{1}x_{2}\right)  \mathbf{1}_{\left(  0,\infty\right)  ^{2}}\left(
x_{1},x_{2}\right)  \,\text{\emph{d}}x_{1}\text{\emph{d}}x_{2}\text{,}
\label{mulbigamma2}%
\end{align}

\end{theorem}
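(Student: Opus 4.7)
My plan is to derive the density by a direct convolution using Proposition~\ref{buildMFGD}. Write $\boldsymbol{X}=\boldsymbol{Y}+\boldsymbol{Z}$ with $\boldsymbol{Y}\sim\boldsymbol{\gamma}_{(P,\lambda)}$ (whose explicit density is given by \eqref{mud2}) and $\boldsymbol{Z}=(Z_1,Z_2)$ with independent components $Z_i\sim\gamma_{(p_i,\lambda_i-\lambda)}$, independent of $\boldsymbol{Y}$. Since the three objects are mutually independent, the density of $\boldsymbol{X}$ at $(x_1,x_2)\in(0,\infty)^2$ is
\[
f_{\boldsymbol{X}}(x_1,x_2)=\int_0^{x_1}\!\!\int_0^{x_2} f_{\boldsymbol{Y}}(y_1,y_2)\,f_{Z_1}(x_1-y_1)\,f_{Z_2}(x_2-y_2)\,dy_1\,dy_2.
\]

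Next I would insert the expression \eqref{mud2} for $f_{\boldsymbol{Y}}$ and expand its Bessel-type factor as the power series $F_1(\lambda;cy_1y_2)=\sum_{k\geq 0}(cy_1y_2)^k/[k!\,(\lambda)_k]$. Grouping exponentials, the coefficient of $y_i$ becomes $1/p_i-p_i^{\prime}/p_{12}$ (with $p_1^{\prime}=p_2$, $p_2^{\prime}=p_1$), which equals $-cp_{12}/p_i$ by the identity $p_1p_2-p_{12}=cp_{12}^2$. The double integral then factorizes as a product of two one-dimensional integrals. After the substitution $u_i=y_i/x_i$, each becomes
\[
x_i^{\lambda_i+k-1}\int_0^1 u_i^{\lambda+k-1}(1-u_i)^{\lambda_i-\lambda-1}\,e^{-cp_{12}x_i u_i/p_i}\,du_i = x_i^{\lambda_i+k-1}B(\lambda+k,\lambda_i-\lambda)\,{}_1F_1\!\bigl(\lambda+k;\lambda_i+k;-cp_{12}x_i/p_i\bigr).
\]

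Applying Kummer's transformation ${}_1F_1(a;b;-z)=e^{-z}{}_1F_1(b-a;b;z)$ produces a factor $\exp(-cp_{12}x_i/p_i)$, which combines with the residual $\exp(-x_i/p_i)$ from $f_{Z_i}$ to yield exactly $\exp(-p_2x_1/p_{12}-p_1x_2/p_{12})$, matching the prescribed exponential. Expanding each remaining ${}_1F_1(\lambda_i-\lambda;\lambda_i+k;\cdot)$ as a series in a new index $m_i$, and setting $m_3:=k$, the density takes the form of a triple series indexed by $(m_1,m_2,m_3)$. Simplifying Gamma-function products via $(\lambda_i+k)_{m_i}(\lambda_i)_k=(\lambda_i)_{m_i+m_3}$ and collapsing $\Gamma(\lambda+k)^2\Gamma(\lambda_i-\lambda)$ into the Beta factors leaves the numerator $(\lambda_1-\lambda)_{m_1}(\lambda_2-\lambda)_{m_2}(\lambda)_{m_3}$ and denominator $(\lambda_1)_{m_1+m_3}(\lambda_2)_{m_2+m_3}$, identifying the series with $F_I$ in \eqref{genLauricella1} evaluated at the stated arguments, while the leftover scalar prefactor reduces to $p_{12}^{-\lambda}p_1^{-(\lambda_1-\lambda)}p_2^{-(\lambda_2-\lambda)}/[\Gamma(\lambda_1)\Gamma(\lambda_2)]$.

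The main obstacle will be the Pochhammer and Gamma bookkeeping in the last step: one must track the cancellation of $\Gamma(\lambda)^2$ and $\Gamma(\lambda_i-\lambda)$, and realize that the Kummer-transformed ${}_1F_1$ is exactly what allows $m_3=k$ to appear symmetrically in the two denominators $(\lambda_i)_{m_i+m_3}$, as required by the Lauricella pattern. Convergence and the interchange of sum and integral are routine (all summands are non-negative, and the series converges on a neighborhood of the origin in the $(x_1,x_2)$-plane). As a sanity check one can verify that this density has the Laplace transform \eqref{TLMMGD} by a direct triple-series computation, and that specializing $\lambda_1=\lambda$ recovers \eqref{BMGD} upon reducing $F_I$ to the Horn function $\Phi_3$.
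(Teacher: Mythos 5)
Your approach is sound, and it is worth noting that the paper itself offers no proof of this statement: Theorem~\ref{Proposition_multifactorbig2} is imported from \cite{Bernardoff(2018)} and does not appear in \ref{Annexe1}, so the convolution route you take via Proposition~\ref{buildMFGD} is a legitimate self-contained derivation rather than a rival to an in-paper argument. Each step you outline checks out: the exponent of $y_i$ after grouping is $-(p_{i'}/p_{12}-1/p_i)y_i=-cp_{12}y_i/p_i$ (with $p_{1'}=p_2$, $p_{2'}=p_1$); the Euler integral gives $x_i^{\lambda_i+k-1}B(\lambda+k,\lambda_i-\lambda)\,{}_1F_1(\lambda+k;\lambda_i+k;-cp_{12}x_i/p_i)$; Kummer's transformation releases $\exp(-cp_{12}x_i/p_i)$, and $1+cp_{12}=p_1p_2/p_{12}$ turns $\exp(-x_i(1+cp_{12})/p_i)$ into the stated exponential; the cancellation $\Gamma(\lambda+k)^2/[\Gamma(\lambda)^2(\lambda)_k]=(\lambda)_k$ and the identity $(\lambda_i)_k(\lambda_i+k)_{m_i}=(\lambda_i)_{m_i+k}$ produce exactly the Pochhammer pattern of $F_I$ in \eqref{genLauricella1}. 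Tonelli (all summands non-negative once $c>0$ and $\lambda_i\geqslant\lambda$) justifies every interchange; note only that the resulting series is in fact entire in $(x_1,x_2)$, not merely convergent near the origin, which is what you need for a density on $(0,\infty)^2$.

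There is one point you should not gloss over: carried to the end, your computation gives the first two Lauricella arguments as $c\tfrac{p_{12}}{p_1}x_1$ and $c\tfrac{p_{12}}{p_2}x_2$, i.e.\ with an extra factor of $c$ relative to ``the stated arguments'' in \eqref{mulbigamma2}. This is not an error on your side: setting $\lambda_1=\lambda$ in your result reproduces \eqref{BMGD}, whose second Horn argument is $c\tfrac{p_{12}}{p_2}x_2$, whereas \eqref{mulbigamma2} as printed would give $\tfrac{p_{12}}{p_2}x_2$ and thus contradict \eqref{BMGD}. So your derivation actually exposes a missing factor of $c$ in the displayed statement; you should say so explicitly rather than assert agreement with the printed arguments. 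Also treat the boundary case $\lambda_i=\lambda$ separately (there $Z_i$ degenerates to $0$ and $\Gamma(\lambda_i-\lambda)$ is not finite), e.g.\ by the reduction of $F_I$ to $\Phi_3$ already recorded after the theorem.
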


If we get $\lambda_{1}=\lambda$ in the equality (\ref{mulbigamma2}), we obtain
Chatelain and Tourneret's result (\ref{BMGD}) because%
\begin{align*}
F_{I}\left(  0,\lambda_{2}-\lambda,\lambda,z_{1},z_{2},z_{3}\right)    =
\sum_{m_{2},m_{3}=0}^{\infty}\frac{\left(  b\right)  _{m_{2}}%
}{\left(  b+c\right)  _{m_{2}+m_{3}}}\frac{z_{2}^{m_{2}}}{m_{2}!}\frac
{z_{3}^{m_{3}}}{m_{3}!}  &  =\Phi_{3}\left(  b;b+c;z_{2},z_{3}\right)  .
\end{align*}

\cite{Bernardoff(2006)} gives the following Proposition:

\begin{proposition}
Let $\mu$ be a \textit{mgd }on $\mathbb{R}^{n}$ associated with $\left(
P,\lambda\right)  .$ Assume that $\mu$ is not concentrated on a linear
subspace of $\mathbb{R}^{n}$ of the form $\{\boldsymbol{x}\in\mathbb{R}%
^{n};\ x_{k}=0\}$ for some $k$ in $\left[  n\right]  =\{1,\ldots,n\}.$ Then:

\begin{enumerate}
\item[\emph{(i)}] For all $i\in\left[  n\right]  ,$ $p_{i}\neq0$.

\item[\emph{(ii)}] If $p_{1},\ldots,p_{k}<0$ and $p_{k+1},\ldots,p_{n}>0,$
then $\text{Supp}\left(  \mu\right)  \subset\left(  -\infty,0\right]
^{k}\times\left[  0,\infty\right)  ^{n-k}$.

\item[\emph{(iii)}] If $p_{1},\ldots,p_{n}>0$ then $p_{\left[  n\right]
}\geqslant0.$
\end{enumerate}
\end{proposition}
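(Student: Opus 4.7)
All three statements follow from inspecting the univariate marginals furnished by (\ref{marginalMGD}). For \emph{(i)}, fix $i\in[n]$; the $i$-th marginal of $\boldsymbol{X}\sim\boldsymbol{\gamma}_{(P,\lambda)}$ has Lt $(1+p_i\theta_i)^{-\lambda}$, and if $p_i=0$ this reduces to the constant $1$, i.e., the Lt of $\delta_0$. Then $X_i=0$ almost surely, so $\mu$ would be concentrated on $\{x_i=0\}$, which is excluded by hypothesis. For \emph{(ii)}, the same formula identifies the $i$-th marginal $\mu_i$ as $\gamma_{(p_i,\lambda)}$, supported on $[0,\infty)$ when $p_i>0$; when $p_i<0$, rewrite $(1+p_i\theta_i)^{-\lambda}=(1-|p_i|\theta_i)^{-\lambda}$ to recognize $\mu_i$ as the law of $-Y_i$ with $Y_i\sim\gamma_{(|p_i|,\lambda)}$, hence supported on $(-\infty,0]$. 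The standard inclusion $\mathrm{Supp}(\mu)\subset\prod_{i=1}^{n}\mathrm{Supp}(\mu_i)$---if $x_i\notin\mathrm{Supp}(\mu_i)$, the open box $\mathbb{R}^{i-1}\times U_i\times\mathbb{R}^{n-i}$ around $\boldsymbol{x}$ has $\mu$-mass $\mu_i(U_i)=0$---then delivers the product containment claimed in (ii).

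For \emph{(iii)}, part (ii) with $k=0$ places $\mathrm{Supp}(\mu)\subset[0,\infty)^n$, so
\[
L_\mu(\boldsymbol{\theta})=\int e^{-\boldsymbol{\theta}\cdot\boldsymbol{x}}\,\mu(d\boldsymbol{x})
\]
is finite and bounded by $1$ for every $\boldsymbol{\theta}\in[0,\infty)^n$. Together with the identification $L_\mu(\boldsymbol{\theta})=P(\boldsymbol{\theta})^{-\lambda}$ (principal branch, since $\lambda$ need not be integral), continuity of $P$, and the value $P(\boldsymbol{0})=1>0$, this forces $P(\boldsymbol{\theta})>0$ throughout the closed positive orthant: along any path in $[0,\infty)^n$ from $\boldsymbol{0}$ to a hypothetical zero of $P$, $L_\mu$ would blow up, a contradiction. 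Finally, specializing to the diagonal $\theta_1=\cdots=\theta_n=t$ yields $P(t,\ldots,t)=\sum_{T\in\mathfrak{P}}p_T\,t^{|T|}$, a polynomial of degree at most $n$ in $t$ whose leading coefficient is $p_{[n]}$; a strictly negative $p_{[n]}$ would send $P(t,\ldots,t)\to-\infty$ as $t\to+\infty$, contradicting the positivity just established. Hence $p_{[n]}\geq 0$.

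\textbf{Main obstacle.} Parts (i) and (ii) reduce to recognizing marginal Lts and a product-of-supports inclusion, so they are essentially bookkeeping. The genuinely delicate step lies in (iii): propagating positivity of $L_\mu$ on the closed positive orthant to positivity of the polynomial $P$ itself, valid for arbitrary $\lambda>0$ (not necessarily integral). I handle this by a continuity/blow-up argument, relying on $P(\boldsymbol{0})=1$ together with the finiteness and nonvanishing of $L_\mu$; once $P>0$ on $[0,\infty)^n$ is secured, the diagonal asymptotic closes the proof immediately.
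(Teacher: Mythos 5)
This proposition is imported into the paper from Bernardoff (2006) without proof, so there is no in-paper argument to compare yours against; I can only assess your proof on its own terms. Parts (i) and (ii) are correct and are the natural argument: the marginal Laplace transform $(1+p_i\theta_i)^{-\lambda}$ from (\ref{marginalMGD}) degenerates to the transform of $\delta_0$ when $p_i=0$, forcing $X_i=0$ a.s.\ and contradicting the non-concentration hypothesis; otherwise it identifies the marginal $\mu_i$ as $\gamma_{(p_i,\lambda)}$ when $p_i>0$ or as the law of $-Y_i$ with $Y_i\sim\gamma_{(|p_i|,\lambda)}$ when $p_i<0$, and the inclusion $\mathrm{Supp}(\mu)\subset\prod_{i}\mathrm{Supp}(\mu_i)$, which you justify correctly, delivers (ii).

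The one step that needs shoring up is in (iii): you assert that $L_\mu(\boldsymbol{\theta})=P(\boldsymbol{\theta})^{-\lambda}$ holds throughout the closed positive orthant and run a blow-up argument along a path toward a hypothetical zero of $P$. But the definition (\ref{TLMGD}) only guarantees this identity on some set with non-empty interior, and your path argument presupposes that the identity persists along the whole path; continuity of $P$ together with $P(\boldsymbol{0})=1$ does not by itself give that. The standard repair: since $\mathrm{Supp}(\mu)\subset[0,\infty)^n$ by (ii), $L_\mu$ is finite, bounded by $1$, strictly positive and real-analytic on the interior of its convex convergence domain $D$, which contains $(0,\infty)^n$ as well as the interior of the set where (\ref{TLMGD}) holds. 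The function $g=L_\mu^{-1/\lambda}$ is then real-analytic on the connected open set $\mathrm{int}(D)$ and coincides with the polynomial $P$ on a non-empty open subset, hence $g=P$ on all of $\mathrm{int}(D)$ by the identity theorem for real-analytic functions; in particular $P=L_\mu^{-1/\lambda}\geqslant 1$ on $(0,\infty)^n$, and $P\geqslant 0$ on the closed orthant by continuity. With that secured, your diagonal specialization $P(t,\ldots,t)=\sum_{T}p_T t^{|T|}$, whose degree-$n$ coefficient is $p_{[n]}$, correctly forces $p_{[n]}\geqslant 0$. So the architecture of your proof is right; only the continuation step needs to be made explicit.
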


\cite{Bernardoff(2006)} gives a necessary and sufficient condition for
infinite divisibility of the \textit{mgd associated with} $\left(
P,\lambda\right)  $, in the sense that the Lt of $\boldsymbol{\gamma}%
$$_{\left(  P,\lambda\right)  }$ power $t$ for all positive $t$ is still the
Lt of a positive measure, by the following theorem using the notation
$\widetilde{b}_{S}$ from the notation $b_{S}$ defined in
\cite{Bernardoff(2003)}:

\begin{theorem}
\label{mainresult} Let $\mu=\boldsymbol{\gamma}_{P,\lambda}$ be a mgd
associated with $\left(  P,\lambda\right)  ,$ where $\lambda>0$ and $P\left(
\boldsymbol{\theta}\right)  =\sum\nolimits_{T\in\mathfrak{P}_{n}}%
p_{T}\boldsymbol{\theta}^{T}$ is such that $p_{i}>0$, for all $i\in\left[
n\right]  ,$ and $p_{\left[  n\right]  }>0$. Let $\widetilde{P}\left(
\boldsymbol{\theta}\right)  =\sum\nolimits_{T\in\mathfrak{P}_{n}}%
\widetilde{p}_{T}\boldsymbol{\theta}^{T}$ be the affine polynomial such that
$\widetilde{p}_{T}=-p_{\overline{T}}/p_{\left[  n\right]  }$ for all
$T\in\mathfrak{P}_{n},$ where $\overline{T}=\left[  n\right]  \smallsetminus
T$. Let%
\begin{equation}
\widetilde{b}_{S}=b_{S}(\widetilde{P})=\sum_{k=1}^{\left\vert S\right\vert
}\left(  k-1\right)  !\sum_{\mathcal{T\in}\Pi_{S}^{k}}\prod_{T\in\mathcal{T}%
}\widetilde{p}_{T}, \label{b_tildeS}%
\end{equation}
with%
\begin{equation}
b_{S}(P)=\sum_{k=1}^{\left\vert S\right\vert }\left(  k-1\right)
!\sum_{\mathcal{T\in}\Pi_{S}^{k}}\prod_{T\in\mathcal{T}}p_{T}, \label{b_S}%
\end{equation}
where $\left\vert S\right\vert $ is the cardinality of the set $S$. Then the
measure $\mu$ is infinitely divisible if and only if%
\begin{equation}
\widetilde{\text{$p$}}_{i}=\widetilde{b}_{\{i\}}<0\text{ for all }i\in\left[
n\right]  , \label{cond1}%
\end{equation}
and%
\begin{equation}
\widetilde{b}_{S}\geqslant0\text{ for all }S\in\mathfrak{P}_{n}^{\ast}\text{
such that }\left\vert S\right\vert \geqslant2. \label{cond2}%
\end{equation}

\end{theorem}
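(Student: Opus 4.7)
The approach is via the L\'evy-Khintchine representation: a probability measure $\mu$ supported on $\mathbb{R}_+^n$ is infinitely divisible if and only if its log-Laplace exponent $\psi(\boldsymbol{\theta})=-\log\mathbb{E}e^{-\langle\boldsymbol{\theta},\boldsymbol{X}\rangle}$ admits the representation $\psi(\boldsymbol{\theta})=\int_{\mathbb{R}_+^n\setminus\{\mathbf{0}\}}(1-e^{-\langle\boldsymbol{\theta},\boldsymbol{x}\rangle})\,\nu(d\boldsymbol{x})$ for some positive $\sigma$-finite L\'evy measure $\nu$ with $\int(1\wedge|\boldsymbol{x}|)\,\nu(d\boldsymbol{x})<\infty$. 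For $\mu=\boldsymbol{\gamma}_{(P,\lambda)}$ this exponent is $\psi=\lambda\log P$, so the plan is to construct the candidate $\nu$ explicitly by inverse Laplace inversion and read off exactly when it is non-negative.

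The algebraic input is the identity $P(\boldsymbol{\theta})=-p_{[n]}\boldsymbol{\theta}^{[n]}\widetilde{P}(1/\boldsymbol{\theta})$, which follows from the definition $\widetilde{p}_T=-p_{\overline{T}}/p_{[n]}$ by reindexing $T\mapsto\overline{T}$ in the expansion of $\widetilde{P}(1/\boldsymbol{\theta})$. Writing $\widetilde{Q}=\widetilde{P}+1=\sum_{T\in\mathfrak{P}_n^{\ast}}\widetilde{p}_T\boldsymbol{\theta}^T$ (which has no constant term), this rewrites as $P(\boldsymbol{\theta})/\boldsymbol{\theta}^{[n]}=p_{[n]}\bigl(1-\widetilde{Q}(1/\boldsymbol{\theta})\bigr)$, so that $\log P(\boldsymbol{\theta})=\sum_{i=1}^n\log\theta_i+\log p_{[n]}+\log\bigl(1-\widetilde{Q}(1/\boldsymbol{\theta})\bigr)$. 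Expanding the last term as $-\sum_{k\geq 1}\widetilde{Q}(1/\boldsymbol{\theta})^k/k$ and collecting the coefficient of the square-free Laurent monomial $\boldsymbol{\theta}^{-S}$ for $S\in\mathfrak{P}_n^{\ast}$ produces exactly $-\widetilde{b}_S$, by the partition sum (\ref{b_tildeS}). Thus the $\widetilde{b}_S$ encode the Laurent behaviour of $\log P$ in the regime $\theta_i\to\infty$ for $i\in S$, which by Laplace inversion selects the component of $\nu$ living on the face $F_S=\{\boldsymbol{x}\in\mathbb{R}_+^n:x_j>0\text{ iff }j\in S\}$.

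I would then build $\nu$ face by face. Differentiating the L\'evy-Khintchine identity in $\theta_{i_0}$ gives $\mathcal{L}[x_{i_0}\nu](\boldsymbol{\theta})=\lambda\,\partial_{i_0}P/P$, and decomposing $\nu=\sum_{S\in\mathfrak{P}_n^{\ast}}\nu_S$ with $\nu_S$ supported on $F_S$, iterated inverse Laplace inversion in the $\theta_i$ for $i\in S$ (with $\theta_j\to\infty$ for $j\notin S$ to isolate the face) produces, as in the bivariate case where the inversion yields the modified-Bessel series $\sum_{k\geq 1}(cx_1x_2)^k/(k!(k-1)!)$ with leading coefficient $c=\widetilde{b}_{\{1,2\}}$, a density $\nu_S(d\boldsymbol{x}_S)=(\lambda/x_{i_0})\,e^{-\sum_{i\in S}\alpha_i x_i}\,h_S(\boldsymbol{x}_S)\,d\boldsymbol{x}_S$ with strictly positive constants $\alpha_i$ (depending on $S$) and $h_S$ an entire function whose Taylor coefficients are explicit polynomial combinations of the $\widetilde{b}_T$ for $T\subseteq S$; the term of lowest non-trivial degree (for $|S|\geq 2$) is a strictly positive multiple of $\widetilde{b}_S$.

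Finally I would conclude by sign analysis. For $|S|=1$, $\nu_{\{i\}}$ reduces to a univariate gamma-type L\'evy density whose positivity and integrability at infinity force $\widetilde{b}_{\{i\}}<0$, yielding (\ref{cond1}); for $|S|\geq 2$, the leading positivity of $h_S$ forces $\widetilde{b}_S\geq 0$, yielding (\ref{cond2}). Conversely, when (\ref{cond1}) and (\ref{cond2}) hold, an induction on $|S|$ shows that every Taylor coefficient of $h_S$ is non-negative, so $\nu=\sum_S\nu_S$ is a bona fide non-negative L\'evy measure and $\mu$ is infinitely divisible. The main obstacle is this last analytic-combinatorial step: carrying out the multivariate inverse Laplace inversion of $\partial_{i_0}P/P$ on each face and showing that the resulting series organises itself with coefficients that are explicit non-negative combinations of the partition-lattice quantities (\ref{b_tildeS}), which is what ultimately ties the positivity of $\nu$ to the sign conditions on the $\widetilde{b}_S$.
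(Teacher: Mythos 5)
First, note that the paper does not actually prove this statement: Theorem \ref{mainresult} is recalled from \cite{Bernardoff(2006)} and no proof appears in \ref{Annexe1}, so your attempt can only be measured against the strategy of that reference --- which is, in fact, essentially the one you adopt (expand $\lambda\log P$ via the substitution $\boldsymbol{\theta}\mapsto\boldsymbol{\theta}-\boldsymbol{\theta}_P$, identify the candidate L\'evy measure by Laplace inversion, and reduce infinite divisibility to the sign of its coefficients). Your algebra is sound up to a point: the identity $P(\boldsymbol{\theta})=-p_{[n]}\boldsymbol{\theta}^{[n]}\widetilde{P}(1/\boldsymbol{\theta})$ is correct, and the coefficient of the square-free monomial $\boldsymbol{\theta}^{-S}$ in $-\log\bigl(1-\widetilde{Q}(1/\boldsymbol{\theta})\bigr)$ is indeed $\widetilde{b}_S$, since the $k!$ orderings of a partition $\mathcal{T}\in\Pi_S^k$ combine with the $1/k$ from the logarithm to give the $(k-1)!$ in (\ref{b_tildeS}). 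This yields necessity of (\ref{cond2}) from the leading term on each face, which is the easy half.

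The genuine gap is the sufficiency direction, which you compress into ``an induction on $|S|$ shows that every Taylor coefficient of $h_S$ is non-negative.'' The expansion of $-\log\bigl(1-\widetilde{Q}(1/\boldsymbol{\theta})\bigr)$ is supported on \emph{all} multi-indices $\boldsymbol{\alpha}\in\mathbb{N}^n$ with $|\boldsymbol{\alpha}|\geqslant 2$, not only the square-free ones, because the sets $T_1,\dots,T_k$ in $\widetilde{Q}^k$ need not be disjoint; the coefficients $b_{\boldsymbol{\alpha}}(\widetilde{P})$ for non-square-free $\boldsymbol{\alpha}$ are \emph{not} determined by positivity of the $\widetilde{b}_S$ in any obvious way, and proving that (\ref{cond1}) and (\ref{cond2}) force $b_{\boldsymbol{\alpha}}(\widetilde{P})\geqslant 0$ for every $\boldsymbol{\alpha}$ with $|\boldsymbol{\alpha}|\geqslant 2$ is the substantive combinatorial content of the theorem (it occupies the core of \cite{Bernardoff(2003)} and \cite{Bernardoff(2006)}); it is not an induction on $|S|$ but a statement over all of $\mathbb{N}^n$. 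A secondary inaccuracy: the necessity of $\widetilde{p}_i<0$ in (\ref{cond1}) cannot come from the one-dimensional faces as you claim, since the marginal L\'evy density of $X_i$ is $\lambda e^{-x/p_i}/x$, which is automatically positive and integrable under the standing hypothesis $p_i>0$; the condition $\widetilde{p}_i<0$ arises instead as the requirement that the exponential factor $\exp(\boldsymbol{\theta}_P,\mathbf{x})=\exp(\sum_i\widetilde{p}_i x_i)$ governing the density on the full-dimensional face (compare (\ref{gamma_P_lambda})) decay at infinity. As written, the proposal is a correct programme with the decisive lemma asserted rather than proved.
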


\begin{corollary}
By the properties of infinite divisible distributions we conclude that the
necessary and sufficient conditions for infinite divisibility of a \textit{mgd
}associated with $\left(  P,\lambda\right)  $ of theorem (\ref{mainresult}),
are also necessary and sufficient conditions for infinite divisibility
of\textit{ mfgd }associated with $\left(  P,\boldsymbol{\Lambda}\right)  .$
\end{corollary}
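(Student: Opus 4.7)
The plan is to derive both directions of the corollary from the convolution representation in Proposition \ref{buildMFGD}: a random vector with law $\boldsymbol{\gamma}_{(P,\boldsymbol{\Lambda})}$ can be written as $\boldsymbol{Y}+\boldsymbol{Z}$, where $\boldsymbol{Y}\sim\boldsymbol{\gamma}_{(P,\lambda)}$ is independent of a vector $\boldsymbol{Z}$ of independent univariate gammas $\gamma_{(p_i,\lambda_i-\lambda)}$. I combine this with two classical facts: every univariate gamma distribution is infinitely divisible, and infinite divisibility is stable under convolution of independent random vectors.

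For the sufficiency direction, assume conditions (\ref{cond1}) and (\ref{cond2}) hold. Theorem \ref{mainresult} then makes $\boldsymbol{\gamma}_{(P,\lambda)}$ infinitely divisible, while $\boldsymbol{Z}$ is infinitely divisible as an independent product of infinitely divisible coordinates. Convolving the two laws yields infinite divisibility of $\boldsymbol{\gamma}_{(P,\boldsymbol{\Lambda})}$.

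For the necessity direction, suppose the mfgd is infinitely divisible, so that for every $t>0$ the function $L_X^t(\boldsymbol{\theta})=[P(\boldsymbol{\theta})]^{-\lambda t}\prod_{i=1}^n(1+p_i\theta_i)^{-t(\lambda_i-\lambda)}$ is the Lt of a probability. The second factor is the Lt of the probability $\bigotimes_{i=1}^n\gamma_{(p_i,t(\lambda_i-\lambda))}$; formally dividing leaves $[P(\boldsymbol{\theta})]^{-\lambda t}$, which I want to identify as the Lt of a probability for every $t>0$. Once granted, $\boldsymbol{\gamma}_{(P,\lambda)}$ is infinitely divisible and conditions (\ref{cond1}) and (\ref{cond2}) follow from Theorem \ref{mainresult}.

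The hard step is justifying this deconvolution, because in general factoring an infinitely divisible law does not force its factors to be infinitely divisible. I would handle it via the L\'evy--Khintchine representation: the L\'evy exponent of the mfgd decomposes as $\lambda\log P+\sum_{i=1}^n(\lambda_i-\lambda)\log(1+p_i\theta_i)$, where the second summand is the explicit, axis-supported L\'evy exponent of $\boldsymbol{Z}$. The task reduces to verifying that subtracting the L\'evy measure of $\boldsymbol{Z}$ from that of $\boldsymbol{X}$ leaves a non-negative measure, which I would check by identifying the axis-restricted part of the mfgd's L\'evy measure using the known univariate marginals $\gamma_{(p_i,\lambda_i)}$ of $\boldsymbol{X}$ together with the structural decomposition $\boldsymbol{X}=\boldsymbol{Y}+\boldsymbol{Z}$; this yields that $\lambda\log P$ is itself a valid L\'evy exponent, giving the required infinite divisibility of $\boldsymbol{\gamma}_{(P,\lambda)}$.
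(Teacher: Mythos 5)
Your sufficiency direction is fine and is evidently what the paper intends by ``the properties of infinitely divisible distributions'': by Proposition \ref{buildMFGD} the mfgd is the convolution of $\boldsymbol{\gamma}_{(P,\lambda)}$ with an independent vector of univariate gammas, each factor is infinitely divisible under conditions (\ref{cond1})--(\ref{cond2}), and infinite divisibility is closed under convolution. The paper gives no further argument, so there is nothing more to compare against on that side.

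The necessity direction, however, contains a genuine gap, and you have in fact put your finger on it yourself before papering over it. You reduce the problem to showing that $\nu_X-\nu_Z\geqslant 0$, where $\nu_X$ is the L\'evy measure of the mfgd and $\nu_Z$ the axis-supported L\'evy measure of the independent-gamma part, and you propose to verify this by ``identifying the axis-restricted part of $\nu_X$ from the univariate marginals $\gamma_{(p_i,\lambda_i)}$ together with the decomposition $\boldsymbol{X}=\boldsymbol{Y}+\boldsymbol{Z}$.'' Neither ingredient delivers what you need. The L\'evy measure of the marginal $X_i$ is the image of $\nu_X$ under the coordinate projection, so it aggregates the mass that $\nu_X$ places on the $i$-th axis with the mass it places off all axes; knowing the marginal is $\gamma_{(p_i,\lambda_i)}$ only bounds the axis-restricted part of $\nu_X$ from above, whereas you need a lower bound by $(\lambda_i-\lambda)x^{-1}e^{-x/p_i}\,dx$. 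And reading off the axis part of $\nu_X$ from $\boldsymbol{X}=\boldsymbol{Y}+\boldsymbol{Z}$ as $\nu_Y+\nu_Z$ presupposes that $\boldsymbol{Y}\sim\boldsymbol{\gamma}_{(P,\lambda)}$ is itself infinitely divisible, which is exactly the conclusion you are trying to reach. A workable route is instead to observe that the conditions (\ref{cond2}) concern only the ``mixed'' coefficients $\widetilde{b}_S$, $\left\vert S\right\vert\geqslant 2$, and that the mixed partial derivatives $(\partial/\partial\boldsymbol{\theta})^{S}$ with $\left\vert S\right\vert\geqslant 2$ annihilate every summand $\log(1+p_i\theta_i)$ of $-\log L_{\boldsymbol{X}}$; hence the off-axis non-negativity constraints forced by infinite divisibility of the mfgd coincide with those for $\lambda\log P$, i.e.\ one must re-run the coefficient computation underlying Theorem \ref{mainresult} rather than invoke a deconvolution of L\'evy measures. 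The singleton condition (\ref{cond1}) then still requires a separate argument, since the extra factors in (\ref{TLMMGD}) do modify the axis part of the exponent. As written, your necessity step is a plausible plan but not a proof.
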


To illustrate the difficulty to calculate the \textit{mgd associated with}
$\left(  P,\lambda\right)  $ we recall, for the \textit{trivariate gamma
distribution (tgd)}, the following theorem given in \cite{Bernardoff(2018)}.
Let $F_{II}$ be the function defined by%
\begin{equation}
F_{II}\left(  \lambda_{1},\lambda_{2},z_{1},z_{2},z_{3},z_{4}\right)
=\sum_{m_{1},\ldots,m_{4}=0}^{\infty}\frac{1}{\left(  \lambda_{1}\right)
_{m_{1}+m_{2}+m_{3}}\left(  \lambda_{2}\right)  _{2m_{1}+m_{2}+m_{4}}}%
\frac{z_{1}^{m_{1}}}{m_{1}!}\frac{z_{2}^{m_{2}}}{m_{2}!}\frac{z_{3}^{m_{3}}%
}{m_{3}!}\frac{z_{4}^{m_{4}}}{m_{4}!}; \label{genLauricella2}%
\end{equation}
it is still a particular generalized Lauricella function. We note that $c$ in (\ref{mud2}) is $\widetilde{b}_{1,2}.$

\begin{theorem}
\label{gamma3}In the case $n=3$, $p_{i}>0$ for $i\in\left[  3\right]  ,$
$p_{ij}>0$ for $i,j\in\left[  3\right]  $, $\widetilde{b}_{ij}=-\frac{b_{k}%
}{p_{123}}+\frac{p_{jk}p_{ik}}{p_{123}^{2}}\geqslant0$ for $i\neq j$ and
$\left\{  i,j,k\right\}  =\left[  3\right]  ,$ $p_{123}>0,$ and
$
\widetilde{b}_{123}=-\frac{1}{p_{123}}+\frac{p_{12}p_{1}}{p_{123}^{2}}%
+\frac{p_{13}p_{2}}{p_{123}^{2}}+\frac{p_{23}p_{1}}{p_{123}^{2}}+2\frac
{p_{12}p_{13}p_{23}}{p_{123}^{3}}\geqslant0,
$
the infinitely divisible \textit{tgd }$\boldsymbol{\gamma}_{\left(
P,\lambda\right)  }$ \textit{associated with} $\left(  P,\lambda\right)  ,$ is
given by the formula%
\begin{align}
\boldsymbol{\gamma}_{\left(  P,\lambda\right)  }\left(  \text{\emph{d}%
}\boldsymbol{x}\right)   &  =\frac{p_{123}^{-\lambda}}{\left[  \Gamma\left(
\lambda\right)  \right]  ^{3}}\exp(\widetilde{p}_{1}x_{1}+\widetilde{p}%
_{2}x_{2}+\widetilde{p}_{3}x_{3})\left(  x_{1}x_{2}x_{3}\right)  ^{\lambda
-1} \times\nonumber\\
&  F_{II}(\lambda,\lambda,\widetilde{b}_{13}x_{1}x_{3}\widetilde{b}%
_{23}x_{2}x_{3},\widetilde{b}_{123}x_{1}x_{2}x_{3},\widetilde{b}_{12}%
x_{1}x_{2},\widetilde{b}_{13}x_{1}x_{3}+\widetilde{b}_{23}x_{2}x_{3}%
)\mathbf{1}_{\left(  0,\infty\right)  ^{3}}\left(  \boldsymbol{x}\right)
\,\text{\emph{d}}\boldsymbol{x}. \label{MGD3}%
\end{align}

\end{theorem}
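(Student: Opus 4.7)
The plan is to verify the density formula (\ref{MGD3}) by showing that the Laplace transform of its right-hand side equals $[P(\boldsymbol{\theta})]^{-\lambda}$ in a neighbourhood of $\boldsymbol{0}$; by injectivity of the Laplace transform, this identifies the right-hand side as the density of $\boldsymbol{\gamma}_{(P,\lambda)}$. Let $f(\boldsymbol{x})$ denote the candidate density and set $\tau_i = \theta_i - \widetilde{p}_i = \theta_i + p_{jk}/p_{123}$ (with $\{i,j,k\} = [3]$). The positivity hypotheses guarantee $\tau_i > 0$ near $\boldsymbol{\theta}=\boldsymbol{0}$, and the exponential prefactor of $f$ combines with the Laplace kernel $e^{-\boldsymbol{\theta}\cdot\boldsymbol{x}}$ to produce $e^{-\sum_i \tau_i x_i}$, yielding absolutely convergent integrals.

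\textbf{Reduction of $P$ to normal form.} A direct calculation shows that every mixed second partial of $P$ at $\widetilde{\boldsymbol{p}}$ vanishes, since $\partial^2_{\theta_i\theta_j}P(\widetilde{\boldsymbol{p}}) = p_{ij} + p_{123}\widetilde{p}_k = p_{ij} - p_{ij} = 0$. Computing $P(\widetilde{\boldsymbol{p}}) = -p_{123}\widetilde{b}_{123}$ and $\partial_i P(\widetilde{\boldsymbol{p}}) = -p_{123}\widetilde{b}_{jk}$ directly from (\ref{b_tildeS}) yields the clean factorization
\[
P(\boldsymbol{\theta}) = p_{123}\bigl(\tau_1\tau_2\tau_3 - \widetilde{b}_{23}\tau_1 - \widetilde{b}_{13}\tau_2 - \widetilde{b}_{12}\tau_3 - \widetilde{b}_{123}\bigr).
\]
Writing the bracket above as $1 - (u_0+u_1+u_2+u_3)$ with $u_0 = \widetilde{b}_{123}/(\tau_1\tau_2\tau_3)$, $u_1 = \widetilde{b}_{23}/(\tau_2\tau_3)$, $u_2 = \widetilde{b}_{13}/(\tau_1\tau_3)$, $u_3 = \widetilde{b}_{12}/(\tau_1\tau_2)$, and applying the generalized binomial/multinomial expansion yields
\[
[P(\boldsymbol{\theta})]^{-\lambda} = p_{123}^{-\lambda}\sum_{n_0,n_1,n_2,n_3\geqslant 0}\frac{(\lambda)_{n_0+n_1+n_2+n_3}}{n_0!n_1!n_2!n_3!}\,\widetilde{b}_{123}^{n_0}\widetilde{b}_{23}^{n_1}\widetilde{b}_{13}^{n_2}\widetilde{b}_{12}^{n_3}\,\tau_1^{-\lambda-n_0-n_2-n_3}\tau_2^{-\lambda-n_0-n_1-n_3}\tau_3^{-\lambda-n_0-n_1-n_2}.
\]

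\textbf{Termwise Laplace transform of $f$.} Expand $F_{II}$ as in (\ref{genLauricella2}), substitute the prescribed arguments, and expand the binomial $z_4^{m_4} = (\widetilde{b}_{13}x_1x_3 + \widetilde{b}_{23}x_2x_3)^{m_4}$ with auxiliary index $a \in \{0,\dots,m_4\}$. Each resulting summand is a monomial in $x_1, x_2, x_3$ of total degrees $m_1+m_2+m_3+a$, $m_1+m_2+m_3+m_4-a$, $2m_1+m_2+m_4$ respectively. Using Fubini (justified by positivity of all $\widetilde{b}_S$ and of $\tau_i$), integrate against $e^{-\sum_i \tau_i x_i}$ using $\int_0^\infty x^{s-1}e^{-\tau x}\,dx = \Gamma(s)/\tau^s$. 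The $x_3$-integration produces $\Gamma(\lambda+2m_1+m_2+m_4)$, and after dividing by $\Gamma(\lambda)^3$ (from the normalizing constant) it combines with $(\lambda)_{2m_1+m_2+m_4}$ in the Lauricella denominator to cancel exactly, leaving two surviving gamma-to-Pochhammer factors from the $x_1$- and $x_2$-integrations.

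\textbf{Matching via Chu--Vandermonde, and the main obstacle.} Reindex the resulting quintuple series by $n_0 = m_2$, $n_3 = m_3$, $n_2 = m_1+a$, $n_1 = m_1+m_4-a$, so that the exponents of $\tau_i^{-1}$ and the powers of $\widetilde{b}_S$ align with the expansion of $[P(\boldsymbol{\theta})]^{-\lambda}$ above. For fixed $(n_0,n_1,n_2,n_3)$ this reparametrization runs over $m_1 \in \{0,\dots,\min(n_1,n_2)\}$ with $a = n_2 - m_1$ and $m_4 = n_1+n_2-2m_1$. The main obstacle is the residual combinatorial identity
\[
\sum_{m_1=0}^{\min(n_1,n_2)}\binom{n_1}{m_1}\binom{n_2}{m_1}\frac{m_1!}{(\mu)_{m_1}} = \frac{(\mu)_{n_1+n_2}}{(\mu)_{n_1}(\mu)_{n_2}},\qquad \mu = \lambda+n_0+n_3,
\]
which is precisely the Chu--Vandermonde evaluation of ${}_2F_1(-n_1,-n_2;\mu;1) = (\mu+n_2)_{n_1}/(\mu)_{n_1}$. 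Once this identity is invoked, the surviving Pochhammer factors collapse to $(\lambda)_{n_0+n_1+n_2+n_3}/(n_0!n_1!n_2!n_3!)$, matching the expansion of $[P(\boldsymbol{\theta})]^{-\lambda}$ from the second step and completing the verification.
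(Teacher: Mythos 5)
Your argument is correct, and the key computations check out: the factorization $P(\boldsymbol{\theta})=p_{123}\bigl(\tau_{1}\tau_{2}\tau_{3}-\widetilde{b}_{2,3}\tau_{1}-\widetilde{b}_{1,3}\tau_{2}-\widetilde{b}_{1,2}\tau_{3}-\widetilde{b}_{1,2,3}\bigr)$ is exactly the $n=3$ instance of (\ref{P_nTaylor}) with $r_{T}=\widetilde{b}_{T}$; the exponent bookkeeping under the reindexing $n_{0}=m_{2}$, $n_{3}=m_{3}$, $n_{2}=m_{1}+a$, $n_{1}=m_{1}+m_{4}-a$ is consistent for all three variables; the residual sum is ${}_{2}F_{1}(-n_{1},-n_{2};\mu;1)$ and Chu--Vandermonde closes it; and Tonelli is legitimately available because every term is nonnegative under the stated sign hypotheses. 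Be aware, though, that the paper does not prove Theorem \ref{gamma3} at all: it is recalled from \cite{Bernardoff(2018)}, so there is no in-paper proof to compare against directly. The closest the paper comes is Corollary \ref{cor_cor_n_3}, which obtains the equivalent density (\ref{gammaP3lambda2}) in terms of ${}_{1}\mathbf{F}_{3}$ by running the same machinery in the opposite direction: Theorem \ref{Th_gamma_P_lambda} expands $[P(\boldsymbol{\theta})]^{-\lambda}$ via the normal form of Proposition \ref{PropPnRn} and inverts the Laplace transform termwise, whereas you start from the candidate density, transform it termwise, and match. The two routes share the same engine (the Taylor factorization at $\boldsymbol{\theta}_{P}$); what your version buys in addition is a proof of the identity between the ${}_{1}\mathbf{F}_{3}$ form and the $F_{II}$ form, which the paper merely asserts in the unnumbered display following Corollary \ref{cor_cor_n_3} --- your Chu--Vandermonde step is precisely the missing justification of that assertion. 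One cosmetic remark: your reading of the third argument of $F_{II}$ in (\ref{MGD3}) as the product $\widetilde{b}_{1,3}x_{1}x_{3}\cdot\widetilde{b}_{2,3}x_{2}x_{3}$ (a comma is evidently missing in the source) is the correct one, as your reindexing confirms.
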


\begin{remark}
The case $p_{123}=0$ is solved by \cite{LetacWesolowsky2008}.
\end{remark}

\begin{remark}
If $\widetilde{b}_{12}=\widetilde{b}_{13}=\widetilde{b}_{23}=0,$ Theorem
\ref{gamma3} gives
\begin{equation}
\boldsymbol{\gamma}_{\left(  P,\lambda\right)  }\left(  \text{\emph{d}%
}\boldsymbol{x}\right)  =\tfrac{p_{123}^{-\lambda}}{\left[  \Gamma\left(
\lambda\right)  \right]  ^{3}}\exp(\widetilde{p}_{1}x_{1}+\widetilde{p}%
_{2}x_{2}+\widetilde{p}_{3}x_{3})\left(  x_{1}x_{2}x_{3}\right)  ^{\lambda
-1}F_{2}(  \lambda,\lambda;\widetilde{b}_{123}x_{1}x_{2}x_{3})
\mathbf{1}_{\left(  0,\infty\right)  ^{3}}\left(  \boldsymbol{x}\right)
\,\text{\emph{d}}\boldsymbol{x}, \label{particular_n_3}%
\end{equation}
and if we put $\lambda=1$ in (\ref{particular_n_3}), we obtain the Kibble and
Moran distribution given in \cite{JKB(2000)}.
\end{remark}

After giving a general expression for the pd of infinitely divisible mgds,
this paper provides their conditional Laplace transforms. These results allow
us to extend the result of \cite{Bernardoff(2023)} from the simulation of
\textit{bgds} to \textit{mgds and mfgds.} These results allow us to achieve
the aim of this paper for $n=3,4$ in the general case and, for two particular
cases for any $n$. This paper is organised as follows. Let $n\in\mathbb{N}^{\ast}\smallsetminus
\left\{  1\right\}  $ and let $  \mathbf{X}_{n}  =\left(
X_{1},\ldots,X_{n}\right)  $ be a real random vector of infinitely divisible
mgd, Section 2 gives a general expression for the pd $\mu_{\mathbf{X}_{n}%
}=\gamma_{\left(  P_{n},\lambda\right)  }$. Let $k\in\mathbb{N}^{\ast
}\smallsetminus\left\{  1,n\right\}  $ and $\left(  x_{1},\ldots,x_{k}\right)
\in\mathbb{R}^{k}$, Section 3 gives the conditional Lt of $\left(
X_{k},\ldots,X_{n}\right)  $ given $\left(  X_{1},\ldots,X_{k}\right)
=\left(  x_{1},\ldots,x_{k}\right)  $ denoted by $L_{\left(  X_{k}%
,\ldots,X_{n}\right)  }^{\left(  X_{1},\ldots,X_{k}\right)  =\left(
x_{1},\ldots,x_{k}\right)  }$. An important particular case is given. Section
4 gives a simpler expression of $L_{\left(  X_{2},\ldots,X_{n}\right)
}^{X_{1}=x_{1}}$. Section 5 apply this last result for $n=2,3,4$, and for the
case $n=2$ , we obtain the result of \cite{Bernardoff(2023)}, see also
\cite{Walker(2021)}. For the case $n=3,4$, we obtain a new general result and algorithms for simulating tgds and quadrivariate gamma distributions. For $n=5$, we could give an expression for $L_{\left(
X_{2},\ldots,X_{n}\right)  }^{X_{1}=x_{1}}$ and apply the same method as for
cases $n=2,3,4$. Unfortunately, the computations seem long and arduous.
Therefore, we study the simpler Markovian case in Section 6. Section 7 presents simulations of mgds for $n=2,3,4$, simulations of mfgds for $n=2,3$, and a simulation of Mmgd for $n=5$. All simulations are performed using the R software, \cite{RCoreTeam}. In
order to facilitate the fluent presentation of the paper, proofs are collected
in   \ref{Annexe1} in order of appearance.

\section{Probability distributions of multivariate gamma distributions}

For a simple example of applying the main results, we will need the following
example in dimension $n$.

\begin{example}
\label{equic_n}Let $P_{n}$ be the affine polynomial defined in
\cite{Bernardoff(2006)} by%
\begin{equation}
P_{n}\left(  \boldsymbol{\theta}\right)  =\frac{-q}{p}+\frac{1}{p}\prod
_{i=1}^{n}\left(  1+p\theta_{i}\right)  \label{gamma_bernardoff2006}%
\end{equation}
where $0<p=1-q<1.$ Let $\mu=\boldsymbol{\gamma}_{P,1}=\varphi_{n,p,1}$ be the
infinitely divisible mgd associated to $\left(  P,1\right)  .$ Let
$\boldsymbol{\gamma}_{P,\lambda}=\varphi_{n,p,\lambda}$ the mgd associated to
$\left(  P,\lambda\right)  .$ For $\boldsymbol{x}%
=\left(  x_{1},\ldots,x_{n}\right)  \in \mathbb{R}^{n},$ we have
\begin{equation}
\boldsymbol{\gamma}_{\left(  P_{n},\lambda\right)  }\left(  \mathtt{d}%
\mathbf{x}\right)  =\tfrac{p^{-\left(  n-1\right)  \lambda}}{\left(
\Gamma\left(  \lambda\right)  \right)  ^{n}}\mathbf{e}^{-\frac{x_{1+\cdots+}x_{n}}{p}%
}\left( \mathbf{x}^{\left[  n\right]  }\right)  ^{\lambda-1}%
F_{n-1}\left(  \lambda,\ldots,\lambda;qp^{-n}\mathbf{x}^{\left[  n\right]
}\right)  \mathbf{1}_{\left(  0,\infty\right)  ^{n}}\left(  \mathbf{x}\right)
\mathtt{d}\mathbf{x} \label{Phy2}%
\end{equation}

\end{example}

We will give an expression of the pd $\boldsymbol{\gamma}_{\left(
P_{n},\lambda\right)  }\left(  \mathtt{d}\mathbf{x}\right)  $ in the general case. Let us denote \newline$\boldsymbol{\theta}_{\left[  n\right]  }\mathbf{=}\left(
\theta_{1},\ldots,\theta_{n}\right)  $ , $P_{n}\left(  \boldsymbol{\theta
}_{\left[  n\right]  }\right)  =\sum_{T\in\mathfrak{P}_{n}}p_{T}\left(
P_{n}\right)  \boldsymbol{\theta}_{\left[  n\right]  }^{T},$ for $1\leqslant
k<n,$ $\boldsymbol{\theta}_{\left[  k\right]  }\mathbf{=}\left(  \theta
_{1},\ldots,\theta_{k}\right)  ,$ $P_{[k]}\left(  \boldsymbol{\theta}_{\left[
k\right]  }\right)  =\sum_{T\in\mathfrak{P}_{k}}p_{T}\left(  P_{k}\right)
\boldsymbol{\theta}_{\left[  k\right]  }^{T}=P_{n}\left(  \left(  \theta
_{1},\ldots,\theta_{k},0,\ldots,0\right)  \right)  $, let us define for
$T\in\mathfrak{P}_{n},$ $\widetilde{p}_{T}\left(  P_{n}\right)  =-\frac
{p_{\left[  n\right]  \smallsetminus T}}{p_{\left\{  n\right\}  }},$ and for
$T\in\mathfrak{P}_{k}$, $\widetilde{p}_{T}\left(  P_{k}\right)  =-\frac
{p_{\left[  k\right]  \smallsetminus T}}{p_{\left\{  k\right\}  }}$, and
$\boldsymbol{\theta}_{P_{n}}=\left(  \widetilde{p}_{1}\left(  P_{n}\right)
,\ldots,\widetilde{p}_{n}\left(  P_{n}\right)  \right)  ,$ as well as
$\boldsymbol{\theta}_{P_{k}}=\left(  \widetilde{p}_{1}\left(  P_{k}\right)
,\ldots,\widetilde{p}_{k}\left(  P_{k}\right)  \right)  .$ If there is no
ambiguity, we denote $\boldsymbol{\theta}_{\left[  n\right]  }$ by 
$\boldsymbol{\theta},$ $\boldsymbol{\theta}_{P_{n}}$ by $\boldsymbol{\theta}_{P}$,
$\widetilde{p}_{T}\left(  P_{n}\right)  =\widetilde{p}_{T}$, and $P_{n}\left(
\boldsymbol{\theta}_{n}\right)$ by $P\left(  \boldsymbol{\theta}\right)  $. We also denote $\widetilde{p}_{i}\left(  P_{n}\right)  =\widetilde{p}_{i}\left(
P\right)  =\widetilde{p}_{i},$ $i\in\left[  n\right]  .$ We also denote by
$\widetilde{\mathbf{p}}$ the vector $\boldsymbol{\theta}_{P_{n}}=\left(
\widetilde{p}_{1},\ldots,p_{n}\right)  .$ If $n>1,$ let $\boldsymbol{\theta
}_{P_{n}}=\left(  \widetilde{p}_{1}\left(  P_{n}\right)  ,\ldots
,\widetilde{p}_{n}\left(  P_{n}\right)  \right)  $, so that $\left(
\partial / \partial\theta\right)  ^{\overline{\left\{  i\right\}  }%
}\left(  P_{n}\right)  \left(  \boldsymbol{\theta}_{P_{n}}\right)  =0,\forall
i\in\left[  n\right]  $, since $P_{n}$ is an affine polynomial, using the
Taylor formula in $\boldsymbol{\theta}_{P_{n}},$ we get the following
proposition which define the affine polynomial $R_{n}.$

\begin{proposition}
\label{PropPnRn}With the above definition, we have
\begin{equation}
P_{n}\left(  \boldsymbol{\theta}_{n}\right)  =p_{\left[  n\right]  }\left(
\boldsymbol{\theta}_{\left[  n\right]  }-\boldsymbol{\theta}_{P_{n}}\right)
^{\left[  n\right]  }\left\{  1-R_{n}\left[  \left(  \boldsymbol{\theta
}_{\left[  n\right]  }-\boldsymbol{\theta}_{P_{n}}\right)  ^{-1}\right]
\right\}  . \label{P_nTaylor}%
\end{equation}
with
\begin{equation}
R_{n}\left(  \mathbf{z}_{\left[  n\right]  }\right)  =R_{n}\left(
z_{1},\ldots,z_{n}\right)  =\sum_{T\in\mathfrak{P}_{n},\left\vert T\right\vert
\geqslant2}r_{T}\mathbf{z}_{\left[  n\right]  }^{T}, \label{Rn}%
\end{equation}
and
\begin{equation}
r_{T}=\tfrac{-1}{p_{\left[  n\right]  }}\left(  \tfrac{\partial}{\partial\theta
}\right)  ^{\overline{T}}\left(  P_{n}\right)  \left(  \boldsymbol{\theta
}_{P_{n}}\right)  ,T\in\mathfrak{P}_{n},\left\vert T\right\vert \geqslant2
\label{rT}%
\end{equation}
Since $R_{n}$ depends of $P_{n},$ if necessary we will write $R_{n}\left(
P_{n}\right)  .$ If there is no ambiguity, denoting $R_{n}\left(
\mathbf{z}_{\left[  n\right]  }\right)  $ by $R\left(  \mathbf{z}\right)  $,
we have
\begin{equation}
P\left(  \boldsymbol{\theta}\right)  =p_{\left[  n\right]  }\left(
\boldsymbol{\theta}-\boldsymbol{\theta}_{P}\right)  ^{\left[  n\right]
}\left\{  1-R\left[  \left(  \boldsymbol{\theta}-\boldsymbol{\theta}%
_{P}\right)  ^{-\mathbf{1}}\right]  \right\}  \label{P_Taylor}%
\end{equation}
This last equality is still true for $n=1,$with $R\left(  z_{1}\right)
=0.$\newline More specifically, let $T\in\mathfrak{P}_{n},\left\vert
T\right\vert \geqslant2,$ we have
\begin{equation}
r_{T}=\sum_{T^{\prime}\in\mathfrak{P}_{T}}\widetilde{p}_{T\smallsetminus
T^{\prime}}\widetilde{\mathbf{p}}^{T^{\prime}}, \label{rT1}%
\end{equation}
and for any $n\in\mathbb{N}\smallsetminus\left\{  0,1\right\}  $ if
$\left\vert T\right\vert =2\leqslant n,$%
\begin{equation}
r_{T}=\widetilde{b}_{T}, \label{rT2}%
\end{equation}
if $\left\vert T\right\vert =3\leqslant n,$%
\begin{equation}
r_{T}=\widetilde{b}_{T}, \label{rT3}%
\end{equation}
if $\left\vert T\right\vert =4\leqslant n,$%
\begin{equation}
r_{T}=\widetilde{b}_{T}-\sum_{\left\{  U,V\right\}  \in\Pi_{T}^{2},\left\vert
U\right\vert =2,\left\vert V\right\vert =2}\widetilde{b}_{U}\widetilde{b}_{V},
\label{rT4}%
\end{equation}
if $\left\vert T\right\vert =5\leqslant n,$%
\begin{equation}
r_{T}=\widetilde{b}_{T}-\sum_{\left\{  U,V\right\}  \in\Pi_{T}^{2},\left\vert
U\right\vert =3,\left\vert V\right\vert =2}\widetilde{b}_{U}\widetilde{b}_{V},
\label{rT5}%
\end{equation}
Later, we will need the following definition and results. In the sequel, we
suppose that $\forall T\in\mathfrak{P}_{n},$ $p_{T}\neq0.$
\end{proposition}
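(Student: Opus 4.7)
The identity (\ref{P_Taylor}) is Taylor's formula for the affine polynomial $P_n$ expanded around $\boldsymbol{\theta}_{P_n}$, so the first step is to write this expansion out. Because $P_n$ has no $\theta_j^2$-terms, the Taylor series around any point $\boldsymbol{\theta}_0$ collapses to the finite sum
\begin{equation*}
P_n(\boldsymbol{\theta}) = \sum_{S\in\mathfrak{P}_n} \bigl(\partial/\partial\theta\bigr)^S(P_n)(\boldsymbol{\theta}_0)\,(\boldsymbol{\theta}-\boldsymbol{\theta}_0)^S,
\end{equation*}
indexed by subsets $S\subseteq[n]$. Specializing $\boldsymbol{\theta}_0=\boldsymbol{\theta}_{P_n}$, the $S=[n]$ coefficient is $p_{[n]}$ (differentiating the monomial $p_{[n]}\boldsymbol{\theta}^{[n]}$ by every variable leaves $p_{[n]}$), and every $|S|=n-1$ coefficient vanishes, since $(\partial^{\overline{\{i\}}}P_n)(\boldsymbol{\theta}) = p_{\overline{\{i\}}} + p_{[n]}\theta_i$ is zero at $\theta_i=\widetilde{p}_i=-p_{\overline{\{i\}}}/p_{[n]}$, which is the defining property of $\boldsymbol{\theta}_{P_n}$. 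Relabelling each remaining coefficient (those with $|S|\leq n-2$) by $-p_{[n]}r_{\overline{S}}$ produces (\ref{rT}), and dividing the Taylor identity through by $p_{[n]}(\boldsymbol{\theta}-\boldsymbol{\theta}_{P_n})^{[n]}$ yields (\ref{P_Taylor}) with $R_n$ as in (\ref{Rn}).

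Next I would establish (\ref{rT1}) by computing the derivative explicitly:
\begin{equation*}
(\partial^{\overline{T}}P_n)(\boldsymbol{\theta}) = \sum_{U\supseteq\overline{T}} p_U\,\boldsymbol{\theta}^{U\smallsetminus\overline{T}} = \sum_{T'\in\mathfrak{P}_T} p_{\overline{T}\cup T'}\,\boldsymbol{\theta}^{T'},
\end{equation*}
evaluating at $\boldsymbol{\theta}=\widetilde{\mathbf{p}}$ so that $\boldsymbol{\theta}^{T'}=\widetilde{\mathbf{p}}^{T'}$, and applying the defining identity $p_{\overline{W}} = -p_{[n]}\widetilde{p}_W$ to $W=T\smallsetminus T'$ (noting $\overline{T\smallsetminus T'} = \overline{T}\cup T'$). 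Dividing by $-p_{[n]}$ then gives (\ref{rT1}) directly.

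For the explicit formulas (\ref{rT2})--(\ref{rT5}), the plan is to expand (\ref{rT1}) and compare with (\ref{b_tildeS}). Using $\widetilde{p}_\emptyset=-1$, the sum (\ref{rT1}) reorganizes $r_T$ as a signed sum over set partitions of $T$ with \emph{at most one block of size $\geq 2$}, whereas $\widetilde{b}_T$ in (\ref{b_tildeS}) sums over all set partitions with weights $(k-1)!$. For $|T|\in\{2,3\}$ every set partition of $T$ has at most one block of size $\geq 2$, and a short count of multiplicities gives $r_T=\widetilde{b}_T$, proving (\ref{rT2}) and (\ref{rT3}). For $|T|\in\{4,5\}$ the ``missing'' partition types in $r_T$ (namely the $(2,2)$ type when $|T|=4$, and the $(3,2)$ and $(2,2,1)$ types when $|T|=5$) must be checked to recombine into the products $\widetilde{b}_U\widetilde{b}_V$ with the prescribed shapes; in particular, the $(2,2,1)$ contribution in the $|T|=5$ case is absorbed by the cross terms that appear when $\widetilde{b}_U$ with $|U|=3$ is multiplied by $\widetilde{b}_V$ with $|V|=2$.

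The main obstacle is the combinatorial bookkeeping in (\ref{rT4})--(\ref{rT5}): the $(k-1)!$ weights interact non-trivially with the multiplicities that appear when $\widetilde{b}_U\widetilde{b}_V$ is expanded into monomials in $\widetilde{p}_\bullet$. A conceptually cleaner route is to recognize $\widetilde{b}_T$ as the coefficient of $\mathbf{z}^T$ in $-\log\bigl(1-\sum_{T\neq\emptyset}\widetilde{p}_T\mathbf{z}^T\bigr)$ and $r_T$ as a linearized truncation of the same generating function; nevertheless, for $|T|\leq 5$ a direct, monomial-by-monomial verification of the signed multiplicities is entirely tractable and is the route I would take here.
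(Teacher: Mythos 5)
Your proposal is correct and follows essentially the same route as the paper: Taylor expansion of the affine polynomial at $\boldsymbol{\theta}_{P_n}$ (with the $|S|=n-1$ coefficients vanishing by the definition of $\boldsymbol{\theta}_{P_n}$), explicit computation of $(\partial/\partial\boldsymbol{\theta})^{\overline{T}}P_n$ at $\widetilde{\mathbf{p}}$ to get (\ref{rT1}), and then a term-by-term comparison with (\ref{b_tildeS}) for $|T|=2,\ldots,5$. The only difference is cosmetic: the paper verifies the low-dimensional cases by direct substitution on representative sets $T=\{1,2\},\ldots,\{1,2,3,4,5\}$, whereas you organize the same verification by partition type, which is a sound and if anything slightly cleaner bookkeeping of the identical computation.
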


\begin{definition}
Let $T\in\mathfrak{P}_{n},$ and $\overline{T}=\mathfrak{[}n]\smallsetminus T,$
and if there is no ambiguity, for simplicity we denote $S_{T}%
(\boldsymbol{\theta}_{T})$ by $S_{T},$ the polynomial defined by: if
$p_{\overline{T}}\neq0,$ then
\begin{equation}
S_{T}(\boldsymbol{\theta}_{T})=\tfrac{1}{p_{\overline{T}}}(\tfrac{\partial
}{\partial\boldsymbol{\theta}})^{\overline{T}}[P_{n}[\boldsymbol{\theta)}].
\label{STdef}%
\end{equation}
If $q_{U}$ is the numbers such that $S_{T}\left(  \boldsymbol{\theta}%
_{T}\right)  =\sum_{U\in\mathfrak{P}_{T}}q_{U}\boldsymbol{\theta}^{U}$, we
have
\begin{equation}
q_{U}=p_{\overline{T}\cup U}p_{\overline{T}}^{-1}. \label{qU}%
\end{equation}
Since $S_{T}$ depends on $P_{n}$, if necessary we will write $S_{T}\left(
P_{n}\right)  $.
\end{definition}

For example, if $n=3,$ we have $S_{2,3}\left(  \theta_{2},\theta_{3}\right)
=1+\frac{p_{1,2}}{p_{1}}\theta_{2}+\frac{p_{1,3}}{p_{1}}\theta_{3}%
+\frac{p_{1,2,3}}{p_{1}}\theta_{2}\theta_{3},$ $S_{i}\left(  \theta
_{i}\right)  =1+\frac{p_{1,2,3}}{p_{\left[  3\right]  \smallsetminus\left\{
i\right\}  }}\theta_{i},$ $i=1,2,3.$ We will need the following proposition.

\begin{proposition}
\label{PropSU(ST)}For $U\in\mathfrak{P}_{T}$, we have $,$
\begin{equation}
S_{U}\left(  S_{T}\right)  =S_{U}\left(  P_{n}\right)  . \label{SU(ST)=SU(Pn)}%
\end{equation}

\end{proposition}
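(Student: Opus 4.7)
The plan is to unfold the definition of $S_U(S_T)$ and recognize the composition of partial derivatives as a single higher-order derivative of $P_n$, with the normalization factors telescoping.

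First, I would observe that $S_T(\boldsymbol{\theta}_T)$ is an affine polynomial in the variables indexed by $T$, so applying definition (\ref{STdef}) to $S_T$ with the role of $[n]$ played by $T$ makes sense: for $U \in \mathfrak{P}_T$, the complement of $U$ inside $T$ is $T\smallsetminus U$, and
\[
S_U(S_T) \;=\; \frac{1}{q_{T\smallsetminus U}}\Bigl(\tfrac{\partial}{\partial\boldsymbol{\theta}}\Bigr)^{T\smallsetminus U}[S_T(\boldsymbol{\theta}_T)],
\]
where $q_{T\smallsetminus U}$ is the coefficient of $\boldsymbol{\theta}^{T\smallsetminus U}$ in $S_T$. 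Applying (\ref{qU}) with the subset $T\smallsetminus U$ of $T$ gives $q_{T\smallsetminus U} = p_{\overline{T}\cup(T\smallsetminus U)}\,p_{\overline{T}}^{-1} = p_{\overline{U}}\,p_{\overline{T}}^{-1}$, since $\overline{T}\cup(T\smallsetminus U) = [n]\smallsetminus U = \overline{U}$.

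Next I would substitute the definition of $S_T$ itself, obtaining
\[
S_U(S_T) \;=\; \frac{p_{\overline{T}}}{p_{\overline{U}}} \Bigl(\tfrac{\partial}{\partial\boldsymbol{\theta}}\Bigr)^{T\smallsetminus U}\!\left[\frac{1}{p_{\overline{T}}}\Bigl(\tfrac{\partial}{\partial\boldsymbol{\theta}}\Bigr)^{\overline{T}}P_n(\boldsymbol{\theta})\right] \;=\; \frac{1}{p_{\overline{U}}}\Bigl(\tfrac{\partial}{\partial\boldsymbol{\theta}}\Bigr)^{T\smallsetminus U}\Bigl(\tfrac{\partial}{\partial\boldsymbol{\theta}}\Bigr)^{\overline{T}}P_n(\boldsymbol{\theta}).
\]
Since $T\smallsetminus U$ and $\overline{T}$ are disjoint subsets of $[n]$ with union $\overline{U}$, and since partial derivatives in distinct variables commute and compose by concatenation of index sets, the two operators collapse to $(\partial/\partial\boldsymbol{\theta})^{\overline{U}}$. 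Hence $S_U(S_T) = p_{\overline{U}}^{-1}(\partial/\partial\boldsymbol{\theta})^{\overline{U}}P_n = S_U(P_n)$, which is exactly (\ref{SU(ST)=SU(Pn)}).

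The only delicate point—and really the only place an error could slip in—is the set bookkeeping: being careful that the complement used in $S_U(S_T)$ is taken inside $T$ (giving $T\smallsetminus U$) while the complement used in $S_T(P_n)$ is taken inside $[n]$, and verifying that $(T\smallsetminus U)\sqcup\overline{T}=\overline{U}$. Once this is fixed, the statement is essentially immediate from the commutativity of mixed partials and the identity $p_{\overline{T}\cup(T\smallsetminus U)}=p_{\overline{U}}$. No use of the affine structure beyond making the derivatives well-defined polynomials is required.
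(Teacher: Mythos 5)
Your proof is correct and follows essentially the same route as the paper's: unfold the definition of $S_U(S_T)$, use (\ref{qU}) to identify the normalizing coefficient as $p_{\overline{U}}/p_{\overline{T}}$, and compose the two derivative operators into $(\partial/\partial\boldsymbol{\theta})^{\overline{U}}$ via $(T\smallsetminus U)\sqcup\overline{T}=\overline{U}$. If anything, your explicit bookkeeping of the complement taken inside $T$ (writing $T\smallsetminus U$ rather than the paper's looser $\overline{U}$ in the intermediate derivative) makes the argument cleaner than the paper's own.
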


We also have the following proposition.

\begin{proposition}
\label{PropqtildeUbtildeUrU(ST)}With the above definition, we have $\forall
U\in\mathfrak{P}_{T}$
\begin{equation}
\widetilde{q}_{U}=\widetilde{p}_{U}, \label{q_tilde=p_tilde}%
\end{equation}
therefore, we have
\begin{equation}
\widetilde{b}_{U}\left(  S_{T}\right)  =\widetilde{b}_{U}, \label{b_U(ST)=b_U}%
\end{equation}
and
\begin{equation}
r_{U}(S_{T})=r_{U}(P_{n}). \label{rU(ST)=rU(Pn)}%
\end{equation}
If $\boldsymbol{\gamma}_{\left(  P,\lambda\right)  }$ is an infinitely
divisible mgd, $\boldsymbol{\gamma}_{\left(  S_{T},\lambda\right)  }$ is also
an infinitely divisible mgd from Theorem (\ref{mainresult}). We have the
following equalities
\begin{equation}
S_{T}=(-\widetilde{p}_{T})^{-1}[\left(  -\widetilde{\mathbf{p}}\right)
^{T}\mathbf{S}^{T}-\sum_{T^{\prime}\in\mathfrak{P}_{T},\left\vert T^{\prime
}\right\vert >1}r_{T^{\prime}}(-\widetilde{\mathbf{p}})^{T\smallsetminus
T^{\prime}}\mathbf{S}^{T\smallsetminus T^{\prime}}] \label{ST}%
\end{equation}
or%
\begin{equation}
(-\widetilde{\mathbf{p}})^{T}\mathbf{S}^{T}=(-\widetilde{p}_{T})S_{T}%
+\sum_{T^{\prime}\in\mathfrak{P}_{T},\left\vert T^{\prime}\right\vert
>1}r_{T^{\prime}}(-\widetilde{\mathbf{p}})^{T\smallsetminus T^{\prime}%
}\mathbf{S}^{T\smallsetminus T^{\prime}}. \label{S^T}%
\end{equation}

\end{proposition}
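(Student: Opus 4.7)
The plan is to establish the four claims in the order they appear, since each uses the previous. All the identities reduce to chasing the definitions through the equality $q_U = p_{\overline{T}\cup U}/p_{\overline{T}}$ given in (\ref{qU}).

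First I would prove (\ref{q_tilde=p_tilde}) by a direct computation. Viewing $S_T$ as an affine polynomial on the index set $T$, its tilde-coefficients are defined by $\widetilde{q}_U = -q_{T\smallsetminus U}/q_T$ for $U\in\mathfrak{P}_T$. Substituting (\ref{qU}) yields
\[
\widetilde{q}_U=-\frac{p_{\overline{T}\cup (T\smallsetminus U)}\, p_{\overline{T}}^{-1}}{p_{[n]}\,p_{\overline{T}}^{-1}}=-\frac{p_{[n]\smallsetminus U}}{p_{[n]}}=\widetilde{p}_U,
\]
since $\overline{T}\cup(T\smallsetminus U)=[n]\smallsetminus U$. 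With the tilde-coefficients agreeing on $\mathfrak{P}_T$, (\ref{b_U(ST)=b_U}) is immediate from (\ref{b_tildeS}) because $\widetilde{b}_U$ depends only on the family $\{\widetilde{p}_{T'}\}_{T'\in\mathfrak{P}_U}$, and the same reasoning applied to (\ref{rT1}) yields (\ref{rU(ST)=rU(Pn)}) since $r_U$ is a universal polynomial in the tilde-coefficients indexed by subsets of $U\subseteq T$.

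The infinite divisibility claim then follows by applying Theorem \ref{mainresult} to the affine polynomial $S_T$ on $T$: conditions (\ref{cond1}) and (\ref{cond2}) for $S_T$ are, by (\ref{q_tilde=p_tilde}) and (\ref{b_U(ST)=b_U}), exactly the restrictions to $U\in\mathfrak{P}_T^{\ast}$ of the corresponding conditions for $P_n$, which hold by hypothesis.

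Finally, for (\ref{ST}) and (\ref{S^T}) I would invoke Proposition \ref{PropPnRn} applied to $S_T$ in place of $P_n$: since $S_T$ is affine with $q_{\emptyset}=1$ and $\boldsymbol{\theta}_{S_T}=(\widetilde{q}_i)_{i\in T}=(\widetilde{p}_i)_{i\in T}$ by Step~1, the Taylor expansion (\ref{P_Taylor}) gives
\[
S_T=q_T\,(\boldsymbol{\theta}_T-\boldsymbol{\theta}_{P_T})^T\Bigl\{1-R(S_T)\bigl[(\boldsymbol{\theta}_T-\boldsymbol{\theta}_{P_T})^{-\mathbf{1}}\bigr]\Bigr\}.
\]
Using $q_T=p_{[n]}/p_{\overline{T}}=(-\widetilde{p}_T)^{-1}$, expanding $R(S_T)$ via (\ref{Rn}) with coefficients $r_{T'}(S_T)=r_{T'}(P_n)=r_{T'}$ by Step~3, and rewriting the monomials $(\boldsymbol{\theta}_T-\boldsymbol{\theta}_{P_T})^{T\smallsetminus T'}$ in terms of the paper's $\mathbf{S}$-notation via the identification $(-\widetilde{\mathbf{p}})^{T\smallsetminus T'}\mathbf{S}^{T\smallsetminus T'}=\prod_{i\in T\smallsetminus T'}(\theta_i-\widetilde{p}_i)$, one reads off (\ref{ST}); moving the first term to the left and multiplying through by $-\widetilde{p}_T$ yields (\ref{S^T}). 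The main obstacle is purely bookkeeping: one has to verify that $\boldsymbol{\theta}_{S_T}$ really is the restriction of $\boldsymbol{\theta}_{P_n}$ to coordinates in $T$ (which is exactly Step~1) and to keep the factors $(-\widetilde{\mathbf{p}})^{T\smallsetminus T'}$ properly matched when converting between the two normalizations of the shifted-monomial notation.
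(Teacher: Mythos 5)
Your proposal is correct and follows essentially the same route as the paper: the direct computation $\widetilde{q}_U=-q_{T\smallsetminus U}/q_T=-p_{[n]\smallsetminus U}/p_{[n]}=\widetilde{p}_U$, the consequent transfer of $\widetilde{b}_U$ and $r_U$ via (\ref{b_tildeS}) and (\ref{rT1}), and the application of the Taylor expansion (\ref{P_nTaylor}) to $S_T$ together with the identity $\theta_i-\widetilde{p}_i=(-\widetilde{p}_i)S_i$ to obtain (\ref{ST}) and, by rearrangement, (\ref{S^T}). The only cosmetic difference is that your computation of $\widetilde{q}_U$ keeps the minus sign explicit throughout, which the paper's displayed formula elides.
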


Now, we can give the following expression of $\boldsymbol{\gamma}
_{(P_{n},\lambda)}$ by the following theorem.

\begin{theorem}
\label{Th_gamma_P_lambda}Let $c_{\boldsymbol{\alpha},\lambda}\left(  R\right)
$ such that
\begin{equation}
\left[  1-R\left(  \mathbf{z}\right)  \right]  ^{-\lambda}=\sum
_{\boldsymbol{\alpha}\in\mathbb{N}^{n}}c_{\boldsymbol{\alpha},\lambda}\left(
R\right)  \mathbf{z}^{\boldsymbol{\alpha}}, \label{c_alpha_lambda_R}%
\end{equation}
then
\begin{equation}
\boldsymbol{\gamma}_{\left(  P,\lambda\right)  }\left(  \mathtt{d}%
\mathbf{x}\right)  =\frac{p_{\left[  n\right]  }^{-\lambda}}{\left[
\Gamma\left(  \lambda\right)  \right]  ^{n}}\exp\left(  \boldsymbol{\theta
}_{P},\mathbf{x}\right)  \mathbf{x}^{\left(  \lambda-1\right)  \mathbf{1}_{n}%
}[\sum_{\boldsymbol{\alpha}\in\mathbb{N}^{n}}\frac{c_{\boldsymbol{\alpha
},\lambda}\left(  R\right)  }{\left(  \lambda\right)  _{\boldsymbol{\alpha}}%
}\mathbf{x}^{\boldsymbol{\alpha}}]\mathbf{1}_{\left(  0,\infty\right)  ^{n}%
}\left(  \mathbf{x}\right)  \left(  \mathtt{d}\mathbf{x}\right)  .
\label{gamma_P_lambda}%
\end{equation}
or more specifically%
\begin{equation}
\boldsymbol{\gamma}_{\left(  P,\lambda\right)  }\left(  \mathtt{d}%
\mathbf{x}\right)  =\frac{p_{\left[  n\right]  }^{-\lambda}}{\left[
\Gamma\left(  \lambda\right)  \right]  ^{n}}\exp\left(  \boldsymbol{\theta
}_{P},\mathbf{x}\right)  \mathbf{x}^{\left(  \lambda-1\right)  \mathbf{1}_{n}%
}[\sum_{\boldsymbol{\alpha}\in\mathbb{N}^{n},c_{\boldsymbol{\alpha},\lambda
}\left(  R\right)  \neq0}\frac{c_{\boldsymbol{\alpha},\lambda}\left(
R\right)  }{\left(  \lambda\right)  _{\boldsymbol{\alpha}}}\mathbf{x}%
^{\boldsymbol{\alpha}}]\mathbf{1}_{\left(  0,\infty\right)  ^{n}}\left(
\mathbf{x}\right)  \left(  \mathtt{d}\mathbf{x}\right)  .
\label{gamma_P_lambda2}%
\end{equation}

\end{theorem}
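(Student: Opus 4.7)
The plan is to invert the Laplace transform $[P(\boldsymbol{\theta})]^{-\lambda}$ term-by-term, using the Taylor factorization already established in Proposition~\ref{PropPnRn}. First I would start from (\ref{P_Taylor}) and raise both sides to the power $-\lambda$, obtaining
\begin{equation*}
[P(\boldsymbol{\theta})]^{-\lambda}=p_{[n]}^{-\lambda}(\boldsymbol{\theta}-\boldsymbol{\theta}_{P})^{-\lambda\mathbf{1}_{n}}\bigl\{1-R[(\boldsymbol{\theta}-\boldsymbol{\theta}_{P})^{-\mathbf{1}}]\bigr\}^{-\lambda}.
\end{equation*}
Then I would plug in the defining expansion (\ref{c_alpha_lambda_R}) with $\mathbf{z}=(\boldsymbol{\theta}-\boldsymbol{\theta}_{P})^{-\mathbf{1}}$, which converts the last factor into the series $\sum_{\boldsymbol{\alpha}}c_{\boldsymbol{\alpha},\lambda}(R)(\boldsymbol{\theta}-\boldsymbol{\theta}_{P})^{-\boldsymbol{\alpha}}$. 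Merging the exponents gives the decomposition
\begin{equation*}
[P(\boldsymbol{\theta})]^{-\lambda}=p_{[n]}^{-\lambda}\sum_{\boldsymbol{\alpha}\in\mathbb{N}^{n}}c_{\boldsymbol{\alpha},\lambda}(R)\prod_{i=1}^{n}(\theta_{i}-\widetilde{p}_{i})^{-\lambda-\alpha_{i}},
\end{equation*}
which is exactly a discrete mixture of tensor products of shifted univariate gamma Laplace transforms.

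Next I would use the elementary fact that, for each $i$, the function $(\theta_{i}-\widetilde{p}_{i})^{-\lambda-\alpha_{i}}$ is the Laplace transform (in $\theta_{i}$) of $\Gamma(\lambda+\alpha_{i})^{-1}x_{i}^{\lambda+\alpha_{i}-1}\exp(\widetilde{p}_{i}x_{i})\mathbf{1}_{(0,\infty)}(x_{i})$; this is where the sign condition $\widetilde{p}_{i}<0$ from (\ref{cond1}) is used, to guarantee that the resulting gamma densities are honest probability densities and that the integral converges on a neighbourhood of $\boldsymbol{\theta}=\mathbf{0}$. Multiplying these over $i\in[n]$ and recognizing $\Gamma(\lambda+\alpha_{i})=\Gamma(\lambda)(\lambda)_{\alpha_{i}}$, each summand contributes the inverse Laplace transform
\begin{equation*}
\frac{1}{[\Gamma(\lambda)]^{n}}\exp\langle\boldsymbol{\theta}_{P},\mathbf{x}\rangle\,\mathbf{x}^{(\lambda-1)\mathbf{1}_{n}}\frac{\mathbf{x}^{\boldsymbol{\alpha}}}{(\lambda)_{\boldsymbol{\alpha}}}\mathbf{1}_{(0,\infty)^{n}}(\mathbf{x}),
\end{equation*}
and summing yields (\ref{gamma_P_lambda}). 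The second form (\ref{gamma_P_lambda2}) follows by discarding vanishing coefficients.

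The main obstacle is justifying the termwise Laplace inversion: one must know that the series $\sum_{\boldsymbol{\alpha}}c_{\boldsymbol{\alpha},\lambda}(R)(\boldsymbol{\theta}-\boldsymbol{\theta}_{P})^{-\boldsymbol{\alpha}}$ converges in an open region of $\boldsymbol{\theta}$-space large enough to characterize the measure, and that the corresponding series of integrands converges absolutely so that Fubini/Tonelli allows interchanging $\sum_{\boldsymbol{\alpha}}$ with $\int e^{-\langle\boldsymbol{\theta},\mathbf{x}\rangle}\,d\mathbf{x}$. For $\boldsymbol{\theta}$ with sufficiently large positive components, $(\boldsymbol{\theta}-\boldsymbol{\theta}_{P})^{-\mathbf{1}}$ lies in a small polydisc around the origin where the analytic function $(1-R(\mathbf{z}))^{-\lambda}$ has the absolutely convergent expansion (\ref{c_alpha_lambda_R}); combined with the positivity of the gamma densities and the fact that the partial sums of the candidate density integrate against $e^{-\langle\boldsymbol{\theta},\mathbf{x}\rangle}$ to the partial sums of the series, one concludes by uniqueness of the Laplace transform that (\ref{gamma_P_lambda}) is the desired density. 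A small bookkeeping check using $(\lambda)_{\boldsymbol{\alpha}}=\prod_{i}(\lambda)_{\alpha_{i}}$ and $\mathbf{x}^{(\lambda-1)\mathbf{1}_{n}+\boldsymbol{\alpha}}=\mathbf{x}^{(\lambda-1)\mathbf{1}_{n}}\mathbf{x}^{\boldsymbol{\alpha}}$ puts everything into the stated form.
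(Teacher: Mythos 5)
Your proposal is correct and follows essentially the same route as the paper: both start from the Taylor factorization (\ref{P_Taylor}), substitute $\mathbf{z}=(\boldsymbol{\theta}-\boldsymbol{\theta}_{P})^{-\mathbf{1}}$ into the expansion (\ref{c_alpha_lambda_R}) to write $[P(\boldsymbol{\theta})]^{-\lambda}$ as $p_{[n]}^{-\lambda}\sum_{\boldsymbol{\alpha}}c_{\boldsymbol{\alpha},\lambda}(R)(\boldsymbol{\theta}-\boldsymbol{\theta}_{P})^{-(\boldsymbol{\alpha}+\lambda\mathbf{1})}$, and then invert term by term using the elementary Laplace transform of $\exp(\widetilde{p}_{i}x_{i})x_{i}^{\lambda+\alpha_{i}-1}/\Gamma(\lambda+\alpha_{i})$ together with $(\lambda)_{\boldsymbol{\alpha}}=\Gamma(\boldsymbol{\alpha}+\lambda\mathbf{1})/[\Gamma(\lambda)]^{n}$. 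Your explicit attention to the Fubini/uniqueness justification of the termwise inversion is a point the paper leaves implicit, but it does not change the argument.
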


We deduce a result given in \cite{Bernardoff(2006)} in the form of the
following corollary.

\begin{corollary}
\label{cor_cor}For $P_{n}\left(  \boldsymbol{\theta}_{\left[  n\right]
}\right)  =\frac{-q}{p}+\frac{1}{p}\prod_{i=1}^{n}\left(  1+p\theta
_{i}\right)  $, for $\mathbf{x}=\left(  x_{1},\ldots,x_{n}\right)  $ in
$\mathbb{R}^{n},$ we have the result (\ref{Phy2}).
\end{corollary}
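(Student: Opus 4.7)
The plan is to apply Theorem \ref{Th_gamma_P_lambda} directly to the affine polynomial $P_{n}$ defined in (\ref{gamma_bernardoff2006}), after identifying each of the pieces $p_{[n]}$, $\boldsymbol{\theta}_{P}$, $R(\mathbf{z})$, and $c_{\boldsymbol{\alpha},\lambda}(R)$ in concrete form.

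First I would expand
$P_{n}(\boldsymbol{\theta})=\tfrac{-q}{p}+\tfrac{1}{p}\prod_{i=1}^{n}(1+p\theta_{i})=1+\sum_{T\in\mathfrak{P}_{n}^{\ast}}p^{|T|-1}\boldsymbol{\theta}^{T}$,
so $p_{T}=p^{|T|-1}$ for $T\neq\emptyset$; in particular $p_{[n]}=p^{n-1}$ and $p_{[n]\smallsetminus\{i\}}=p^{n-2}$, giving $\widetilde{p}_{i}=-p_{[n]\smallsetminus\{i\}}/p_{[n]}=-1/p$ for every $i\in[n]$. Hence $\boldsymbol{\theta}_{P}=(-1/p,\ldots,-1/p)$ and $\exp(\boldsymbol{\theta}_{P},\mathbf{x})=\mathbf{e}^{-(x_{1}+\cdots+x_{n})/p}$, while $p_{[n]}^{-\lambda}=p^{-(n-1)\lambda}$. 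This already reproduces the prefactor and the exponential in (\ref{Phy2}).

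Next I would compute $R_{n}$ through (\ref{rT}). Since $(\partial/\partial\theta)^{\overline{T}}(P_{n})(\boldsymbol{\theta})=\tfrac{1}{p}\,p^{n-|T|}\prod_{i\in T}(1+p\theta_{i})$, evaluation at $\boldsymbol{\theta}_{P}$ makes every factor $(1+p\cdot(-1/p))=0$, so the result vanishes as soon as $|T|\geqslant1$, i.e.\ for every $T\subsetneq[n]$ with $|T|\geqslant2$. Only $T=[n]$ survives, and then $(\partial/\partial\theta)^{\overline{T}}(P_{n})(\boldsymbol{\theta}_{P})=P_{n}(\boldsymbol{\theta}_{P})=-q/p$, yielding $r_{[n]}=-\tfrac{1}{p^{n-1}}\cdot(-q/p)=q/p^{n}$. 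Therefore
\begin{equation*}
R(\mathbf{z})=\tfrac{q}{p^{n}}\,z_{1}\cdots z_{n}=\tfrac{q}{p^{n}}\,\mathbf{z}^{[n]}.
\end{equation*}

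With this single-monomial $R$, the generating series in (\ref{c_alpha_lambda_R}) becomes the ordinary binomial series
$[1-R(\mathbf{z})]^{-\lambda}=\sum_{k\geqslant 0}\tfrac{(\lambda)_{k}}{k!}(q/p^{n})^{k}(\mathbf{z}^{[n]})^{k}$,
so $c_{\boldsymbol{\alpha},\lambda}(R)$ is nonzero exactly when $\boldsymbol{\alpha}=k\mathbf{1}_{n}$ for some $k\in\mathbb{N}$, with value $(\lambda)_{k}(q/p^{n})^{k}/k!$. Inserting this into (\ref{gamma_P_lambda2}) and using $(\lambda)_{k\mathbf{1}_{n}}=[(\lambda)_{k}]^{n}$ collapses the multi-index sum to
\begin{equation*}
\sum_{k=0}^{\infty}\frac{(\lambda)_{k}}{k!\,[(\lambda)_{k}]^{n}}\left(\tfrac{q}{p^{n}}\mathbf{x}^{[n]}\right)^{k}=\sum_{k=0}^{\infty}\frac{1}{[(\lambda)_{k}]^{n-1}}\frac{(qp^{-n}\mathbf{x}^{[n]})^{k}}{k!}=F_{n-1}(\lambda,\ldots,\lambda;qp^{-n}\mathbf{x}^{[n]})
\end{equation*}
by the definition (\ref{Hypergeomconf}) of $F_{n-1}=F_{n-1}^{0}$. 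Combining with the prefactor and exponential computed above yields exactly (\ref{Phy2}).

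The only step that is not purely mechanical is the vanishing argument for $r_{T}$ when $2\leqslant|T|\leqslant n-1$; once one observes that $\boldsymbol{\theta}_{P}$ is the common zero of every factor $1+p\theta_{i}$, the collapse of $R$ to a single monomial is immediate and the hypergeometric identification is then automatic.
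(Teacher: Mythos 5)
Your proof is correct and follows essentially the same route as the paper: identify $p_{[n]}=p^{n-1}$ and $\boldsymbol{\theta}_{P}=-p^{-1}\mathbf{1}_{n}$, show $R_{n}(\mathbf{z})=qp^{-n}\mathbf{z}^{[n]}$, expand the binomial series to get $c_{k\mathbf{1}_{n},\lambda}(R_{n})=\tfrac{(\lambda)_{k}}{k!}(qp^{-n})^{k}$ with all other coefficients zero, and collapse the sum in (\ref{gamma_P_lambda2}) to $F_{n-1}$. The only cosmetic difference is that you obtain $R_{n}$ by evaluating each $r_{T}$ via (\ref{rT}) and the vanishing of the factors $1+p\theta_{i}$ at $\boldsymbol{\theta}_{P}$, whereas the paper reads $R_{n}$ off directly from the factored form (\ref{P_Taylor}) of $P_{n}$.
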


For $n=2,$ we can give the following corollary.

\begin{corollary}
\label{cor_cor_n_2}For $n=2,$ we have
\begin{equation}
R_{2}\left(  \mathbf{z}\right)  =\widetilde{b}_{1,2}z_{1}z_{2}, \label{R2}%
\end{equation}
and%
\begin{equation}
c_{\boldsymbol{\alpha},\lambda}\left(  R_{2}\right)  =\frac{\left(
\lambda\right)  _{l}}{l!}\widetilde{b}_{1,2}^{l}, \label{calphaR2}%
\end{equation}
and $c_{\boldsymbol{\alpha},\lambda}\left(  R_{2}\right)  =0$ otherwise.
Hence, we have%
\begin{equation}
\boldsymbol{\gamma}_{\left(  P_{2},\lambda\right)  }\left(  \mathtt{d}%
\mathbf{x}\right)  =\frac{p_{1,2}^{-\lambda}}{\left[  \Gamma\left(
\lambda\right)  \right]  ^{2}}\exp(-\frac{p_{2}}{p_{1,2}}\boldsymbol{x}%
_{1}-\frac{p_{1}}{p_{1,2}}x_{2})\left(  x_{1}x_{2}\right)  ^{\left(
\lambda-1\right)  }F_{1}\left(  \lambda,\widetilde{b}_{1,2}x_{1}x_{2}\right)
\mathbf{1}_{\left(  0,\infty\right)  ^{2}}\left(  \mathbf{x}\right)  \left(
\mathtt{d}\mathbf{x}\right)  . \label{gammaP2_lambda}%
\end{equation}
We obtain the formula (\ref{mud2}).
\end{corollary}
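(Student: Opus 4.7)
The corollary is a direct specialization of Theorem \ref{Th_gamma_P_lambda} to $n=2$, so the strategy is simply to identify the three ingredients $R_{n}$, the coefficients $c_{\boldsymbol{\alpha},\lambda}(R)$, and the exponential/weight factors in this case, and then recognise the resulting series as $F_{1}$.

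First I would compute $R_{2}$ from the definition (\ref{Rn}). For $n=2$, the only subset $T\in\mathfrak{P}_{2}$ with $|T|\geqslant2$ is $T=\{1,2\}$, so $R_{2}(\mathbf{z})=r_{\{1,2\}}z_{1}z_{2}$. By formula (\ref{rT2}) in Proposition \ref{PropPnRn}, $r_{\{1,2\}}=\widetilde{b}_{\{1,2\}}=\widetilde{b}_{1,2}$, which establishes (\ref{R2}).

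Next I would expand $[1-R_{2}(\mathbf{z})]^{-\lambda}$ with the standard binomial series $(1-u)^{-\lambda}=\sum_{l\geqslant0}\frac{(\lambda)_{l}}{l!}u^{l}$ applied to $u=\widetilde{b}_{1,2}z_{1}z_{2}$. This yields
\begin{equation*}
[1-\widetilde{b}_{1,2}z_{1}z_{2}]^{-\lambda}=\sum_{l=0}^{\infty}\frac{(\lambda)_{l}}{l!}\widetilde{b}_{1,2}^{\,l}\,z_{1}^{l}z_{2}^{l}.
\end{equation*}
Matching with (\ref{c_alpha_lambda_R}) shows that the only non-zero coefficients occur for $\boldsymbol{\alpha}=(l,l)$, and equal $\frac{(\lambda)_{l}}{l!}\widetilde{b}_{1,2}^{\,l}$, which is (\ref{calphaR2}).

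Finally I would insert these data into (\ref{gamma_P_lambda}). With $\boldsymbol{\theta}_{P}=(\widetilde{p}_{1},\widetilde{p}_{2})=(-p_{2}/p_{1,2},-p_{1}/p_{1,2})$, the exponential factor becomes $\exp\!\bigl(-\tfrac{p_{2}}{p_{1,2}}x_{1}-\tfrac{p_{1}}{p_{1,2}}x_{2}\bigr)$, and the monomial factor is $(x_{1}x_{2})^{\lambda-1}$. Using the convention $(\lambda)_{(l,l)}=(\lambda)_{l}^{2}$, the bracketed series reduces to
\begin{equation*}
\sum_{l=0}^{\infty}\frac{(\lambda)_{l}\widetilde{b}_{1,2}^{\,l}/l!}{(\lambda)_{l}^{2}}(x_{1}x_{2})^{l}=\sum_{l=0}^{\infty}\frac{1}{(\lambda)_{l}}\frac{(\widetilde{b}_{1,2}x_{1}x_{2})^{l}}{l!}=F_{1}(\lambda;\widetilde{b}_{1,2}x_{1}x_{2}),
\end{equation*}
which is exactly (\ref{gammaP2_lambda}). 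Since the paper has already observed that the constant $c$ in (\ref{mud2}) coincides with $\widetilde{b}_{1,2}$, this coincides with the known bivariate formula. There is no real obstacle here: the whole argument is algebraic bookkeeping, and the only point requiring care is the convention $(\lambda)_{\boldsymbol{\alpha}}=\prod_{i}(\lambda)_{\alpha_{i}}$ used in Theorem \ref{Th_gamma_P_lambda}, which must be made explicit to confirm the cancellation $(\lambda)_{l}/(\lambda)_{l}^{2}=1/(\lambda)_{l}$ producing $F_{1}$.
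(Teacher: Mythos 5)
Your proposal is correct and follows essentially the same route as the paper: compute $R_{2}$ from (\ref{rT2}), expand $[1-R_{2}(\mathbf{z})]^{-\lambda}$ by the binomial series to read off $c_{\boldsymbol{\alpha},\lambda}(R_{2})$, and substitute into Theorem \ref{Th_gamma_P_lambda} using $(\lambda)_{(l,l)}=(\lambda)_{l}^{2}$ to recognise $F_{1}$. The identification $\boldsymbol{\theta}_{P}=(-p_{2}/p_{1,2},-p_{1}/p_{1,2})$ and the cancellation $(\lambda)_{l}/(\lambda)_{l}^{2}=1/(\lambda)_{l}$ are exactly the steps the paper carries out.
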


For $n=3,$ we can give the following corollary.

\begin{corollary}
\label{cor_cor_n_3}For $n=3,$ we have
\begin{equation}
R_{3}\left(  \mathbf{z}\right)  =\widetilde{b}_{1,2}z_{1}z_{2}+\widetilde{b}%
_{1,3}z_{1}z_{3}+\widetilde{b}_{2,3}z_{2}z_{3}+\widetilde{b}_{1,2,3}z_{1}%
z_{2}z_{3} \label{R3}%
\end{equation}
If $\boldsymbol{\alpha}=\left(  \mathbb{\alpha}_{1},\mathbb{\alpha}%
_{2},\mathbb{\alpha}_{3}\right)  \in\mathbb{N}^{3},$ and $\max\left(
\mathbb{\alpha}_{1},\mathbb{\alpha}_{2},\mathbb{\alpha}_{3}\right)
=\left\Vert \boldsymbol{\alpha}\right\Vert _{\infty}$, if $\left\Vert
\boldsymbol{\alpha}\right\Vert _{\infty}\leqslant\frac{\left\vert
\boldsymbol{\alpha}\right\vert }{2},$ we have
\begin{equation}
c_{\boldsymbol{\alpha},\lambda}\left(  R_{3}\right)  =\sum_{\left\Vert
\boldsymbol{\alpha}\right\Vert _{\infty}\leqslant k\leqslant\frac{\left\vert
\boldsymbol{\alpha}\right\vert }{2},k\in\mathbb{N}}\tfrac{\left(
\lambda\right)  _{k}\widetilde{b}_{1,2}^{k-\alpha_{3}}\widetilde{b}%
_{1,3}^{k-\alpha_{2}}\widetilde{b}_{2,3}^{k-\alpha_{1}}\widetilde{b}%
_{1,2,3}^{\alpha_{1}+\alpha_{2}+\alpha_{3}-2k}}{\left(  k-\alpha_{3}\right)
!\left(  k-\alpha_{2}\right)  !\left(  k-\alpha_{1}\right)  !\left(
\alpha_{1}+\alpha_{2}+\alpha_{3}-2k\right)  !}. \label{calphaR3}%
\end{equation}
and $c_{\boldsymbol{\alpha},\lambda}\left(  R_{3}\right)  =0$ if $\left\Vert
\boldsymbol{\alpha}\right\Vert _{\infty}>\frac{\left\vert \boldsymbol{\alpha
}\right\vert }{2}$, in particular if $\left\vert \boldsymbol{\alpha
}\right\vert =1,$ $c_{\boldsymbol{\alpha},\lambda}\left(  R_{3}\right)  =0$.
Thus we have for $\mathbf{x}=\left(  x_{1},x_{2},x_{3}\right)  $
\begin{align}
\boldsymbol{\gamma}_{\left(  P_{3},\lambda\right)  }\left(  \mathtt{d}%
\mathbf{x}\right)   &  =\frac{p_{\left[  3\right]  }^{-\lambda}}{\left[
\Gamma\left(  \lambda\right)  \right]  ^{3}}\exp\left(  \boldsymbol{\theta
}_{P},\mathbf{x}\right)  \mathbf{x}^{\left(  \lambda-1\right)  \mathbf{1}_{3}%
}\nonumber\\
&  \times\{\sum_{\boldsymbol{\alpha}\in\mathbb{N}^{3},\left\Vert
\boldsymbol{\alpha}\right\Vert _{\infty}\leqslant\frac{\left\vert
\boldsymbol{\alpha}\right\vert }{2}}[\sum_{\left\Vert \boldsymbol{\alpha
}\right\Vert _{\infty}\leqslant k\leqslant\frac{\left\vert \boldsymbol{\alpha
}\right\vert }{2},k\in\mathbb{N}}\tfrac{\left(  \lambda\right)  _{k}%
\widetilde{b}_{1,2}^{k-\alpha_{3}}\widetilde{b}_{1,3}^{k-\alpha_{2}%
}\widetilde{b}_{2,3}^{k-\alpha_{1}}\widetilde{b}_{1,2,3}^{\alpha_{1}%
+\alpha_{2}+\alpha_{3}-2k}}{\left(  k-\alpha_{3}\right)  !\left(  k-\alpha
_{2}\right)  !\left(  k-\alpha_{1}\right)  !\left(  \alpha_{1}+\alpha
_{2}+\alpha_{3}-2k\right)  !}]\frac{\mathbf{x}^{\boldsymbol{\alpha}}}{\left(
\lambda\right)  _{\boldsymbol{\alpha}}}\}\mathbf{1}_{\left(  0,\infty\right)
^{3}}\left(  \mathbf{x}\right)  \left(  \mathtt{d}\mathbf{x}\right)  .
\label{gammaP3lambda}%
\end{align}
Or, with for $\mathbf{z}_{\left[4\right]}=\left(  z_{1},z_{2},z_{3},z_{4}\right)
\in\mathbb{R}^{4},$ and $_{1}\mathbf{F}_{3}$ is a generalized multivariate
Lauricella function defined by%
\begin{equation}
_{1}\mathbf{F}_{3}(\lambda;\mathbf{z}_{\left[4\right]})=\sum_{\boldsymbol{l}\in
\mathbb{N}^{4}}\frac{\left(  \lambda\right)  _{l_{1}+l_{2}+l_{3}+l_{4}}%
}{\left(  \lambda\right)  _{l_{2}+l_{3}+l_{4}}\left(  \lambda\right)
_{l_{1}+l_{3}+l_{4}}\left(  \lambda\right)  _{l_{1}+l_{2}+l_{4}}}%
\frac{\mathbf{z}_{\left[4\right]}^{\boldsymbol{l}}}{\boldsymbol{l}!}. \label{gLauricella_4}%
\end{equation}%
\begin{align}
\boldsymbol{\gamma}_{\left(  P_{3},\lambda\right)  }\left(  \mathtt{d}%
\mathbf{x}\right)  =\frac{p_{\left[  3\right]  }^{-\lambda}}{\left[
\Gamma\left(  \lambda\right)  \right]  ^{3}}\exp\left(  \boldsymbol{\theta
}_{P_{3}},\mathbf{x}\right) \times\nonumber\\
& \hspace{-3cm}\mathbf{x}^{\left(  \lambda-1\right)
\mathbf{1}_{3}}{}_{1}\mathbf{F}_{3}(\lambda;\widetilde{b}_{1,2}x_{1}%
x_{2},\widetilde{b}_{1,3}x_{1}x_{3},\widetilde{b}_{2,3}x_{2}x_{3}%
,\widetilde{b}_{1,2,3}x_{1}x_{2}x_{3})\mathbf{1}_{\left(  0,\infty\right)
^{3}}\left(  \mathbf{x}\right)  \left(  \mathtt{d}\mathbf{x}\right)  .
\label{gammaP3lambda2}%
\end{align}

\end{corollary}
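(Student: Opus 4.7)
The plan is to specialize Theorem \ref{Th_gamma_P_lambda} to $n=3$ by first identifying $R_3$ explicitly, and then extracting the coefficients $c_{\boldsymbol{\alpha},\lambda}(R_3)$ from the generalized binomial expansion of $[1-R_3(\mathbf{z})]^{-\lambda}$.

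For $R_3$: the subsets $T\in\mathfrak{P}_3$ with $\left\vert T\right\vert\geqslant 2$ are $\{1,2\}$, $\{1,3\}$, $\{2,3\}$, $\{1,2,3\}$. Formulas (\ref{rT2}) and (\ref{rT3}) of Proposition \ref{PropPnRn} yield $r_T=\widetilde{b}_T$ for each of these, so substituting into (\ref{Rn}) gives (\ref{R3}) at once. For $c_{\boldsymbol{\alpha},\lambda}(R_3)$, I expand
\[
[1-R_3(\mathbf{z})]^{-\lambda}=\sum_{k\geqslant 0}\frac{(\lambda)_k}{k!}R_3(\mathbf{z})^k
\]
and apply the multinomial theorem to $R_3(\mathbf{z})^k$, indexing summands by $(k_{12},k_{13},k_{23},k_{123})\in\mathbb{N}^4$ with $k_{12}+k_{13}+k_{23}+k_{123}=k$. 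The resulting monomial is $\mathbf{z}^{\boldsymbol{\alpha}}$ with $\alpha_1=k_{12}+k_{13}+k_{123}$, $\alpha_2=k_{12}+k_{23}+k_{123}$, $\alpha_3=k_{13}+k_{23}+k_{123}$. Inverting this linear system with $k$ held fixed gives
\[
k_{123}=\left\vert\boldsymbol{\alpha}\right\vert-2k,\qquad k_{12}=k-\alpha_3,\qquad k_{13}=k-\alpha_2,\qquad k_{23}=k-\alpha_1,
\]
so nonnegativity of these four indices is equivalent to $\left\Vert\boldsymbol{\alpha}\right\Vert_\infty\leqslant k\leqslant\left\vert\boldsymbol{\alpha}\right\vert/2$. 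When this interval is empty (i.e.\ $\left\Vert\boldsymbol{\alpha}\right\Vert_\infty>\left\vert\boldsymbol{\alpha}\right\vert/2$), no monomial $\mathbf{z}^{\boldsymbol{\alpha}}$ arises and $c_{\boldsymbol{\alpha},\lambda}(R_3)=0$; otherwise, summing the multinomial contributions $(\lambda)_k\, k!^{-1}\cdot k!/(k_{12}!k_{13}!k_{23}!k_{123}!)$ over valid $k$ produces formula (\ref{calphaR3}).

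Plugging this into (\ref{gamma_P_lambda}) of Theorem \ref{Th_gamma_P_lambda} then yields (\ref{gammaP3lambda}). For the Lauricella form (\ref{gammaP3lambda2}), rather than first collecting by $\boldsymbol{\alpha}$, I keep the quadruple sum indexed by $(k_{12},k_{13},k_{23},k_{123})$, use the identity $x_1^{\alpha_1}x_2^{\alpha_2}x_3^{\alpha_3}=(x_1x_2)^{k_{12}}(x_1x_3)^{k_{13}}(x_2x_3)^{k_{23}}(x_1x_2x_3)^{k_{123}}$, and make the change of variables $(l_1,l_2,l_3,l_4)=(k_{12},k_{13},k_{23},k_{123})$. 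Since $\alpha_1=l_1+l_2+l_4$, $\alpha_2=l_1+l_3+l_4$, $\alpha_3=l_2+l_3+l_4$, the denominator $(\lambda)_{\alpha_1}(\lambda)_{\alpha_2}(\lambda)_{\alpha_3}$ inserted by (\ref{gamma_P_lambda}) matches $(\lambda)_{l_1+l_2+l_4}(\lambda)_{l_1+l_3+l_4}(\lambda)_{l_2+l_3+l_4}$ from (\ref{gLauricella_4}), and the series collapses to ${}_1\mathbf{F}_3\!\left(\lambda;\widetilde{b}_{1,2}x_1x_2,\widetilde{b}_{1,3}x_1x_3,\widetilde{b}_{2,3}x_2x_3,\widetilde{b}_{1,2,3}x_1x_2x_3\right)$.

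The main hurdle is purely bookkeeping: no step is conceptually difficult beyond Theorem \ref{Th_gamma_P_lambda} and Proposition \ref{PropPnRn}, but inverting the linear system $(k_{12},k_{13},k_{23},k_{123})\leftrightarrow(k,\alpha_1,\alpha_2,\alpha_3)$ and independently matching the three Pochhammer factors in the denominator of ${}_1\mathbf{F}_3$ with the correct $\alpha_j$'s both require care with the indexing.
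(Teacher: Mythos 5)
Your proposal is correct and follows essentially the same route as the paper's own proof: identify $R_{3}$ via (\ref{rT2})--(\ref{rT3}), expand $[1-R_{3}(\mathbf{z})]^{-\lambda}$ as a quadruple sum over the multinomial indices, invert the linear system to get $l_{1}=k-\alpha_{3}$, $l_{2}=k-\alpha_{2}$, $l_{3}=k-\alpha_{1}$, $l_{4}=\left\vert\boldsymbol{\alpha}\right\vert-2k$ with the nonnegativity constraints giving exactly $\left\Vert\boldsymbol{\alpha}\right\Vert_{\infty}\leqslant k\leqslant\left\vert\boldsymbol{\alpha}\right\vert/2$, and then substitute into (\ref{gamma_P_lambda}), keeping the sum indexed by $\boldsymbol{l}$ to read off the ${}_{1}\mathbf{F}_{3}$ form. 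No gaps.
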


We note that the latter result compared with (\ref{MGD3}) gives with
$z_{1}=\widetilde{b}_{1,2}x_{1}x_{2},z_{2}=\widetilde{b}_{1,3}x_{1}x_{3}%
,z_{3}=\widetilde{b}_{2,3}x_{2}x_{3},z_{4}=\widetilde{b}_{1,2,3}x_{1}%
x_{2}x_{3}$: 
$
\sum_{\boldsymbol{l}%
\in\mathbb{N}^{4}}\frac{\left(  \lambda\right)  _{l_{1}+l_{2}+l_{3}+l_{4}}%
}{\left(  \lambda\right)  _{l_{2}+l_{3}+l_{4}}\left(  \lambda\right)
_{l_{1}+l_{3}+l_{4}}\left(  \lambda\right)  _{l_{1}+l_{2}+l_{4}}}%
\frac{\mathbf{z}_{4}^{\boldsymbol{l}}}{\boldsymbol{l}!}
=\sum_{\boldsymbol{l}\in\mathbb{N}^{4}}\frac{1}{\left(  \lambda\right)
_{l_{1}+l_{3}+l_{4}}\left(  \lambda\right)  _{l_{2}+2l_{3}+l_{4}}}%
\frac{\left(  z_{1},z_{2}+z_{3,},z_{2}z_{3},z_{4}\right)  ^{\boldsymbol{l}}%
}{\boldsymbol{l}!}.
$

In the particular three cases $\widetilde{b}_{1,2},\widetilde{b}%
_{1,3},\widetilde{b}_{2,3},\widetilde{b}_{1,2,3}>0,$ $\widetilde{b}%
_{1,2},\widetilde{b}_{1,3},\widetilde{b}_{2,3}>0$, $\widetilde{b}_{1,2,3}=0$
and $\widetilde{b}_{1,2}=\widetilde{b}_{1,3}=\widetilde{b}_{2,3}=0$,
$\widetilde{b}_{1,2,3}>0,$ we can give the following remarks.

\begin{remark}
\label{cor_cor_n_3_Rem1}If $\widetilde{b}_{1,2},\widetilde{b}_{1,3}%
,\widetilde{b}_{2,3},\widetilde{b}_{1,2,3}>0,$ if $\left\Vert
\boldsymbol{\alpha}\right\Vert _{\infty}\leqslant\frac{\left\vert
\boldsymbol{\alpha}\right\vert }{2},$
\begin{equation}
c_{\boldsymbol{\alpha},\lambda}\left(  R_{3}\right)  =\sum_{\left\Vert
\boldsymbol{\alpha}\right\Vert _{\infty}\leqslant k\leqslant\frac{\left\vert
\boldsymbol{\alpha}\right\vert }{2},,k\in\mathbb{N}}\tfrac{\left(
\lambda\right)  _{k}\widetilde{b}_{2,3}^{k-\alpha_{1}}\widetilde{b}%
_{1,3}^{k-\alpha_{2}}\widetilde{b}_{1,2}^{k-\alpha_{3}}\widetilde{b}%
_{1,2,3}^{\alpha_{1}+\alpha_{2}+\alpha_{3}-2k}}{\left(  k-\alpha_{1}\right)
!\left(  k-\alpha_{2}\right)  !\left(  k-\alpha_{3}\right)  !\left(
\alpha_{1}+\alpha_{2}+\alpha_{3}-2k\right)  !}>0 \label{calphaR3bTpositive}%
\end{equation}
and $c_{\boldsymbol{\alpha},\lambda}\left(  R_{3}\right)  =0$ if $\left\Vert
\boldsymbol{\alpha}\right\Vert _{\infty}>\frac{\left\vert \boldsymbol{\alpha
}\right\vert }{2}$. Thus we have
\begin{align}
\boldsymbol{\gamma}_{\left(  P_{3},\lambda\right)  }\left(  \mathtt{d}%
\mathbf{x}\right)   &  =\frac{p_{\left[  3\right]  }^{-\lambda}}{\left[
\Gamma\left(  \lambda\right)  \right]  ^{3}}\exp\left(  \boldsymbol{\theta
}_{P},\mathbf{x}\right)  \mathbf{x}^{\left(  \lambda-1\right)  \mathbf{1}_{3}%
}\times\nonumber\\
&  \{\sum_{\boldsymbol{\alpha}\in\mathbb{N}^{3},\left\Vert
\boldsymbol{\alpha}\right\Vert _{\infty}\leqslant\frac{\left\vert
\boldsymbol{\alpha}\right\vert }{2}}[\sum_{\left\Vert \boldsymbol{\alpha
}\right\Vert _{\infty}\leqslant k\leqslant\frac{\left\vert \boldsymbol{\alpha
}\right\vert }{2},k\in\mathbb{N}}\tfrac{\left(  \lambda\right)  _{k}%
\widetilde{b}_{2,3}^{k-\alpha_{1}}\widetilde{b}_{1,3}^{k-\alpha_{2}%
}\widetilde{b}_{1,2}^{k-\alpha_{3}}\widetilde{b}_{1,2,3}^{\alpha_{1}%
+\alpha_{2}+\alpha_{3}-2k}}{\left(  k-\alpha_{1}\right)  !\left(  k-\alpha
_{2}\right)  !\left(  k-\alpha_{3}\right)  !\left(  \alpha_{1}+\alpha
_{2}+\alpha_{3}-2k\right)  !}]\frac{\mathbf{x}^{\boldsymbol{\alpha}}}{\left(
\lambda\right)  _{\boldsymbol{\alpha}}}\}\mathbf{1}_{\left(  0,\infty\right)
^{3}}\left(  \mathbf{x}\right)  \left(  \mathtt{d}\mathbf{x}\right)  .
\label{gammaP3lambda0}%
\end{align}

\end{remark}

\begin{remark}
\label{cor_cor_n_3_Rem2}If $\widetilde{b}_{1,2},\widetilde{b}_{1,3}%
,\widetilde{b}_{2,3}>0$ and $\widetilde{b}_{1,2,3}=0,$ if $\left\Vert
\boldsymbol{\alpha}\right\Vert _{\infty}\leqslant\frac{\left\vert
\boldsymbol{\alpha}\right\vert }{2}=k\in\mathbb{N},$ then
\begin{equation}
c_{\boldsymbol{\alpha},\lambda}\left(  R_{3}\right)  =\tfrac{\left(
\lambda\right)  _{k}\widetilde{b}_{2,3}^{k-\alpha_{1}}\widetilde{b}%
_{1,3}^{k-\alpha_{2}}\widetilde{b}_{1,2}^{k-\alpha_{3}}}{\left(  k-\alpha
_{1}\right)  !\left(  k-\alpha_{2}\right)  !\left(  k-\alpha_{3}\right)  !}>0,
\label{calphaR3bTpb123z}%
\end{equation}
and $c_{\boldsymbol{\alpha},\lambda}\left(  R_{3}\right)  =0$ if $\left\Vert
\boldsymbol{\alpha}\right\Vert _{\infty}>\frac{\left\vert \boldsymbol{\alpha
}\right\vert }{2}\in\mathbb{N}$ or $\frac{\left\vert \boldsymbol{\alpha
}\right\vert }{2}\notin\mathbb{N}$. Thus we have
\begin{equation}
\boldsymbol{\gamma}_{\left(  P_{3},\lambda\right)  }\left(  \mathtt{d}%
\mathbf{x}\right)  =\tfrac{p_{\left[  3\right]  }^{-\lambda}}{\left[
\Gamma\left(  \lambda\right)  \right]  ^{3}}\exp\left(  \boldsymbol{\theta
}_{P},\mathbf{x}\right)  \mathbf{x}^{\left(  \lambda-1\right)  \mathbf{1}_{3}%
}[\sum_{\boldsymbol{\alpha}\in\mathbb{N}^{3},\left\Vert \boldsymbol{\alpha
}\right\Vert _{\infty}\leqslant\frac{\left\vert \boldsymbol{\alpha}\right\vert
}{2}=k\in\mathbb{N}}\tfrac{\left(  \lambda\right)  _{k}\widetilde{b}%
_{2,3}^{k-\alpha_{1}}\widetilde{b}_{1,3}^{k-\alpha_{2}}\widetilde{b}%
_{1,2}^{k-\alpha_{3}}}{\left(  k-\alpha_{1}\right)  !\left(  k-\alpha
_{2}\right)  !\left(  k-\alpha_{3}\right)  !}\frac{\mathbf{x}%
^{\boldsymbol{\alpha}}}{\left(  \lambda\right)  _{\boldsymbol{\alpha}}%
}]\mathbf{1}_{\left(  0,\infty\right)  ^{3}}\left(  \mathbf{x}\right)  \left(
\mathtt{d}\mathbf{x}\right)  . \label{gammaP3lambda1}%
\end{equation}

\end{remark}

\begin{remark}
\label{cor_cor_n_3_Rem3}If $\widetilde{b}_{1,2},\widetilde{b}_{1,3}%
,\widetilde{b}_{2,3}=0$, $\widetilde{b}_{1,2,3}>0,$ then for
$\boldsymbol{\alpha=}k\mathbf{1}_{3}$ $,k\in\mathbb{N},$
\begin{equation}
c_{k\mathbf{1}_{3},\lambda}\left(  R_{3}\right)  =\frac{\left(  \lambda
\right)  _{k}}{k!}\widetilde{b}_{1,2,3}^{k} \label{calphaR3bTzb123positive}%
\end{equation}
and $c_{\boldsymbol{\alpha},\lambda}\left(  R_{3}\right)  =0$ otherwise. Thus
we have%
\begin{equation}
\boldsymbol{\gamma}_{\left(  P_{3},\lambda\right)  }\left(  \mathtt{d}%
\mathbf{x}\right)  =\frac{p_{\left[  3\right]  }^{-\lambda}}{\left[
\Gamma\left(  \lambda\right)  \right]  ^{3}}\exp\left(  \boldsymbol{\theta
}_{P},\mathbf{x}\right)  \mathbf{x}^{\left(  \lambda-1\right)  \mathbf{1}_{3}%
}F_{2}(\lambda,\lambda;\widetilde{b}_{1,2,3}\mathbf{x}^{\left[  3\right]
})\mathbf{1}_{\left(  0,\infty\right)  ^{3}}\left(  \mathbf{x}\right)  \left(
\mathtt{d}\mathbf{x}\right)  \label{gammaP3lambda3}%
\end{equation}

\end{remark}

\section{Conditional Laplace transform}

We now assume that $P_{n}$ is an affine polynomial such that $P_{n}\left(
\boldsymbol{\theta}\right)  =\sum_{T\in\mathfrak{P}_{n}}p_{T}%
\boldsymbol{\theta}^{T}=1+\sum_{T\in\mathfrak{P}_{n}^{\ast}}p_{T}%
\boldsymbol{\theta}^{T}$, with $p_{\left[  k\right]  }>0$ for $k=1,\ldots,n,$
and is such that the mgd $\boldsymbol{\gamma}_{\left(  P_{n},\lambda\right)
}$ associated with $\left(  P_{n},\lambda\right)  $ is infinitely divisible.
If $k\in\left[  n\right]  $, we denote by $\boldsymbol{\theta}_{\left[
k\right]  }=\left(  \theta_{1},\ldots,\theta_{k}\right)  $,
$\boldsymbol{\theta}_{\left[  n\right]  \smallsetminus\left[  k\right]
}=\left(  \theta_{k+1},\ldots,\theta_{n}\right)  $, and $P_{k}$ the affine
polynomial $P_{k}\left(  \boldsymbol{\theta}_{\left[  k\right]  }\right)
=\sum_{T\in\mathfrak{P}_{k}}p_{T}\boldsymbol{\theta}_{\left[  k\right]  }%
^{T}=P_{n}\left(  \boldsymbol{\theta}_{\left[  k\right]  },\mathbf{0}%
_{n-k}\right)  $. Similarly, if $T=\left\{  t_{1},\ldots,t_{k}\right\}
\subset\left[  n\right]  ,$ $t_{1}<\ldots<t_{k},$ we denote by
$\boldsymbol{\theta}_{T}=\left(  \theta_{t_{1}},\ldots,\theta_{t_{k}}\right)
$, and if $\left[  n\right]  \smallsetminus T=\overline{T}=\left\{
t_{k+1},\ldots t_{n}\right\}  ,$ $\boldsymbol{\theta}_{\left[  n\right]
\smallsetminus T}=\left(  \theta_{t_{k+1}},\ldots,\theta_{t_{n}}\right)  $,
and $P_{T}$ the affine polynomial $P_{T}\left(  \boldsymbol{\theta}%
_{T}\right)  =\sum_{S\in\mathfrak{P}\left(  T\right)  }p_{S}\boldsymbol{\theta
}_{T}^{S}=P_{n}\left(  \boldsymbol{\theta}_{T}^{\prime}\right)  $, where
$\left(  \boldsymbol{\theta}_{T}^{\prime}\right)  _{i}=\allowbreak
\theta_{t_{i}}$ if $t_{i}\in T$ and $\left(  \boldsymbol{\theta}_{T}^{\prime
}\right)  _{i}=0$ if $t_{i}\notin T.$ For all $\boldsymbol{\alpha}=\left(
\alpha_{1},\ldots,\alpha_{n}\right)  \in\mathbb{N}^{n}$, we introduce the
notation  
$
(\partial/\partial\boldsymbol{\theta})^{\boldsymbol{\alpha}}%
=\partial^{\left\vert \boldsymbol{\alpha}\right\vert }/\partial
\theta_{1}^{\alpha_{1}}\cdots\partial\theta_{n}^{\alpha_{n}}.
$
For all $T\in\mathfrak{P}_{n}$, we also define $\left(  \partial
/\partial\boldsymbol{\theta}\right)  ^{T}=\left(  \partial/\partial
\boldsymbol{\theta}\right)  ^{\mathbf{1}_{T}}$.

Let $\mathbf{X}=\left(  X_{1},\ldots,X_{n}\right)  $ be a random real vector
such that has distribution $\boldsymbol{\gamma}_{\left(  P_{n},\lambda\right)
},$ denoted by $\mathbf{X}\sim\boldsymbol{\gamma}_{\left(  P_{n}%
,\lambda\right)  },$ we give a formula for the Lt of $X_{\left[
n\smallsetminus\left[  k\right]  \right]  }=\left(  X_{k+1},\ldots
,X_{n}\right)  $ given $\mathbf{X}_{\left[  k\right]  }=\mathbf{x}_{\left[
k\right]  }$, an important conditional distribution for the simulations of
$\mathbf{X}$, in the following main theorem.

\begin{theorem}
\label{L_X1_n_0} Let $\mathbf{X}=\left(  X_{1},\ldots,X_{n}\right)  $ be a
random real vector such that $\mathbf{X}\sim\boldsymbol{\gamma}_{\left(
P_{n},\lambda\right)  },$ with the notation of Theorem \ref{Th_gamma_P_lambda}%
. Let $1<n\in\mathbb{N},$ $1\leqslant k<n,$ and $Q_{\left[  n\right]
\smallsetminus\left[  k\right]  }$ the affine polynomials with respect to the
$n-k$ variables $\theta_{k+1},\ldots,\theta_{n}$ defined by%
\begin{equation}
Q_{\left[  n\right]  \smallsetminus\left[  k\right]  }\left(
\boldsymbol{\theta}_{\left[  n\right]  \smallsetminus\left[  k\right]
}\right)  =\prod_{i=k+1}^{n}[1+\theta_{i}\left(  -\boldsymbol{\theta}_{P_{n}%
}\right)  _{i}^{-1}] \label{Qn_k}%
\end{equation}
If $\mathbf{x}=\left(  x_{1},\ldots,x_{n}\right)  \in\mathbb{R}^{n},$ we
denote $\mathbf{x}_{\left[  k\right]  }=\left(  x_{1},\ldots,x_{k}\right)
\in\mathbb{R}^{k}.$ If $\mathbf{y}=\left(  y_{1},\ldots,y_{n}\right)
\in\mathbb{R}^{n},$ let $\mathbf{y}_{\left[  k\right]  }\mathbf{x}_{\left[
k\right]  }=\left(  y_{1}x_{1},\ldots,y_{k}x_{k}\right)  $, then the Lt of
$\mathbf{X}_{\left[  n\right]  \smallsetminus\left[  k\right]  }$ given
$\mathbf{X}_{\left[  k\right]  }=\mathbf{x}_{\left[  k\right]  }$ is
\begin{equation}
L_{\mathbf{X}_{\left[  n\right]  \smallsetminus\left[  k\right]  }%
}^{\mathbf{X}_{\left[  k\right]  }=\mathbf{x}_{\left[  k\right]  }}\left(
\boldsymbol{\theta}_{\left[  n\right]  \smallsetminus\left[  k\right]
}\right)  =[Q_{\left[  n\right]  \smallsetminus\left[  k\right]  }\left(
\boldsymbol{\theta}_{\left[  n\right]  \smallsetminus\left[  k\right]
}\right)  ]^{-\lambda}\frac{\mathbf{F}_{k}\left(  \lambda,R_{n},\mathbf{x}%
_{\left[  k\right]  },\boldsymbol{\theta}_{\left[  n\right]  \smallsetminus
\left[  k\right]  }\right)  }{\mathbf{F}_{k}\left(  \lambda,R_{n}%
,\mathbf{x}_{\left[  k\right]  },\boldsymbol{0}_{\left[  n\right]
\smallsetminus\left[  k\right]  }\right)  } \label{L_Zk_xk}%
\end{equation}
with%
\begin{equation}
\mathbf{F}_{k}\left(  \lambda,R_{n},\mathbf{x}_{\left[  k\right]
},\boldsymbol{\theta}_{\left[  n\right]  \smallsetminus\left[  k\right]
}\right)  =\sum_{\boldsymbol{\alpha=}\left(  \boldsymbol{\alpha}_{\left[
k\right]  },\boldsymbol{\alpha}_{\left[  n\right]  \smallsetminus\left[
k\right]  }\right)  \in\mathbb{N}^{n},c_{\boldsymbol{\alpha},\lambda}\left(
R_{n}\right)  \neq0}\frac{\mathbf{x}_{\left[  k\right]  }%
^{_{_{\boldsymbol{\alpha}_{\left[  k\right]  }}}}}{\left(  \lambda\right)
_{\boldsymbol{\alpha}_{\left[  k\right]  }}}c_{\boldsymbol{\alpha},\lambda
}\left(  R_{n}\right)  \left(  \boldsymbol{\theta}_{\left[  n\right]
}-\boldsymbol{\theta}_{P_{n}}\right)  _{\left[  n\right]  \smallsetminus
\left[  k\right]  }^{-\boldsymbol{\alpha}_{\left[  n\right]  \smallsetminus
\left[  k\right]  }} \label{Fkdef}%
\end{equation}

\end{theorem}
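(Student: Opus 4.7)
The plan is to deduce the conditional Laplace transform from the explicit density formula of Theorem \ref{Th_gamma_P_lambda} by integrating out $x_{k+1},\ldots,x_n$ and recognising the result as the claimed ratio. Writing the conditional density $f(\mathbf{x}_{[n]\smallsetminus[k]} \mid \mathbf{x}_{[k]})$ as the joint density divided by the marginal density of $\mathbf{X}_{[k]}$ (the latter being the same computation evaluated at $\boldsymbol{\theta}_{[n]\smallsetminus[k]}=\boldsymbol{0}$), one sees that, term by term in the series over $\boldsymbol{\alpha} \in \mathbb{N}^n$, the joint density factorises as an $\mathbf{x}_{[k]}$-part of the form $\exp(\sum_{i=1}^k \widetilde{p}_i x_i)\, \mathbf{x}_{[k]}^{(\lambda-1)\mathbf{1}_k+\boldsymbol{\alpha}_{[k]}}$ times the analogous $\mathbf{x}_{[n]\smallsetminus[k]}$-part. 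The integrations therefore decouple into one-dimensional gamma integrals.

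Concretely, for each $i \in [n]\smallsetminus[k]$ and each exponent $\alpha_i$, the integral of $x_i^{\lambda-1+\alpha_i} \exp(-(\theta_i-\widetilde{p}_i)x_i)$ over $(0,\infty)$ equals $\Gamma(\lambda+\alpha_i)(\theta_i-\widetilde{p}_i)^{-(\lambda+\alpha_i)}$; this is legitimate because infinite divisibility forces $\widetilde{p}_i<0$, hence $(\boldsymbol{\theta}-\boldsymbol{\theta}_{P_n})_i = \theta_i - \widetilde{p}_i > 0$ for $\theta_i \geq 0$, and the interchange of the sum over $\boldsymbol{\alpha}$ with the integrals is justified by the non-negativity of the $c_{\boldsymbol{\alpha},\lambda}(R_n)$ together with the convergence of $[1-R_n(\mathbf{z})]^{-\lambda}$ near $\mathbf{0}$. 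Using $\Gamma(\lambda+\alpha_i)=(\lambda)_{\alpha_i}\Gamma(\lambda)$ and the split $(\lambda)_{\boldsymbol{\alpha}}=(\lambda)_{\boldsymbol{\alpha}_{[k]}}\prod_{i=k+1}^n(\lambda)_{\alpha_i}$, the $\Gamma(\lambda)^{n-k}$ factors and the $(\lambda)_{\alpha_i}$ for $i\in[n]\smallsetminus[k]$ cancel; writing each $(\theta_i-\widetilde{p}_i)^{-\lambda}$ as $(-\widetilde{p}_i)^{-\lambda}(1+\theta_i(-\widetilde{p}_i)^{-1})^{-\lambda}$ then pulls $\prod_{i=k+1}^n(-\widetilde{p}_i)^{-\lambda}$ out of both numerator and denominator (where it cancels in the ratio), leaving $[Q_{[n]\smallsetminus[k]}(\boldsymbol{\theta}_{[n]\smallsetminus[k]})]^{-\lambda}$ in front. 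The residual sum in the numerator is precisely $\mathbf{F}_k(\lambda,R_n,\mathbf{x}_{[k]},\boldsymbol{\theta}_{[n]\smallsetminus[k]})$ as in (\ref{Fkdef}), since $\prod_{i=k+1}^n(\theta_i-\widetilde{p}_i)^{-\alpha_i}$ is exactly $(\boldsymbol{\theta}-\boldsymbol{\theta}_{P_n})_{[n]\smallsetminus[k]}^{-\boldsymbol{\alpha}_{[n]\smallsetminus[k]}}$, and evaluating at $\boldsymbol{\theta}_{[n]\smallsetminus[k]}=\boldsymbol{0}$ yields the denominator $\mathbf{F}_k(\lambda,R_n,\mathbf{x}_{[k]},\boldsymbol{0}_{[n]\smallsetminus[k]})$.

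The main obstacle is purely bookkeeping: tracking which factors live on each side of the partition $[n] = [k] \cup ([n]\smallsetminus[k])$, and verifying that the $(\lambda)_{\alpha_i}$ telescope correctly so that only $(\lambda)_{\boldsymbol{\alpha}_{[k]}}$ survives in the denominator of each term of $\mathbf{F}_k$. The analytic content, one-dimensional gamma integrals and Fubini, is routine once infinite divisibility has been invoked to pin down the signs of the $\widetilde{p}_i$.
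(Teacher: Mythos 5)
Your proposal follows essentially the same route as the paper: both start from the series density of Theorem \ref{Th_gamma_P_lambda}, integrate term by term against $\exp\left(-\left\langle\boldsymbol{\theta}_{\left[n\right]\smallsetminus\left[k\right]},\mathbf{x}_{\left[n\right]\smallsetminus\left[k\right]}\right\rangle\right)$ via the one-dimensional gamma integrals producing $\left(\theta_{i}-\widetilde{p}_{i}\right)^{-\left(\lambda+\alpha_{i}\right)}$, and arrive at the ratio form by normalising at $\boldsymbol{\theta}_{\left[n\right]\smallsetminus\left[k\right]}=\mathbf{0}$ (the paper does this through the condition $L_{\mathbf{X}_{\left[n\right]\smallsetminus\left[k\right]}}^{\mathbf{X}_{\left[k\right]}=\mathbf{x}_{\left[k\right]}}\left(\mathbf{0}_{n-k}\right)=1$, you by identifying the marginal of $\mathbf{X}_{\left[k\right]}$ with the same integral at $\mathbf{0}$ --- the same step in different clothing). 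One small inaccuracy: the coefficients $c_{\boldsymbol{\alpha},\lambda}\left(R_{n}\right)$ need not all be non-negative, since the $r_{T}$ can be negative for $\left\vert T\right\vert\geqslant4$ by (\ref{rT4}), so your justification of the sum--integral interchange should appeal to absolute convergence of the series rather than positivity, though the paper itself leaves this interchange entirely implicit.
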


\section{Conditional Laplace transform in the particular case $k=1$}

Another form of Theorem \ref{L_X1_n_0} can be given for $k=1$ by the following theorem.

\begin{theorem}
\label{L_X1_n_1} Let $\mathbf{X}=\left(  X_{1},\ldots,X_{n}\right)  $ be a
random real vector such that $\mathbf{X}\sim\boldsymbol{\gamma}_{\left(
P_{n},\lambda\right)  },$ with the notation of Theorem \ref{Th_gamma_P_lambda}%
. Let $1<n\in\mathbb{N},$ $k=1,$ and $S_{n-1},$ $B_{n-1}$ the affine
polynomials with respect to the $n-1$ variables $\theta_{2},\ldots,\theta_{n}$
defined by

\begin{align}
&S_{\left[  n\right]  \smallsetminus\left[  1\right]  }\left(
\boldsymbol{\theta}_{\left[  n\right]  \smallsetminus\left[  1\right]
}\right)  =\frac{p_{\left[  n\right]  }}{p_{1}}\left(  \boldsymbol{\theta}
_{n}-\boldsymbol{\theta}_{P_{n}}\right)  _{\left[  n\right]  \smallsetminus
\left[  1\right]  }^{\mathbf{1}_{\left[  n\right]  \smallsetminus\left[
1\right]  }}\left[  1-R_{n}\left(  0,\left(  \boldsymbol{\theta}%
_{n}-\boldsymbol{\theta}_{P_{n}}\right)  _{\left[  n\right]  \smallsetminus
\left[  1\right]  }^{-1}\right)  \right]  ,\label{S} \\
&B_{n-1}\left(  \boldsymbol{\theta}_{\left[  n\right]  \smallsetminus\left[
1\right]  }\right)  =\frac{p_{\left[  n\right]  }}{p_{1}}\left(
\boldsymbol{\theta}_{n}-\boldsymbol{\theta}_{P_{n}}\right)  _{\left[
n\right]  \smallsetminus\left[  1\right]  }^{\mathbf{1}_{\left[  n\right]
\smallsetminus\left[  1\right]  }}\frac{\partial}{\partial z_{1}}R_{n}\left(
0,\left(  \boldsymbol{\theta}_{n}-\boldsymbol{\theta}_{P_{n}}\right)
_{\left[  n\right]  \smallsetminus\left[  1\right]  }^{-1}\right)  ,\label{B} 
\end{align}

we have

\begin{align}
&S_{\left[  n\right]  \smallsetminus\left[  1\right]  }\left(
\boldsymbol{\theta}_{\left[  n\right]  \smallsetminus\left[  1\right]
}\right)  =\frac{1}{p_{1}}\frac{\partial}{\partial\theta_{1}}P_{n}\left(
\boldsymbol{\theta}_{n}\right)  , \label{S2} \\
&B_{n-1}\left(  \boldsymbol{\theta}_{\left[  n\right]  \smallsetminus\left[
1\right]  }\right)  =-\frac{1}{p_{1}}P_{n}\left(  \widetilde{p}_{1}%
,\boldsymbol{\theta}_{\left[  n\right]  \smallsetminus\left[  1\right]
}\right)  ,\label{B2}
\end{align}

and%
\begin{equation}
S_{\left[  n\right]  \smallsetminus\left[  1\right]  }\left(
\boldsymbol{\theta}_{\left[  n\right]  \smallsetminus\left[  1\right]
}\right)  =\sum_{T\subset\left[  n\right]  \smallsetminus\left[  1\right]
}\frac{p_{\left[  1\right]  \cup T}}{p_{\left[  1\right]  }}\boldsymbol{\theta
}_{\left[  n\right]  \smallsetminus\left[  1\right]  }^{T}. \label{S3}%
\end{equation}
Let $\mathfrak{z}_{n-1}$ the function with respect to the $n-1$ variables
$\theta_{2},\ldots,\theta_{n}$ defined by%
\begin{equation}
\mathfrak{z}_{n-1}\left(  \boldsymbol{\theta}_{\left[  n\right]
\smallsetminus\left[  1\right]  }\right)    =\frac{B_{n-1}\left(
\boldsymbol{\theta}_{\left[  n\right]  \smallsetminus\left[  1\right]
}\right)  }{S_{\left[  n\right]  \smallsetminus\left[  1\right]  }\left(
\boldsymbol{\theta}_{\left[  n\right]  \smallsetminus\left[  1\right]
}\right)  }
 =\frac{\frac{\partial}{\partial z_{1}}R_{n}\left(  0,\left(
\boldsymbol{\theta}_{\left[  n\right]  }-\boldsymbol{\theta}_{P_{n}}\right)
_{\left[  n\right]  \smallsetminus\left[  1\right]  }^{-1}\right)  }%
{1-R_{n}\left(  0,\left(  \boldsymbol{\theta}_{\left[  n\right]
}-\boldsymbol{\theta}_{P_{n}}\right)  _{\left[  n\right]  \smallsetminus
\left[  1\right]  }^{-1}\right)  }. \label{dzeta_}%
\end{equation}
Let us define
\begin{equation}
I_{1}\left(  R_{n}\right)  =\left\{  \alpha_{1}\in\mathbb{N},\exists
\boldsymbol{\alpha}=\left(  \alpha_{1},\boldsymbol{\alpha}_{\left[  n\right]
\smallsetminus\left\{  1\right\}  }\right)  \in\mathbb{N}^{n}%
,c_{\boldsymbol{\alpha},\lambda}\left(  R_{n}\right)  \neq0\right\}  ,
\label{I1_Rn}%
\end{equation}
then we define $\mathbf{G},$ a function with respect to the variable $u_{1}$,
by
\begin{equation}
\mathbf{G}\left(  R_{n},u_{1}\right)  =\sum_{\alpha_{1}\in I_{1}\left(
R_{n}\right)  }\frac{u_{1}^{\alpha_{1}}}{\alpha_{1}!} \label{F2}%
\end{equation}
If $\mathbf{x}=\left(  x_{1},\ldots,x_{n}\right)  \in\mathbb{R}^{n},$ we
denote $\mathbf{x}_{\left[  k\right]  }=\left(  x_{1},\ldots,x_{k}\right)
\in\mathbb{R}^{k}.$ If $\mathbf{y}=\left(  y_{1},\ldots,y_{n}\right)
\in\mathbb{R}^{n},$ let $\boldsymbol{y}_{\left[  k\right]  }\mathbf{x}%
_{\left[  k\right]  }=\left(  y_{1}x_{1},\ldots,y_{k}x_{k}\right)  $, then the
Lt of $\mathbf{X}_{\left[  n\right]  \smallsetminus\left[  1\right]  }=\left(
X_{2},\ldots,X_{n}\right)  $ given $X_{1}=x_{1}$ is
\begin{equation}
L_{\mathbf{X}_{\left[  n\right]  \smallsetminus\left[  1\right]  }}%
^{X_{1}=x_{1}}\left(  \boldsymbol{\theta}_{\left[  n\right]  \smallsetminus
\left[  1\right]  }\right)  =[S_{\left[  n\right]  \smallsetminus\left[
1\right]  }\left(  \boldsymbol{\theta}_{\left[  n\right]  \smallsetminus
\left[  1\right]  }\right)  ]^{-\lambda}\frac{\mathbf{G}\left(  R_{n}%
,\mathfrak{z}_{n-1}\left(  \boldsymbol{\theta}_{\left[  n\right]
\smallsetminus\left[  1\right]  }\right)  x_{1}\right)  }{\mathbf{G}\left(
R_{n},\mathfrak{z}_{n-1}\left(  \boldsymbol{0}_{\left[  n\right]
\smallsetminus\left[  1\right]  }\right)  x_{1}\right)  }
\label{LXn_1_X1_Yn_1}
\end{equation}

\end{theorem}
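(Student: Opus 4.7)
The plan is to reduce the statement to an application of Theorem \ref{L_X1_n_0} with $k=1$, exploiting the separate linearity of $R_n$ in each variable: since (\ref{Rn}) only involves monomials $r_T\mathbf z^T$ in which each $z_i$ appears at most once, we have the splitting $R_n(\mathbf z) = R_n(0,\mathbf z_{[n]\smallsetminus[1]}) + z_1\,\frac{\partial R_n}{\partial z_1}(0,\mathbf z_{[n]\smallsetminus[1]})$.

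\emph{Step 1: alternative forms of $S_{[n]\smallsetminus[1]}$ and $B_{n-1}$.} Formula (\ref{S2}) is exactly (\ref{STdef}) specialized to $T=[n]\smallsetminus\{1\}$, and expanding $\partial P_n/\partial\theta_1$ from $P_n=\sum_T p_T\boldsymbol\theta^T$ term by term yields (\ref{S3}). To identify (\ref{S2}) with (\ref{S}) and (\ref{B2}) with (\ref{B}), I would insert the splitting of $R_n$ into the Taylor identity (\ref{P_Taylor}), writing $(\boldsymbol\theta-\boldsymbol\theta_{P_n})^{[n]}=(\theta_1-\widetilde p_1)(\boldsymbol\theta-\boldsymbol\theta_{P_n})_{[n]\smallsetminus[1]}^{\mathbf 1}$. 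Multiplying out, $P_n$ becomes an affine polynomial in $\theta_1$ whose $\theta_1$-derivative equals $p_{[n]}(\boldsymbol\theta-\boldsymbol\theta_{P_n})_{[n]\smallsetminus[1]}^{\mathbf 1}[1-R_n(0,(\boldsymbol\theta-\boldsymbol\theta_{P_n})_{[n]\smallsetminus[1]}^{-1})]$, and whose value at $\theta_1=\widetilde p_1$ equals $-p_{[n]}(\boldsymbol\theta-\boldsymbol\theta_{P_n})_{[n]\smallsetminus[1]}^{\mathbf 1}\frac{\partial R_n}{\partial z_1}(0,(\boldsymbol\theta-\boldsymbol\theta_{P_n})_{[n]\smallsetminus[1]}^{-1})$. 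Dividing by $p_1$ and $-p_1$ respectively gives (\ref{S}) and (\ref{B}).

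\emph{Step 2 (the main step): rewriting $\mathbf F_1$ via a geometric-series factorization.} The same splitting yields
\[
1-R_n(\mathbf z)=\bigl[1-R_n(0,\mathbf z_{[n]\smallsetminus[1]})\bigr]\bigl[1-z_1\,u(\mathbf z_{[n]\smallsetminus[1]})\bigr],\qquad u=\frac{\tfrac{\partial R_n}{\partial z_1}(0,\mathbf z_{[n]\smallsetminus[1]})}{1-R_n(0,\mathbf z_{[n]\smallsetminus[1]})}.
\]
Raising to the power $-\lambda$ and expanding the second factor binomially, the coefficient of $z_1^{\alpha_1}$ equals $[1-R_n(0,\cdot)]^{-\lambda}(\lambda)_{\alpha_1}u^{\alpha_1}/\alpha_1!$; by (\ref{c_alpha_lambda_R}) it must also equal $\sum_{\boldsymbol\alpha_{[n]\smallsetminus[1]}}c_{(\alpha_1,\boldsymbol\alpha_{[n]\smallsetminus[1]}),\lambda}(R_n)\,\mathbf z_{[n]\smallsetminus[1]}^{\boldsymbol\alpha_{[n]\smallsetminus[1]}}$. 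Substituting $\mathbf z_{[n]\smallsetminus[1]}=(\boldsymbol\theta-\boldsymbol\theta_{P_n})_{[n]\smallsetminus[1]}^{-1}$, so that $u$ becomes $\mathfrak z_{n-1}(\boldsymbol\theta_{[n]\smallsetminus[1]})$ by (\ref{dzeta_}), plugging this into the definition (\ref{Fkdef}) of $\mathbf F_1$, and noting that the two $(\lambda)_{\alpha_1}$ factors cancel, one obtains
\[
\mathbf F_1(\lambda,R_n,x_1,\boldsymbol\theta_{[n]\smallsetminus[1]})=\bigl[1-R_n(0,(\boldsymbol\theta-\boldsymbol\theta_{P_n})_{[n]\smallsetminus[1]}^{-1})\bigr]^{-\lambda}\,\mathbf G\bigl(R_n,\mathfrak z_{n-1}(\boldsymbol\theta_{[n]\smallsetminus[1]})\,x_1\bigr).
\]
The restriction to $\alpha_1\in I_1(R_n)$ in $\mathbf G$ precisely matches the set of $\alpha_1$ producing some nonzero $c_{\boldsymbol\alpha,\lambda}(R_n)$ in the $\mathbf F_1$-sum.

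\emph{Step 3: absorbing the prefactor into $S_{[n]\smallsetminus[1]}^{-\lambda}$ and concluding.} From (\ref{S}) and (\ref{Qn_k}), the ratio $S_{[n]\smallsetminus[1]}/Q_{[n]\smallsetminus[1]}$ equals $\tfrac{p_{[n]}}{p_1}(-\boldsymbol\theta_{P_n})_{[n]\smallsetminus[1]}^{\mathbf 1}[1-R_n(0,(\boldsymbol\theta-\boldsymbol\theta_{P_n})_{[n]\smallsetminus[1]}^{-1})]$; the $\boldsymbol\theta$-independent factor is pinned down by evaluating at $\boldsymbol\theta=\mathbf 0$ and using $S_{[n]\smallsetminus[1]}(\mathbf 0)=Q_{[n]\smallsetminus[1]}(\mathbf 0)=1$ (the first from (\ref{S3}) with $T=\emptyset$), giving
\[
Q_{[n]\smallsetminus[1]}^{-\lambda}\,\frac{[1-R_n(0,(\boldsymbol\theta-\boldsymbol\theta_{P_n})_{[n]\smallsetminus[1]}^{-1})]^{-\lambda}}{[1-R_n(0,(-\boldsymbol\theta_{P_n})_{[n]\smallsetminus[1]}^{-1})]^{-\lambda}}=S_{[n]\smallsetminus[1]}^{-\lambda}.
\]
Inserting the Step 2 expression for $\mathbf F_1$ (both at $\boldsymbol\theta_{[n]\smallsetminus[1]}$ and at $\mathbf 0_{[n]\smallsetminus[1]}$) into (\ref{L_Zk_xk}) with $k=1$ and applying this identity yields (\ref{LXn_1_X1_Yn_1}).

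\emph{Expected main obstacle.} The substance of the argument is concentrated in Step 2; Steps 1 and 3 are algebraic bookkeeping. The clean factorization of $[1-R_n]^{-\lambda}$ in $z_1$ is available only because $R_n$ is separately linear in $z_1$, a property inherited from the affinity of $P_n$. Without it, the $z_1^{\alpha_1}$-coefficient of $[1-R_n]^{-\lambda}$ would not reduce to a single geometric-type term times $[1-R_n(0,\cdot)]^{-\lambda}$, and $\mathbf F_1$ could not be summed into $\mathbf G$. The most delicate accounting is tracking which $\alpha_1$ contribute, in order to match the sum in (\ref{Fkdef}) with the support $I_1(R_n)$ used in the definition (\ref{F2}) of $\mathbf G$.
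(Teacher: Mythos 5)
Your proposal is correct and follows essentially the same route as the paper's proof: reduce to Theorem \ref{L_X1_n_0} with $k=1$, use the affinity of $R_{n}$ in $z_{1}$ to evaluate the inner sum of $\mathbf{F}_{1}$ in closed form, resum over $\alpha_{1}$ into $\mathbf{G}$, and absorb the resulting prefactor into $S_{\left[  n\right]  \smallsetminus\left[  1\right]  }^{-\lambda}$ by normalizing at $\boldsymbol{\theta}=\mathbf{0}$. The only (equivalent) mechanical differences are that you extract the $z_{1}^{\alpha_{1}}$-coefficient of $\left[  1-R_{n}\right]  ^{-\lambda}$ from the binomial expansion of the factor $\left[  1-z_{1}\mathfrak{z}\right]  ^{-\lambda}$ where the paper applies $\left(  \partial/\partial z_{1}\right)  ^{\beta_{1}}$ and then sets $z_{1}=0$ and $\beta_{1}=\alpha_{1}$, and that you obtain (\ref{S2}) and (\ref{B2}) by reading off the $\theta_{1}$-coefficient and the value at $\theta_{1}=\widetilde{p}_{1}$ of the affine-in-$\theta_{1}$ form of $P_{n}$ where the paper argues via the limit $\theta_{1}\rightarrow\infty$ and a change of variables.
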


Before prove Theorem \ref{L_X1_n_1}, we give the following remark for the
affine polynomial \newline $S_{\left[  n\right]  \smallsetminus\left[  1\right]
}\left(  \boldsymbol{\theta}_{\left[  n\right]  \smallsetminus\left[
1\right]  }\right)  =\sum_{T\subset\left[  n\right]  \smallsetminus\left[
1\right]  }\frac{p_{\{1\}\cup T}}{p_{1}}(\boldsymbol{\theta}_{\left[
n\right]  \smallsetminus\left[  1\right]  })^{T}.$

\begin{remark}
\label{RemL_X1_n_1}For $T\in\mathfrak{P}_{\left[  n\right]  \smallsetminus
\left[  1\right]  },$ we have $\widetilde{p}_{T}(S_{\left[  n\right]
\smallsetminus\left[  1\right]  })=\widetilde{p}_{T}(R_{n}),$ hence we have
$\widetilde{b}_{T}\left(  S_{\left[  n\right]  \smallsetminus\left[  1\right]
}\right)  =\widetilde{b}_{T}\left(  P_{n}\right)  .$ Therefore, if
$\mathbf{\gamma}_{(P_{n},\lambda)}$ is infinitely divisible $\mathbf{\gamma
}_{(S_{\left[  n\right]  \smallsetminus\left[  1\right]  },\lambda)}$ is also
infinitely divisible by Theorem \ref{mainresult}. We also have
\begin{equation}
r_{T}\left(  S_{\left[  n\right]  \smallsetminus\left[  1\right]  }\right)
=r_{T}\left(  P_{n}\right)  =r_{T} \label{r_T_S_n_1}%
\end{equation}
and therefore
\begin{equation}
S_{\left[  n\right]  \smallsetminus\left[  1\right]  }\left(
\boldsymbol{\theta}_{\left[  n\right]  \smallsetminus\left[  1\right]
}\right)  =\frac{p_{\left[  n\right]  }}{p_{1}}\left(  \boldsymbol{\theta
}_{\left[  n\right]  }-\boldsymbol{\theta}_{P_{n}}\right)  ^{\left[  n\right]
\smallsetminus\left\{  1\right\}  }\{1-\sum_{T\subset\left[  n\right]
\smallsetminus\left\{  1\right\}  }r_{T}\left(  \boldsymbol{\theta}_{\left[
n\right]  }-\boldsymbol{\theta}_{P_{n}}\right)  ^{-T}\} \label{S_n_1_rT}%
\end{equation}

\end{remark}

Now, we can prove Theorem \ref{L_X1_n_1}. If there is no ambiguity, we denote
$S_{\left[  n\right]  \smallsetminus\left[  1\right]  },B_{n-1}$ and
$\mathfrak{z}_{n-1}$ by $S,B$ and $\mathfrak{z}$.

Before giving the main theorem, we prove by finite induction the following lemma.

\begin{lemma}
\label{I1(Rn)}With the notations of Theorem (\ref{L_X1_n_1}), unless $Rn=0$,
we have,%
\[
I_{1}\left(  R_{n}\right)  =\left\{  \alpha_{1}\in\mathbb{N},\exists
\boldsymbol{\alpha}=\left(  \alpha_{1},\boldsymbol{\alpha}_{\left[  n\right]
\smallsetminus\left\{  1\right\}  }\right)  \in\mathbb{N}^{n}%
,c_{\boldsymbol{\alpha},\lambda}\left(  R_{n}\right)  \neq0\right\}
=\mathbb{N},
\]
therefore $G$ defined in Theorem (\ref{L_X1_n_1}) by (\ref{F2}) is $\exp$.
\end{lemma}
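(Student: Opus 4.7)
The plan is to decompose $R_{n}$ according to its affine dependence on $z_{1}$ and to extract the coefficients of $z_{1}^{k}$ in $[1-R_{n}]^{-\lambda}$. Since $R_{n}$ is inherited from the affine polynomial $P_{n}$ through the Taylor identity (\ref{P_Taylor}), it is itself affine in each variable; in particular we may write $R_{n}(\mathbf{z})=A_{0}(\mathbf{z}')+z_{1}\,A_{1}(\mathbf{z}')$ with $\mathbf{z}'=(z_{2},\ldots,z_{n})$ and $A_{0}(\mathbf{0})=0$. Factoring $1-R_{n}=(1-A_{0})\bigl[1-z_{1}A_{1}/(1-A_{0})\bigr]$ and applying the generalized binomial series in the formal variable $z_{1}$ yields
\[
[1-R_{n}(\mathbf{z})]^{-\lambda}=\sum_{k\geq 0}\frac{(\lambda)_{k}}{k!}\,A_{1}(\mathbf{z}')^{k}\,(1-A_{0}(\mathbf{z}'))^{-\lambda-k}\,z_{1}^{k},
\]
where $(1-A_{0})^{-\lambda-k}$ is well defined and a unit in $\mathbb{R}[[\mathbf{z}']]$ because $1-A_{0}$ has constant term~$1$.

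Reading off the coefficient of $z_{1}^{k}$ gives a formal power series $C_{k}\in\mathbb{R}[[\mathbf{z}']]$, and by definition $k\in I_{1}(R_{n})$ iff $C_{k}\not\equiv 0$. Since $\mathbb{R}[[\mathbf{z}']]$ is an integral domain and $(1-A_{0})^{-\lambda-k}$ is invertible in it, the condition $C_{k}\neq 0$ is equivalent to $k=0$ or $A_{1}\not\equiv 0$. Thus the equality $I_{1}(R_{n})=\mathbb{N}$ reduces to the existence of at least one monomial of $R_{n}$ involving $z_{1}$, i.e.\ to $A_{1}\not\equiv 0$.

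The finite induction on $n$ establishes the implication $R_{n}\neq 0\Rightarrow A_{1}\not\equiv 0$. The base case is $n=2$: $R_{2}(\mathbf{z})=\widetilde{b}_{1,2}\,z_{1}z_{2}$, so $R_{2}\neq 0$ forces $A_{1}\not\equiv 0$ at once. For the inductive step, suppose $A_{1}\equiv 0$; then $R_{n}[(\boldsymbol{\theta}-\boldsymbol{\theta}_{P_{n}})^{-1}]$ is independent of $\theta_{1}$ in the Taylor identity (\ref{P_Taylor}), which forces the factorization $P_{n}(\boldsymbol{\theta})=(1+p_{1}\theta_{1})\,P'(\theta_{2},\ldots,\theta_{n})$ for some affine polynomial $P'$ in $n-1$ variables (the coefficients match because each $p_{T}\neq 0$ by hypothesis). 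A direct check using (\ref{rT}) shows that the $R$-polynomial built from $P'$ coincides, after relabelling, with $R_{n}$ viewed as an element of $\mathbb{R}[\mathbf{z}']$, so applying the induction hypothesis to $P'$ closes the argument and produces the required contradiction.

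The main obstacle is this inductive structural step, namely tracking how the vanishing of $A_{1}$ forces the factorization of $P_{n}$ and then reducing to a lower-dimensional instance; the formal-series algebra of the first two paragraphs is routine. Once $A_{1}\not\equiv 0$ is secured, $C_{k}\not\equiv 0$ for every $k\geq 0$, so $I_{1}(R_{n})=\mathbb{N}$ and $\mathbf{G}(R_{n},u_{1})=\sum_{k\geq 0}u_{1}^{k}/k!=\exp(u_{1})$, as claimed.
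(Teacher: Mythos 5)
Your first two paragraphs are sound and in fact cleaner than the paper's argument: writing $R_{n}(\mathbf{z})=A_{0}(\mathbf{z}')+z_{1}A_{1}(\mathbf{z}')$, factoring $1-R_{n}=(1-A_{0})\bigl[1-z_{1}A_{1}/(1-A_{0})\bigr]$, and using that $(1-A_{0})^{-\lambda-k}$ is a unit in the integral domain $\mathbb{R}[[\mathbf{z}']]$ correctly reduces the lemma to the single claim $A_{1}\not\equiv 0$, i.e.\ that $r_{T}\neq 0$ for at least one $T$ containing the index $1$. (The paper instead runs a case analysis on the minimal cardinality of a nonzero $r_{T}$, restricts all other variables to $0$, and expands $(1-r_{T}\mathbf{z}^{T})^{-\lambda}$ directly; its ``without loss of generality $T=[k+1]$'' silently assumes $1\in T$, which is exactly the point your reduction isolates.)

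The gap is in your inductive step, and it cannot be repaired as written because the implication $R_{n}\neq 0\Rightarrow A_{1}\not\equiv 0$ is false under the stated hypotheses. If $A_{1}\equiv 0$ then, as you say, $P_{n}$ factors as $(1+p_{1}\theta_{1})P'(\theta_{2},\ldots,\theta_{n})$ --- but this is not a contradiction; it is a consistent configuration compatible with $R_{n}\neq 0$. Concretely, take $n=3$ and $P_{3}(\boldsymbol{\theta})=(1+\theta_{1})(1+\theta_{2}+\theta_{3}+c\,\theta_{2}\theta_{3})$ with $0<c<1$: all $p_{T}>0$, the distribution is infinitely divisible ($\widetilde{b}_{1,2}=\widetilde{b}_{1,3}=\widetilde{b}_{1,2,3}=0$, $\widetilde{b}_{2,3}=(1-c)/c^{2}>0$), and $R_{3}(\mathbf{z})=\widetilde{b}_{2,3}z_{2}z_{3}\neq 0$ while $A_{1}\equiv 0$, so $I_{1}(R_{3})=\{0\}\neq\mathbb{N}$. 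Your induction hypothesis applied to $P'$ only asserts that some $r_{T}(P')$ with $2\in T$ is nonzero; it says nothing about the excised variable $\theta_{1}$ and so cannot produce the ``required contradiction.'' What your (correct) reduction actually shows is that the lemma needs the stronger hypothesis $A_{1}\not\equiv 0$ (equivalently, $r_{T}\neq 0$ for some $T\ni 1$, which fails precisely when $\theta_{1}$ factors out of $P_{n}$, i.e.\ when $X_{1}$ is independent of the rest); the paper's own proof glosses over the same degenerate case, but your write-up claims to close it and does not.
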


Now, we can give the following main Theorem.

\begin{theorem}
\label{ThLXn_1_X1}With the notations of Theorem (\ref{L_X1_n_1}), unless
$Rn=0$, we have%
\begin{align}
L_{\mathbf{X}_{\left[  n\right]  \smallsetminus\left[  1\right]  }}%
^{X_{1}=x_{1}}\left(  \boldsymbol{\theta}_{\left[  n\right]  \smallsetminus
\left[  1\right]  }\right)   &  =S_{\left[  n\right]  \smallsetminus\left[
1\right]  }^{-\lambda}\left(  \boldsymbol{\theta}_{\left[  n\right]
\smallsetminus\left[  1\right]  }\right)  \exp\{-[\tfrac{P_{n}\left(
0,\boldsymbol{\theta}_{\left[  n\right]  \smallsetminus\left[  1\right]
}\right)  }{S_{\left[  n\right]  \smallsetminus\left[  1\right]  }\left(
\boldsymbol{\theta}_{\left[  n\right]  \smallsetminus\left[  1\right]
}\right)  }-1]\frac{x_{1}}{p_{1}}\}\label{LXn_1_Yn_1s}\\
&  =S_{\left[  n\right]  \smallsetminus\left[  1\right]  }^{-\lambda}\left(
\boldsymbol{\theta}_{\left[  n\right]  \smallsetminus\left[  1\right]
}\right)  \exp\{-[\frac{\sum_{T\in\mathfrak{P}\left(  \left[  n\right]
\smallsetminus\left[  1\right]  \right)  }(p_{T}-\frac{p_{\{1\}\cup T\}}%
}{p_{1}})\boldsymbol{\theta}_{\left[  n\right]  \smallsetminus\left[
1\right]  }^{T}}{S_{\left[  n\right]  \smallsetminus\left[  1\right]  }\left(
\boldsymbol{\theta}_{\left[  n\right]  \smallsetminus\left[  1\right]
}\right)  }]\frac{x_{1}}{p_{1}}\}\label{LXn_1_Yn_1s_d}\\
&  =S_{\left[  n\right]  \smallsetminus\left[  1\right]  }^{-\lambda}\left(
\boldsymbol{\theta}_{\left[  n\right]  \smallsetminus\left[  1\right]
}\right)\times \nonumber\\
&  \mathbf{e}^{\{\frac{p_{\left[  n\right]  }}{p_{1}}x_{1}\sum
_{T\subset\left[  n\right]  \ \smallsetminus\left\{  1\right\}  ,0<\left\vert
T\right\vert }r_{\left\{  1\right\}  \cup T}\left(  -\widetilde{\mathbf{p}%
}\right)  ^{\left\{  \left[  n\right]  \smallsetminus\left\{  1\right\}
\right\}  \smallsetminus T}[\mathbf{S}^{\left[  n\right]  \smallsetminus
\left\{  1\right\}  \smallsetminus T}\left(  \boldsymbol{\theta}_{\left[
n\right]  \mathbf{\smallsetminus}\left[  1\right]  }\right)  S_{\left[
n\right]  \smallsetminus\left[  1\right]  }^{-1}\left(  \boldsymbol{\theta
}_{\left[  n\right]  \mathbf{\smallsetminus}\left[  1\right]  }\right)
-1]\}}\label{LXn_1_X1_Yn_1s_d_main}
\end{align}
We denote by $\mathbf{S}=\left(  S_{1},\ldots,S_{n}\right)  ,$ with
$S_{i}\left(  \theta_{i}\right)  =1+\left(  -\widetilde{p}_{i}\right)
^{-1}\theta_{i}$, $i=1,\ldots,n.$ For simplicity, we denote $L_{\mathbf{X}%
_{\left[  n\right]  \smallsetminus\left[  1\right]  }}^{X_{1}=x_{1}}\left(
\boldsymbol{\theta}_{\left[  n\right]  \smallsetminus\left[  1\right]
}\right)  $ by $L_{\mathbf{X}_{\left[  n\right]  \smallsetminus\left[
1\right]  }}^{X_{1}=x_{1}}$, $S_{\left[  n\right]  \smallsetminus\left[
1\right]  }\left(  \boldsymbol{\theta}_{\left[  n\right]
\mathbf{\smallsetminus}\left[  1\right]  }\right)  $ by $S_{\left[  n\right]
\smallsetminus\left[  1\right]  }$, and $\mathbf{S}\left(  \boldsymbol{\theta
}_{\left[  n\right]  \mathbf{\smallsetminus}\left[  1\right]  }\right)  $ by
$\mathbf{S}$ and we can write
\begin{equation}
L_{\mathbf{X}_{\left[  n\right]  \smallsetminus\left[  1\right]  }}%
^{X_{1}=x_{1}}=S_{\left[  n\right]  \smallsetminus\left[  1\right]
}^{-\lambda}\mathbf{e}^{\{(-\widetilde{p}_{\left[  n\right]
\mathbf{\smallsetminus}\left[  1\right]  })^{-1}x_{1}\sum_{T\subset\left[
n\right]  \ \smallsetminus\left\{  1\right\}  ,0<\left\vert T\right\vert
}r_{\left\{  1\right\}  \cup T}\left(  -\widetilde{\mathbf{p}}\right)
^{\left\{  \left[  n\right]  \smallsetminus\left\{  1\right\}  \right\}
\smallsetminus T}[\mathbf{S}^{\left[  n\right]  \smallsetminus\left\{
1\right\}  \smallsetminus T}S_{\left[  n\right]  \smallsetminus\left[
1\right]  }^{-1}-1]\}}. \label{LXn_1_X1_Yn_1s_d_main_s}%
\end{equation}
We can also write%
\begin{equation}
L_{\mathbf{X}_{\left[  n\right]  \smallsetminus\left[  1\right]  }}%
^{X_{1}=x_{1}}=S_{\left[  n\right]  \smallsetminus\left[  1\right]
}^{-\lambda}\mathbf{e}^{\{(-\widetilde{p}_{\left[  n\right]
\mathbf{\smallsetminus}\left[  1\right]  })^{-1}x_{1}[\sum_{T\subset\left[
n\right]  \ \smallsetminus\left\{  1\right\}  ,0<\left\vert T\right\vert
}r_{\left\{  1\right\}  \cup T}\left(  -\widetilde{\mathbf{p}}\right)
^{\left\{  \left[  n\right]  \smallsetminus\left\{  1\right\}  \right\}
\smallsetminus T}\mathbf{S}^{\left[  n\right]  \smallsetminus\left\{
1\right\}  \smallsetminus T}S_{\left[  n\right]  \smallsetminus\left[
1\right]  }^{-1}-C]\}}, \label{LXn_1_X1_Yn_1s_d_main_s_C}%
\end{equation}
where $C=\sum_{T\subset\left[  n\right]  \ \smallsetminus\left\{  1\right\}
,0<\left\vert T\right\vert }r_{\left\{  1\right\}  \cup T}\left(
-\widetilde{\mathbf{p}}\right)  ^{\left\{  \left[  n\right]  \smallsetminus
\left\{  1\right\}  \right\}  \smallsetminus T}$ is such that $L_{\mathbf{X}%
_{\left[  n\right]  \smallsetminus\left[  1\right]  }}^{X_{1}=x_{1}}\left(
\mathbf{0}_{n-1}\right)  =1.$
\end{theorem}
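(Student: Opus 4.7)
The plan is to specialize Theorem \ref{L_X1_n_1} using Lemma \ref{I1(Rn)}. Since we assume $R_n \neq 0$, Lemma \ref{I1(Rn)} gives $I_1(R_n)=\mathbb{N}$, so $\mathbf{G}(R_n,u_1)=\sum_{\alpha_1\geqslant 0}u_1^{\alpha_1}/\alpha_1!=\exp(u_1)$. Substituting into (\ref{LXn_1_X1_Yn_1}) immediately yields $L_{\mathbf{X}_{[n]\smallsetminus[1]}}^{X_1=x_1}=S_{[n]\smallsetminus[1]}^{-\lambda}\exp\{(\mathfrak{z}_{n-1}(\boldsymbol{\theta}_{[n]\smallsetminus[1]})-\mathfrak{z}_{n-1}(\boldsymbol{0}_{[n]\smallsetminus[1]}))x_1\}$, which is the common starting point for the three claimed forms.

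To obtain (\ref{LXn_1_Yn_1s}) and (\ref{LXn_1_Yn_1s_d}) I would exploit the affinity of $P_n$ in $\theta_1$: since $\partial^{2}P_n/\partial\theta_1^{2}=0$ we have $P_n(\theta_1,\boldsymbol{\theta}_{[n]\smallsetminus[1]})=P_n(0,\boldsymbol{\theta}_{[n]\smallsetminus[1]})+\theta_1\,p_1 S_{[n]\smallsetminus[1]}$ by (\ref{S2}). Setting $\theta_1=\widetilde{p}_1$ and comparing with (\ref{B2}) gives $B_{n-1}=-\tfrac{1}{p_1}P_n(0,\boldsymbol{\theta}_{[n]\smallsetminus[1]})-\widetilde{p}_1 S_{[n]\smallsetminus[1]}$, so $\mathfrak{z}_{n-1}=-\tfrac{1}{p_1}\tfrac{P_n(0,\boldsymbol{\theta}_{[n]\smallsetminus[1]})}{S_{[n]\smallsetminus[1]}}-\widetilde{p}_1$. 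Evaluating at $\boldsymbol{\theta}_{[n]\smallsetminus[1]}=\boldsymbol{0}$ (where $P_n(0,\boldsymbol{0})=1$ and $S_{[n]\smallsetminus[1]}(\boldsymbol{0})=1$) produces $\mathfrak{z}_{n-1}(\boldsymbol{0})=-\tfrac{1}{p_1}-\widetilde{p}_1$, so the $\widetilde{p}_1$ terms cancel in the difference and we obtain (\ref{LXn_1_Yn_1s}). Formula (\ref{LXn_1_Yn_1s_d}) then follows by writing $P_n(0,\boldsymbol{\theta})-S_{[n]\smallsetminus[1]}=\sum_{T\subset[n]\smallsetminus[1]}(p_T-p_{\{1\}\cup T}/p_1)\boldsymbol{\theta}^{T}$, using the explicit form (\ref{S3}).

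For the main expression (\ref{LXn_1_X1_Yn_1s_d_main}) I would instead use the $R_n$-factored forms (\ref{S}) and (\ref{B}). The key algebraic identity is $\theta_i-\widetilde{p}_i=(-\widetilde{p}_i)S_i(\theta_i)$ for $i\in[n]\smallsetminus[1]$, which follows directly from $S_i(\theta_i)=1+(-\widetilde{p}_i)^{-1}\theta_i$. Therefore $(\boldsymbol{\theta}-\boldsymbol{\theta}_{P_n})^{[n]\smallsetminus\{1\}}=(-\widetilde{\mathbf{p}})^{[n]\smallsetminus\{1\}}\mathbf{S}^{[n]\smallsetminus\{1\}}$ and $(\boldsymbol{\theta}-\boldsymbol{\theta}_{P_n})_{[n]\smallsetminus[1]}^{-T}=(-\widetilde{\mathbf{p}})^{-T}\mathbf{S}^{-T}$ for each $T\subset[n]\smallsetminus\{1\}$. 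Substituting into (\ref{B}) and expanding $(\partial R_n/\partial z_1)(0,\mathbf{w})=\sum_{T\subset[n]\smallsetminus\{1\},|T|\geqslant1}r_{\{1\}\cup T}\mathbf{w}^{T}$ yields $B_{n-1}=\tfrac{p_{[n]}}{p_1}\sum_{T}r_{\{1\}\cup T}(-\widetilde{\mathbf{p}})^{[n]\smallsetminus\{1\}\smallsetminus T}\mathbf{S}^{[n]\smallsetminus\{1\}\smallsetminus T}$. Dividing by $S_{[n]\smallsetminus[1]}$, forming the difference $\mathfrak{z}_{n-1}(\boldsymbol{\theta})-\mathfrak{z}_{n-1}(\boldsymbol{0})$ (where at $\boldsymbol{0}$ each $S_i$ and $S_{[n]\smallsetminus[1]}$ equals $1$), and multiplying by $x_1$ yields exactly (\ref{LXn_1_X1_Yn_1s_d_main}).

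The last two forms are cosmetic rewritings. Formula (\ref{LXn_1_X1_Yn_1s_d_main_s}) follows from the identity $p_{[n]}/p_1=(-\widetilde{p}_{[n]\smallsetminus[1]})^{-1}$, which is immediate from $\widetilde{p}_{[n]\smallsetminus[1]}=-p_{\{1\}}/p_{[n]}$. Formula (\ref{LXn_1_X1_Yn_1s_d_main_s_C}) is obtained by splitting the bracket $[\mathbf{S}^{[n]\smallsetminus\{1\}\smallsetminus T}S_{[n]\smallsetminus[1]}^{-1}-1]$ into two sums and pulling out the constant $C=\sum_{T}r_{\{1\}\cup T}(-\widetilde{\mathbf{p}})^{[n]\smallsetminus\{1\}\smallsetminus T}$, and the value of $C$ is characterized by the normalization $L_{\mathbf{X}_{[n]\smallsetminus[1]}}^{X_1=x_1}(\boldsymbol{0}_{n-1})=1$. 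I expect the main obstacle to be the bookkeeping of multi-index factors in the $R_n$-factorized computation, in particular isolating the powers of $(-\widetilde{\mathbf{p}})$ and $\mathbf{S}$ correctly across the product $(\boldsymbol{\theta}-\boldsymbol{\theta}_{P_n})^{[n]\smallsetminus\{1\}}\cdot\mathbf{w}^{T}$; the implicit consistency between the direct formula $\mathfrak{z}_{n-1}(\boldsymbol{0})=-\tfrac{1}{p_1}-\widetilde{p}_1$ and its $R_n$-factored counterpart is a useful check.
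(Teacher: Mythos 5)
Your proposal is correct and follows essentially the same route as the paper: both specialize Theorem \ref{L_X1_n_1} via Lemma \ref{I1(Rn)} (so that $\mathbf{G}=\exp$), identify $B_{n-1}=-\widetilde{p}_{1}S_{\left[n\right]\smallsetminus\left[1\right]}-\tfrac{1}{p_{1}}P_{n}(0,\boldsymbol{\theta}_{\left[n\right]\smallsetminus\left[1\right]})$ from (\ref{S2}) and (\ref{B2}), and obtain the exponent as $(\mathfrak{z}_{n-1}(\boldsymbol{\theta}_{\left[n\right]\smallsetminus\left[1\right]})-\mathfrak{z}_{n-1}(\boldsymbol{0}))x_{1}$ before normalizing at $\boldsymbol{0}_{n-1}$. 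Your derivation of (\ref{LXn_1_X1_Yn_1s_d_main}) directly from the $R_{n}$-factored form (\ref{B}) via $\theta_{i}-\widetilde{p}_{i}=(-\widetilde{p}_{i})S_{i}$ is a mild streamlining of the paper's re-expansion of $P_{n}(0,\cdot)/S_{\left[n\right]\smallsetminus\left[1\right]}$ through (\ref{S_n_1_rT}), but it rests on the same identities and yields the same result.
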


\section{Applications of the main results}

\subsection{A particular case in the case $k=1$}

Now let us apply the result of Corollary \ref{cor_cor} to get the following Corollary.

\begin{corollary}
\label{cor1}For $P_{n}\left(  \boldsymbol{\theta}\right)  =\frac{-q}{p}%
+\frac{1}{p}\prod_{i=1}^{n}\left(  1+p\theta_{i}\right)  ,$ we have
\begin{equation}
L_{\mathbf{X}_{\left[  n\right]  \mathbf{\smallsetminus}\left[  k\right]  }%
}^{\mathbf{X}_{\left[  k\right]  }=\mathbf{x}_{\left[  k\right]  }}\left(
\boldsymbol{\theta}_{\left[  n\right]  \mathbf{\smallsetminus}\left[
k\right]  }\right)  =[\prod_{i=k+1}^{n}\left(  1+p\theta_{i}\right)
]^{-\lambda}\frac{F_{k-1}\left(  \lambda,\ldots,\lambda,qp^{-k}\mathbf{x}%
_{k}^{\left[  k\right]  }\prod_{i=k+1}^{n}\left(  1+p\theta_{i}\right)
^{-1}\right)  }{F_{k-1}\left(  \lambda,\ldots,\lambda,qp^{-k}\mathbf{x}%
_{k}^{\left[  k\right]  }\right)  } \label{LCp}%
\end{equation}

\end{corollary}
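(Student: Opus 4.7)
The plan is to apply Theorem \ref{L_X1_n_0} directly to the equicorrelated polynomial $P_n(\boldsymbol{\theta}) = -q/p + (1/p)\prod_{i=1}^{n}(1+p\theta_i)$ of Example \ref{equic_n}, reading off each ingredient of \eqref{L_Zk_xk} from the explicit density \eqref{Phy2} already established in Corollary \ref{cor_cor}.

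First I would expand $P_n$ to find $p_T = p^{|T|-1}$ for every $T\in\mathfrak{P}_n^\ast$; in particular $p_{[n]}=p^{n-1}$, and $\widetilde{p}_i = -p_{[n]\smallsetminus\{i\}}/p_{[n]} = -1/p$ for every $i$. Hence $(-\boldsymbol{\theta}_{P_n})_i^{-1}=p$, and the definition \eqref{Qn_k} collapses to
$$Q_{[n]\smallsetminus[k]}(\boldsymbol{\theta}_{[n]\smallsetminus[k]})=\prod_{i=k+1}^{n}(1+p\theta_i),$$
which is exactly the base of the $-\lambda$ power appearing in \eqref{LCp}.

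Next I would extract the coefficients $c_{\boldsymbol{\alpha},\lambda}(R_n)$ by matching the general formula \eqref{gamma_P_lambda} against \eqref{Phy2}. Using $\exp(\boldsymbol{\theta}_P,\mathbf{x})=e^{-(x_1+\cdots+x_n)/p}$ and $p_{[n]}^{-\lambda}=p^{-(n-1)\lambda}$, uniqueness of Taylor coefficients forces
$$\sum_{\boldsymbol{\alpha}\in\mathbb{N}^n}\frac{c_{\boldsymbol{\alpha},\lambda}(R_n)}{(\lambda)_{\boldsymbol{\alpha}}}\mathbf{x}^{\boldsymbol{\alpha}} \;=\; \sum_{m\geqslant 0}\frac{(qp^{-n})^{m}(x_1\cdots x_n)^{m}}{(\lambda)_m^{\,n-1}\,m!},$$
so only the diagonal multi-indices $\boldsymbol{\alpha}=m\mathbf{1}_n$ contribute, with $c_{m\mathbf{1}_n,\lambda}(R_n) = (\lambda)_m(qp^{-n})^{m}/m!$.

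Finally I would substitute this into the definition \eqref{Fkdef}. Only diagonal $\boldsymbol{\alpha}$ survive, giving
$$\mathbf{F}_k(\lambda,R_n,\mathbf{x}_{[k]},\boldsymbol{\theta}_{[n]\smallsetminus[k]}) = \sum_{m\geqslant 0}\frac{(x_1\cdots x_k)^{m}}{(\lambda)_m^{k}}\cdot\frac{(\lambda)_m(qp^{-n})^{m}}{m!}\prod_{i=k+1}^{n}(\theta_i+1/p)^{-m}.$$
Writing $(\theta_i+1/p)^{-m} = p^{m}(1+p\theta_i)^{-m}$ supplies a factor $p^{(n-k)m}$; combined with $qp^{-n}\cdot p^{n-k}=qp^{-k}$ and the collapse $(\lambda)_m/(\lambda)_m^{k}=1/(\lambda)_m^{k-1}$, the sum becomes $F_{k-1}\!\bigl(\lambda,\ldots,\lambda;\,qp^{-k}(x_1\cdots x_k)\prod_{i=k+1}^{n}(1+p\theta_i)^{-1}\bigr)$. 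Setting $\boldsymbol{\theta}_{[n]\smallsetminus[k]}=\mathbf{0}$ yields the denominator in \eqref{LCp}, and multiplying by $Q_{[n]\smallsetminus[k]}^{-\lambda}$ from the first step closes the argument. The only genuinely delicate point is the diagonal support of $c_{\boldsymbol{\alpha},\lambda}(R_n)$; once this is secured by the uniqueness of the series expansion of $[1-R_n(\mathbf{z})]^{-\lambda}$, the remainder is a careful bookkeeping of powers of $p$ and Pochhammer symbols.
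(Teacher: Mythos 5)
Your proposal is correct and follows essentially the same route as the paper: both identify $\widetilde{p}_{i}=-1/p$, use the diagonal support $c_{m\mathbf{1}_{n},\lambda}\left(  R_{n}\right)  =\frac{\left(  \lambda\right)  _{m}}{m!}\left(  qp^{-n}\right)  ^{m}$ already obtained in the proof of Corollary \ref{cor_cor}, and substitute into (\ref{L_Zk_xk})--(\ref{Fkdef}) to collapse the sums to $F_{k-1}$ after absorbing $p^{\left(  n-k\right)  m}$ into $qp^{-n}$. The only cosmetic difference is that you recover the diagonal coefficients by matching (\ref{gamma_P_lambda}) against (\ref{Phy2}), whereas the paper reads them directly from the expansion of $\left[  1-qp^{-n}\mathbf{z}^{\left[  n\right]  }\right]  ^{-\lambda}$; both are valid.
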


The case $k=1$ is simpler because $F_{0}=\exp,$ so we can give the following corollary.

\begin{corollary}
\label{cor1_k_1}For $n>1,k=1,$ we have%
\begin{equation}
L_{\mathbf{X}_{\left[  n\right]  \mathbf{\smallsetminus}\left[  1\right]  }%
}^{X_{1}=x_{1}}\left(  \boldsymbol{\theta}_{\left[  n\right]
\mathbf{\smallsetminus}\left[  1\right]  }\right)  =[\prod_{i=2}^{n}\left(
1+p\theta_{i}\right)  ]^{-\lambda}\exp\{qp^{-1}x_{1}[\prod_{i=2}^{n}\left(
1+p\theta_{i}\right)  ^{-1}-1]\}, \label{LCp_k_1}%
\end{equation}
or%
\begin{equation}
L_{\mathbf{X}_{\left[  n\right]  \mathbf{\smallsetminus}\left[  1\right]  }%
}^{X_{1}=x_{1}}\left(  \boldsymbol{\theta}_{\left[  n\right]
\mathbf{\smallsetminus}\left[  1\right]  }\right)  =\sum_{k=0}^{\infty}%
\frac{\left(  qp^{-1}x_{1}\right)  ^{k}}{k!}\exp\left(  -qp^{-1}x_{1}\right)
[\prod_{i=2}^{n}\left(  1+p\theta_{i}\right)  ]^{-\left(  \lambda+k\right)  },
\label{LCp_k_1_2}%
\end{equation}

\end{corollary}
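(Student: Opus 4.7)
The proof is essentially a direct specialization of Corollary \ref{cor1} to the case $k=1$, followed by a standard manipulation of exponentials.

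First, I would invoke Corollary \ref{cor1} with $k=1$. The right-hand side of (\ref{LCp}) then involves $F_{k-1}=F_{0}$. Going back to the definition (\ref{Hypergeomconf}) of the generalized hypergeometric function and using the convention $F_{m}=F_{m}^{0}$, we have $F_{0}(z)=\sum_{k=0}^{\infty}\frac{z^{k}}{k!}=\exp(z)$ (no parameters, since $p=0$ and $m=0$). So (\ref{LCp}) becomes
\[
L_{\mathbf{X}_{\left[n\right]\smallsetminus\left[1\right]}}^{X_{1}=x_{1}}\left(\boldsymbol{\theta}_{\left[n\right]\smallsetminus\left[1\right]}\right)=\bigl[\prod_{i=2}^{n}(1+p\theta_{i})\bigr]^{-\lambda}\frac{\exp\bigl(qp^{-1}x_{1}\prod_{i=2}^{n}(1+p\theta_{i})^{-1}\bigr)}{\exp(qp^{-1}x_{1})}.
\]
Using $\exp(a)/\exp(b)=\exp(a-b)$, I can combine the two exponentials, factoring out $qp^{-1}x_{1}$, which yields formula (\ref{LCp_k_1}).

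For the second form (\ref{LCp_k_1_2}), I would expand the first exponential as a power series in the variable $qp^{-1}x_{1}\prod_{i=2}^{n}(1+p\theta_{i})^{-1}$:
\[
\exp\bigl(qp^{-1}x_{1}\prod_{i=2}^{n}(1+p\theta_{i})^{-1}\bigr)=\sum_{k=0}^{\infty}\frac{(qp^{-1}x_{1})^{k}}{k!}\bigl[\prod_{i=2}^{n}(1+p\theta_{i})\bigr]^{-k}.
\]
Substituting back and merging the factor $\bigl[\prod_{i=2}^{n}(1+p\theta_{i})\bigr]^{-\lambda}$ with $\bigl[\prod_{i=2}^{n}(1+p\theta_{i})\bigr]^{-k}$ into $\bigl[\prod_{i=2}^{n}(1+p\theta_{i})\bigr]^{-(\lambda+k)}$, and keeping the factor $\exp(-qp^{-1}x_{1})$ outside the sum, gives exactly (\ref{LCp_k_1_2}).

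There is no real obstacle here: the only point deserving a brief justification is the identification $F_{0}=\exp$ in the notation of (\ref{Hypergeomconf}). Given that Corollary \ref{cor1} supplies the general formula, the corollary for $k=1$ reduces to an exponential-algebra identity plus a standard power-series expansion, which also makes transparent the probabilistic interpretation of (\ref{LCp_k_1_2}) as a Poisson mixture of univariate gamma Laplace transforms with Poisson parameter $qp^{-1}x_{1}$.
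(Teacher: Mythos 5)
Your proof is correct and follows the same route as the paper's primary argument: set $k=1$ in Corollary \ref{cor1}, use $F_{0}=\exp$, combine the exponentials to get (\ref{LCp_k_1}), and expand the exponential series to get (\ref{LCp_k_1_2}). (The paper additionally sketches two alternative derivations via Theorem \ref{L_X1_n_1} and via formula (\ref{LXn_1_X1_Yn_1s_d_main_s}), but these are supplementary to the argument you give.)
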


As a result, Formula (\ref{LCp_k_1_2}) gives a simulation of $\mathbf{X}%
_{\left[  n\right]  }$. Let us denote by $\mathcal{P}\left(  \mu\right)  $ the
Poisson distribution (Pd) of parameter $\mu$, we derive the following theorem.

\begin{theorem}
\label{Thequicor}Let $X_{1}\sim\gamma_{\left(  p,\lambda\right)  },$ let
$V_{1}\sim\mathcal{P}\left(  qp^{-1}X_{1}\right)  ,$ let $\mathbf{X}_{\left[
n\right]  \smallsetminus\left[  1\right]  }=\left(  X_{2},\ldots,X_{n}\right)
,$ and\newline$\mathbf{X}_{\left[  n\right]  \smallsetminus\left[  1\right]
}|\left(  X_{1}=x_{1}\right)  \sim\mathbf{\gamma}_{(\prod_{i=2}^{n}\left(
1+p\boldsymbol{\theta}_{i}\right)  ,\lambda+V_{1})}$, then $\mathbf{X}%
_{\left[  n\right]  }=(X_{1},X_{2},\ldots,X_{n})\sim\boldsymbol{\gamma
}_{(P_{n},\lambda)},$ with $P_{n}\left(  \boldsymbol{\theta}_{\left[
n\right]  }\right)  =\frac{-q}{p}+\frac{1}{p}\prod_{i=1}^{n}\left(
1+p\theta_{i}\right)  .$
\end{theorem}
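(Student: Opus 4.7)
The plan is to verify that the joint Laplace transform of the random vector $\mathbf{X}_{[n]}$ produced by the two-step construction equals $P_n(\boldsymbol{\theta})^{-\lambda}$, which by (\ref{TLMGD}) characterizes $\boldsymbol{\gamma}_{(P_n,\lambda)}$.

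The first step is to condition on the pair $(X_1,V_1)$. By the specification of the construction, the conditional Lt of $(X_2,\dots,X_n)$ given $(X_1,V_1)=(x_1,k)$ is $Q^{-(\lambda+k)}$, where $Q=\prod_{i=2}^{n}(1+p\theta_i)$. Averaging over $V_1\mid X_1=x_1\sim\mathcal{P}(qp^{-1}x_1)$ via the Poisson probability generating function collapses the mixture to
\[
\mathbb{E}\bigl[Q^{-(\lambda+V_1)}\mid X_1=x_1\bigr] = Q^{-\lambda}\exp\bigl\{qp^{-1}x_1(Q^{-1}-1)\bigr\},
\]
which is exactly formula (\ref{LCp_k_1}) of Corollary \ref{cor1_k_1}. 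Hence the construction reproduces the correct conditional Lt $L_{\mathbf{X}_{[n]\smallsetminus[1]}}^{X_1=x_1}$ associated with $\boldsymbol{\gamma}_{(P_n,\lambda)}$.

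The second step is to integrate out $X_1$. Using the marginal Lt of $X_1$ given by (\ref{marginalMGD}) with $p_1$ read off from $P_n$, the full joint Lt becomes $Q^{-\lambda}$ times the Lt of $X_1$ evaluated at $\theta_1+qp^{-1}(1-Q^{-1})$. A short algebraic simplification, clearing $Q^{-1}$ from inside the bracket and using $p+q=1$, collapses the resulting expression to $P_n(\boldsymbol{\theta})^{-\lambda}$, completing the verification by uniqueness of the Lt.

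The only real obstacle is recognising the Poisson-mixture closed form as the conditional Lt already obtained in Corollary \ref{cor1_k_1}; once that identification is in hand, the joint Lt is a routine integration against a univariate gamma, and the algebraic collapse to $P_n^{-\lambda}$ is immediate.
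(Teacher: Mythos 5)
Your proof is correct and follows essentially the same route as the paper: both identify the Poisson mixture $\sum_{k\geqslant 0}\mathbf{P}(V_1=k\mid X_1=x_1)\,Q^{-(\lambda+k)}$ with the conditional Laplace transform of Corollary \ref{cor1_k_1}. Your additional explicit integration over $X_1$ --- which, as you rightly set up, must use the scale read off from $P_n(\theta_1,\mathbf{0}_{n-1})=1+\theta_1$ (i.e.\ $p_1=1$, not $p$ as the theorem's statement literally says), after which the expression does collapse to $P_n(\boldsymbol{\theta})^{-\lambda}$ via $p+q=1$ --- makes explicit a step the paper leaves implicit, namely that matching the conditional Lt together with the marginal of $X_1$ determines the joint law.
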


We derive the following algorithm to simulate $\mathbf{X}_{\left[
n\right]  }\sim\boldsymbol{\gamma}_{(P_{n},\lambda)}.$

\begin{algorithm}
\label{Al14Ber(2023) copy(1)}Simulation of an infinitely divisible mgd
$\mathbf{\gamma}_{\left(  P_{n},\lambda\right)  },$ with $P_{n}\left(
\boldsymbol{\theta}_{\left[  n\right]  }\right)  =\frac{-q}{p}+\frac{1}%
{p}\prod_{i=1}^{n}\left(  1+p\theta_{i}\right)  $

\begin{enumerate}
\item Simulate $X_{1}\sim\gamma_{(p,\lambda)}$

\item Simulate $V_{1}\sim\mathcal{P}\left(  qp^{-1}X_{1}\right)  $

\item Simulate independently $X_{i}\sim\gamma_{(p,\lambda+V_{1})}$

\item Then $\mathbf{X}_{\left[  n\right]  }=(X_{1},X_{2},\ldots,X_{n})$
simulate $\boldsymbol{\gamma}_{(P_{n},\lambda)}.$
\end{enumerate}
\end{algorithm}

\subsection{The case $n=2$ and $k=1$}

In this case, we give another proof of Theorem 14 in \cite{Bernardoff(2023)}:

\begin{theorem}
\label{Th14Ber(2023)}Let $P_{2}\left(  \theta_{1},\theta_{2}\right)
=1+p_{1}\theta_{1}+p_{2}\theta_{2}+p_{1,2}\theta_{1}\theta_{2}$, with
$p_{1}>0,$ $p_{2}>0$, $p_{1,2}>0$ $\widetilde{b}_{1,2}=\widetilde{b}%
_{1,2}\left(  P_{2}\right)  =p_{1}p_{2}/p_{1,2}^{2}-1/p_{1,2}>0$ Let
$P_{1}\left(  \theta_{1}\right)  =P_{2}\left(  \theta_{1},0\right)  ,$
$X_{1}\sim\gamma_{\left(  P_{1},\lambda\right)  }.$ Let $\alpha_{1}%
=\frac{\widetilde{b}_{1,2}}{(-\widetilde{p}_{2})}$, and $V_{1}\sim
\mathcal{P}\left(  \alpha_{1}X_{1}\right)  $. We have $S_{2}\left(  \theta
_{2}\right)  =1+\frac{p_{1,2}}{p_{1}}\theta_{2}$. Let $X_{2}\sim
\gamma_{\left(  S_{2},\lambda+V_{1}\right)  }$, then $\mathbf{X}_{\left[
2\right]  }=\left(  X_{1},X_{2}\right)  \sim\mathbf{\gamma}_{\left(
P_{2},\lambda\right)  }.$
\end{theorem}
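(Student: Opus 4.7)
The plan is to derive this theorem as a direct specialization of Theorem~\ref{ThLXn_1_X1} to the case $n=2$, and then interpret the resulting conditional Laplace transform as a Poisson mixture of univariate gamma Laplace transforms.

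First, I would compute the ingredients for $n=2$. By Corollary~\ref{cor_cor_n_2} we have $R_{2}(z_{1},z_{2})=\widetilde{b}_{1,2}z_{1}z_{2}$, so in the sum appearing in \eqref{LXn_1_X1_Yn_1s_d_main_s} the only subset $T\subset\{2\}$ with $|T|>0$ is $T=\{2\}$, giving $\{1\}\cup T=\{1,2\}$ and $r_{\{1,2\}}=\widetilde{b}_{1,2}$ by \eqref{rT2}. Then $\{2\}\setminus T=\emptyset$, so $(-\widetilde{\mathbf{p}})^{\emptyset}=1$ and $\mathbf{S}^{\emptyset}=1$. A direct calculation from \eqref{S2} (or \eqref{S3}) also confirms $S_{\{2\}}(\theta_{2})=\frac{1}{p_{1}}\partial_{\theta_{1}}P_{2}=1+\frac{p_{1,2}}{p_{1}}\theta_{2}$, which matches the $S_{2}$ in the statement of the theorem.

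Next I would plug these into formula \eqref{LXn_1_X1_Yn_1s_d_main_s}: since the only surviving term collapses entirely, the exponent simplifies to $(-\widetilde{p}_{2})^{-1}x_{1}\widetilde{b}_{1,2}\bigl[S_{2}^{-1}(\theta_{2})-1\bigr]=\alpha_{1}x_{1}\bigl[S_{2}^{-1}(\theta_{2})-1\bigr]$, where $\alpha_{1}=\widetilde{b}_{1,2}/(-\widetilde{p}_{2})$. This yields
\begin{equation*}
L_{X_{2}}^{X_{1}=x_{1}}(\theta_{2})=S_{2}^{-\lambda}(\theta_{2})\,e^{-\alpha_{1}x_{1}}\,e^{\alpha_{1}x_{1}S_{2}^{-1}(\theta_{2})}.
\end{equation*}

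Expanding the second exponential as a power series gives
\begin{equation*}
L_{X_{2}}^{X_{1}=x_{1}}(\theta_{2})=\sum_{k=0}^{\infty}\frac{(\alpha_{1}x_{1})^{k}}{k!}e^{-\alpha_{1}x_{1}}\,S_{2}^{-(\lambda+k)}(\theta_{2}),
\end{equation*}
which is precisely the Laplace transform of the mixture $\gamma_{(S_{2},\lambda+V_{1})}$ conditioned on $X_{1}=x_{1}$, where $V_{1}\mid X_{1}=x_{1}\sim\mathcal{P}(\alpha_{1}x_{1})$. By uniqueness of Laplace transforms, this identifies the conditional law of $X_{2}\mid X_{1}=x_{1}$ exactly as in the algorithm. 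Combining with the known marginal $X_{1}\sim\gamma_{(P_{1},\lambda)}$ from \eqref{marginalMGD} yields $(X_{1},X_{2})\sim\boldsymbol{\gamma}_{(P_{2},\lambda)}$.

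I do not expect a serious obstacle: the main risk is purely bookkeeping, namely making sure that the general-$n$ notation of Theorem~\ref{ThLXn_1_X1} (the empty products $(-\widetilde{\mathbf{p}})^{\emptyset}$ and $\mathbf{S}^{\emptyset}$, the single term $r_{\{1,2\}}=\widetilde{b}_{1,2}$, and the constant $C=\widetilde{b}_{1,2}$) collapses cleanly to the scalar formula above. Once the algebra is verified, the Poisson-mixture interpretation is immediate and the theorem follows by conditioning.
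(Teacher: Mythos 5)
Your proposal is correct and follows essentially the same route as the paper's own proof: the paper likewise specializes formula (\ref{LXn_1_X1_Yn_1s_d_main_s}) to $n=2$ to obtain $L_{X_{2}}^{X_{1}=x_{1}}=S_{2}^{-\lambda}\exp\{\tfrac{\widetilde{b}_{1,2}}{(-\widetilde{p}_{2})}x_{1}(S_{2}^{-1}-1)\}$ and then expands the exponential to read off the Poisson mixture $\sum_{v_{1}}\mathbf{P}(V_{1}=v_{1})S_{2}^{-(\lambda+v_{1})}$. Your additional bookkeeping on how the general-$n$ sum collapses to the single term $r_{\{1,2\}}=\widetilde{b}_{1,2}$ is accurate and merely makes explicit what the paper leaves implicit.
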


We derive the following algorithm to simulate $\mathbf{X}_{\left[
2\right]  }=\left(  X_{1},X_{2}\right)  \sim\mathbf{\gamma}_{\left(
P_{2},\lambda\right)  }$, see \cite{Bernardoff(2023)}:

\begin{algorithm}
\label{Al14Ber(2023)}Simulation of an infinitely divisible bgd $\mathbf{\gamma
}_{\left(  P_{2},\lambda\right)  }$

\begin{enumerate}
\item Simulate $X_{1}\sim\gamma_{(p_{1},\lambda)}$

\item Simulate $V_{1}\sim\mathcal{P}(\frac{p_{1,2}}{p_{1}}\widetilde{b}%
_{1,2}X_{1})$

\item Simulate $X_{2}\sim\gamma_{(\frac{p_{1,2}}{p_{1}},\lambda+V_{1})}$

\item Then $\mathbf{X}_{\left[  2\right]  }=\left(  X_{1},X_{2}\right)  $
simulates $\mathbf{\gamma}_{\left(  P_{2},\lambda\right)  }.$
\end{enumerate}
\end{algorithm}

\subsection{The case $n=3$ and $k=1$}

In this case, we give the following theorem

\begin{theorem}
\label{Th1_n=3_Bern2024}Let $P_{3}\left(  \theta_{1},\theta_{2},\theta
_{3}\right)  =1+p_{1}\theta_{1}+p_{2}\theta_{2}+p_{3}\theta_{3}+p_{1,2}%
\theta_{1}\theta_{2}+p_{1,3}\theta_{1}\theta_{3}+p_{2,3}\theta_{2}\theta
_{3}+p_{1,2,3}\theta_{1}\theta_{2}\theta_{3}$ with $p_{i}>0,i=1,2,3,$
$p_{1,2}>0,p_{2,3}>0$, $p_{1,3}>0$, $p_{1,2,3}>0$, and $\widetilde{b}%
_{1,2}\left(  P_{3}\right)  =p_{1,3}p_{2,3}/p_{1,2,3}^{2}-p_{3}/p_{1,2,3}>0$,
$\widetilde{b}_{1,3}\left(  P_{3}\right)  =p_{1,2}p_{2,3}/p_{1,2,3}^{2}%
-p_{2}/p_{1,2,3}>0,$ $\widetilde{b}_{2,3}\left(  P_{3}\right)  =p_{1,3}%
p_{1,2}/p_{1,2,3}^{2}-p_{1}/p_{1,2,3}>0$ ($\widetilde{p}_{1}=-p_{2,3}%
/p_{1,2,3}<0$, $\widetilde{p}_{2}=-p_{1,3}/p_{1,2,3}<0$,$\widetilde{p}%
_{3}=-p_{1,2}/p_{1,2,3}<0$) and $\widetilde{b}_{1,2,3}\left(  P_{3}\right)
=\widetilde{p}_{1,2,3}+\widetilde{p}_{1}\widetilde{p}_{2,3}+\widetilde{p}%
_{2}\widetilde{p}_{1,3}+\widetilde{p}_{3}\widetilde{p}_{1,2}+2\widetilde{p}%
_{1}\widetilde{p}_{2}\widetilde{p}_{3}=-1/p_{1,2,3}+p_{2,3}p_{1}/p_{1,2,3}%
^{2}+p_{1,3}p_{2}/p_{1,2,3}^{2}+p_{1,2}p_{3}/p_{1,2,3}^{2}-2p_{2,3}%
p_{1,3}p_{1,2}/p_{1,2,3}^{3}>0$. Let $P_{1}\left(  \theta_{1}\right)
=P_{3}\left(  \theta_{1},0,0\right)  =1+p_{1}\theta_{1},$ let $X_{1}\sim
\gamma_{\left(  P_{1},\lambda\right)  }.$ Let $S_{2,3}\left(  \theta
_{2},\theta_{3}\right)  =1+\frac{p_{1,2}}{p_{1}}\theta_{2}+\frac{p_{1,3}%
}{p_{1}}\theta_{3}+\frac{p_{1,2,3}}{p_{1}}\theta_{2}\theta_{3},S_{2}\left(
\theta_{2}\right)  =1+\left(  -\widetilde{p}_{2}\right)  ^{-1}\theta_{2}$ and
$S_{3}\left(  \theta_{3}\right)  =1+\left(  -\widetilde{p}_{3}\right)
^{-1}\theta_{3}.$ Let $\alpha_{1}=\frac{\widetilde{b}_{1,2}}{(-\widetilde{p}%
_{2})},\alpha_{2}=\frac{\widetilde{b}_{1,3}}{(-\widetilde{p}_{3})},\alpha
_{3}=\frac{\widetilde{b}_{1,2,3}}{(-\widetilde{p}_{2,3})},\alpha_{4}%
=\frac{\widetilde{b}_{1,2}\widetilde{b}_{2,3}}{(-\widetilde{p}_{2}%
)(-\widetilde{p}_{2,3})},\alpha_{5}=\frac{\widetilde{b}_{1,3}\widetilde{b}%
_{2,3}}{(-\widetilde{p}_{3})(-\widetilde{p}_{2,3})}$, and $V_{i}%
\sim\mathcal{P}\left(  \alpha_{i}X_{1}\right)  ,i\in\left[  5\right]  $
independent, with the notation $\mathbf{v}=\left(  v_{1},\ldots v_{5}\right)
\in\mathbb{N},$ we have%
\begin{equation}
L_{(X_{2},X_{3})}^{X_{1}=x_{1}}=\sum_{\mathbf{v}\in\mathbb{N}^{5}}\prod
_{i=1}^{5}\mathbf{P}\left(  V_{i}=v_{i}\right)  S_{2}^{-(v_{1}+v_{4})}%
S_{3}^{-(v_{2}+v_{5})}S_{2,3}^{-(\lambda+v_{3}+v_{4}+v_{5})}. \label{LCn=3}%
\end{equation}
Let $Y_{2}\sim\gamma_{\left(  S_{2},V_{1}+V_{4}\right)  },Y_{3}\sim
\gamma_{\left(  S_{3},V_{2}+V_{5}\right)  },\left(  Z_{2},Z_{3}\right)
\sim\mathbf{\gamma}_{\left(  S_{2,3},\lambda+V_{3}+V_{4}+V_{5}\right)  }$
independent, and let $\left(  X_{2},X_{3}\right)  =\left(  Y_{2}+Z_{2}%
,Y_{3}+Z_{3}\right)  $, then $\mathbf{X}_{\left[  3\right]  }=\left(
X_{1},X_{2},X_{3}\right)  \sim\mathbf{\gamma}_{\left(  P_{3},\lambda\right) }.$
\end{theorem}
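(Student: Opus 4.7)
The plan is to specialize Theorem~\ref{ThLXn_1_X1} to $n=3$, rewrite the resulting conditional Lt of $(X_2,X_3)$ given $X_1=x_1$ as $S_{2,3}^{-\lambda}$ times an exponential, and then split that exponential into exactly five elementary factors of Poisson type. First I would instantiate formula (\ref{LXn_1_X1_Yn_1s_d_main_s_C}) for $n=3$: the sum over $T\subset\{2,3\}$ with $|T|\geqslant 1$ has only three terms, corresponding to $T\in\{\{2\},\{3\},\{2,3\}\}$. By (\ref{rT2}) and (\ref{rT3}) we have $r_{\{1,2\}}=\widetilde b_{1,2}$, $r_{\{1,3\}}=\widetilde b_{1,3}$, and $r_{\{1,2,3\}}=\widetilde b_{1,2,3}$. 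Collecting the overall factor $x_1/(-\widetilde p_{\{2,3\}})$ out front, the exponent reduces to
\[
\widetilde b_{1,2}(-\widetilde p_3)\bigl[S_3 S_{2,3}^{-1}-1\bigr]+\widetilde b_{1,3}(-\widetilde p_2)\bigl[S_2 S_{2,3}^{-1}-1\bigr]+\widetilde b_{1,2,3}\bigl[S_{2,3}^{-1}-1\bigr].
\]

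The key algebraic step is to rewrite $S_3 S_{2,3}^{-1}$ and $S_2 S_{2,3}^{-1}$ as combinations of the ``monomial'' terms $S_2^{-1}, S_3^{-1}, S_{2,3}^{-1}, S_2^{-1}S_{2,3}^{-1}, S_3^{-1}S_{2,3}^{-1}$. Identity (\ref{S^T}) of Proposition~\ref{PropqtildeUbtildeUrU(ST)}, applied with $T=\{2,3\}$, yields $(-\widetilde p_2)(-\widetilde p_3) S_2 S_3 = (-\widetilde p_{\{2,3\}}) S_{2,3}+\widetilde b_{2,3}$. Dividing by $S_2 S_{2,3}$ (respectively $S_3 S_{2,3}$) gives
\[
S_3 S_{2,3}^{-1}=\tfrac{-\widetilde p_{\{2,3\}}}{(-\widetilde p_2)(-\widetilde p_3)}S_2^{-1}+\tfrac{\widetilde b_{2,3}}{(-\widetilde p_2)(-\widetilde p_3)}S_2^{-1}S_{2,3}^{-1},
\]
and symmetrically for $S_2 S_{2,3}^{-1}$. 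Substituting and dividing by $(-\widetilde p_{\{2,3\}})$, I would collect coefficients monomial by monomial and expect them to simplify exactly to the five parameters $\alpha_1,\ldots,\alpha_5$ of the statement, with the leftover constant being $C=\sum_{i=1}^5\alpha_i$ (as forced by the normalization $L(\mathbf 0)=1$).

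Once the exponential is factored as $\prod_{i=1}^{5}\exp\bigl(\alpha_i x_1(T_i-1)\bigr)$ with $T_1=S_2^{-1}$, $T_2=S_3^{-1}$, $T_3=S_{2,3}^{-1}$, $T_4=S_2^{-1}S_{2,3}^{-1}$, $T_5=S_3^{-1}S_{2,3}^{-1}$, I invoke the Poisson probability generating function identity $\exp(\mu(z-1))=\sum_{v}\mathbf P(V=v) z^v$ for $V\sim\mathcal P(\mu)$, applied to each factor with $\mu=\alpha_i x_1$. The five series combine into a sum over $\mathbf v\in\mathbb N^5$, and absorbing $S_{2,3}^{-\lambda}$ into the $v_3$-index yields precisely (\ref{LCn=3}). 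Finally, I would compute the Lt of the candidate vector $(X_2,X_3)=(Y_2+Z_2,Y_3+Z_3)$ given $X_1=x_1$ by conditioning on $\mathbf V=(V_1,\ldots,V_5)$ and using the conditional independence of $Y_2,Y_3,(Z_2,Z_3)$: this produces the same five-fold sum, hence the same Lt. Since $X_1\sim\gamma_{(P_1,\lambda)}$ is the correct marginal, uniqueness of the Lt implies $(X_1,X_2,X_3)\sim\boldsymbol\gamma_{(P_3,\lambda)}$.

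The main obstacle is the algebraic bookkeeping in the substitution step: I must verify that the three original brackets redistribute cleanly over exactly five monomials with no cross-contributions and that the coefficients simplify to the stated $\alpha_i$. This calculation is routine but error-prone; fortunately Proposition~\ref{PropqtildeUbtildeUrU(ST)} provides precisely the one identity needed, and the built-in normalization $L_{(X_2,X_3)}^{X_1=x_1}(\mathbf 0)=1$ gives a useful check on the constant term.
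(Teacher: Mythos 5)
Your proposal is correct and follows essentially the same route as the paper's own proof: specializing (\ref{LXn_1_X1_Yn_1s_d_main_s_C}) to $n=3$ with $r_{\{1,2\}}=\widetilde b_{1,2}$, $r_{\{1,3\}}=\widetilde b_{1,3}$, $r_{\{1,2,3\}}=\widetilde b_{1,2,3}$, using the identity $(-\widetilde p_2)(-\widetilde p_3)S_2S_3=(-\widetilde p_{2,3})S_{2,3}+\widetilde b_{2,3}$ from (\ref{S^T}) divided by $S_2S_{2,3}$ and $S_3S_{2,3}$ to redistribute the exponent over the five monomials with coefficients $\alpha_1,\ldots,\alpha_5$, fixing the constant via $L(\mathbf 0)=1$, and expanding the exponential into Poisson generating functions. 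This matches the paper step for step.
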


From Theorem \ref{Th1_n=3_Bern2024}, we derive the following algorithm for
simulating $\mathbf{X}_{\left[  3\right]  }=\left(  X_{1},X_{2},X_{3}\right)
\sim\mathbf{\gamma}_{\left(  P_{3},\lambda\right)  }$

\begin{algorithm}
\label{Al1_n=3_Ber(2024)}Simulation of an infinitely divisible tgd $\mathbf{\gamma}_{\left(  P_{3},\lambda\right)  }$

\begin{enumerate}
\item Simulate $X_{1}\sim\gamma_{\left(  p_{1},\lambda\right)  .}$

\item Compute $\alpha_{i},i\in\left[  5\right]  ,$ defined
in Theorem \ref{Th1_n=3_Bern2024}. Simulate independently
$V_{i}\sim\mathcal{P}\left(  \alpha_{i}X_{1}\right) .$

\item Simulate independently $Y_{2}\sim\gamma_{\left(  \left(  -\widetilde{p}%
_{2}\right)  ^{-1},V_{1}+V_{4}\right)  },Y_{3}\sim\gamma_{\left(  \left(
-\widetilde{p}_{3}\right)  ^{-1},V_{2}+V_{5}\right)  },\left(  Z_{2}%
,Z_{3}\right)  \sim\mathbf{\gamma}_{\left(  S_{2,3},\lambda+V_{3}+V_{4}%
+V_{5}\right)  }$.

\item Then $\mathbf{X}_{\left[  3\right]  }  =\left(  X_{1}%
,Y_{2}+Z_{2},Y_{3}+Z_{3}\right)  $ simulates $\mathbf{\gamma}_{\left(  P_{3},\lambda\right)}.$
\end{enumerate}
\end{algorithm}

We can notice that $\boldsymbol{W=}\left(  W_{1},W_{2},W_{3}\right)  =\left(
V_{1}+V_{4},V_{2}+V_{5},V_{3}+V_{4}+V_{5}\right)  $ satisfied for
$\boldsymbol{t}=\left(  t_{1},t_{2},t_{3}\right)  \in\left(  0,\infty\right)
^{3},$
$
\mathbb{E}\left(  \boldsymbol{t}^{\boldsymbol{W}}|X_{1}=x_{1}\right)
=\exp[(\alpha_{1}t_{1}+\alpha_{2}t_{2}+\alpha_{3}t_{3}+\alpha_{4}t_{1}%
t_{3}+\alpha_{5}t_{2}t_{3})x_{1}-(\alpha_{1}+\alpha_{2}+\alpha_{3}+\alpha
_{4}+\alpha_{5})x_{1}],
$
and $\boldsymbol{W}|X_{1}=x_{1}$ is a trivariate Poisson distribution, see
\cite{DwassTeicher(1957)}.

From Theorem (\ref{Th14Ber(2023)}), we derive the following theorem and algorithm.

\begin{theorem}
\label{Th2_n=3_Bern2024}Let $X_{1}\sim\gamma_{\left(  p_{1},\lambda\right)  }%
$. 
Let $\alpha_{i},i\in\left[  5\right]  ,$ defined
in Theorem \ref{Th1_n=3_Bern2024}, $\alpha_{6}=\frac
{\widetilde{b}_{2,3}}{(-\widetilde{p}_{3})}$, and $V_{i}\sim\mathcal{P}\left(
\alpha_{i}X_{1}\right)  ,i\in\left[  5\right]  $. Let $Z_{2}^{\prime}%
\sim\gamma_{\left(  \frac{p_{1,2}}{p_{1}},\lambda+V_{3}+V_{4}+V_{5}\right)  }$
and $V_{6}\sim\mathcal{P}\left(  \alpha_{6}Z_{2}^{\prime}\right)  $, let
$Y_{2}\sim\gamma_{\left(  (-\widetilde{p}_{2})^{-1},V_{1}+V_{4}\right)
},Y_{3}\sim\gamma_{\left(  -\widetilde{p}_{3})^{-1},V_{2}+V_{5}\right)
},$ $Z_{3}^{\prime}\sim\gamma_{\left(  -\widetilde{p}_{3})^{-1},\lambda
+V_{3}+V_{4}+V_{5}+V_{6}\right)  }$ independent, we have $\left(
Z_{2}^{\prime},Z_{3}^{\prime}\right)  \sim\mathbf{\gamma}_{\left(
S_{2,3},\lambda+V_{3}+V_{4}+V_{5}\right)  }$, let $\left(X_{2},X_{3}\right)=\left(
Y_{2}+Z_{2}^{^{\prime}},Y_{3}+Z_{3}^{\prime}\right)  ,$ then $\mathbf{X}%
_{\left[  3\right]  }=\left(  X_{1},X_{2},X_{3}\right)  \sim\mathbf{\gamma
}_{\left(  P_{3},\lambda\right)  }.$
\end{theorem}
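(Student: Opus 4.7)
The plan is to derive Theorem \ref{Th2_n=3_Bern2024} as a consequence of Theorem \ref{Th1_n=3_Bern2024} combined with the bivariate simulation result Theorem \ref{Th14Ber(2023)}. Theorem \ref{Th1_n=3_Bern2024} already produces the decomposition $(X_2,X_3)=(Y_2+Z_2,Y_3+Z_3)$ with $(Z_2,Z_3)\sim\boldsymbol{\gamma}_{(S_{2,3},\lambda+V_3+V_4+V_5)}$, independent of $(X_1,Y_2,Y_3)$. So the only thing to establish is that the pair $(Z_2',Z_3')$ defined in the statement has this same bivariate gamma law (conditionally on $V_3,V_4,V_5$); then substituting $(Z_2',Z_3')$ for $(Z_2,Z_3)$ preserves the joint distribution of $(X_1,X_2,X_3)$.

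First I would unfold the bivariate algorithm from Theorem \ref{Th14Ber(2023)} applied to the affine polynomial $S_{2,3}(\theta_2,\theta_3)=1+\frac{p_{1,2}}{p_1}\theta_2+\frac{p_{1,3}}{p_1}\theta_3+\frac{p_{1,2,3}}{p_1}\theta_2\theta_3$ with shape parameter $\mu=\lambda+V_3+V_4+V_5$; conditioning on $(V_3,V_4,V_5)$ turns $\mu$ into a fixed positive real, so Theorem \ref{Th14Ber(2023)} applies directly. Its output is a first coordinate of law $\gamma_{(p_{1,2}/p_1,\mu)}$, which is exactly $Z_2'$; a Poisson variable $V_6$ of rate $\bigl[\widetilde{b}_{2,3}(S_{2,3})/(-\widetilde{p}_3(S_{2,3}))\bigr]\,Z_2'$; and a second coordinate of law $\gamma_{((-\widetilde{p}_3(S_{2,3}))^{-1},\mu+V_6)}$.

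The key step is then to identify the $S_{2,3}$-invariants with the corresponding $P_3$-invariants. Applying Proposition \ref{PropqtildeUbtildeUrU(ST)} with $T=\{2,3\}$, namely (\ref{q_tilde=p_tilde}) and (\ref{b_U(ST)=b_U}), gives $\widetilde{p}_3(S_{2,3})=\widetilde{p}_3$ and $\widetilde{b}_{2,3}(S_{2,3})=\widetilde{b}_{2,3}$. Hence the Poisson rate is precisely $\alpha_6 Z_2'$, and $Z_3'\sim\gamma_{((-\widetilde{p}_3)^{-1},\mu+V_6)}$, exactly as in the statement. The infinite divisibility of $\boldsymbol{\gamma}_{(S_{2,3},\mu)}$ needed to invoke Theorem \ref{Th14Ber(2023)} also follows from (\ref{b_U(ST)=b_U}) together with the infinite divisibility of $\boldsymbol{\gamma}_{(P_3,\lambda)}$ assumed through the positivity of $\widetilde{b}_{1,2},\widetilde{b}_{1,3},\widetilde{b}_{2,3},\widetilde{b}_{1,2,3}$.

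To conclude, I would invoke Theorem \ref{Th14Ber(2023)} conditionally on $(V_3,V_4,V_5)$ to deduce $(Z_2',Z_3')\sim\boldsymbol{\gamma}_{(S_{2,3},\lambda+V_3+V_4+V_5)}$, then insert this pair into the construction of Theorem \ref{Th1_n=3_Bern2024} in place of $(Z_2,Z_3)$; independence of $(Z_2',Z_3')$ from $(X_1,Y_2,Y_3)$ is built in by construction. The only genuine obstacle is the notational bookkeeping: one must carefully match the roles of the variables $(\theta_1,\theta_2)$ of Theorem \ref{Th14Ber(2023)} with $(\theta_2,\theta_3)$ here, and verify via Proposition \ref{PropqtildeUbtildeUrU(ST)} that the relevant $\widetilde{b}$ and $\widetilde{p}$ quantities are unchanged when $P_3$ is replaced by $S_{2,3}$; everything else is a direct substitution.
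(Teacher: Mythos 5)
Your proposal is correct and follows essentially the same route as the paper: the paper's proof also applies Theorem \ref{Th14Ber(2023)} to the affine polynomial $S_{2,3}$ with shape parameter $\lambda+V_{3}+V_{4}+V_{5}$, using the identities $\widetilde{b}_{2,3}(S_{2,3})=\widetilde{b}_{2,3}$ and $\widetilde{p}_{3}(S_{2,3})=\widetilde{p}_{3}$ to obtain $(Z_{2}^{\prime},Z_{3}^{\prime})\sim\boldsymbol{\gamma}_{(S_{2,3},\lambda+V_{3}+V_{4}+V_{5})}$ and then substitutes this pair into the construction of Theorem \ref{Th1_n=3_Bern2024}. Your write-up merely makes explicit (via Proposition \ref{PropqtildeUbtildeUrU(ST)} and the conditioning on $V_{3},V_{4},V_{5}$) the steps the paper leaves implicit.
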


\begin{algorithm}
\label{Al2_n=3_Ber(2024)}Simulation of an infinitely divisible tgd

\begin{enumerate}
\item Simulate $X_{1}\sim\gamma_{\left(  p_{1},\lambda\right)  .}.$

\item Compute $\alpha_{i},i\in\left[  6\right]  ,$ defined
in Theorem \ref{Th2_n=3_Bern2024}. Simulate independently
$V_{i}\sim\mathcal{P}\left(  \alpha_{i}X_{i}\right)  ,i\in\left[  5\right]  .$

\item Simulate $Z_{2}^{\prime}\sim\gamma_{(  \frac{p_{1,2}}{p_{1}},\lambda+V_{3}+V_{4}+V_{5})  }.$

\item Simulate $V_{6}\sim\mathcal{P}\left(  \alpha_{6}Z_{2}^{\prime}\right)
.$

\item Simulate independently $Y_{2}\sim\gamma_{\left(  \left(  -\widetilde{p}%
_{2}\right)  ^{-1},V_{4}+V_{5}\right)  },Y_{3}\sim\gamma_{\left(  \left(
-\widetilde{p}_{3}\right)  ^{-1},V_{2}+V_{5}\right)  },Z_{3}^{\prime}%
\sim\gamma_{\left(  \left(  -\widetilde{p}_{3}\right)  ^{-1},\lambda
+V_{3}+V_{4}+V_{5}+V_{6}\right)  }.$

\item Then $\mathbf{X}_{\left[3\right]}=\left(  X_{1},X_{2},X_{3}\right)  =\left(  X_{1}%
,Y_{2}+Z_{2}^{\prime},Y_{3}+Z_{3}^{\prime}\right)  $ simulates $\mathbf{\gamma
}_{\left(  P_{3},\lambda\right)  }.$
\end{enumerate}
\end{algorithm}

We can give the following remark.

\begin{remark}
In Theorem (\ref{Th1_n=3_Bern2024}) or Algorithm (\ref{Al1_n=3_Ber(2024)}), we
use 3 univariate gamma distributions, 5 Poisson distributions and one
bivariate distribution. In Theorem (\ref{Th2_n=3_Bern2024}) or Algorithm
(\ref{Al2_n=3_Ber(2024)}), we use 5 univariate gamma distributions and 6
Poisson distributions. For the simulations themselves, we can use either
method with the R software, \cite{RCoreTeam}.
\end{remark}

\subsection{The case $n=4$ and $k=1$}

In this case, we give the following theorem

\begin{theorem}
\label{Th1_n=4_Bern2024}Let $P_{4}\left(  \boldsymbol{\theta}_{4}\right)
=\sum_{T\in\mathfrak{P}_{4}}p_{T}\boldsymbol{\theta}_{4}^{T}$ with
$p_{T}>0,T\in\mathfrak{P}_{4}^{\ast}$, and $\widetilde{b}_{T}\left(
P_{4}\right)  >0,T\in\mathfrak{P}_{4}^{\ast},|T|>1$, Let $P_{1}\left(
\theta_{1}\right)  =P_{4}\left(  \theta_{1},0,0,0\right)  =1+p_{1}\theta_{1},$
let $X_{1}\sim\gamma_{\left(  P_{1},\lambda\right)  }.$ Let $S_{T}%
,T\in\mathfrak{P}_{4}^{\ast}$ defined by (\ref{STdef}), so that $S_{2}\left(
\theta_{2}\right)  =1+\left(  -\widetilde{p}_{2}\right)  ^{-1}\theta_{2}$ and
$S_{3}\left(  \theta_{3}\right)  =1+\left(  -\widetilde{p}_{3}\right)
^{-1}\theta_{3},$ $S_{4}\left(  \theta_{4}\right)  =1+\left(  -\widetilde{p}%
_{4}\right)  ^{-1}\theta_{4},$ $S_{2,3}\left(  \theta_{2},\theta_{3}\right)
=1+\frac{p_{1,2,4}}{p_{1,4}}\theta_{2}+\frac{p_{1,3,4}}{p_{1,4}}\theta
_{3}+\frac{p_{1,2,3,4}}{p_{1,4}}\theta_{2}\theta_{3},$ $S_{2,4}\left(
\theta_{2},\theta_{4}\right)  =1+\frac{p_{1,2,3}}{p_{1,3}}\theta_{2}%
+\frac{p_{1,3,4}}{p_{1,3}}\theta_{4}+\frac{p_{1,2,3,4}}{p_{1,3}}\theta
_{2}\theta_{4},$ $S_{3,4}\left(  \theta_{3},\theta_{4}\right)  =1+\frac
{p_{1,2,3}}{p_{1,2}}\theta_{3}+\frac{p_{1,2,4}}{p_{1,2}}\theta_{4}%
+\frac{p_{1,2,3,4}}{p_{1,2}}\theta_{3}\theta_{4},$ and $S_{2,3,4}\left(
\theta_{2},\theta_{3},\theta_{4}\right)  =1+\frac{p_{1,2}}{p_{1}}\theta
_{2}+\frac{p_{1,3}}{p_{1}}\theta_{3}+\frac{p_{1,4}}{p_{1}}\theta_{4}%
+\frac{p_{1,2,3}}{p_{1}}\theta_{2}\theta_{3}+\frac{p_{1,2,4}}{p_{1}}\theta
_{2}\theta_{4}+\frac{p_{1,3,4}}{p_{1}}\theta_{3}\theta_{4}+\frac{p_{1,2,3,4}%
}{p_{1}}\theta_{2}\theta_{3}\theta_{4}.$ \newline Let $
\alpha_{1}  =\tfrac{\widetilde{b}_{1,2}}{\left(  -\widetilde{p}_{2}\right)
},\alpha_{2}=\tfrac{\widetilde{b}_{1,3}}{\left(
-\widetilde{p}_{3}\right)  },\alpha_{3}=\tfrac{\widetilde{b}_{1,4}}{\left(
-\widetilde{p}_{4}\right)  },\alpha_{4}=\tfrac{\widetilde{b}_{1,2,3}}{\left(
-\widetilde{p}_{2,3}\right)  },\alpha_{5}=\tfrac{\widetilde{b}_{1,2,4}%
}{\left(  -\widetilde{p}_{2,4}\right)  },\alpha_{6}=\tfrac{\widetilde{b}%
_{1,3,4}}{\left(  -\widetilde{p}_{3,4}\right)  },
\alpha_{7} =\tfrac{\widetilde{b}_{1,2}\widetilde{b}_{2,3}}{\left(
-\widetilde{p}_{2}\right)  \left(  -\widetilde{p}_{2,3}\right)  },\\ \alpha
_{8}=\tfrac{\widetilde{b}_{1,2}\widetilde{b}_{2,4}}{\left(  -\widetilde{p}%
_{2}\right)  \left(  -\widetilde{p}_{2,4}\right)  },\alpha_{9}=\tfrac
{\widetilde{b}_{1,3}\widetilde{b}_{2,3}}{\left(  -\widetilde{p}_{3}\right)
\left(  -\widetilde{p}_{2,3}\right)  },\alpha_{10}=\tfrac{\widetilde{b}%
_{1,3}\widetilde{b}_{3,4}}{\left(  -\widetilde{p}_{3}\right)  \left(
-\widetilde{p}_{3,4}\right)  },
\alpha_{11}    =\tfrac{\widetilde{b}_{1,4}\widetilde{b}_{2,4}}{\left(
-\widetilde{p}_{4}\right)  \left(  -\widetilde{p}_{2,4}\right)  },\alpha
_{12}=\tfrac{\widetilde{b}_{1,4}\widetilde{b}_{3,4}}{\left(  -\widetilde{p}%
_{4}\right)  \left(  -\widetilde{p}_{3,4}\right)  },\\ \alpha_{13}=\tfrac
{\widetilde{b}_{1,2,3,4}}{\left(  -\widetilde{p}_{2,3,4}\right)  },\alpha
_{14}=\tfrac{\widetilde{b}_{1,2}\widetilde{b}_{2,3,4}+\widetilde{b}%
_{2,3}\widetilde{b}_{1,2,4}+\widetilde{b}_{2,4}\widetilde{b}_{1,2,3}}{\left(
-\widetilde{p}_{2}\right)  \left(  -\widetilde{p}_{2,3,4}\right)  },
\alpha_{15}   =\tfrac{\widetilde{b}_{1,3}\widetilde{b}_{2,3,4}%
+\widetilde{b}_{2,3}\widetilde{b}_{1,3,4}+\widetilde{b}_{3,4}\widetilde{b}%
_{1,2,3}}{\left(  -\widetilde{p}_{3}\right)  \left(  -\widetilde{p}%
_{2,3,4}\right)  },\alpha_{16}=\tfrac{\widetilde{b}_{1,4}\widetilde{b}%
_{2,3,4}+\widetilde{b}_{2,4}\widetilde{b}_{1,3,4}+\widetilde{b}_{3,4}%
\widetilde{b}_{1,2,4}}{\left(  -\widetilde{p}_{4}\right)  \left(
-\widetilde{p}_{2,3,4}\right)  },\\
\alpha_{17} =\tfrac{2\widetilde{b}_{1,2}\widetilde{b}_{2,3}\widetilde{b}%
_{2,4}}{\left(  -\widetilde{p}_{2}\right)  ^{2}\left(  -\widetilde{p}%
_{2,3,4}\right)  },\alpha_{18}=\tfrac{2\widetilde{b}_{1,3}\widetilde{b}%
_{2,3}\widetilde{b}_{3,4}}{\left(  -\widetilde{p}_{3}\right)  ^{2}\left(
-\widetilde{p}_{2,3,4}\right)  },\alpha_{19}=\tfrac{2\widetilde{b}%
_{1,4}\widetilde{b}_{2,4}\widetilde{b}_{3,4}}{\left(  -\widetilde{p}%
_{4}\right)  ^{2}\left(  -\widetilde{p}_{2,3,4}\right)  },
\alpha_{20}   =\tfrac{\widetilde{b}_{2,3}\left(  \widetilde{b}%
_{1,2}\widetilde{b}_{3,4}+\widetilde{b}_{1,3}\widetilde{b}_{2,4}\right)
}{\left(  -\widetilde{p}_{2}\right)  \left(  -\widetilde{p}_{3}\right)
\left(  -\widetilde{p}_{2,3,4}\right)  },\\ \alpha_{21}=\tfrac{\widetilde{b}%
_{2,4}\left(  \widetilde{b}_{1,2}\widetilde{b}_{3,4}+\widetilde{b}%
_{1,4}\widetilde{b}_{2,3}\right)  }{\left(  -\widetilde{p}_{2}\right)  \left(
-\widetilde{p}_{4}\right)  \left(  -\widetilde{p}_{2,3,4}\right)  }%
,\alpha_{22}=\tfrac{\widetilde{b}_{3,4}\left(  \widetilde{b}_{1,3}%
\widetilde{b}_{2,4}+\widetilde{b}_{1,4}\widetilde{b}_{2,3}\right)  }{\left(
-\widetilde{p}_{3}\right)  \left(  -\widetilde{p}_{4}\right)  \left(
-\widetilde{p}_{2,3,4}\right)  },
\alpha_{23}    =\tfrac{\widetilde{b}_{1,2,3}\widetilde{b}_{2,3,4}}{\left(
-\widetilde{p}_{2,3}\right)  \left(  -\widetilde{p}_{2,3,4}\right)  }%
,\alpha_{24}=\tfrac{\widetilde{b}_{1,2,4}\widetilde{b}_{2,3,4}}{\left(
-\widetilde{p}_{2,4}\right)  \left(  -\widetilde{p}_{2,3,4}\right)  }%
,\\ \alpha_{25}=\tfrac{\widetilde{b}_{1,3,4}\widetilde{b}_{2,3,4}}{\left(
-\widetilde{p}_{3,4}\right)  \left(  -\widetilde{p}_{2,3,4}\right)  },
\alpha_{26}  =\tfrac{\widetilde{b}_{2,3}\left(  \widetilde{b}%
_{1,2}\widetilde{b}_{2,3,4}+\widetilde{b}_{2,4}\widetilde{b}_{1,2,3}\right)
}{\left(  -\widetilde{p}_{2}\right)  \left(  -\widetilde{p}_{2,3}\right)
\left(  -\widetilde{p}_{2,3,4}\right)  },\alpha_{27}=\tfrac{\widetilde{b}%
_{2,4}\left(  \widetilde{b}_{1,2}\widetilde{b}_{2,3,4}+\widetilde{b}%
_{2,3}\widetilde{b}_{1,2,4}\right)  }{\left(  -\widetilde{p}_{2}\right)
\left(  -\widetilde{p}_{2,4}\right)  \left(  -\widetilde{p}_{2,3,4}\right)
},\alpha_{28}=\tfrac{\widetilde{b}_{2,3}\left(  \widetilde{b}_{1,3}%
\widetilde{b}_{2,3,4}+\widetilde{b}_{3,4}\widetilde{b}_{1,2,3}\right)
}{\left(  -\widetilde{p}_{3}\right)  \left(  -\widetilde{p}_{2,3}\right)
\left(  -\widetilde{p}_{2,3,4}\right)  },\\
\alpha_{29}    =\tfrac{\widetilde{b}_{3,4}\left(  \widetilde{b}%
_{1,3}\widetilde{b}_{2,3,4}+\widetilde{b}_{2,3}\widetilde{b}_{1,3,4}\right)
}{\left(  -\widetilde{p}_{3}\right)  \left(  -\widetilde{p}_{3,4}\right)
\left(  -\widetilde{p}_{2,3,4}\right)  },\alpha_{30}=\tfrac{\widetilde{b}%
_{2,4}\left(  \widetilde{b}_{1,4}\widetilde{b}_{2,3,4}+\widetilde{b}%
_{3,4}\widetilde{b}_{1,2,4}\right)  }{\left(  -\widetilde{p}_{4}\right)
\left(  -\widetilde{p}_{2,4}\right)  \left(  -\widetilde{p}_{2,3,4}\right)
},\alpha_{31}=\tfrac{\widetilde{b}_{3,4}\left(  \widetilde{b}_{1,4}%
\widetilde{b}_{2,3,4}+\widetilde{b}_{2,4}\widetilde{b}_{1,3,4}\right)
}{\left(  -\widetilde{p}_{4}\right)  \left(  -\widetilde{p}_{3,4}\right)
\left(  -\widetilde{p}_{2,3,4}\right)  },
\alpha_{32}    =\tfrac{\widetilde{b}_{1,2}\widetilde{b}_{2,3}^{2}%
\widetilde{b}_{2,4}}{\left(  -\widetilde{p}_{2}\right)  ^{2}\left(
-\widetilde{p}_{2,3}\right)  \left(  -\widetilde{p}_{2,3,4}\right)  }%
,\\ \alpha_{33}=\tfrac{\widetilde{b}_{1,2}\widetilde{b}_{2,3}\widetilde{b}%
_{2,4}^{2}}{\left(  -\widetilde{p}_{2}\right)  ^{2}\left(  -\widetilde{p}%
_{2,4}\right)  \left(  -\widetilde{p}_{2,3,4}\right)  },\alpha_{34}%
=\tfrac{\widetilde{b}_{1,3}\widetilde{b}_{2,3}^{2}\widetilde{b}_{3,4}}{\left(
-\widetilde{p}_{3}\right)  ^{2}\left(  -\widetilde{p}_{2,3}\right)  \left(
-\widetilde{p}_{2,3,4}\right)  },
\alpha_{35}    =\tfrac{\widetilde{b}_{1,3}\widetilde{b}_{2,3}\widetilde{b}%
_{3,4}^{2}}{\left(  -\widetilde{p}_{3}\right)  ^{2}\left(  -\widetilde{p}%
_{3,4}\right)  \left(  -\widetilde{p}_{2,3,4}\right)  },\alpha_{36}%
=\tfrac{\widetilde{b}_{1,4}\widetilde{b}_{3,4}\widetilde{b}_{2,4}^{2}}{\left(
-\widetilde{p}_{4}\right)  ^{2}\left(  -\widetilde{p}_{2,4}\right)  \left(
-\widetilde{p}_{2,3,4}\right)  },\\ \alpha_{37}=\tfrac{\widetilde{b}%
_{1,4}\widetilde{b}_{2,4}\widetilde{b}_{3,4}^{2}}{\left(  -\widetilde{p}%
_{4}\right)  ^{2}\left(  -\widetilde{p}_{3,4}\right)  \left(  -\widetilde{p}%
_{2,3,4}\right)  },
\alpha_{38}    =\tfrac{\widetilde{b}_{2,3}^{2}\left(  \widetilde{b}%
_{1,2}\widetilde{b}_{3,4}+\widetilde{b}_{1,3}\widetilde{b}_{2,4}\right)
}{\left(  -\widetilde{p}_{2}\right)  \left(  -\widetilde{p}_{3}\right)
\left(  -\widetilde{p}_{2,3}\right)  \left(  -\widetilde{p}_{2,3,4}\right)
},\alpha_{39}=\tfrac{\widetilde{b}_{2,4}^{2}\left(  \widetilde{b}%
_{1,2}\widetilde{b}_{3,4}+\widetilde{b}_{1,4}\widetilde{b}_{2,3}\right)
}{\left(  -\widetilde{p}_{2}\right)  \left(  -\widetilde{p}_{4}\right)
\left(  -\widetilde{p}_{2,4}\right)  \left(  -\widetilde{p}_{2,3,4}\right)
},\\ \alpha_{40}=\tfrac{\widetilde{b}_{3,4}^{2}\left(  \widetilde{b}%
_{1,3}\widetilde{b}_{2,4}+\widetilde{b}_{1,4}\widetilde{b}_{2,3}\right)
}{\left(  -\widetilde{p}_{3}\right)  \left(  -\widetilde{p}_{4}\right)
\left(  -\widetilde{p}_{3,4}\right)  \left(  -\widetilde{p}_{2,3,4}\right)  }.
$ Let $V_{i}\sim\mathcal{P}\left(  \alpha_{i}X_{1}\right)  ,i\in\left[
40\right]  ,$ be independent Poisson distributions, we have with the above definitions, with $\mathbf{v}=\left(  v_{1},\ldots,v_{40}\right) 
\in\mathbb{N}^{40} ,$ and 
$z_{1}=v_{1}+v_{7}+v_{8}+v_{14}+2v_{17}+v_{20}+v_{21}+v_{26}%
+v_{27}+2v_{32}+2v_{33}+v_{38}+v_{39}$, 
$z_{2}=v_{2}+v_{9}+v_{10}+v_{15}+2v_{18}+v_{20}+v_{22}+v_{28}%
+v_{29}+2v_{34}+2v_{35}+v_{38}+v_{40}$,
$z_{3}=v_{3}+v_{11}+v_{12}+v_{16}+2v_{19}+v_{21}+v_{22}+v_{30}%
+v_{31}+2v_{36}+2v_{37}+v_{39}+v_{40}$,
$z_{4}=v_{4}+v_{7}+v_{9}+v_{23}+v_{26}+v_{28}+v_{32}+v_{34}+v_{38}$, 
$z_{5}=v_{5}+v_{8}+v_{11}+v_{24}+v_{27}+v_{30}+v_{33}+v_{36}+v_{39}$,
$z_{6}=(v_{6}+v_{10}+v_{12}+v_{25}+v_{29}+v_{31}+v_{35}+v_{37}+v_{40}$,
$z_{7}=\sum_{i=13}^{40}v_{i}.$

\begin{equation}
L_{\left(  X_{2},X_{3},X_{4}\right)  }^{X_{1}=x_{1}}   =\sum_{\mathbf{v}
\in\mathbb{N}^{40}}[\prod_{i=1}^{40}\mathbf{P}\left(  V_{i}=v_{i}\right)
] S_{2}^{-z_{1}} S_{3}^{-z_{2}}  S_{4}^{-(z_{3}}S_{2,3}^{-z_{4}}S_{2,4}^{-z_{5}} S_{3,4}^{-z_{6}}S_{2,3,4}^{-(\lambda+z_{7})}.\label{LCn=4}
\end{equation}

Let the following infinitely divisible independent random vectors defined by%
\begin{align}
X_{1}  &  \sim\gamma_{\left(  P_{1},\lambda\right)  },\label{X1}\\
Y_{2}  &  \sim\gamma_{\left(  S_{2},V_{1}+V_{7}+V_{8}+V_{14}+2V_{17}%
+V_{20}+V_{21}+V_{26}+V_{27}+2V_{32}+2V_{33}+V_{38}+V_{39}\right)
},\label{Y2}\\
Y_{3}  &  \sim\gamma_{\left(  S_{3},V_{2}+V_{9}+V_{10}+V_{15}+2V_{18}%
+V_{20}+V_{22}+V_{28}+V_{29}+2V_{34}+2V_{35}+V_{38}+V_{40}\right)
},\label{Y3}\\
Y_{4}  &  \sim\gamma_{\left(  S_{4},V_{3}+V_{11}+V_{12}+V_{16}+2V_{19}%
+V_{21}+V_{22}+V_{30}+V_{31}+2V_{36}+2V_{37}+V_{39}+V_{40}\right)
},\label{Y4}\\
\left(  U_{1,2},U_{1,3}\right)   &  \sim\boldsymbol{\gamma}_{\left(
S_{2,3},V_{4}+V_{7}+V_{9}+V_{23}+V_{26}+V_{28}+V_{32}+V_{34}+V_{38}\right)
},\label{U1,2,U1,3}\\
\left(  U_{2,2},U_{2,4}\right)   &  \sim\boldsymbol{\gamma}_{\left(
S_{2,4},V_{5}+V_{8}+V_{11}+V_{24}+V_{27}+V_{30}+V_{33}+V_{36}+V_{39}\right)
},\label{U2,2,U2,4}\\
\left(  U_{3,3},U_{3,4}\right)   &  \sim\boldsymbol{\gamma}_{\left(
S_{3,4},V_{6}+V_{10}+V_{12}+V_{25}+V_{29}+V_{31}+V_{35}+V_{37}+V_{40}\right)
},\label{U3,3,U3,4}\\
\left(  W_{2},W_{3},W_{4}\right)   &  \sim\boldsymbol{\gamma}_{(S_{2,3,4}%
,\lambda+\sum_{i=13}^{40}V_{i})}. \label{W2,W3,W4}%
\end{align}
Let $(X_{2},X_{3},X_{4})$ defined by
\begin{align}
X_{2}  &  =Y_{2}+U_{1,2}+U_{2,2}+W_{2},\label{X2}\\
X_{3}  &  =Y_{3}+U_{1,3}+U_{3,3}+W_{3},\label{X3}\\
X_{4}  &  =Y_{4}+U_{2,4}+U_{3,4}+W_{4}, \label{X4}%
\end{align}
then
\begin{equation}
\mathbf{X}_{\left[  4\right]  }=(X_{1},X_{2},X_{3},X_{4})\sim
\boldsymbol{\gamma}_{(P_{\left[  4\right]  },\lambda)}. \label{X[4]}%
\end{equation}

\end{theorem}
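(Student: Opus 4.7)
The plan is to verify (\ref{X[4]}) by computing the conditional Laplace transform $L^{X_1=x_1}_{(X_2,X_3,X_4)}$ of the constructed vector and matching it to the specialization of Theorem~\ref{ThLXn_1_X1} at $n=4$. Conditionally on $X_1=x_1$ and on the Poisson vector $\mathbf{V}=(V_1,\ldots,V_{40})$, the seven independent components in (\ref{Y2})--(\ref{W2,W3,W4}) are gamma-distributed with Laplace transforms of the form $S_j^{-v}$ or $S_T^{-v}$. By independence of the summands and of the $V_i$'s, averaging first against the gamma components and then against the Poissons produces exactly the series (\ref{LCn=4}) with the stated linear combinations $z_1,\ldots,z_7$. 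The task therefore reduces to proving that this series coincides with the closed-form expression delivered by Theorem~\ref{ThLXn_1_X1}.

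Specializing formula (\ref{LXn_1_X1_Yn_1s_d_main}) at $n=4$ gives $S_{2,3,4}^{-\lambda}$ multiplied by an exponential whose exponent is a sum indexed by the seven non-empty subsets $T\subset\{2,3,4\}$. For each such $T$ I expand $r_{\{1\}\cup T}$ via (\ref{rT1})--(\ref{rT4}), namely $r_{\{1,j\}}=\widetilde{b}_{1,j}$, $r_{\{1,j,k\}}=\widetilde{b}_{1,j,k}$, and $r_{1,2,3,4}=\widetilde{b}_{1,2,3,4}-\widetilde{b}_{1,2}\widetilde{b}_{3,4}-\widetilde{b}_{1,3}\widetilde{b}_{2,4}-\widetilde{b}_{1,4}\widetilde{b}_{2,3}$. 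For each factor $(-\widetilde{\mathbf{p}})^K\mathbf{S}^K$ with $K=\{2,3,4\}\setminus T$ and $|K|\geq 2$, I apply identity (\ref{S^T}) iteratively to rewrite it as $(-\widetilde{p}_K)S_K$ plus combinations of $r_{T'}(-\widetilde{\mathbf{p}})^{K\setminus T'}\mathbf{S}^{K\setminus T'}$ for subsets $T'\subseteq K$ with $|T'|\geq 2$. After full distribution and regrouping, the exponent takes the form $x_1\sum_{i=1}^{40}\alpha_i(M_i-1)$, where each $M_i$ is a monomial $S_2^{-a_i}S_3^{-b_i}S_4^{-c_i}S_{2,3}^{-d_i}S_{2,4}^{-e_i}S_{3,4}^{-f_i}S_{2,3,4}^{-g_i}$ whose exponent vector matches the $i$-th column of the coefficient matrix implicit in the definitions of $z_1,\ldots,z_7$. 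Using $\exp\{\alpha_i x_1(M_i-1)\}=\mathbb{E}[M_i^{V_i}]$ for $V_i\sim\mathcal{P}(\alpha_i x_1)$ and factoring over independent $V_i$ then reproduces (\ref{LCn=4}), completing the match.

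The main obstacle is the combinatorial bookkeeping of the 40 terms. Each proper subset $T\subset\{2,3,4\}$ of size $1$ or $2$ generates several contributions through nested applications of (\ref{S^T}), and the subset $T=\{2,3,4\}$ contributes both directly through $\widetilde{b}_{1,2,3,4}$ and through the three subtracted pair-products $\widetilde{b}_U\widetilde{b}_V$ in the expansion of $r_{1,2,3,4}$. In particular, the factor $2$ appearing at positions $17$--$19$ and $32$--$37$ in $z_1,z_2,z_3$ is produced by two distinct occurrences of a repeated index $j$ through nested $(-\widetilde{p}_j)S_j$ substitutions which combine to give a repeated $S_j^{-1}$ in the associated $M_i$. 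Verifying each of the $40$ correspondences between the regrouped coefficient and the listed $\alpha_i$ is a long but mechanical computation; the essential check is that the coefficient collected in front of each distinct monomial $M_i$ equals $\alpha_i\cdot(-\widetilde{p}_{2,3,4})$ exactly, which in turn comes directly from the product of the relevant $\widetilde{b}$- and $\widetilde{\mathbf{p}}$-factors produced by the expansion.
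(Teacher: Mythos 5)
Your proposal follows essentially the same route as the paper's proof: both hinge on Theorem \ref{ThLXn_1_X1}, on the algebraic identities (\ref{ST})--(\ref{S^T}) together with the expansion $r_{1,2,3,4}=\widetilde{b}_{1,2,3,4}-\sum\widetilde{b}_U\widetilde{b}_V$, and on the Poisson--gamma mixture identity $\exp\{\alpha_i x_1(M_i-1)\}=\mathbb{E}\left[M_i^{V_i}\right]$; you merely run the argument in the opposite direction (verify the constructed vector's conditional Laplace transform and match it to the closed form, rather than derive the series (\ref{LCn=4}) and read off the construction), which is an equivalent presentation. The one caveat is that applying (\ref{S^T}) to the numerator factors $(-\widetilde{\mathbf{p}})^{K}\mathbf{S}^{K}$ alone still leaves ratios such as $S_{3,4}S_{2,3,4}^{-1}$, which are not yet of the required pure-inverse form $M_i$; the bulk of the paper's computation consists precisely in dividing those identities by products such as $S_2S_{2,3,4}$ and solving the resulting recursive system (its equations (\ref{S2/S23}) through (\ref{S3,4/S2,3,4})) to express every such ratio as a combination of monomials $\prod_T S_T^{-m_T}$ with $m_T\geqslant 0$ --- this is the step your sketch asserts ("full distribution and regrouping") but does not actually supply, and it is where the forty coefficients $\alpha_i$ and the factors of $2$ are generated.
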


The following algorithm is derived from Theorem \ref{Th1_n=4_Bern2024} to simulate $\mathbf{X}_{\left[  4\right]  } \sim\mathbf{\gamma}_{\left(  P_{4},\lambda\right)  }$

\begin{algorithm}
\label{Al_n=4_Ber(2024)}Simulation of an infinitely divisible quadrivariate
gamma distribution $\mathbf{\gamma}_{\left(  P_{4},\lambda\right)  }$

\begin{enumerate}
\item Simulate $X_{1}\sim\gamma_{\left(  P_{1},\lambda\right)  };$

\item Compute $\alpha_{i},i\in\left[  40\right]  ,$ defined
in Theorem \ref{Th1_n=4_Bern2024} and simulate independently  $V_{i}\sim
\mathcal{P}\left(  \alpha_{i}X_{1}\right) ;$

\item Simulate independently
$Y_{2},Y_{3},Y_{4},\left(  U_{1,2},U_{1,3}\right),\left(  U_{2,2},U_{2,4}\right),\left(  U_{3,3},U_{3,4}\right),\left(  W_{2},W_{3},W_{4}\right)$ defined in Theorem \ref{Th1_n=4_Bern2024}.
  
\item Compute $X_{2},X_{3},$ and $X_{4},$ respectively defined by (\ref{X2}), (\ref{X3}) and  (\ref{X4}), then $\mathbf{X}_{\left[
4\right]  }$ simulates $\boldsymbol{\gamma}_{(P_{4},\lambda)}.$
\end{enumerate}
\end{algorithm}

\begin{remark}
We need to simulate $\boldsymbol{\gamma}_{\left(  S_{2,3},V_{4}+V_{7}%
+V_{9}+V_{23}+V_{26}+V_{28}+V_{32}+V_{34}+V_{38}\right)  },$ \newline%
$\boldsymbol{\gamma}_{\left(  S_{2,4},V_{5}+V_{8}+V_{11}+V_{24}+V_{27}%
+V_{30}+V_{33}+V_{36}+V_{39}\right)  },$ $\boldsymbol{\gamma}_{\left(
S_{3,4},V_{6}+V_{10}+V_{12}+V_{25}+V_{29}+V_{31}+V_{35}+V_{37}+V_{40}\right)
},$ and $\boldsymbol{\gamma}_{(S_{2,3,4},\lambda+\sum_{i=13}^{40}V_{i})}.$ To
do this, we use $40$ Pds and $4$ ugds, $3$ bgds and $1$ tgd. In each time for
bgd, we can use $1$ Pd and $2$ ugds, and for tgd we can use $6$ Pds and $5$
ugds. Finally, we can simulate $\boldsymbol{\gamma}_{(P_{\left[  4\right]
},\lambda)}$with $49$ Pds and $15$ ugds.
\end{remark}

We see that it is possible to simulate a mgd by induction. Unfortunately, the
complexity of the computations seems enormous from $n=5$ upwards. This is why
we study the Markovian case, which is simpler.

\section{Markovian multivariate gamma distributions}

In this section, we use the results given in \cite{Chatelainetal2009}. We suppose that $\mathbf{X}_{\left[  n\right]  }=\left(
X_{1},\ldots,X_{n}\right)  \sim\gamma_{\left(  P_{n},\lambda\right)  },$ where
$P_{n}$ is an affine polynomial and $\lambda>0$. We assume that $\gamma
_{\left(  P_{n},\lambda\right)  }$ is infinitely divisible and it satisfies
the following first-order Markov property%
$
\mathbf{P}\left(  X_{i+1}\in B|X_{i}=x_{i},\ldots,X_{1}=x_{1}\right)
=\mathbf{P}\left(  X_{i+1}\in B|X_{i}=x_{i}\right)  ,
$
for any $1\leqslant i\leqslant n-1$ and for any bounded set all $B\subset
\mathbb{R}.$ Such a distribution is called a Markovian mgd (Mmgd). Let
$f_{\mathbf{X}_{\left[  n\right]  }}$ be the probability density function
(pdf) of $\mathbf{X}_{\left[  n\right]  }$ on $\left(  0,\infty\right)  ^{n}$. See  \cite{Chatelainetal2009} for the expression of the probability density of a Mmgd. \cite{Chatelainetal2009} also give the following Theorem in the case
$p_{i}=1,i\in\left[  n\right]  .$

\begin{theorem}
\label{Th_Mmgd}Let $\mathbf{X}_{\left[  n\right]  }$ be a random vector
distributed according to a Mmgd with shape parameter $\lambda>0.$ The Lt of
$\mathbf{X}_{\left[  n\right]  }$ can be expressed for  $\boldsymbol{\theta}=\left(  \theta_{1},\ldots,\theta_{n}\right)\in\mathbb{R}^{n}$ as
$
L_{\mathbf{X}_{\left[  n\right]  }}\left(  \boldsymbol{\theta}\right)
=\det\left(  \boldsymbol{I}_{n}+\boldsymbol{D}_{\boldsymbol{\theta}%
}\boldsymbol{R}_{1/2}\right)  ^{-\lambda}
$
where $\boldsymbol{I}_{n}$ is the $n\times n$ identity matrix, $\boldsymbol{D}%
_{\boldsymbol{\theta}}$ is the diagonal matrix whose diagonal entries are the
components of vector $\boldsymbol{\theta,}$ and $\boldsymbol{R}_{1/2}=\left(
a_{i,j}\right)  _{1\leqslant i,j\leqslant n}$ is a correlation matrix such
that
$
a_{i,i}   =1,\forall1\leqslant i\leqslant n;
a_{i,i+1}   =\sqrt{\rho_{i,i+1}},\forall1\leqslant i\leqslant
n-1;
a_{i,j}   =\sqrt{\rho_{i,j}}=\sqrt{\rho_{i,k}}\sqrt{\rho_{k,j}}%
,\forall1\leqslant i<k<j\leqslant n.
$

\end{theorem}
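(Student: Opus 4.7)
The plan is induction on $n$. The base case $n=1$ is immediate, since $X_1\sim\gamma_{(1,\lambda)}$ gives $L_{X_1}(\theta_1)=(1+\theta_1)^{-\lambda}=\det(\mathbf{I}_1+\theta_1)^{-\lambda}$. For the inductive step, the Markov property and the tower law give
\begin{equation*}
L_{\mathbf{X}_{[n]}}(\boldsymbol{\theta}) = \mathbb{E}\bigl[e^{-\sum_{i<n}\theta_i X_i}\,L_{X_n}^{X_{n-1}}(\theta_n)\bigr].
\end{equation*}
The marginal distribution of $(X_{n-1},X_n)$ is a bgd with $p_{n-1}=p_n=1$ and $p_{\{n-1,n\}}=1-\rho_{n-1,n}$, so Theorem \ref{ThLXn_1_X1} (equivalently, equation (\ref{LXn_1_X1_Yn_1s_d_main_s}) specialised to $n=2$) yields
\begin{equation*}
L_{X_n}^{X_{n-1}=x}(\theta_n) = \bigl[1+(1-\rho_{n-1,n})\theta_n\bigr]^{-\lambda}\exp(-\tau_n x), \qquad \tau_n := \frac{\rho_{n-1,n}\theta_n}{1+(1-\rho_{n-1,n})\theta_n}.
\end{equation*}
Substituting and invoking the inductive hypothesis for $L_{\mathbf{X}_{[n-1]}}$ reduces the theorem to the matrix identity
\begin{equation*}
\det(\mathbf{I}_n+\mathbf{D}_{\boldsymbol{\theta}}\mathbf{R}_{1/2}) = \bigl[1+(1-\rho_{n-1,n})\theta_n\bigr]\,\det(\mathbf{I}_{n-1}+\mathbf{D}_{\boldsymbol{\theta}'}\mathbf{R}_{1/2}^{(n-1)}),
\end{equation*}
with $\boldsymbol{\theta}'=(\theta_1,\ldots,\theta_{n-2},\theta_{n-1}+\tau_n)$ and $\mathbf{R}_{1/2}^{(n-1)}$ the leading $(n-1)\times(n-1)$ principal submatrix.

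The main obstacle is establishing this matrix identity. I would first prove the sub\-determinant formula
\begin{equation*}
\det\bigl((\mathbf{R}_{1/2})_{T,T}\bigr) = \prod_{s=1}^{|T|-1}(1-\rho_{i_s,i_{s+1}}),\qquad T=\{i_1<\cdots<i_{|T|}\}\subset[n],
\end{equation*}
by a secondary induction on $|T|$. The row operation replacing the last row by itself minus $\sqrt{\rho_{i_{|T|-1},i_{|T|}}}$ times the penultimate row exploits the multiplicative identity $\sqrt{\rho_{i_s,i_{|T|}}}=\sqrt{\rho_{i_s,i_{|T|-1}}}\sqrt{\rho_{i_{|T|-1},i_{|T|}}}$ to zero out every entry of the last row except the diagonal one, which becomes $1-\rho_{i_{|T|-1},i_{|T|}}$; Laplace expansion completes the induction. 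Consequently $\det(\mathbf{I}_n+\mathbf{D}_{\boldsymbol{\theta}}\mathbf{R}_{1/2})$ is the explicit affine polynomial $\sum_{T\subset[n]}\boldsymbol{\theta}^T\prod_{s=1}^{|T|-1}(1-\rho_{i_s,i_{s+1}})$.

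The target identity then reduces to a coefficient-by-coefficient check comparing the coefficient of $\boldsymbol{\theta}^T$ on both sides in the two cases $n\in T$ and $n\notin T$. The nontrivial case is $n\in T$ with $n-1\notin T$, say $m:=\max(T\setminus\{n\})\leq n-2$, which after expansion of $\theta_{n-1}+\tau_n$ and clearing $[1+(1-\rho_{n-1,n})\theta_n]$ boils down to the arithmetic identity
\begin{equation*}
1-\rho_{m,n} = (1-\rho_{n-1,n}) + \rho_{n-1,n}(1-\rho_{m,n-1}),
\end{equation*}
which is immediate from the Markov product rule $\rho_{m,n}=\rho_{m,n-1}\rho_{n-1,n}$. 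The remaining cases ($n\notin T$, or $\{n-1,n\}\subset T$) follow by the same mechanism, and the combinatorial bookkeeping is the principal remaining task; all other pieces are direct applications of the previously established theorems.
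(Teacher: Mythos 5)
The paper does not actually prove this statement: Theorem \ref{Th_Mmgd} is quoted from \cite{Chatelainetal2009} (``\ldots also give the following Theorem in the case $p_i=1$''), so there is no in-paper argument to compare yours against. Your blind proof is nevertheless essentially correct and derives the result from the paper's own machinery. The identification of the marginal $(X_{n-1},X_n)$ as a bgd with $p_{n-1}=p_n=1$ and $p_{\{n-1,n\}}=1-\rho_{n-1,n}$ is right, and your conditional Laplace transform agrees with Theorem \ref{Th14Ber(2023)} specialised to these parameters, since $\widetilde{b}_{n-1,n}/(-\widetilde{p}_n)=\rho_{n-1,n}/(1-\rho_{n-1,n})$ and $S_2^{-1}-1=-(1-\rho_{n-1,n})\theta_n/[1+(1-\rho_{n-1,n})\theta_n]$. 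The principal-minor formula $\det((\boldsymbol{R}_{1/2})_{T,T})=\prod_{s}(1-\rho_{i_s,i_{s+1}})$ is correct, and your row operation does annihilate the whole last row off the diagonal because $a_{i,j}=a_{i,k}a_{k,j}$ holds for \emph{all} $i<k<j$, not only consecutive indices; I have also checked that the remaining coefficient comparison (the cases $n\notin T$, $\{n-1,n\}\subset T$, $T=\{n\}$, and $n\in T$ with $m=\max(T\smallsetminus\{n\})\leqslant n-2$) closes exactly via $1-\rho_{m,n}=(1-\rho_{n-1,n})+\rho_{n-1,n}(1-\rho_{m,n-1})$, so the ``bookkeeping'' you defer does go through. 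Two points should be made explicit. First, you invoke the product rule $\rho_{m,n}=\rho_{m,n-1}\rho_{n-1,n}$ as given, whereas for a Mmgd it is part of what must be established; it follows in one line from your own conditional Lt, which gives $\mathbb{E}(X_n|X_{n-1})=\lambda(1-\rho_{n-1,n})+\rho_{n-1,n}X_{n-1}$, hence $\mathrm{Cov}(X_m,X_n)=\rho_{n-1,n}\,\mathrm{Cov}(X_m,X_{n-1})$ with all variances equal to $\lambda$. Second, the inductive hypothesis is applied to $\mathbf{X}_{[n-1]}$, so you should record that the first $n-1$ coordinates of a Mmgd again form a Mmgd (marginals of a mgd are mgds, and infinite divisibility and the first-order Markov property are inherited), and note that the degenerate case $\rho_{n-1,n}=0$, where $\widetilde{b}_{n-1,n}=0$ and Theorem \ref{Th14Ber(2023)} does not formally apply, is trivial since then $X_n$ is independent of the rest and $\tau_n=0$.
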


We also have $\left(  X_{i},X_{i+l}\right)  \sim\gamma_{\left(  P_{\left[
i,i+l\right]  },\lambda\right)  }$ with $P_{\left[  i,i+l\right]  }\left(
\theta_{i},\theta_{i+1}\right)  =1+\theta_{i}+\theta_{i+1}+\left(
1-a_{i,i+l}^{2}\right)  \theta_{i}\theta_{i+l}$, and we have
$
\rho_{i,i+l}=\prod_{j=i}^{i+l-1}\rho_{j,j+1},\text{ for all }1\leqslant
i<i+l\leqslant n.
$ A straightforward consequence is that $\left(  X_{i},X_{i+1}\right)
\sim\gamma_{\left(  P_{\left[  i,i+1\right]  },\lambda\right)  }$ with
$P_{\left[  i,i+1\right]  }\left(  \theta_{i},\theta_{i+1}\right)
=1+\theta_{i}+\theta_{i+1}+\left(  1-\rho_{i,i+1}\right)  \theta_{i}%
\theta_{i+1}.$ We immediately derive the algorithm  for simulating 
$\mathbf{X}_{\left[  n\right]  }$ from Algorithm \ref{Al14Ber(2023)} and the computations $\frac{p_{i,i+1}}{p_{i}}\widetilde{b}_{i,i+1}=\frac{\rho_{i,i+1}%
}{1-\rho_{i,i+1}}$ and $\frac{p_{i,i+1}}{p_{i}}=1-\rho_{i,i+1}.$

\begin{algorithm}
\label{Algo_Mgd}Simulation of $\gamma_{\left(  P_{n}%
,\lambda\right)  },$ with $P_{n}=\det\left(  \boldsymbol{I}%
_{n}+\boldsymbol{D}_{\boldsymbol{\theta}}R_{1/2}\right)  $ under the
conditions of Theorem \ref{Th_Mmgd}
\end{algorithm}

\begin{enumerate}
\item Simulate $X_{1}\sim\gamma_{\left(  1,\lambda\right)  }$ ;

\item For $i=1$ to $n-1,$ do

\begin{itemize}
\item simulate $V_{i}\sim\mathcal{P}\left(  \frac{\rho_{i,i+1}}{1-\rho
_{i,i+1}}X_{i}\right)  ,$

\item simulate $X_{i+1}\sim\gamma_{(1-\rho_{i,i+1},\lambda+V_{i})}$ ;
\end{itemize}

\item Then $\mathbf{X}_{\left[  n\right]  }=\left(  X_{1},\ldots,X_{n}\right)
$ simulate $\boldsymbol{\gamma}_{\left(  P_{n},\lambda
\right)  }.$
\end{enumerate}

We note that if we consider $\mathbf{Y}_{\left[  n\right]  }=\left(  p_{1}X_{1}%
,\ldots,p_{n}X_{n}\right)  ,$ then $\mathbf{Y}_{\left[  n\right]  }=\left(
Y_{1},\ldots,Y_{n}\right)  \sim\gamma_{\left(  Q_{n}%
,\lambda\right)  },$ where $Q_{n}$ is an affine polynomial
with 
$
Q_{n}\left(  \boldsymbol{\theta}_{\left[  n\right]  }\right)
 =P_{n}\left(  \boldsymbol{p}_{\left[  n\right]
}\boldsymbol{\theta}_{\left[  n\right]  }\right) 
=\det\left(  \boldsymbol{I}_{n}+\boldsymbol{D}_{\boldsymbol{p}_{\left[
n\right]  }\boldsymbol{\theta}_{\left[  n\right]  }}R_{1/2}\right)
^{-\lambda}.
$

\section{Simulations} 
We present simulations for examples of mgds for $n\in \left\lbrace2,3,4\right\rbrace$, for examples of mfgds for $n\in \left\lbrace2,3\right\rbrace$, and for an example of Mmgd for $n=5$.

\subsection{Simulations in dimension 2}

Let $P_{2}\left(  \theta_{1},\theta_{2}\right)
=1+3\theta_{1}+3\theta_{2}+\theta_{1}\theta_{2}$ and $Q_{2}
\left(  \theta_{1},\theta_{2}\right)  =1+15/13\theta_{1}+3/13\theta
_{2}+1/13\theta_{1}\theta_{2}$, let $\mathbf{X}_{\left[2\right]}=\left(  X_{1},X_{2}\right)
\sim\boldsymbol{\gamma}_{(P_{ 2},2)}$ and $\mathbf{Y}_{\left[2\right]}=\left(
Y_{1},Y_{2}\right)  \sim\boldsymbol{\gamma}_{(Q_{2},2)}$ for
which the correlation coefficients $\rho_{X_{1},X_{2}}$ and $\rho_{Y_{1}%
,Y_{2}}$ are respectively, $\rho_{X_{1},X_{2}}=1-\frac{p_{1,2}}{p_{1}p_{2}%
}=\frac{8}{9}=0.889$ and $\rho_{Y_{1},Y_{2}}=\frac{32}{45}=0.711.$

Simulations for samples of size 1,000 of bgd $\boldsymbol{\gamma
}_{(P_{2},2)}$ and $\boldsymbol{\gamma}_{(Q_{2},2)}$ are illustrated respectively by the graphical representations
given in Figure \ref{fig1}.

\begin{center}
\begin{figure}[!h]
\centering
\includegraphics[width=8cm,height =10cm]{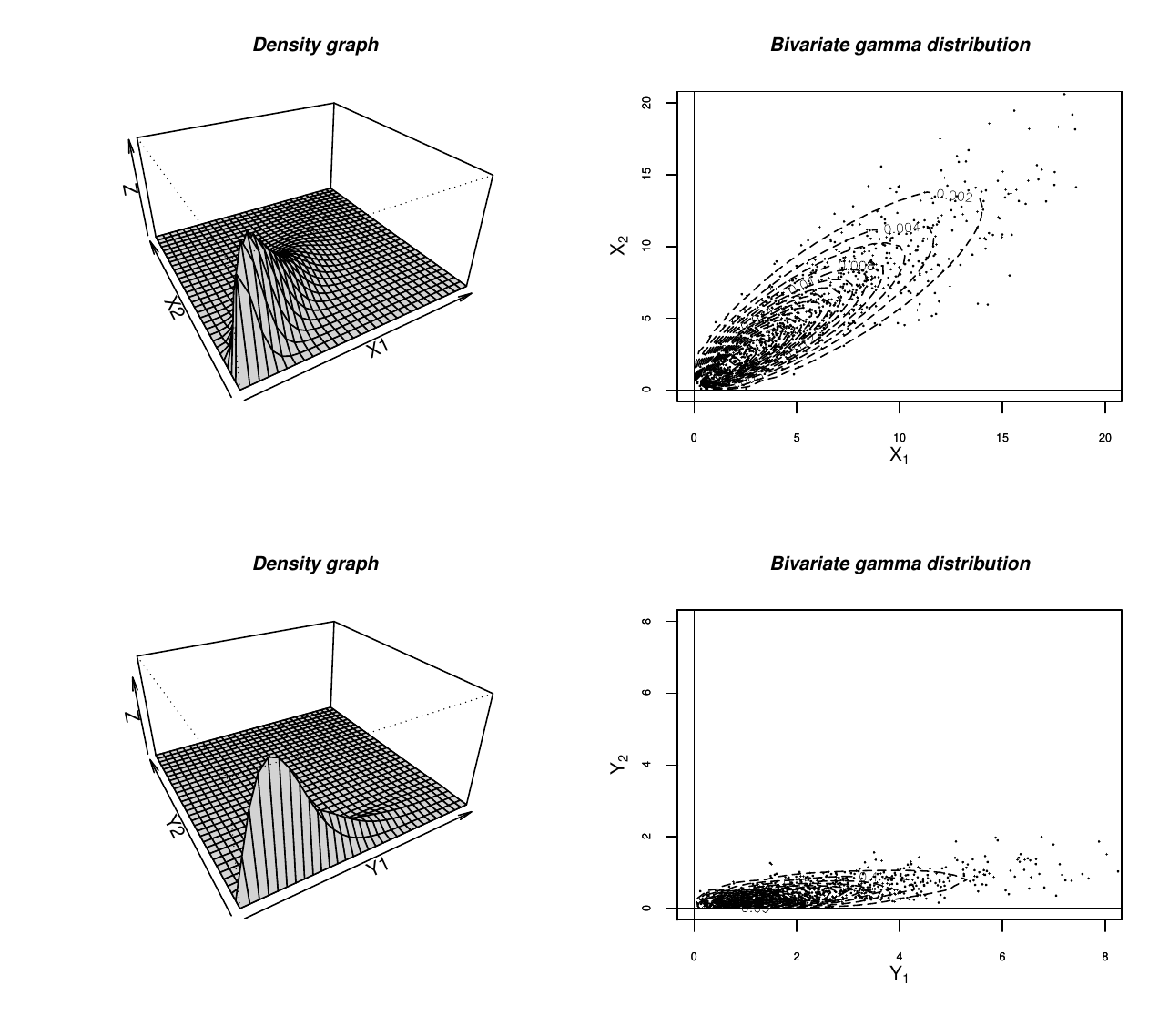}
\\
\caption{ Distributions and simulations of $\mathbf{X}_{\left[2\right]}$ and $\mathbf{Y}_{\left[2\right]}$}
\label{fig1}
\end{figure}
\end{center}

Let $\mathbf{X}^{\prime}_{\left[2\right]}=\left(  X_{1}^{\prime},X_{2}^{\prime}\right)
\sim\boldsymbol{\gamma}_{(P_{2},(2,3,4))}$ and $\mathbf{Y}%
^{\prime}_{\left[2\right]}=\left( Y_{1}^{\prime},Y_{2}^{\prime}\right)  \sim\boldsymbol{\gamma
}_{(Q_{2},(2,3,4))}$ for which the correlation coefficients
$\rho_{X_{1}^{\prime},X_{2}^{\prime}}^{\prime}$ and $\rho_{Y_{1}^{\prime
},Y_{2}^{\prime}}^{\prime}$ are respectively (by the formula $\rho
_{X_{1}^{\prime},X_{2}^{\prime}}^{\prime}=\frac{\lambda}{\sqrt{\lambda
_{1}\lambda_{2}}}\rho_{X_{1},X_{2}}$) $\rho_{X_{1}^{\prime},X_{2}^{\prime}%
}^{\prime}=\frac{2}{\sqrt{12}}\frac{8}{9}=\frac{8}{27}\sqrt{3}\simeq0.513\,$
and $\rho_{Y_{1}^{\prime},Y_{2}^{\prime}}^{\prime}=\frac{2}{\sqrt{12}}%
\frac{32}{45}=\frac{32}{135}\sqrt{3}\simeq0.411.$

Simulations for samples of size 1,000 of mfgd $\boldsymbol{\gamma
}_{(P_{2},(2,3,4))}$ and $\boldsymbol{\gamma}_{(Q_{2},(2,3,4))}$ are illustrated by the graphical representations given
in Figure \ref{fig2}.
\begin{center}
\begin{figure}[!h]
\centering
\includegraphics[width=8cm,height =10cm]{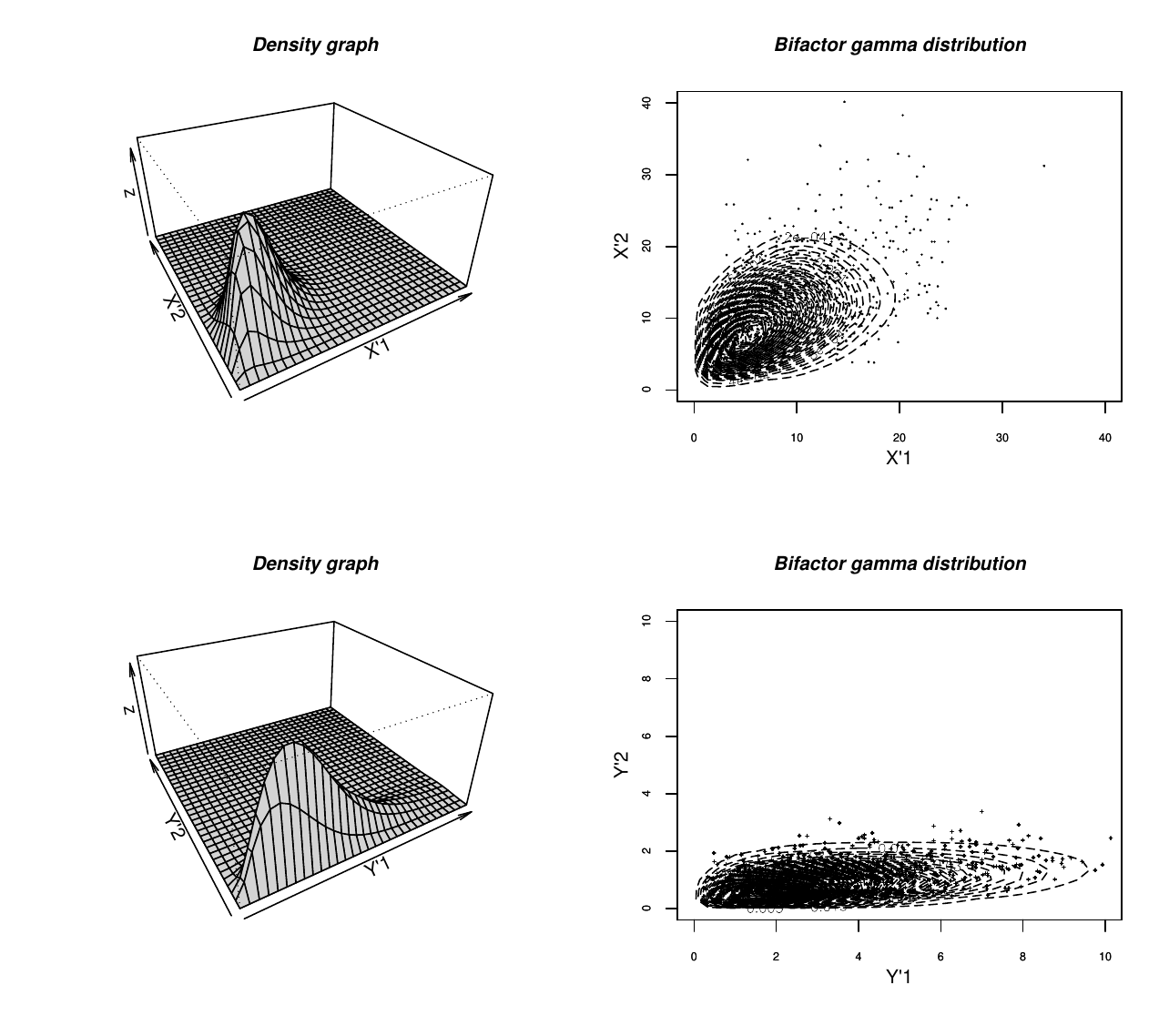}
\\
\caption{Distributions and simulations  of $\mathbf{X}^{\prime}_{\left[2\right]}$ and $\mathbf{Y}^{\prime}_{\left[2\right]}$}
\label{fig2}
\end{figure}
\end{center}

\subsection{Simulations in dimension 3}

Let $P_{3}\left(  \theta_{1},\theta_{2},\theta_{3}\right)
=1+\theta_{1}+\theta_{2}+\theta_{3}+0.55\theta_{1}\theta_{2}+0.45\theta_{1}\theta
_{3}+0.5\theta_{2}\theta_{3}+0.2\theta_{1}\theta_{2}\theta_{3}$ and
$Q_{3}\left(  \theta_{1},\theta_{2},\theta_{3}\right)
=1+\theta_{1}+4\theta_{2}+5\theta_{3}+2.2\theta_{1}\theta_{2}+2.25\theta_{1}\theta
_{3}+10\theta_{2}\theta_{3}+4\theta_{1}\theta_{2}\theta_{3}$, let
$\mathbf{X}_{\left[3\right]}=\left(  X_{1},X_{2},X_{3}\right)  \sim\boldsymbol{\gamma}_{(P_{3},2)}$ and $\mathbf{Y}_{\left[3\right]}=\left(  Y_{1},Y_{2},Y_{3}\right)\sim\boldsymbol{\gamma}_{(Q_{3},2)}$ with $\left(
Y_{1},Y_{2},Y_{3}\right)  =\left(  X_{1},4X_{2},5X_{3}\right)  $.

Simulations for samples of size 1,000 of mgd $\boldsymbol{\gamma
}_{(P_{3},2)}$ and $\boldsymbol{\gamma}_{(Q_{3},2)}$ are illustrated by the graphical representations given in
Figures \ref{fig3}, \ref{fig4} and Figures \ref{fig5}, \ref{fig6}.

\begin{center}
\begin{figure}[!h]
\centering
\includegraphics[width=15cm,height =4cm]{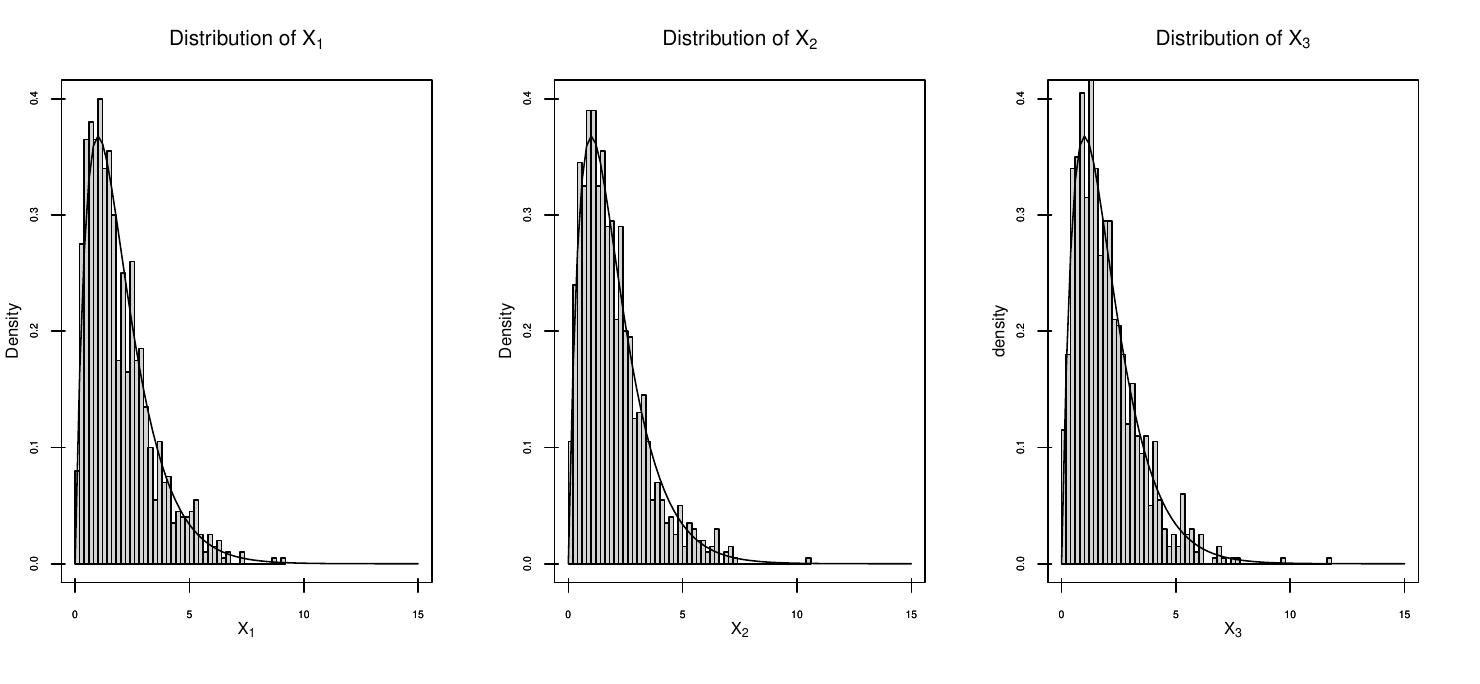}
\caption{ Distributions of $ X_{1},X_{2},X_{3}$ }
\label{fig3}
\end{figure}
\end{center}

\begin{center}
\begin{figure}[!h]
\centering
\includegraphics[width=10cm,height =12cm]{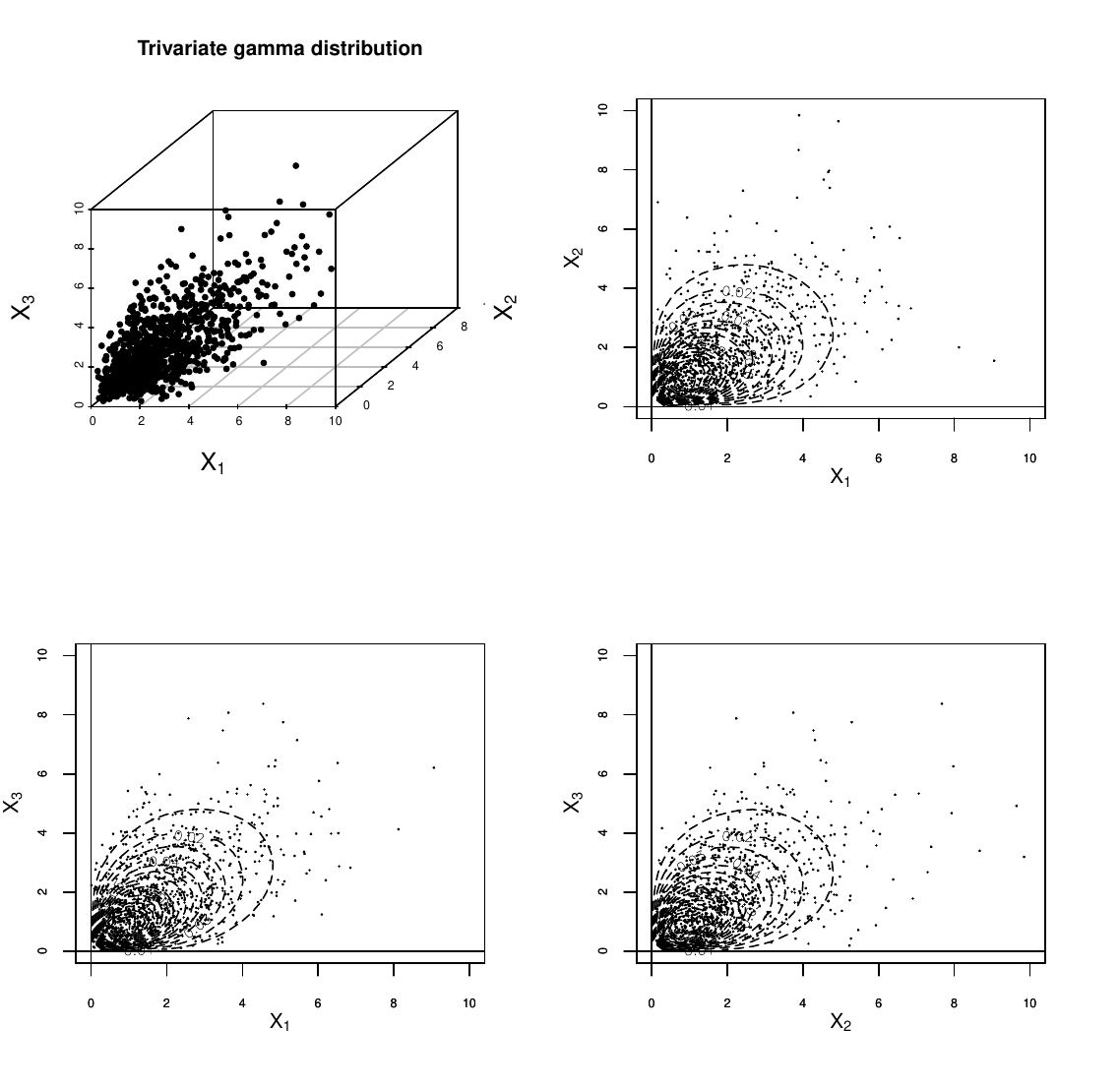}
\caption{Distribution and simulation of $\mathbf{X}_{\left[3\right]}$}
\label{fig4}
\end{figure}
\end{center}

\begin{center}
\begin{figure}[!h]
\centering
\includegraphics[width=15cm,height =4cm]{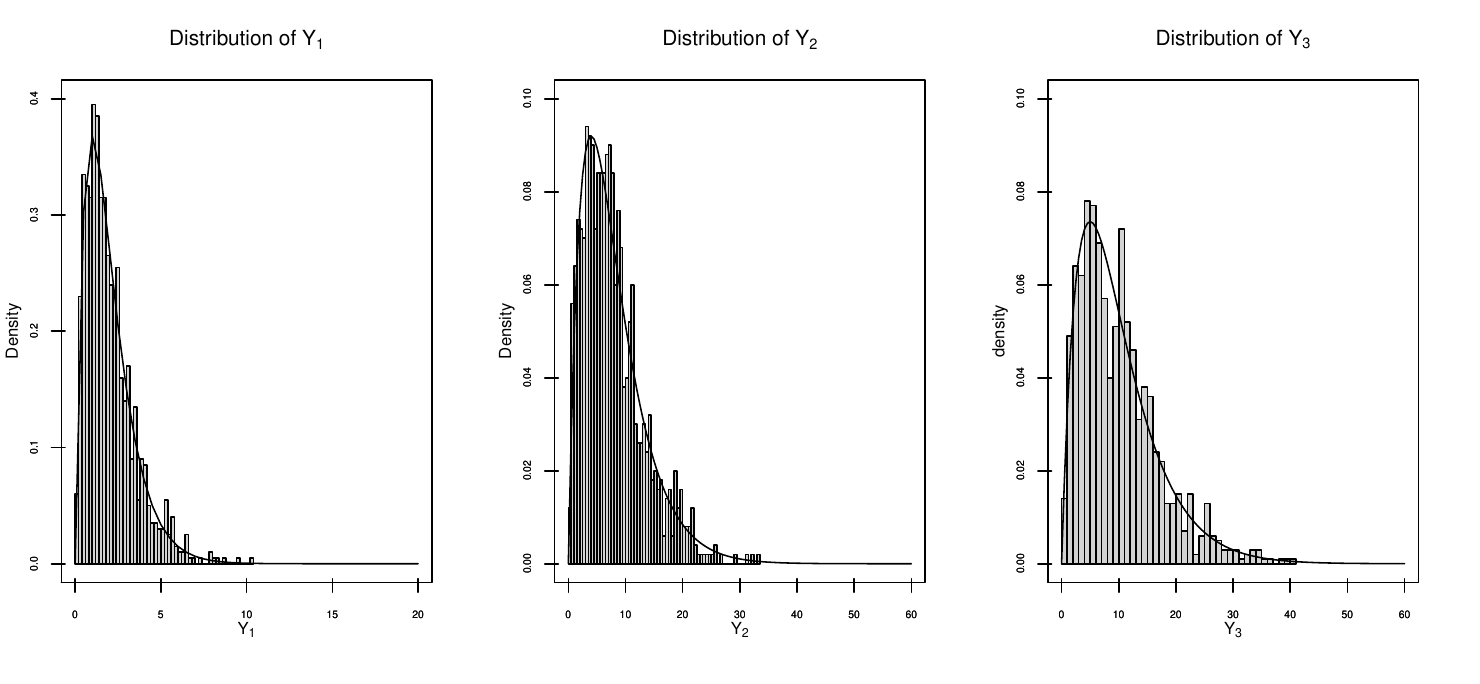}
\caption{ Distributions of $ Y_{1},Y_{2},Y_{3}$}
\label{fig5}
\end{figure}
\end{center}

\begin{center}
\begin{figure}[!h]
\centering
\includegraphics[width=10cm,height =12cm]{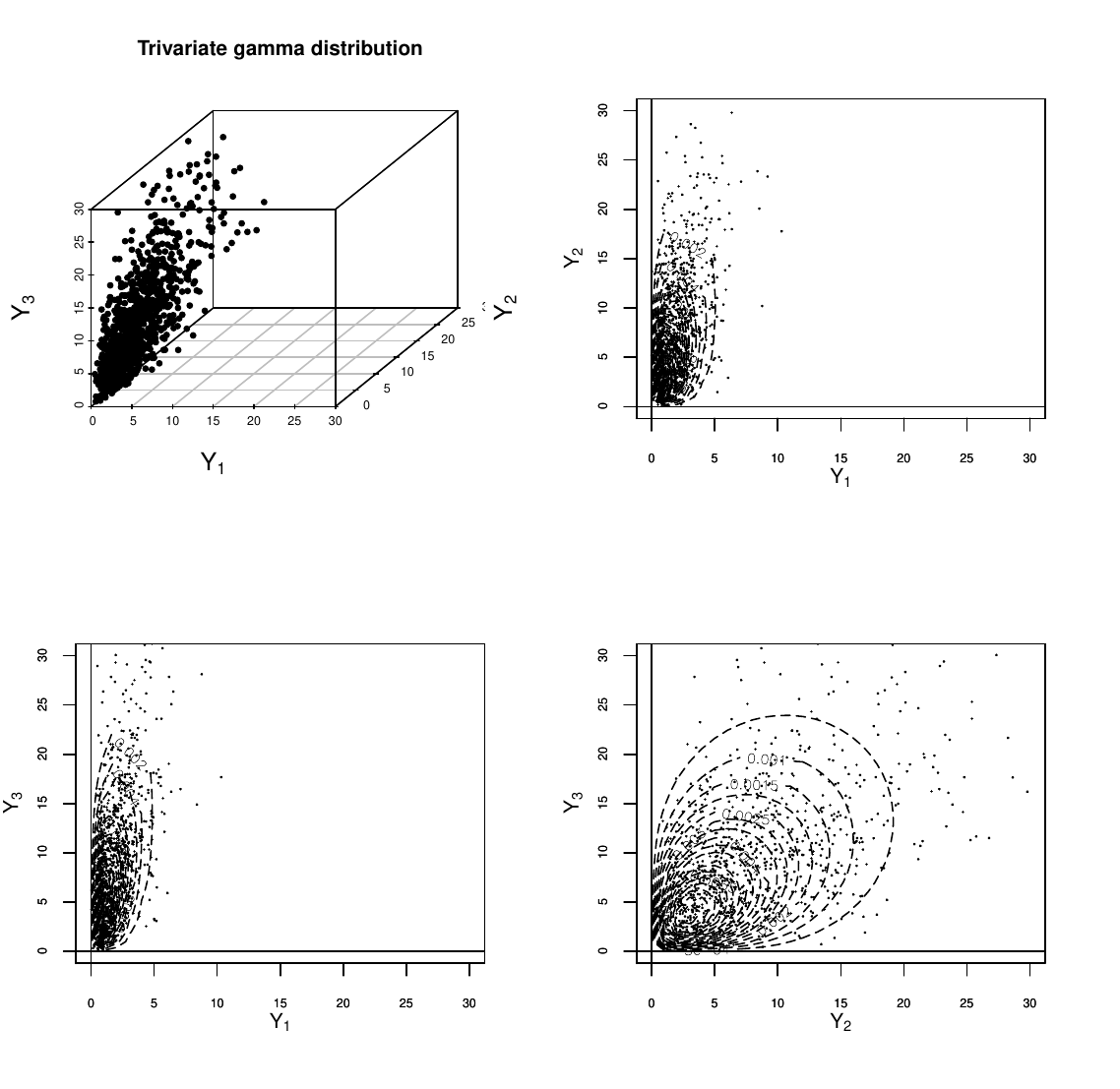}
\caption{ Distribution and simulation of $\mathbf{Y}_{\left[3\right]}$}
\label{fig6}
\end{figure}
\end{center}
 
\rule[-3cm]{0cm}{3cm}
\newline

Let $\mathbf{X}^{\prime}_{\left[3\right]}=\left(  X_{1}^{\prime},X_{2}^{\prime},X_{3}^{\prime
}\right)  \sim\boldsymbol{\gamma}_{(P_{\left[  3\right]  },(2,3,4,5))}$ and
$\mathbf{Y}^{\prime}_{\left[3\right]}=\left(  Y_{1}^{\prime},Y_{2}^{\prime},Y_{3}^{\prime
}\right)  \sim\boldsymbol{\gamma}_{(Q_{\left[  3\right]  },(2,3,4,5))}.$

Simulations for samples of size 1,000 of mfgd $\boldsymbol{\gamma
}_{(P_{3},(2,3,4,5))}$ and $\boldsymbol{\gamma}_{(Q_{3},(2,3,4,5))}$ are illustrated by the graphical representations
given in Figure \ref{fig7} and Figure \ref{fig8}.
\begin{center}
\begin{figure}[!h]
\centering
\includegraphics[width=9cm,height =9cm]{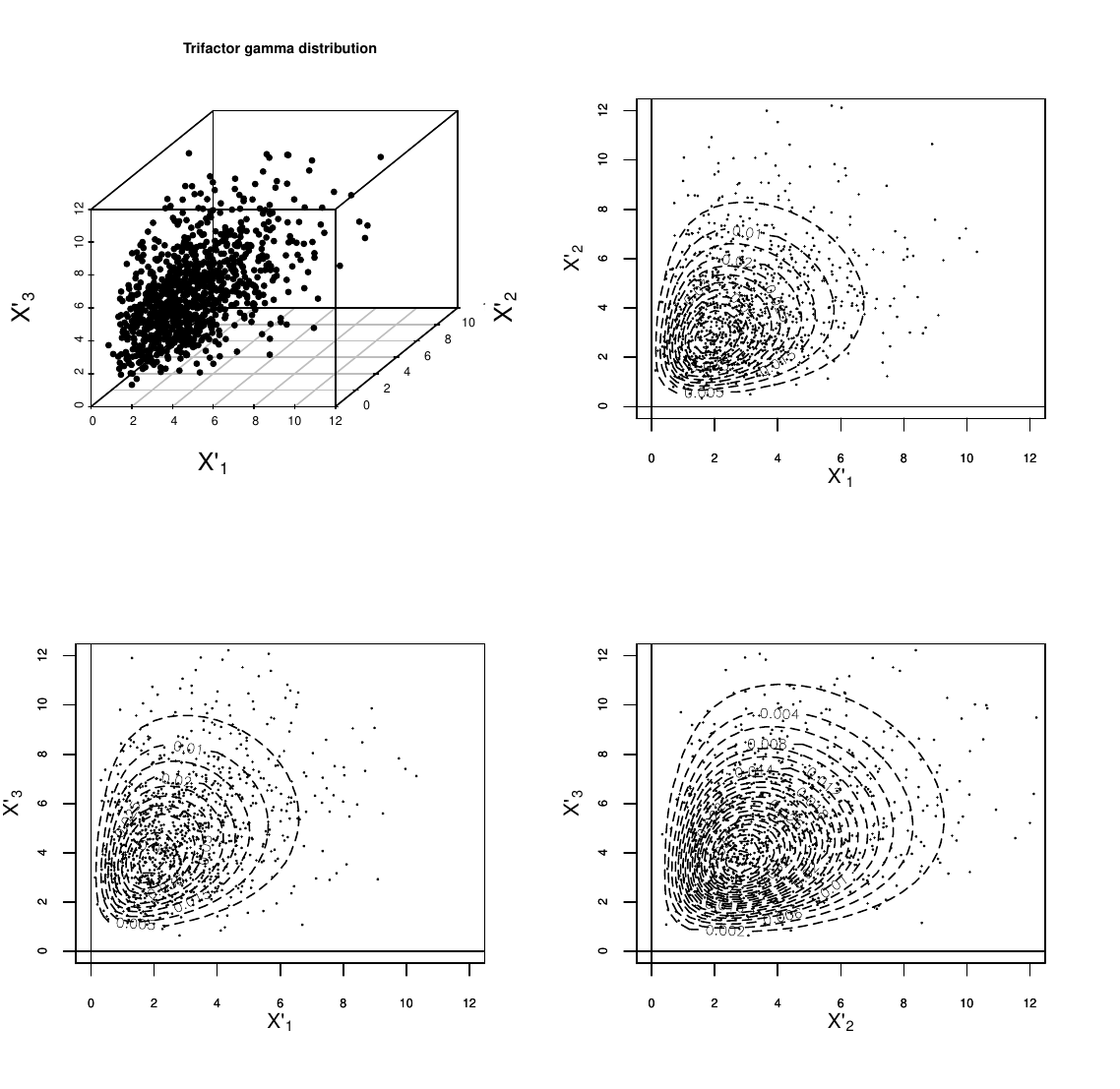}
\caption{Distribution and simulation of $\mathbf{X}^{\prime}_{\left[3\right]}$}
\label{fig7}
\end{figure}
\end{center}

\pagebreak

\begin{center}
\begin{figure}[!h]
\centering
\includegraphics[width=9cm,height =9cm]{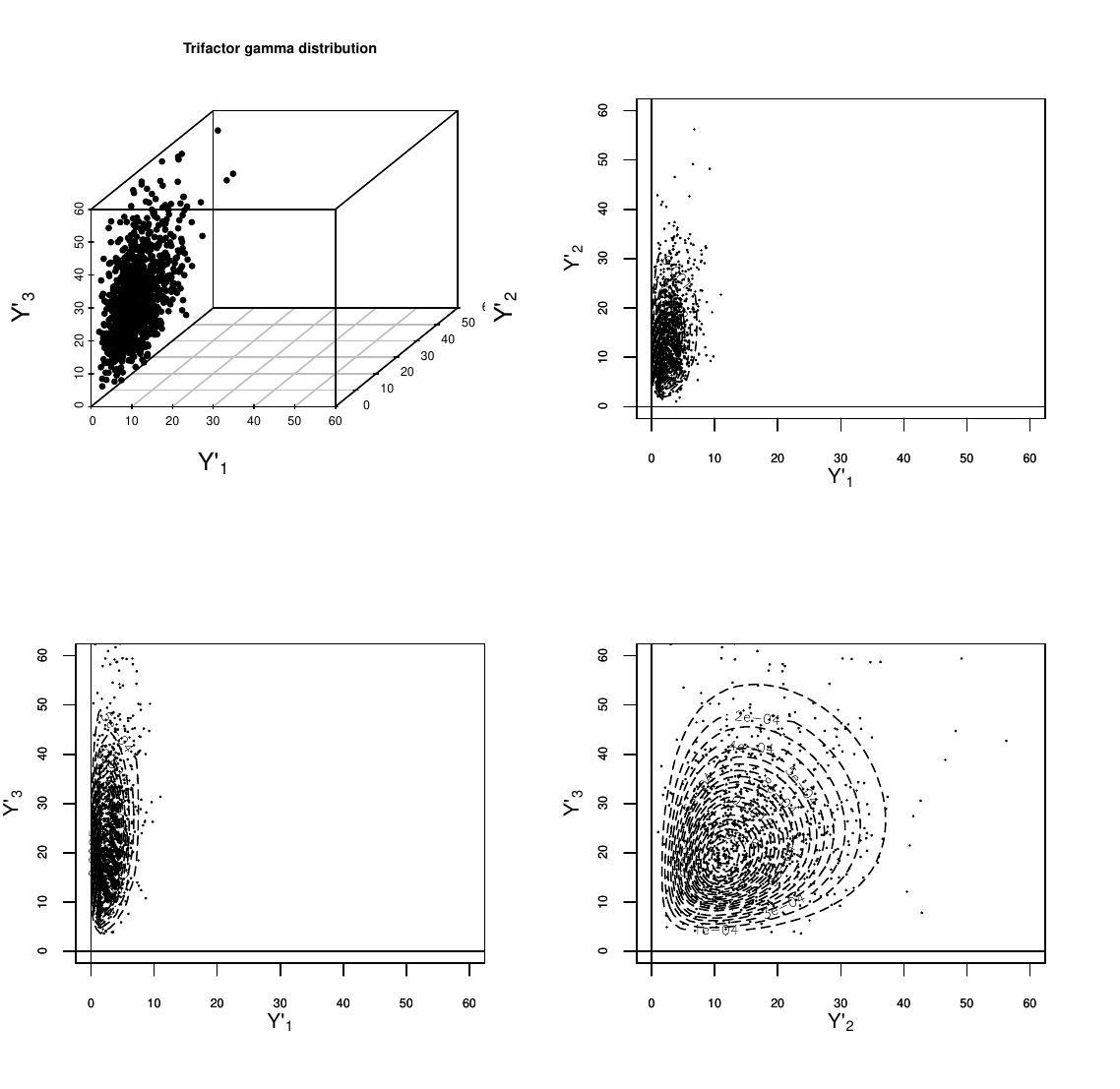}
\caption{Distribution and simulation of $\mathbf{Y}^{\prime}_{\left[3\right]}$}
\label{fig8}
\end{figure}
\end{center}

\subsection{Simulations in dimension 4}
First, we look for the symmetric case where 
$
P_{4}\left(  \theta_{1},\theta_{2},\theta_{3},\theta
_{4},\theta_{4}\right)    =1+s_{1}\left(  \theta_{1}+\theta_{2}+\theta
_{3}+\theta_{4}\right)  +\\ s_{2}\left(  \theta_{1}\theta_{2}+\theta_{1}%
\theta_{3}+\theta_{1}\theta_{4}+\theta_{2}\theta_{3}+\theta_{2}\theta
_{4}+\theta_{3}\theta_{4}\right) 
 +s_{3}\left(  \theta_{1}\theta_{2}\theta_{3}+\theta_{1}\theta_{2}\theta
_{4}+\theta_{1}\theta_{3}\theta_{4}+\theta_{2}\theta_{3}\theta_{4}\right)
+s_{4}\theta_{1}\theta_{2}\theta_{3}\theta_{4}.
$ 
According to formulas (3.13a) or (3.13b) in \cite{Bernardoff(2006)} and
\cite{Comtet(1974)} pp. 307-8, we respectively have for $n=4,$ and $\left\vert
S\right\vert =1,2,3,4,$ $\widetilde{b}_{S}=-s_{3}/s_{4},$ $(-s_{4}s_{2}%
+s_{3}^{2})/s_{4}^{2},$ $-(s_{1}s_{4}^{2}+2s_{3}^{3}-3s_{2}s_{3}s_{4}%
)/s_{4}^{3},$ $(6s_{3}^{4}-s_{4}^{3}+3s_{2}^{2}s_{4}^{2}+4s_{1}s_{3}s_{4}%
^{2}-12s_{2}s_{3}^{2}s_{4})/s_{4}^{4}.$ Without loss of generality, we can
assume that $s_{4}=1$, so we respectively have for $n=4,$ and $\left\vert
S\right\vert =1,2,3,4,$ $\widetilde{b}_{S}=-s_{3},-s_{2}+s_{3}^{2}%
,-s_{1}+3s_{3}s_{2}-2s_{3}^{3},6s_{3}^{4}-1+3s_{2}^{2}+4s_{1}s_{3}%
-12s_{2}s_{3}^{2}.$ We must simultaneously check the conditions $-s_{3}%
<0,-s_{2}+s_{3}^{2}\geqslant0,-s_{1}+3s_{3}s_{2}-2s_{3}^{3}\geqslant0$ and
$6s_{3}^{4}-1+3s_{2}^{2}+4s_{1}s_{3}-12s_{2}s_{3}^{2}\geqslant0$ for the gamma
distribution $\boldsymbol{\gamma}_{(P_{\left[  4\right]  },\lambda)}$ to be
indefinitely divisible. These conditions are equivalent to
\begin{equation}
s_{3}>0,s_{2}\leqslant s_{3}^{2},\frac{1-3s_{2}^{2}}{4s_{3}}-\frac{3}{2}%
s_{3}^{3}+3s_{2}s_{3}\leqslant s_{1}\leqslant3s_{3}s_{2}-2s_{3}^{3}.
\label{condsym4_1}%
\end{equation}
The last condition  (\ref{condsym4_1}) is equivalent to $\frac{1-3s_{2}^{2}%
}{4s_{3}}+\frac{1}{2}s_{3}^{3}+(3s_{2}s_{3}-2s_{3}^{3})\leqslant
s_{1}\leqslant3s_{3}s_{2}-2s_{3}^{3}$, which is only possible for
$\frac{1-3s_{2}^{2}}{4s_{3}}+\frac{1}{2}s_{3}^{3}\leqslant0.$ This gives us
the following condition $\sqrt{\frac{1+2s_{3}^{4}}{3}\leqslant}s_{2}$ and
(\ref{condsym4_1}) becomes
\begin{equation}
s_{3}>0,\sqrt{\frac{1+2s_{3}^{4}}{3}}\leqslant s_{2}\leqslant s_{3}^{2}%
,\frac{1-3s_{2}^{2}}{4s_{3}}-\frac{3}{2}s_{3}^{3}+3s_{2}s_{3}\leqslant
s_{1}\leqslant3s_{3}s_{2}-2s_{3}^{3}. \label{condsym4_2}%
\end{equation}
For $s_{3}=2$, we get $3.\,316\,624\,8\leqslant s_{2}\leqslant4$, and
$s_{2}=3.5$ matches. For $s_{3}=2,s_{2}=3.5$, we get $4.\,531\,25\leqslant
s_{1}\leqslant5.0$ and $s_{1}=4.75$ is a possible value. We check that for
$\left\vert S\right\vert =1,2,3,4,$ we have respectively $\widetilde{b}%
_{S}=-2,0.5,0.25,1.75$. Let  
$\mathbf{X}_{\left[4\right]}=\left(  X_{1},X_{2},X_{3},X_{4}\right)  \sim\boldsymbol{\gamma}_{(P_{4},2)}$. Simulations for samples of size 1,000 of mgd $\boldsymbol{\gamma}_{(P_{4 },2)}$ are illustrated by the graphical representations given in Figure \ref{fig9} by four one-dimensional projections and Figure \ref{fig10} by various three-dimensional projections.

\begin{center}
\begin{figure}[!h]
\centering
\includegraphics[width=10cm,height =9cm]{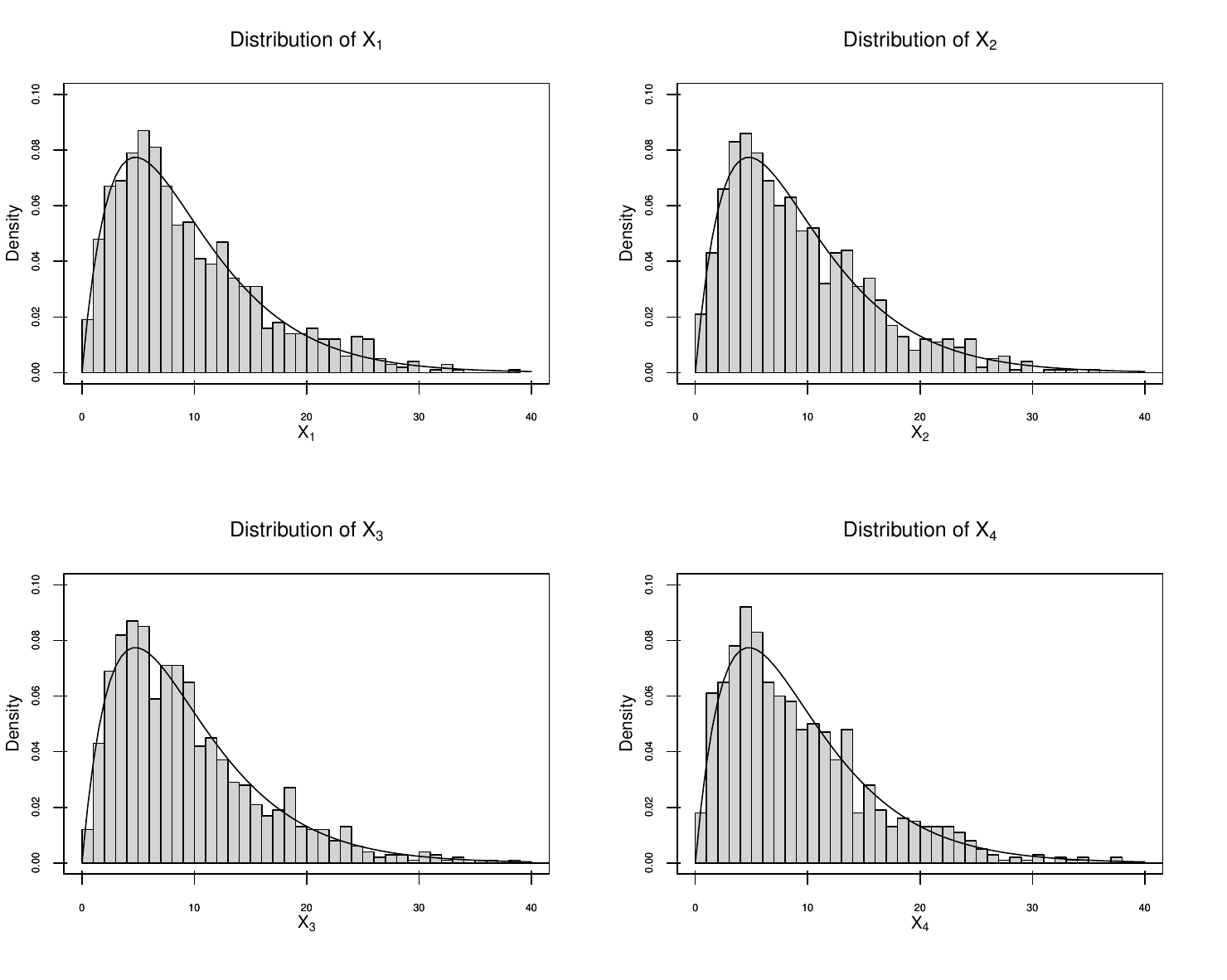}
\caption{Distribution and simulation of $X_{1}, X_{2}, X_{3}, X_{4}$}
\label{fig9}
\end{figure}
\end{center}

\pagebreak

\begin{center}
\begin{figure}[!h]
\centering
\includegraphics[width=10cm,height =18cm]{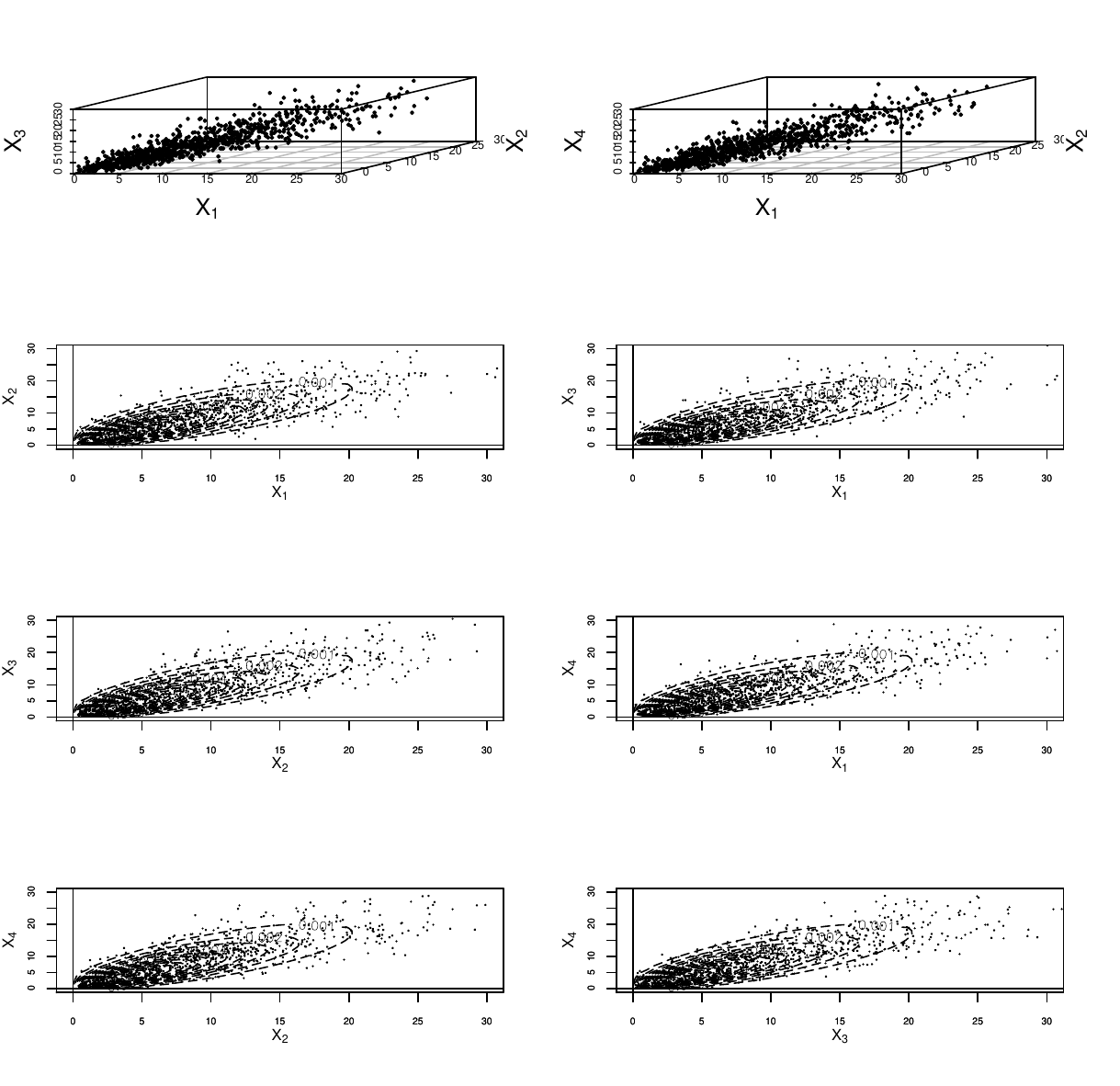}
\caption{Distribution and simulation of $\mathbf{X}_{\left[4\right]}$}
\label{fig10}
\end{figure}
\end{center}

Next, we search for the general case by slightly modifying the values of
$s_{1},s_{2},s_{3},s_{4}$ checking that the indefinite divisibility conditions of $\boldsymbol{\gamma}_{(P_{\left[  4\right]  },\lambda)}$ remain verified. For example, we obtain
\newline
$(  p_{1},p_{2},p_{3},p_{4},p_{1,2},p_{1,3},p_{1,4},p_{2,3},p_{2,4},p_{3,4},p_{1,2,3},p_{1,2,4},p_{1,3,4},p_{2,3,4},p_{1,2,3,4})
 =\\(4.75,4.8,4.85,4.7,3.5,3.55,3.6,3.65,3.45,3.4,2,1.99,2.02,2.01,1)$ with
 \newline 
$(\widetilde{b}_{1},\widetilde{b}_{2},\widetilde{b}_{3},\widetilde{b}_{4},\widetilde{b}_{1,2},\widetilde{b}_{1,3},\widetilde{b}_{1,4},\widetilde{b}_{2,3},\widetilde{b}_{2,4},\widetilde{b}_{3,4},\widetilde{b}_{1,2,3},\widetilde{b}_{1,2,4},\widetilde{b}_{1,3,4},\widetilde{b}_{2,3,4},\widetilde{b}_{1,2,3,4})=
(-2.01,-2.02,\\-1.99,-2,0.6602,0.5499,0.37,0.4198,0.49,0.48,0.111404,0.2177,0.3989,0.5053
 ,1.590846)$
 \newline
 , and $Q_{4}\left(  \theta_{1},\theta_{2},\theta_{3},\theta
_{4},\theta_{4}\right)  =1+4.75\theta_{1}+4.8\theta_{2}+4.85\theta
_{3}+4.7\theta_{4}+3.5\theta_{1}\theta_{2}+3.55\theta_{1}\theta_{3}
+3.6\theta_{1}\theta_{4}+3.65\theta_{2}\theta_{3}+3.45\theta_{2}\theta
_{4}+3.4\theta_{3}\theta_{4}+2\theta_{1}\theta_{2}\theta_{3}+1.99\theta
_{1}\theta_{2}\theta_{4}+2.02\theta_{1}\theta_{3}\theta_{4}+2.01\theta
_{2}\theta_{3}\theta_{4}+\theta_{1}\theta_{2}\theta_{3}\theta_{4}.$ Let
$\mathbf{Y}_{\left[4\right]}=\left(  Y_{1},Y_{2},Y_{3},Y_{4}\right)  \sim\boldsymbol{\gamma}_{(Q_{4},2)}$.
Simulations for samples of size 1,000 of mgd $\boldsymbol{\gamma}_{(Q_{4},2)}$ are illustrated by the graphical representations given in
Figure \ref{fig11} by four one-dimensional projections and Figure \ref{fig12} by various three-dimensional projections.

\begin{center}
\begin{figure}[!h]
\centering
\includegraphics[width=10cm,height =9cm]{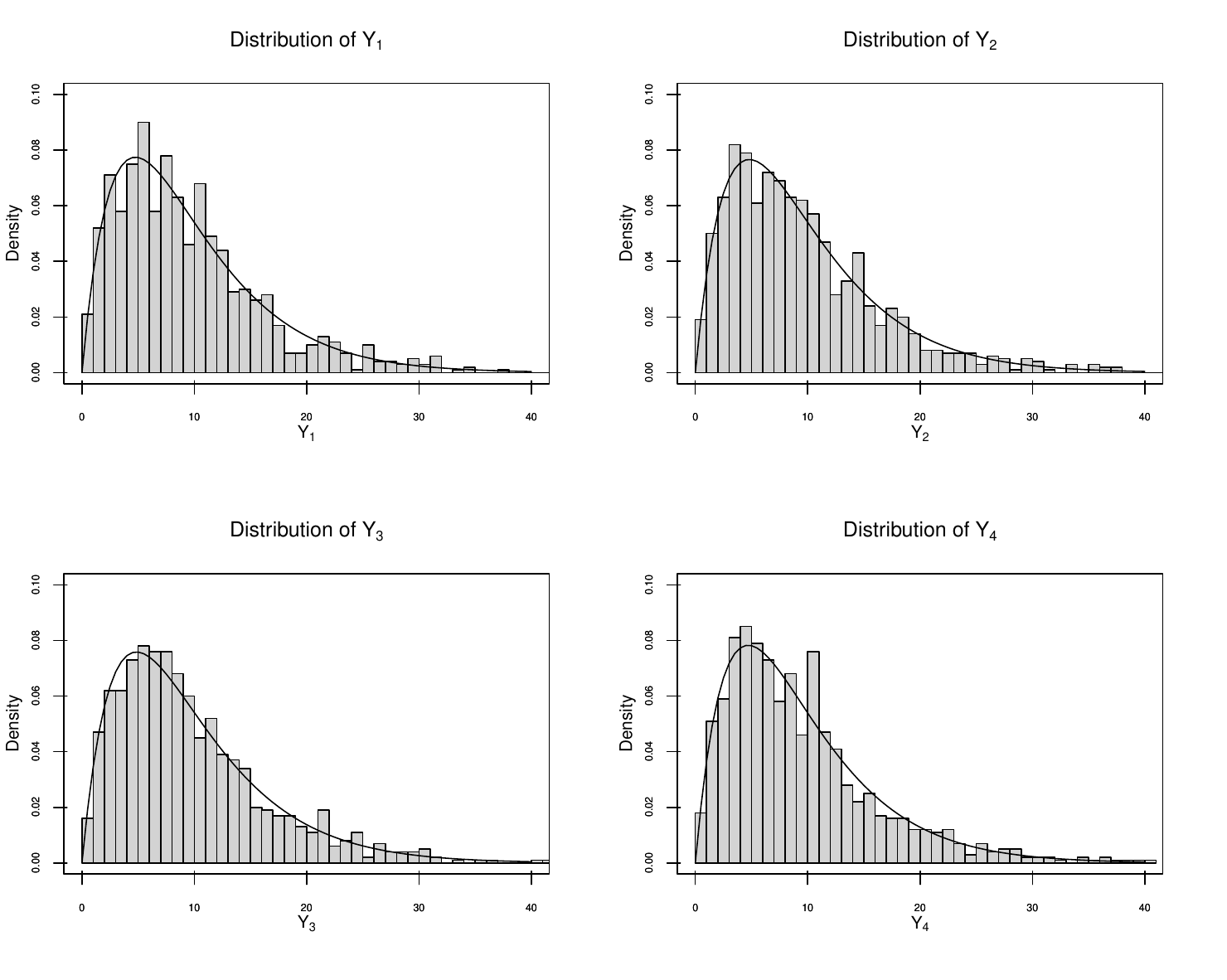}
\caption{Distribution and simulation of $Y_{1}, Y_{2}, Y_{3}, Y_{4}$}
\label{fig11}
\end{figure}
\end{center}

\pagebreak

\begin{center}
\begin{figure}[!h]
\centering
\includegraphics[width=10cm,height =18cm]{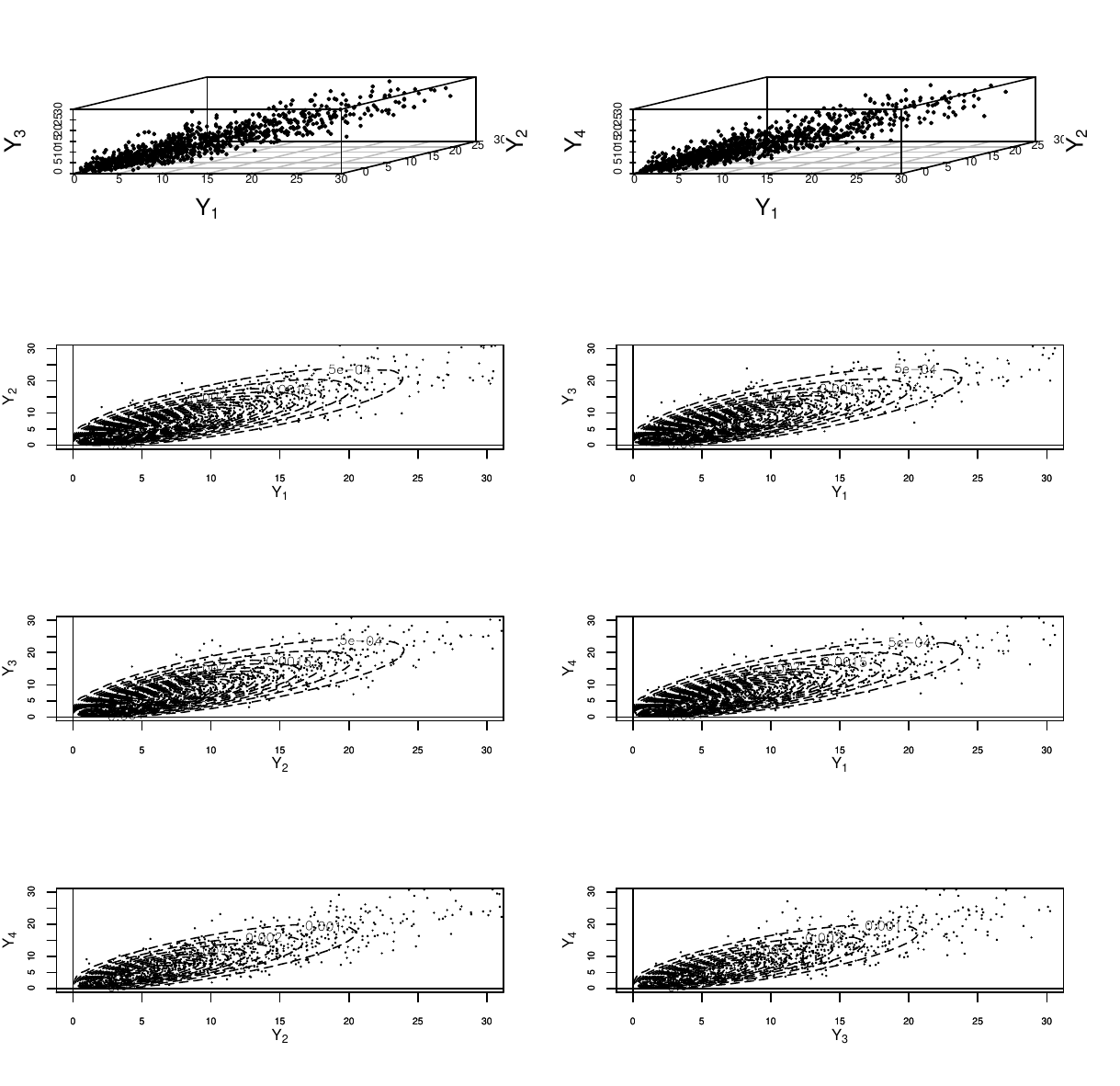}
\caption{Distribution and simulation of $\mathbf{Y}_{\left[4\right]}$}
\label{fig12}
\end{figure}
\end{center}

\subsection{Simulations of Mmgd in dimension 5}
For example, we obtain for
$\rho_{1,2}=0.9^{2},\rho_{2,3}=0.8^{2},\rho_{3,4}=0.7^{2},\rho_{4,5}=0.6^{2},$ \newline
$\boldsymbol{R}_{1/2}=\left(
\begin{smallmatrix}
1 & 0.9 & 0.72 & 0.504 & 0.3024\\
0.9 & 1 & 0.8 & 0.56 & 0.336\\
0.72 & 0.8 & 1 & 0.7 & 0.42\\
0.504 & 0.56 & 0.7 & 1 & 0.6\\
0.3024 & 0.336 & 0.42 & 0.6 & 1
\end{smallmatrix}\right)
$ and \newline
$
P_{5}\left(  \theta_{1},\theta_{2},\theta_{3},\theta
_{4},\theta_{4},\theta_{5}\right)    
  =1+\theta_{1}+\theta_{2}+\theta_{3}+\theta_{4}+\theta_{5}
  +0.19\theta_{1}\theta_{2}+0.481\,6\theta_{1}\theta_{3}+0.745\,984\,\theta
_{1}\theta_{4}+\\
0.908\,554\,24\theta_{1}\theta_{5}+0.36\theta_{2}\theta_{3}
  +0.686\,4\theta_{2}\theta_{4}+0.887\,104\,\theta_{2}\theta_{5}
+0.51\theta_{3}\theta_{4}+0.823\,6\theta_{3}\theta_{5}+0.64\theta_{4}
\theta_{5}+\\
0.068\,4\theta_{1}\theta_{2}\theta_{3}+0.130\,416\,\theta_{1}\theta
_{2}\theta_{4}+0.168\,549\,76\theta_{1}\theta_{2}\theta_{5}+0.245\,616\,\theta
_{1}\theta_{3}\theta_{4}+0.396\,645\,76\theta_{1}\theta_{3}\theta_{5}+\\
0.477\,429\,76\theta_{1}\theta_{4}\theta_{5}+0.183\,6\theta_{2}\theta_{3}\theta_{4}%
+0.296\,496\,\theta_{2}\theta_{3}\theta_{5}
  +0.439\,296\,\theta_{2}\theta_{4}\theta_{5}+0.326\,4\theta_{3}\theta
_{4}\theta_{5}+\\
0.034\,884\,\theta_{1}\theta_{2}\theta_{3}\theta_{4}%
+0.056\,334\,24\theta_{1}\theta_{2}\theta_{3}\theta_{5}+0.083\,466\,24\theta_{1}\theta_{2}\theta_{4}\theta_{5}+0.157\,194\,24\theta_{1}\theta_{3}\theta_{4}\theta_{5}+\\
0.117\,504\,\theta_{2}\theta_{3}\theta_{4}\theta_{5}
  +0.022\,325\,76\theta_{1}\theta_{2}\theta_{3}\theta_{4}\theta_{5}.
$\newline
 Let
$\mathbf{X}_{\left[5\right]}=\left(  X_{1},X_{2},X_{3},X_{4},X_{5}\right)  \sim\boldsymbol{\gamma}_{(P_{5},2)}$.
A smulation for a sample of size 1,000 of Mmgd $\boldsymbol{\gamma
}_{(P_{5},2 )}$ is illustrated by the graphical representations
given in Figure \ref{fig13} by various one-dimensional and two-dimensional projections.
\vspace{-0.5cm}
\begin{center}
\begin{figure}[!h]
\centering
\includegraphics[width=16cm,height =11cm]{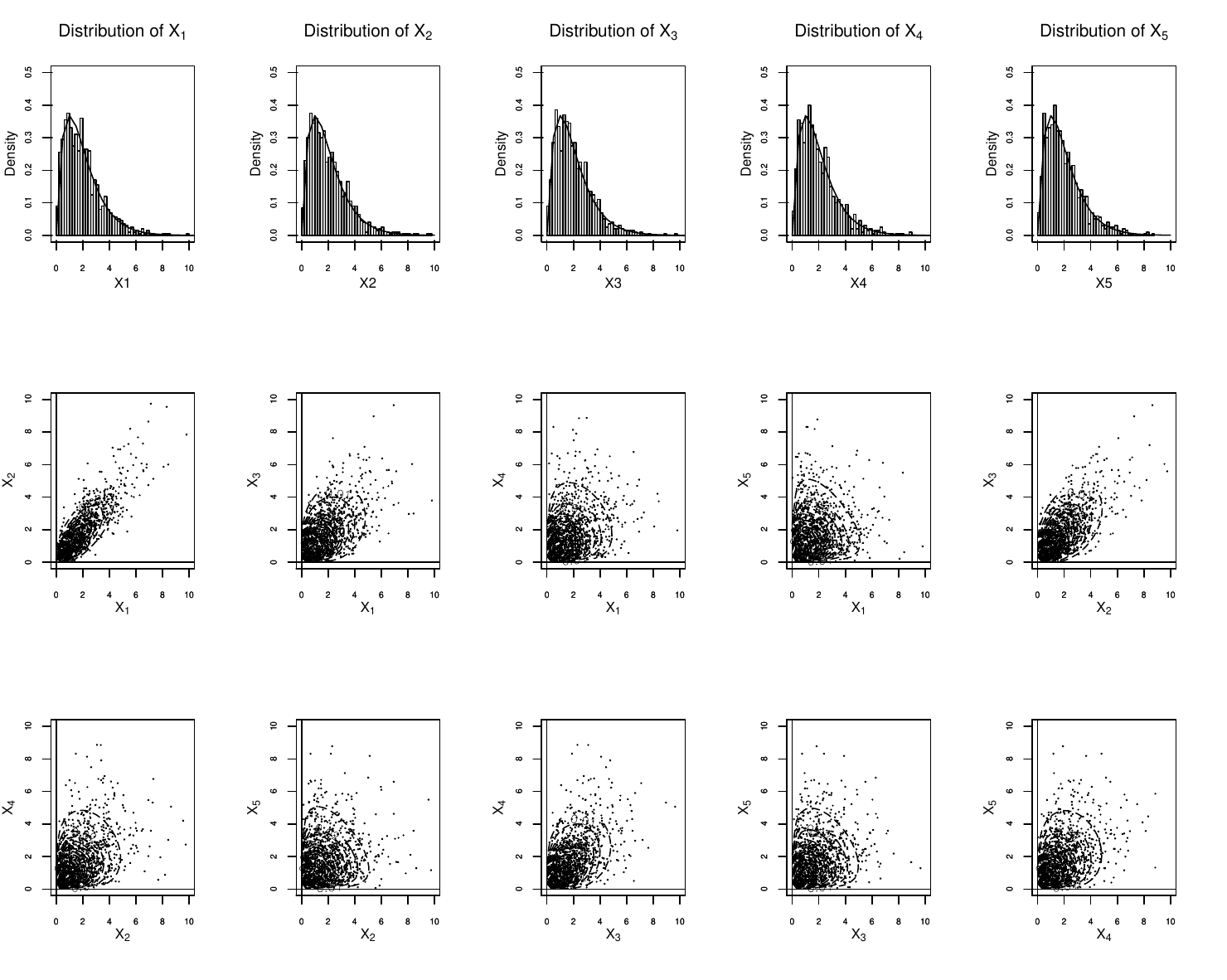}
\caption{Distribution and simulation of $\mathbf{X}_{\left[5\right]}$}
\label{fig13}
\end{figure}
\end{center}

\appendix
\setcounter{section}{0}

\section{Proofs}

\label{Annexe1}

\begin{proof}
[Proof of Proposition \ref{PropPnRn}]Using the Taylor formula in
$\boldsymbol{\theta}_{P_{n}},$we get

$P_{n}\left(  \boldsymbol{\theta}_{\left[  n\right]  }\right)=p_{\left[  n\right]  }\left(  \boldsymbol{\theta}_{\left[  n\right]
}-\boldsymbol{\theta}_{P_{n}}\right)  ^{\left[  n\right]  }\left(
1-\sum_{T\in\mathfrak{P}_{n},\left\vert T\right\vert \geqslant2}-\frac
{1}{p_{\left[  n\right]  }}\left(  \frac{\partial}{\partial\theta}\right)
^{\overline{T}}\left(  P_{n}\right)  \left(  \boldsymbol{\theta}_{P_{n}%
}\right)  \left(  \boldsymbol{\theta}_{\left[  n\right]  }-\boldsymbol{\theta
}_{P_{n}}\right)  ^{-T}\right)  ,$ then by (\ref{rT}) and (\ref{Rn}) , we obtain (\ref{P_nTaylor}). Now we compute $r_{T}$ for $T\in\mathfrak{P}_{n},\left\vert T\right\vert\geqslant2,$ from (\ref{rT}), we have\newline
$
r_{T}=\sum_{S\in\mathfrak{P}_{n}}\widetilde{p}_{\overline{S}}\boldsymbol{\theta
}_{n}^{S\smallsetminus\overline{T}}]\left(  \boldsymbol{\theta}_{P_{n}}\right)
 =\sum_{S\in\mathfrak{P}_{n},S=\overline{T}\cup T^{\prime},T^{\prime}%
\subset\left[  n\right]  \smallsetminus\overline{T}}\widetilde{p}%
_{\overline{S}}^{S\smallsetminus\overline{T}}\widetilde{\mathbf{p}}%
^{T^{\prime}}
  \sum_{T^{\prime}\in\mathfrak{P}_{T}}\widetilde{p}_{T\smallsetminus
T^{\prime}}^{T^{\prime}}\widetilde{\mathbf{p}}^{T^{\prime}}.
$

If $\left\vert T\right\vert =2,$ without loss of generality we compute  $r_{T}$ 
for $T=\left\{  1,2\right\}  $: $r_{T}  =\widetilde{p}_{1,2}+\widetilde{p}_{1}\widetilde{p}_{2}%
+\widetilde{p}_{2}\widetilde{p}_{1}-\widetilde{p}_{1}\widetilde{p}_{2}
 =\widetilde{p}_{1,2}+\widetilde{p}_{1}\widetilde{p}_{2}=\widetilde{b}%
_{\left\{  1,2\right\}  }=\widetilde{b}_{T}.$

Similarly, if $\left\vert T\right\vert =3,$  we compute  $r_{T}$ for $T=\left\{  1,2,3\right\}  $:
$
r_{T}   =\widetilde{p}_{1,2,3}+\widetilde{p}_{1,2}\widetilde{p}%
_{3}+\widetilde{p}_{1,3}\widetilde{p}_{2}+\widetilde{p}_{2,3}\widetilde{p}%
_{1}+\widetilde{p}_{1}\widetilde{p}_{2}\widetilde{p}_{3}+\widetilde{p}%
_{2}\widetilde{p}_{1}\widetilde{p}_{3}+\widetilde{p}_{3}\widetilde{p}%
_{1}\widetilde{p}_{2}+\widetilde{p}_{1}\widetilde{p}_{2}\widetilde{p}%
_{3}-\widetilde{p}_{1}\widetilde{p}_{2}\widetilde{p}_{3}
 =\widetilde{p}_{1,2,3}+\widetilde{p}_{1,2}\widetilde{p}_{3}+\widetilde{p}%
_{1,3}\widetilde{p}_{2}+\widetilde{p}_{2,3}\widetilde{p}_{1}+2\widetilde{p}%
_{1}\widetilde{p}_{2}\widetilde{p}_{3}
=\widetilde{b}_{1,2,3}=\widetilde{b}_{T}.
$

Similarly, if $\left\vert T\right\vert =4,$ we compute $r_{T}$
for $T=\left\{  1,2,3,4\right\}  .$ We prove that

$r_{\left\{  1,2,3,4\right\}  }  =\widetilde{b}_{1,2,3,4}-\left(
\widetilde{b}_{1,2}\widetilde{b}_{3,4}+\widetilde{b}_{1,3}\widetilde{b}
_{2,4}+\widetilde{b}_{1,4}\widetilde{b}_{2,3}\right) 
 =\widetilde{b}_{T}-\sum_{\left\{  U,V\right\}  \in\Pi_{T}^{2},\left\vert
U\right\vert =2,\left\vert V\right\vert =2}\widetilde{b}_{U}\widetilde{b}_{V}.$

Similarly, if $\left\vert T\right\vert =5,$ we compute $r_{T}$
for $T=\left\{  1,2,3,4,5\right\}  ,$ we prove that%

$r_{\left\{  1,2,3,4,5\right\}  }    =\widetilde{b}_{1,2,3,4,5}-\widetilde{b}%
_{1,2,3}\widetilde{b}_{4,5}-\widetilde{b}_{1,2,4}\widetilde{b}_{3,5}%
-\widetilde{b}_{1,2,5}\widetilde{b}_{3,4}-\widetilde{b}_{1,3,4}\widetilde{b}%
_{2,4}-\widetilde{b}_{1,3,5}\widetilde{b}_{2,4}
 -\widetilde{b}_{1,4,5}\widetilde{b}_{2,3}-\widetilde{b}_{2,3,4}%
\widetilde{b}_{1,5}-\widetilde{b}_{2,3,5}\widetilde{b}_{1,4}-\widetilde{b}%
_{2,4,5}\widetilde{b}_{1,3}-\widetilde{b}_{3,4,5}\widetilde{b}_{1,2}
 =\widetilde{b}_{T}-\sum_{\left\{  U,V\right\}  \in\Pi_{T}^{2},\left\vert
U\right\vert =3,\left\vert V\right\vert =2}\widetilde{b}_{U}\widetilde{b}_{V}.$

\end{proof}

\begin{proof}
[Proof of Proposition \ref{PropSU(ST)}]Indeed, we have

$S_{U}\left(  S_{T}\right)     =\frac{p_{\overline{T}}}{p_{\overline{T}%
\cup\overline{U}}}\frac{\partial}{\partial\boldsymbol{\theta}})^{\overline{U}%
}\{\frac{1}{p_{\overline{T}}}(\frac{\partial}{\partial\boldsymbol{\theta}%
})^{\overline{T}}[P_{n}(\boldsymbol{\theta})]\}
 =\frac{1}{p_{\overline{T}\cup\overline{U}}}(\frac{\partial}{\partial
\boldsymbol{\theta}})^{\overline{U}\cup\overline{T}}[P_{n}(\boldsymbol{\theta
})]
 =\frac{1}{p_{\overline{U\cap T}}}(\frac{\partial}{\partial
\boldsymbol{\theta}})^{\overline{U\cap T}}[P_{n}(\boldsymbol{\theta})]
 =S_{U}\left(  P_{n}\right)  .$

\end{proof}

\begin{proof}
[Proof of Proposition \ref{PropqtildeUbtildeUrU(ST)}]Equality
(\ref{q_tilde=p_tilde}) results from the following computation $
\widetilde{q}_{U}=\frac{p_{\overline{T}\cup T\smallsetminus U}}{p_{\overline
{T}}}/\frac{p_{\overline{T}\cup T}}{p_{\overline{T}}}=\frac{p_{\overline{U}}%
}{p_{\left[  n\right]  }}.
$ Equality (\ref{b_U(ST)=b_U}) comes from (\ref{b_tildeS}) and
(\ref{q_tilde=p_tilde}). Equality (\ref{rU(ST)=rU(Pn)}) comes from (\ref{rT1})
and (\ref{q_tilde=p_tilde}). According to (\ref{P_nTaylor}) for $S_{T}$ and
(\ref{rU(ST)=rU(Pn)}), we get
$S_{T}\left(  \boldsymbol{\theta}_{T}\right)    =\left(  -\widetilde{p}%
_{T}\right)  ^{-1}\left(  \boldsymbol{\theta}_{\left[  n\right]
}-\boldsymbol{\theta}_{P_{n}}\right)  ^{T}(1-\sum_{T^{\prime}\in
\mathfrak{P}_{T},\left\vert T^{\prime}\right\vert >1}r_{T^{\prime}}\left(
\boldsymbol{\theta}_{\left[  n\right]  }-\boldsymbol{\theta}_{P_{n}}\right)
^{-T^{\prime}})
=\left(  -\widetilde{p}_{T}\right)  ^{-1}[\left(  \boldsymbol{\theta
}_{\left[  n\right]  }-\boldsymbol{\theta}_{P_{n}}\right)  ^{T}-\sum
_{T^{\prime}\in\mathfrak{P}_{T},\left\vert T^{\prime}\right\vert
>1}r_{T^{\prime}}\left(  \boldsymbol{\theta}_{\left[  n\right]  }%
-\boldsymbol{\theta}_{P_{n}}\right)  ^{T\smallsetminus T^{\prime}}]
$
and the equalities $\theta_{i}-\widetilde{p}_{i}=\left(  -\widetilde{p}%
_{i}\right)  S_{i},i=1,\ldots n,$ gives (\ref{ST}). Equality (\ref{S^T}) is a
rewriting of (\ref{ST}).
\end{proof}

\begin{proof}
[Proof of Theorem \ref{Th_gamma_P_lambda}]Let us remember some definitions
introduced in \cite{Bernardoff(2006)}. We construct certain measures
on $\left[  0,\infty\right)  ^{n}$ indexed by $I\in\mathfrak{P}_{n}^{\ast}.$
For $i\in\left[  n\right]  ,$ define $l_{i}\left(  \mathtt{d}x_{i}\right)
=\mathbf{1}_{\left(  0,\infty\right)  }\left(  x_{i}\right)  \mathtt{d}x_{i}$
if $i\in I$ and $l_{i}\left(  \mathtt{d}x_{i}\right)  =\delta_{0}\left(
\mathtt{d}x_{i}\right)  $ if $i\notin I.$ We define the following measure on
$\left[  0,\infty\right)  ^{n}$: $
h_{I}\left(  \mathtt{d}\mathbf{x}\right)  =\bigotimes_{i=1}^{n}l_{i}\left(
\mathtt{d}x_{i}\right)  .
$
For instance, if $n=3$ and $I=\left\{  2,3\right\}  ,$ then $
h_{\left\{  2,3\right\}  }\left(  \mathtt{d}x_{1},\mathtt{d}x_{2}%
,\mathtt{d}x_{3}\right)  =\delta_{0}\left(  \mathtt{d}x_{1}\right)
1_{\left\{  0,\infty\right\}  ^{2}}\left(  x_{2},x_{3}\right)  \mathtt{d}%
x_{2}\mathtt{d}x_{3}.%
$
 We denote by $\mathbf{1}_{n}$ the vector $\left(  1,\ldots,1\right)
\in\mathbb{R}^{n},$ and by $\mathbf{1}$ if there is no ambiguity, and by
$\mathbf{0}_{n}$ the vector $\left(  0,\ldots,0\right)  \in\mathbb{R}^{n}$,
and by $\mathbf{0}$ if there is no ambiguity. For $I\in\mathfrak{P}_{n}^{\ast
},$ we write $\mathbb{N}_{i}^{I}=\mathbb{N}$ if $i\in I,$ $\mathbb{N}_{i}%
^{I}=\left\{  0\right\}  $ if $i\notin I,$ and $\mathbb{N}^{I}=\times
_{i=1}^{n}\mathbb{N}_{i}^{I}.$ For $\boldsymbol{\theta}=\left(  \theta
_{1},\ldots,\theta_{n}\right)  \in\mathbb{R}$ $^{n}$ with $\theta_{i}\neq0$
for all $i\in\left[  n\right]  ,$ recall the notations $\boldsymbol{\theta
}^{-1}=\left(  \theta_{1}^{-1},\ldots,\theta_{n}^{-1}\right)  $ and for
$\boldsymbol{\alpha}\in\mathbb{N}^{n}$, $\boldsymbol{\theta}%
^{-\boldsymbol{\alpha}}=\left(  \boldsymbol{\theta}^{-1}\right)
^{\boldsymbol{\alpha}}.$ For all $I\in\mathfrak{P}_{n}^{\ast},$ let
\begin{equation}
\mu_{\boldsymbol{\alpha},I}\left(  \mathtt{d}\mathbf{x}\right)  =\frac
{\mathbf{x}^{\boldsymbol{\alpha}-\mathbf{1}_{I}}}{\boldsymbol{\alpha
}-\mathbf{1}_{I}!}h_{I}\left(  \mathtt{d}\mathbf{x}\right)  \label{mu_alpha_I}%
\end{equation}
Thus, for $\theta_{1}>0,$\ldots,$\theta_{n}>0,$the Lt of $\mu
_{\boldsymbol{\alpha},I}$ is $L_{\mu_{\mathbf{\alpha},I}}\left(
\boldsymbol{\theta}\right)  =\boldsymbol{\theta}^{-\boldsymbol{\alpha}}.$ More
generally, for $-a_{1}+\theta_{1}>0,\ldots,-a_{n}+\theta_{n}>0,$ if
$\mathbf{a}=\left(  a_{1},\ldots,a_{n}\right)  \in\mathbb{R}^{n},$ then we
have
\begin{equation}
L_{\exp\left(  \mathbf{a},\mathbf{x}\right)  \mu_{\boldsymbol{\alpha},I}%
}\left(  \boldsymbol{\theta}\right)  =\left(  -\mathbf{a}+\boldsymbol{\theta
}\right)  ^{-\boldsymbol{\alpha}}. \label{L_mu_alpha_I}%
\end{equation}
The latter is still true if we replace $(\boldsymbol{\alpha}-\mathbf{1}_{I})!$
in (\ref{mu_alpha_I}) by $\Gamma\left(  \boldsymbol{\alpha}\right)
=\prod_{i\in I}\Gamma\left(  \alpha_{i}\right)  $ if $\alpha_{i}>0,$
$i\in\left[  n\right]  .$\newline Using (\ref{P_Taylor}) and
(\ref{c_alpha_lambda_R}), we write 
$
\left[  P\left(  \boldsymbol{\theta}\right)  \right]  ^{-\lambda}=p_{\left[
n\right]  }^{-\lambda}\sum_{\boldsymbol{\alpha}\in\mathbb{N}^{n}%
}c_{\boldsymbol{\alpha},\lambda}\left(  R\right)  \left(  -\boldsymbol{\theta
}_{P}+\boldsymbol{\theta}\right)  ^{-\left(  \boldsymbol{\alpha}%
+\lambda\mathbf{1}\right)  }.%
$
 Using (\ref{L_mu_alpha_I}) we get 
$
\left[  P\left(  \boldsymbol{\theta}\right)  \right]  ^{-\lambda
}=L_{p_{\left[  n\right]  }^{-\lambda}\exp\left(  \boldsymbol{\theta}%
_{P},\mathbf{x}\right)  \mathbf{x}^{\left(  \lambda-1\right)  \mathbf{1}}%
(\sum_{\boldsymbol{\alpha}\in\mathbb{N}^{n}}c_{\boldsymbol{\alpha},\lambda
}\left(  R\right)  \frac{\mathbf{x}^{\boldsymbol{\alpha}}}{\Gamma\left(
\boldsymbol{\alpha}+\lambda\mathbf{1}\right)  })h_{\left[  n\right]  }}\left(
\boldsymbol{\theta}\right)
$
and the equality $\left(  \lambda\right)  _{\boldsymbol{\alpha}}=\Gamma\left(
\boldsymbol{\alpha}+\lambda\mathbf{1}\right)  /\left[  \Gamma\left(
\mathbf{\lambda}\right)  \right]  ^{n}$ gives (\ref{gamma_P_lambda}).
\end{proof}

\begin{proof}
[Proof of Corollary \ref{cor_cor}]We apply formula (\ref{gamma_P_lambda2}) of
Theorem (\ref{Th_gamma_P_lambda}). We have $p_{i}=1,$ $i\in\left[  n\right]
,$ $p_{T}=p^{\left\vert T\right\vert -1},$, $\widetilde{p}_{T}=-p^{-\left\vert
T\right\vert }$, for all $T\in\mathfrak{P}_{n}^{\ast}$. Then
$\boldsymbol{\theta}_{P_{n}}=\left(  -p^{-1},\ldots,-p^{-1}\right)
=-p^{-1}\mathbf{1}_{n}.$ For $\left(  \boldsymbol{\theta}_{P_{n}%
}+\boldsymbol{\theta}_{\left[  n\right]  }\right)  ^{\left[  n\right]
}>qp^{-n}$ and $\theta_{i}>-p^{-1},$ $\forall i\in\left[  n\right]  $, we
obtain
$
P_{n}\left(  \boldsymbol{\theta}_{\left[  n\right]  }\right)  =p^{n-1}%
\prod_{i=1}^{n}(p^{-1}+\theta_{i})[1-qp^{-n}\prod_{i=1}^{n}(p^{-1}+\theta
_{i})^{-1}],
$
and if $\mathbf{z}=\left(  z_{1},\ldots,z_{n}\right)  ,$ then
$
R_{n}\left(  \mathbf{z}\right)  =qp^{-n}\mathbf{z}^{\left[  n\right]  }.
$
Since
$
\left[  1-R_{n}\left(  \mathbf{z}\right)  \right]  ^{-\lambda}=\sum
_{l=0}^{\infty}\frac{\left(  \lambda\right)  _{l}}{l!}\left(  qp^{-n}\right)
^{l}\mathbf{z}^{l1_{n}},
$
we have $c_{l\mathbf{1}_{n},\lambda}\left(  R_{n}\right)  =\frac{\left(
\lambda\right)  _{l}}{l!}\left(  qp^{-n}\right)  ^{l}$ if $l\in\mathbb{N},$
and $c_{\alpha,\lambda}\left(  R_{n}\right)  =0$ if $\alpha\neq l\mathbf{1}%
_{n},$ $l\in\mathbb{N}.$ Therefore (\ref{gamma_P_lambda2}) gives \newline
$
\boldsymbol{\gamma}_{\left(  P,\lambda\right)  }\left(  \mathtt{d}%
\mathbf{x}\right)   =\frac{\left(  p^{n-1}\right)  ^{-\lambda}}{\left[
\Gamma\left(  \lambda\right)  \right]  ^{n}}\exp\left(  \boldsymbol{-}%
\frac{x_{1}+\cdots+x_{n}}{p}\right)  \left(  \mathbf{x}^{\left[  n\right]
}\right)  ^{\left(  \lambda-1\right)  }[\sum_{l=0}^{\infty}\frac{1}{\left[
\left(  \lambda\right)  _{l}\right]  ^{n-1}l!}\left(  qp^{-n}\mathbf{x}%
^{\left[  n\right]  }\right)  ^{l}]\mathbf{1}_{\left(  0,\infty\right)  ^{n}%
}\left(  \mathbf{x}\right)  \left(  \mathtt{d}\mathbf{x}\right)  \\
 =\frac{\left(  p^{n-1}\right)  ^{-\lambda}}{\left[  \Gamma\left(
\lambda\right)  \right]  ^{n}}\exp\left(  \boldsymbol{-}\frac{x_{1}%
+\cdots+x_{n}}{p}\right)  \left(  \mathbf{x}^{\left[  n\right]  }\right)
^{\left(  \lambda-1\right)  }F_{n-1}\left(  \lambda,\ldots,\lambda
;qp^{-n}\mathbf{x}^{\left[  n\right]  }\right)  \mathbf{1}_{\left(
0,\infty\right)  ^{n}}\left(  \mathbf{x}\right)  \left(  \mathtt{d}%
\mathbf{x}\right) , 
$ 
and Definition (\ref{Hypergeomconf}) of $F_{n-1}$ gives (\ref{Phy2}).
\end{proof}

\begin{proof}
[Proof of Corollary \ref{cor_cor_n_2}]From (\ref{rT2}), we have (\ref{R2}),
and
$
\lbrack1-R_{2}(\mathbf{z)}]^{-\lambda}=\sum_{l=0}^{\infty}\frac{\left(
\lambda\right)  _{l}}{l!}\widetilde{b}_{1,2}^{l}\mathbf{z}^{l\mathbf{1}_{2}}.
$
Hence, if $\boldsymbol{\alpha}=l\mathbf{1}_{2}=\left(  l,l\right)  ,$
$l\in\mathbb{N},$
$
c_{\boldsymbol{\alpha},\lambda}\left(  R_{2}\right)  =\frac{\left(
\lambda\right)  _{l}}{l!}\widetilde{b}_{1,2}^{l},
$
and $c_{\boldsymbol{\alpha},\lambda}\left(  R_{2}\right)  =0$ otherwise.
Formula (\ref{gamma_P_lambda2}) therefore gives 
$
\boldsymbol{\gamma}_{\left(  P_{2},\lambda\right)  }\left(  \mathtt{d}%
\mathbf{x}\right)  =\frac{p_{1,2}^{-\lambda}}{\left[  \Gamma\left(
\lambda\right)  \right]  ^{2}}\exp(-\frac{p_{2}}{p_{1,2}}\boldsymbol{x}%
_{1}-\frac{p_{1}}{p_{1,2}}x_{2})\left(  x_{1}x_{2}\right)  ^{\left(
\lambda-1\right)  }(\sum_{\boldsymbol{l}\in\mathbb{N}}\frac{1}{\left(
\lambda\right)  _{\boldsymbol{l}}}\frac{\left(  \widetilde{b}_{1,2}x_{1}%
x_{2}\right)  ^{l}}{l!})\mathbf{1}_{\left(  0,\infty\right)  ^{2}}\left(
\mathbf{x}\right)  \left(  \mathtt{d}\mathbf{x}\right)  ,
$
and definition (\ref{Hypergeomconf}) of $F_{1}$ gives (\ref{gammaP2_lambda}).
\end{proof}

\begin{proof}
[Proof of Corollary \ref{cor_cor_n_3}]From (\ref{rT2}) and (\ref{rT3}), we
have, $r_{T}=\widetilde{b}_{T}$ for $T\in\mathfrak{P}_{3,}\left\vert
T\right\vert =2,3$. This proves Formula (\ref{R3}).

Now, for $\mathbf{z}_{3}=\left(  z_{1},z_{2},z_{3}\right)  \in\mathbb{R}^{3}$
, and $\lambda>0,$ we develop $[1-R_{3}\left(  \mathbf{z}\right)  ]^{-\lambda
}$ by \newline
$
[1-R_{3}\left(  \mathbf{z}_{3}\right)  ]^{-\lambda}  =\sum_{\mathbf{l}=\left(  l_{1},l_{2},l_{3},l_{4}\right)  \in\mathbb{N}%
^{4}}\frac{\left(  \lambda\right)  _{l_{1}+l_{2}+l_{3}+l_{4}}}{l_{1}%
!l_{2}!l_{3}!l_{4}!}\left(  \widetilde{b}_{1,2}z_{1}z_{2}\right)  ^{l_{1}%
}\left(  \widetilde{b}_{1,3}z_{1}z_{3}\right)  ^{l_{2}}\left(  \widetilde{b}%
_{2,3}z_{2}z_{3}\right)  ^{l_{3}}\left(  \widetilde{b}_{1,2,3}z_{1}z_{2}%
z_{3}\right)  ^{l_{4}}.
$

Now, the conditions $k\in\mathbb{N},l_{1}+l_{2}+l_{3}+l_{4}=k;\alpha_{1}%
=l_{1}+l_{2}+l_{4},\alpha_{2}=l_{1}+l_{3}+l_{4},\alpha_{3}=l_{2}+l_{3}+l_{4}$
give $l_{1}=k-\alpha_{3}\geqslant0,l_{2}=k-\alpha_{2}\geqslant0,l_{3}%
=k-\alpha_{1}\geqslant0,l_{4}=\alpha_{1}+\alpha_{2}+\alpha_{3}-2k\geqslant0$
and we get \newline
$
\left[  1-R_{3}\left(  \mathbf{z}\right)  \right]  ^{-\lambda}    
 =\sum_{\boldsymbol{\alpha}\in\mathbb{N}^{3},\left\Vert \boldsymbol{\alpha
}\right\Vert _{\infty}\leqslant\frac{\left\vert \boldsymbol{\alpha}\right\vert
}{2}}[\sum_{\left\Vert \boldsymbol{\alpha}\right\Vert _{\infty}\leqslant
k\leqslant\frac{\left\vert \boldsymbol{\alpha}\right\vert }{2},k\in\mathbb{N}%
}\tfrac{\left(  \lambda\right)  _{k}\widetilde{b}_{1,2}^{k-\alpha_{3}%
}\widetilde{b}_{1,3}^{k-\alpha_{2}}\widetilde{b}_{2,3}^{k-\alpha_{1}%
}\widetilde{b}_{1,2,3}^{\alpha_{1}+\alpha_{2}+\alpha_{3}-2k}}{\left(
k-\alpha_{3}\right)  !\left(  k-\alpha_{2}\right)  !\left(  k-\alpha
_{1}\right)  !\left(  \alpha_{1}+\alpha_{2}+\alpha_{3}-2k\right)
!}]\mathbf{z}_{3}^{\boldsymbol{\alpha}}%
$
and we get (\ref{calphaR3}) . Formula (\ref{gamma_P_lambda})
gives (\ref{gammaP3lambda}). According to (\ref{gammaP3lambda}), we have by 
$k\in\mathbb{N},l_{1}+l_{2}+l_{3}+l_{4}=k;\alpha_{1}=l_{1}+l_{2}+l_{4},\alpha_{2}=l_{1}+l_{3}+l_{4},\alpha_{3}=l_{2}+l_{3}+l_{4},$ 
$
\boldsymbol{\gamma}_{\left(  P_{3},\lambda\right)  }\left(  \mathtt{d}%
\mathbf{x}\right)    
 =\\ \frac{p_{\left[  3\right]  }^{-\lambda}}{\left[  \Gamma\left(
\lambda\right)  \right]  ^{3}}\exp\left(  \boldsymbol{\theta}_{P}%
,\mathbf{x}\right)  \mathbf{x}^{\left(  \lambda-1\right)  \mathbf{1}_{3}}\,\,_{1}\mathbf{F}_{3}(\lambda;\widetilde{b}_{1,2}x_{1}%
x_{2},\widetilde{b}_{1,3}x_{1}x_{3},\widetilde{b}_{2,3}x_{2}x_{3}%
,\widetilde{b}_{1,2,3}x_{1}x_{2}x_{3})\mathbf{1}_{\left(  0,\infty\right)
^{3}}\left(  \mathbf{x}\right)  \left(  \mathtt{d}\mathbf{x}\right).
$

\end{proof}

\begin{proof}
[Proof of Remark \ref{cor_cor_n_3_Rem1}]The only difference with
(\ref{gammaP3lambda}) is that all $c_{\boldsymbol{\alpha},\lambda}\left(
R_{3}\right)  $ defined by (\ref{calphaR3bTpositive}) for $\left\Vert
\boldsymbol{\alpha}\right\Vert _{\infty}\leqslant\frac{\left\vert
\boldsymbol{\alpha}\right\vert }{2}$ are positive and the formula
(\ref{gamma_P_lambda2}) gives (\ref{gammaP3lambda0}).
\end{proof}

\begin{proof}
[Proof of Remark \ref{cor_cor_n_3_Rem2}]As $\widetilde{b}_{1,2,3}=0,$
$\widetilde{b}_{1,2,3}^{\alpha_{1}+\alpha_{2}+\alpha_{3}-2k}=1$ only
for$\frac{\left\vert \boldsymbol{\alpha}\right\vert }{2}=k\in\mathbb{N}.$ In
this case, if $\left\Vert \boldsymbol{\alpha}\right\Vert _{\infty}%
\leqslant\frac{\left\vert \boldsymbol{\alpha}\right\vert }{2}=k\in
\mathbb{N},$ then 
$
c_{\boldsymbol{\alpha},\lambda}\left(  R_{3}\right)  =\tfrac{\left(
\lambda\right)  _{k}\widetilde{b}_{2,3}^{k-\alpha_{1}}\widetilde{b}%
_{1,3}^{k-\alpha_{2}}\widetilde{b}_{1,2}^{k-\alpha_{3}}}{\left(  k-\alpha
_{1}\right)  !\left(  k-\alpha_{2}\right)  !\left(  k-\alpha_{3}\right)
!}>0,
$
gives (\ref{calphaR3bTpb123z}) and (\ref{gamma_P_lambda2}) gives
(\ref{gammaP3lambda1}).
\end{proof}

\begin{proof}
[Proof of Remark \ref{cor_cor_n_3_Rem3}]If $\widetilde{b}_{1,2},\widetilde{b}%
_{1,3},\widetilde{b}_{2,3}=0$, $\widetilde{b}_{1,2,3}>0,$ then

$
c_{\boldsymbol{\alpha},\lambda}\left(  R_{3}\right)  =\sum_{\left\Vert
\boldsymbol{\alpha}\right\Vert _{\infty}\leqslant k\leqslant\frac{\left\vert
\boldsymbol{\alpha}\right\vert }{2},k\in\mathbb{N}}\tfrac{\left(
\lambda\right)  _{k}\widetilde{b}_{2,3}^{k-\alpha_{1}}\widetilde{b}%
_{1,3}^{k-\alpha_{2}}\widetilde{b}_{1,2}^{k-\alpha_{3}}\widetilde{b}%
_{1,2,3}^{\alpha_{1}+\alpha_{2}+\alpha_{3}-2k}}{\left(  k-\alpha_{1}\right)
!\left(  k-\alpha_{2}\right)  !\left(  k-\alpha_{3}\right)  !\left(
\alpha_{1}+\alpha_{2}+\alpha_{3}-2k\right)  !}\neq0
$
only for $\boldsymbol{\alpha=}k\mathbf{1}_{3}$ $,k\in\mathbb{N},$ and we
obtain (\ref{calphaR3bTzb123positive}), and $c_{\boldsymbol{\alpha
},\lambda}\left(  R_{3}\right)  =0$ otherwise. Thus we have\newline
$
\boldsymbol{\gamma}_{\left(  P_{3},\lambda\right)  }\left(  \mathtt{d}%
\mathbf{x}\right)    =\frac{p_{\left[  3\right]  }^{-\lambda}}{\left[
\Gamma\left(  \lambda\right)  \right]  ^{3}}\exp\left(  \boldsymbol{\theta
}_{P},\mathbf{x}\right)  \mathbf{x}^{\left(  \lambda-1\right)  \mathbf{1}_{3}%
}[\sum_{\boldsymbol{\alpha}=k\mathbf{1}_{3},k\in\mathbb{N}}\frac{\left(
\lambda\right)  _{k}}{k!}\widetilde{b}_{1,2,3}^{k}\frac{\left(  \mathbf{x}%
^{\left[  3\right]  }\right)  ^{k}}{\left[  \left(  \lambda\right)
_{\boldsymbol{k}}\right]  ^{3}}]\mathbf{1}_{\left(  0,\infty\right)  ^{3}%
}\left(  \mathbf{x}\right)  \left(  \mathtt{d}\mathbf{x}\right)  
$
and (\ref{Hypergeomconf}) gives (\ref{gammaP3lambda3}).
\end{proof}

\begin{proof}
[Proof of Theorem \ref{L_X1_n_0}]Let $\mathbf{1}_{n}=\left(  1,\ldots,1\right)
\in\mathbb{R}^{n},$ we denote by $\mathbf{1}_{k}=\left(  1,\ldots,1\right)
\in\mathbb{R}^{k}$ and $\mathbf{1}_{n-k}=\left(  1,\ldots,1\right)
\in\mathbb{R}^{n-k}$. We note that $\mathbf{X}_{\left[  k\right]  }$ is a
random real vector such that $\mathbf{X}_{\left[  k\right]  }\sim
\boldsymbol{\gamma}_{\left(  P_{k},\lambda\right)  },$ with
$
P_{k}\left(  \boldsymbol{\theta}_{\left[  k\right]  }\right)  =\sum
_{T\in\mathfrak{P}_{k}}p_{T}\boldsymbol{\theta}_{\left[  k\right]  }^{T}%
=\sum_{T\in\mathfrak{P}_{k}}p_{T}\boldsymbol{\theta}_{\left[  k\right]  }%
^{T}.
$
Using (\ref{gamma_P_lambda2}), the Lt $L_{\mathbf{X}_{\left[
n\right]  \smallsetminus\left[  k\right]  }}^{\mathbf{X}_{\left[  k\right]
}=\mathbf{x}_{\left[  k\right]  }}$ is given by 

$
 L_{\mathbf{X}_{\left[  n\right]  \smallsetminus\left[  k\right]  }%
}^{\mathbf{X}_{\left[  k\right]  }=\mathbf{x}_{\left[  k\right]  }}\left(
\boldsymbol{\theta}_{\left[  n\right]  \smallsetminus\left[  k\right]
}\right)= 
 \frac{\left[  \frac{p_{\left[  n\right]  }}{p_{\left[
k\right]  }}\right]  ^{-\lambda}\exp\left(  \left(  \boldsymbol{\theta}%
_{P_{n}}\right)  _{\left[  k\right]  }-\boldsymbol{\theta}_{P_{k}}%
,\mathbf{x}_{\left[  k\right]  }\right)  }{\sum_{\boldsymbol{\alpha}_{\left[
k\right]  }\in\mathbb{N}^{k},c_{\boldsymbol{\alpha}_{\left[  k\right]
},\lambda}\left(  R_{k}\right)  \neq0}\frac{c_{\boldsymbol{\alpha}_{\left[
k\right]  },\lambda}\left(  R_{k}\right)  }{\left(  \lambda\right)
_{\boldsymbol{\alpha}_{\left[  k\right]  }}}\mathbf{x}_{\left[  k\right]
}^{\boldsymbol{\alpha}_{\left[  k\right]  }}}\times\\
  \sum_{\boldsymbol{\alpha=}\left(  \boldsymbol{\alpha}_{\left[
k\right]  },\boldsymbol{\alpha}_{\left[  n\right]  \smallsetminus\left[
k\right]  }\right)  \in\mathbb{N}^{n},c_{\boldsymbol{\alpha},\lambda}\left(
R_{n}\right)  \neq0}\frac{\mathbf{x}_{\left[  k\right]  }^{\boldsymbol{\alpha
}_{\left[  k\right]  }}}{\left(  \lambda\right)  _{\boldsymbol{\alpha
}_{\left[  k\right]  }}}c_{\boldsymbol{\alpha},\lambda}\left(  R_{n}\right)
\left(  \boldsymbol{\theta}_{\left[  n\right]  \smallsetminus\left[  k\right]
}-\left(  \boldsymbol{\theta}_{P_{n}}\right)  _{\left[  n\right]
\smallsetminus\left[  k\right]  }\right)  ^{-\left(  \boldsymbol{\alpha
}_{\left[  n\right]  \smallsetminus\left[  k\right]  }+\lambda\mathbf{1}%
_{n-k}\right)  }.%
$\newline
We obtain
\begin{align}
L_{\mathbf{X}_{\left[  n\right]  \smallsetminus\left[  k\right]  }%
}^{\mathbf{X}_{\left[  k\right]  }=\mathbf{x}_{\left[  k\right]  }}\left(
\boldsymbol{\theta}_{\left[  n\right]  \smallsetminus\left[  k\right]
}\right)   &  =\frac{\exp\left(  \left(  \boldsymbol{\theta}_{P_{n}}\right)
_{\left[  k\right]  }-\boldsymbol{\theta}_{P_{k}},\mathbf{x}_{\left[
k\right]  }\right)  }{\sum_{\boldsymbol{\alpha}_{\left[  k\right]  }%
\in\mathbb{N}^{k},c_{\boldsymbol{\alpha}_{\left[  k\right]  },\lambda}\left(
R_{k}\right)  \neq0}\frac{c_{\boldsymbol{\alpha}_{\left[  k\right]  },\lambda
}\left(  R_{k}\right)  }{\left(  \lambda\right)  _{\boldsymbol{\alpha
}_{\left[  k\right]  }}}\mathbf{x}_{\left[  k\right]  }^{\boldsymbol{\alpha
}_{\left[  k\right]  }}}\left(  \frac{p_{\left[  n\right]  }}{p_{\left[
k\right]  }}\left(  \boldsymbol{\theta}_{\left[  n\right]  }%
-\boldsymbol{\theta}_{P_{n}}\right)  _{\left[  n\right]  \smallsetminus\left[
k\right]  }^{\mathbf{1}_{n-k}}\right)  ^{-\lambda}\times\nonumber\\
&  \hspace{-3cm}\sum_{\boldsymbol{\alpha=}\left(  \boldsymbol{\alpha
}_{\left[  k\right]  },\boldsymbol{\alpha}_{\left[  n\right]  \smallsetminus
\left[  k\right]  }\right)  \in\mathbb{N}^{n},c_{\boldsymbol{\alpha},\lambda
}\left(  R_{n}\right)  \neq0}\frac{\mathbf{x}_{\left[  k\right]
}^{\boldsymbol{\alpha}_{\left[  k\right]  }}}{\left(  \lambda\right)
_{\boldsymbol{\alpha}_{\left[  k\right]  }}}c_{\boldsymbol{\alpha},\lambda
}\left(  R_{n}\right)  \left(  \boldsymbol{\theta}_{\left[  n\right]
}-\boldsymbol{\theta}_{P_{n}}\right)  _{\left[  n\right]  \smallsetminus
\left[  k\right]  }^{-\boldsymbol{\alpha}_{\left[  n\right]  \smallsetminus
\left[  k\right]  }}. \label{LXn_k_Xk}%
\end{align}
Since $L_{\mathbf{X}_{\left[  n\right]  \smallsetminus\left[  k\right]  }%
}^{\mathbf{X}_{\left[  k\right]  }=\mathbf{x}_{\left[  k\right]  }}$ is the Lt
of a pd we have $L_{\mathbf{X}_{n-k}}^{\mathbf{X}_{k}=\mathbf{x}_{k}}\left(
\boldsymbol{0}_{n\mathbf{-}k}\right)  =1$ and we get
\begin{align}
&  \frac{\exp\left(  \left(  \boldsymbol{\theta}_{P_{n}}\right)  _{\left[
k\right]  }-\boldsymbol{\theta}_{P_{k}},\mathbf{x}_{\left[  k\right]
}\right)  }{\sum_{\boldsymbol{\alpha}_{\left[  k\right]  }\in\mathbb{N}%
^{k},c_{\boldsymbol{\alpha}_{\left[  k\right]  },\lambda}\left(  R_{k}\right)
\neq0}\frac{c_{\boldsymbol{\alpha}_{\left[  k\right]  },\lambda}\left(
R_{k}\right)  }{\left(  \lambda\right)  _{\boldsymbol{\alpha}_{\left[
k\right]  }}}\mathbf{x}_{\left[  k\right]  }^{\boldsymbol{\alpha}_{\left[
k\right]  }}}\label{LXn_k_Xk_0n_k}\\
&  =\left(  \frac{p_{\left[  n\right]  }}{p_{\left[  k\right]  }}\left(
-\boldsymbol{\theta}_{P_{n}}\right)  _{\left[  n\right]  \smallsetminus\left[
k\right]  }^{\mathbf{1}_{n-k}}\right)  ^{\lambda}[\sum_{\boldsymbol{\alpha
=}\left(  \boldsymbol{\alpha}_{\left[  k\right]  },\boldsymbol{\alpha
}_{\left[  n\right]  \smallsetminus\left[  k\right]  }\right)  \in
\mathbb{N}^{n},c_{\boldsymbol{\alpha},\lambda}\left(  R_{n}\right)  \neq
0}\frac{\mathbf{x}_{\left[  k\right]  }^{\boldsymbol{\alpha}_{\left[
k\right]  }}}{\left(  \lambda\right)  _{\boldsymbol{\alpha}_{\left[  k\right]
}}}c_{\boldsymbol{\alpha},\lambda}\left(  R_{n}\right)  \left(
-\boldsymbol{\theta}_{P_{n}}\right)  _{\left[  n\right]  \smallsetminus\left[
k\right]  }^{-\boldsymbol{\alpha}_{\left[  n\right]  \smallsetminus\left[
k\right]  }}]^{-1}.\nonumber
\end{align}
We carry (\ref{LXn_k_Xk_0n_k}) in (\ref{LXn_k_Xk}) and we obtain

\begin{align}
&  L_{\mathbf{X}_{\left[  n\right]  \smallsetminus\left[  k\right]  }%
}^{\mathbf{X}_{\left[  k\right]  }=\mathbf{x}_{\left[  k\right]  }}\left(
\boldsymbol{\theta}_{\left[  n\right]  \smallsetminus\left[  k\right]
}\right) \nonumber
 =[\frac{\left(  \boldsymbol{\theta}_{\left[  n\right]  }-\boldsymbol{\theta
}_{P_{n}}\right)  _{\left[  n\right]  \smallsetminus\left[  k\right]
}^{\mathbf{1}_{n-k}}}{\left(  -\boldsymbol{\theta}_{P_{n}}\right)  _{\left[
n\right]  \smallsetminus\left[  k\right]  }^{\mathbf{1}_{n-k}}}]^{-\lambda
}\times\nonumber\\
&  \frac{\sum_{\boldsymbol{\alpha=}\left(  \boldsymbol{\alpha}_{\left[
k\right]  },\boldsymbol{\alpha}_{\left[  n\right]  \smallsetminus\left[
k\right]  }\right)  \in\mathbb{N}^{n},c_{\boldsymbol{\alpha},\lambda}\left(
R_{n}\right)  \neq0}\frac{\mathbf{x}_{\left[  k\right]  }^{\boldsymbol{\alpha
}_{\left[  k\right]  }}}{\left(  \lambda\right)  _{\boldsymbol{\alpha
}_{\left[  k\right]  }}}c_{\boldsymbol{\alpha},\lambda}\left(  R_{n}\right)
\left(  \boldsymbol{\theta}_{\left[  n\right]  }-\boldsymbol{\theta}_{P_{n}%
}\right)  _{\left[  n\right]  \smallsetminus\left[  k\right]  }%
^{-\boldsymbol{\alpha}_{\left[  n\right]  \smallsetminus\left[  k\right]  }}%
}{\sum_{\boldsymbol{\alpha=}\left(  \boldsymbol{\alpha}_{\left[  k\right]
},\boldsymbol{\alpha}_{\left[  n\right]  \smallsetminus\left[  k\right]
}\right)  \in\mathbb{N}^{n},c_{\boldsymbol{\alpha},\lambda}\left(
R_{n}\right)  \neq0}\frac{\mathbf{x}_{\left[  k\right]  }^{\boldsymbol{\alpha
}_{\left[  k\right]  }}}{\left(  \lambda\right)  _{\boldsymbol{\alpha
}_{\left[  k\right]  }}}c_{\boldsymbol{\alpha},\lambda}\left(  R_{n}\right)
\left(  -\boldsymbol{\theta}_{P_{n}}\right)  _{\left[  n\right]
\smallsetminus\left[  k\right]  }^{-\boldsymbol{\alpha}_{\left[  n\right]
\smallsetminus\left[  k\right]  }}}. \label{LXn_k_Xk2}%
\end{align}
As we have $\left(  \boldsymbol{\theta}_{\left[  n\right]  }%
-\boldsymbol{\theta}_{P_{n}}\right)  _{\left[  n\right]  \smallsetminus\left[
k\right]  }^{\mathbf{1}_{n-k}}\left(  -\boldsymbol{\theta}_{P_{n}}\right)
_{\left[  n\right]  \smallsetminus\left[  k\right]  }^{-\mathbf{1}_{n-k}%
}=\prod_{i=k+1}^{n}[1+\theta_{i}\left(  -\boldsymbol{\theta}_{P_{n}}\right)
_{i}^{-1}],$ (\ref{LXn_k_Xk2}) gives (\ref{L_Zk_xk}).
\end{proof}

\begin{proof}
[Proof of Remark \ref{RemL_X1_n_1}]We have $\widetilde{p}_{T}(S_{\left[
n\right]  \smallsetminus\left[  1\right]  })=\frac{p_{\left\{  1\right\}  \cup
T}}{p_{1}}$ and
$
\widetilde{p}_{T}(S_{\left[  n\right]  \smallsetminus\left[  1\right]  })  
=-p_{\left(  \left[  n\right]  \smallsetminus1\right)  \smallsetminus
T}(S_{\left[  n\right]  \smallsetminus\left[  1\right]  })/\frac{p_{\left[
n\right]  }}{p_{1}}=-\frac{p_{\left\{  1\right\}  \cup\left(  \left[
n\right]  \smallsetminus1\right)  \smallsetminus T}}{p_{1}}/\frac{p_{\left[
n\right]  }}{p_{1}}
=-\frac{p_{\left[  n\right]  \smallsetminus T}}{p_{\left[  n\right]  }%
}=\widetilde{p}_{T}\left(  R_{n}\right)  .
$ 
We also have\newline
$
r_{T}\left(  S_{\left[  n\right]  \smallsetminus\left[  1\right]  }\right)
  =-\frac{1}{\frac{p_{\left[  n\right]  }}{p_{1}}}\left(  \frac{\partial
}{\partial\boldsymbol{\theta}}\right)  ^{\left(  \left[  n\right]
\smallsetminus\left\{  1\right\}  \right)  \smallsetminus T}\left[  \frac
{1}{p_{1}}\frac{\partial}{\partial\theta_{1}}P_{n}\left(  \boldsymbol{\theta
}_{P_{n}}\right)  \right] 
 =-\frac{1}{p_{\left[  n\right]  }}\left(  \frac{\partial}{\partial
\boldsymbol{\theta}}\right)  ^{\left[  n\right]  \smallsetminus T}P_{n}\left(
\boldsymbol{\theta}_{P_{n}}\right)  =r_{T}\left(  P_{n}\right)  .
$

\end{proof}

\begin{proof}
[Proof of Theorem \ref{L_X1_n_1}]We calculate, in five steps, another
expression of (\ref{Fkdef}) for $k=1$ to obtain a new formula for
$L_{\mathbf{X}_{\left[  n\right]  \smallsetminus\left[  1\right]  }}%
^{X_{1}=x_{1}}\left(  \boldsymbol{\theta}_{\left[  n\right]  \smallsetminus
\left[  1\right]  }\right)  $ from (\ref{L_Zk_xk}).\newline\textbf{First
step:} For $\alpha_{1}\in I_{1}\left(  R_{n}\right)  ,$ let $J_{\left[
n\right]  \smallsetminus\left[  1\right]  }\left(  \alpha_{1}\right)
=\left\{  \boldsymbol{\alpha}_{\left[  n\right]  \smallsetminus\left[
1\right]  }\in\mathbb{N}^{n-1},c_{\alpha_{1},\boldsymbol{\alpha}_{\left[
n\right]  \smallsetminus\left[  1\right]  },\lambda}\left(  R_{n}\right)
\neq0\right\}  ,$ then we have by (\ref{Fkdef})
\begin{equation}
\mathbf{F}_{1}\left(  \lambda,R_{n},x_{1},\boldsymbol{\theta}_{\left[
n\right]  \smallsetminus\left[  1\right]  }\right)  =\sum_{\alpha_{1}\in
I_{1}\left(  R_{n}\right)  }\frac{x_{1}^{\alpha_{1}}}{\left(  \lambda\right)
_{\alpha_{1}}}\sum_{\boldsymbol{\alpha}_{\left[  n\right]  \smallsetminus
\left[  1\right]  }\in J_{\left[  n\right]  \smallsetminus\left[  1\right]
}\left(  \alpha_{1}\right)  }c_{\alpha_{1},\boldsymbol{\alpha}_{\left[
n\right]  \smallsetminus\left[  1\right]  },\lambda}\left(  R_{n}\right)
\left(  \boldsymbol{\theta}_{\left[  n\right]  }-\boldsymbol{\theta}_{P_{n}%
}\right)  _{\left[  n\right]  \smallsetminus\left[  1\right]  }%
^{-\boldsymbol{\alpha}_{\left[  n\right]  \smallsetminus\left[  1\right]  }}.
\label{F1def_inter_1}%
\end{equation}
\textbf{Second step:} For $\alpha\in\mathbb{N}$ and $j\in\mathbb{N},$ we
denote by $\left\langle \alpha\right\rangle _{j}\ $the number $\alpha\left(
\alpha-1\right)  \ldots\left(  \alpha-j+1\right)  ,$ in particular
$\left\langle \alpha\right\rangle _{\alpha}=\alpha!$ and if $j>\alpha
,\left\langle \alpha\right\rangle _{j}=0.$ For $\boldsymbol{\alpha}_{\left[
k\right]  }\in\mathbb{N}^{\left[  k\right]  },$ and $\alpha_{i}\geqslant
\beta_{i}\in\mathbb{N},$ we denote by $\left\langle \boldsymbol{\alpha
}_{\left[  k\right]  }\right\rangle _{\boldsymbol{\beta}_{\left[  k\right]  }%
}$ the number $\left\langle \alpha_{1}\right\rangle _{\beta_{1}}%
\cdots\left\langle \alpha_{k}\right\rangle _{\beta_{k}}$, in particular
$\left\langle \boldsymbol{\alpha}_{\left[  k\right]  }\right\rangle
_{\boldsymbol{\alpha}_{\left[  k\right]  }}=\boldsymbol{\alpha}_{\left[
k\right]  }!$ and if $\exists j\in\left[  k\right]  ,\beta_{j}>\alpha_{j},$
then $\left\langle \boldsymbol{\alpha}_{\left[  k\right]  }\right\rangle
_{\boldsymbol{\beta}_{\left[  k\right]  }}=0.$ Let $\beta_{1}\in\mathbb{N}$,
we apply $\left(  \partial/\partial z_{1}\right)  ^{\beta_{1}}$ to
(\ref{c_alpha_lambda_R}) with $k=1$, because $R_{n}\mathbf{\ }$is an affine
polynomial, we obtain by \cite{Comtet(1974)} p. 42%

\begin{align}
&
\left(  \tfrac{\partial}{\partial z_{1}}\right)  ^{\beta_{1}}\left[  1-R\left(
\mathbf{z}\right)  \right]  ^{-\lambda}=\left(  \tfrac{\partial}{\partial
z_{1}}\right)  ^{\beta_{1}}\sum_{\boldsymbol{\alpha}\in\mathbb{N}^{n}%
}c_{\boldsymbol{\alpha},\lambda}\left(  R\right)  \mathbf{z}%
^{\boldsymbol{\alpha}}%
\left(  \lambda\right)  _{\beta_{1}}\left(  \tfrac{\partial}{\partial z_{1}%
}R_{n}\left(  \mathbf{z}\right)  \right)  ^{\beta_{1}}\left[  1-R_{n}\left(
\mathbf{z}\right)  \right]  ^{-\left(  \lambda+\beta_{1}\right)  }\nonumber\\
&  =\sum_{\alpha_{1}\in I_{1}\left(  R_{n}\right)  }\sum_{\boldsymbol{\alpha
}_{\left[  n\right]  \smallsetminus\left[  1\right]  }\in J_{\left[  n\right]
\smallsetminus\left[  1\right]  }\left(  \alpha_{1}\right)  ,\alpha
_{1}\geqslant\beta_{1}}c_{\alpha_{1},\boldsymbol{\alpha}_{\left[  n\right]
\smallsetminus\left[  1\right]  },\lambda}\left(  R_{n}\right)  \left\langle
\alpha_{1}\right\rangle _{\beta_{1}}z_{1}^{\alpha_{1}-\beta_{1}}%
\mathbf{z}_{\left[  n\right]  \smallsetminus\left[  1\right]  }%
^{\boldsymbol{\alpha}_{\left[  n\right]  \smallsetminus\left[  1\right]  }}.
\label{F1def_inter_2}%
\end{align}
\textbf{Third step:} Making $z_{1}=0$ in (\ref{F1def_inter_2}), we get
\newline
$
 \left(  \lambda\right)  _{\beta_{1}}\left[  1-R_{n}\left(  0,\mathbf{z}%
_{\left[  n\right]  \smallsetminus\left[  1\right]  }\right)  \right]
^{-\left(  \lambda+\beta_{1}\right)  }\left(  \frac{\partial}{\partial z_{1}%
}R_{n}(0,\mathbf{z}_{\left[  n\right]  \smallsetminus\left[  1\right]
})\right)  ^{\beta_{1}}=\\ \sum_{\boldsymbol{\alpha}_{\left[  n\right]  \smallsetminus\left[
1\right]  }\in J_{\left[  n\right]  \smallsetminus\left[  1\right]  }\left(
\beta_{1}\right)  }c_{\beta_{1},\boldsymbol{\alpha}_{\left[  n\right]
\smallsetminus\left[  k\right]  },\lambda}\left(  R_{n}\right)  (\beta
_{1}!)\mathbf{z}_{\left[  n\right]  \smallsetminus\left[  1\right]
}^{\boldsymbol{\alpha}_{\left[  n\right]  \smallsetminus\left[  1\right]  }}.
$ \newline
\textbf{Fourth step:} Making $\beta_{1}=\alpha_{1}$ and $\mathbf{z}_{\left[
n\right]  \smallsetminus\left[  1\right]  }=\left(  \boldsymbol{\theta
}_{\left[  n\right]  }-\boldsymbol{\theta}_{P_{n}}\right)  _{\left[  n\right]
\smallsetminus\left[  1\right]  }^{-1}$ in last Equality, we
get%
\begin{align}
&  \sum_{\boldsymbol{\alpha}_{\left[  n\right]  \smallsetminus\left[
1\right]  }\in J_{\left[  n\right]  \smallsetminus\left[  1\right]  }\left(
\alpha_{1}\right)  }c_{\alpha_{1},\boldsymbol{\alpha}_{\left[  n\right]
\smallsetminus\left[  k\right]  },\lambda}\left(  R_{n}\right)  \left(
\boldsymbol{\theta}_{\left[  n\right]  }-\boldsymbol{\theta}_{P_{n}}\right)
_{\left[  n\right]  \smallsetminus\left[  1\right]  }^{-\boldsymbol{\alpha
}_{\left[  n\right]  \smallsetminus\left[  1\right]  }} =\nonumber\\
& \tfrac{\left(  \lambda\right)  _{\alpha_{1}}}{\alpha_{1}!}\left[
1-R_{n}\left(  0,\mathbf{z}_{\left[  n\right]  \smallsetminus\left[  1\right]
}\right)  \right]  ^{-\left(  \lambda+\alpha_{1}\right)  }\left(
\tfrac{\partial}{\partial z_{1}}R_{n}(0,\left(  \boldsymbol{\theta}_{\left[
n\right]  }-\boldsymbol{\theta}_{P_{n}}\right)  _{\left[  n\right]
\smallsetminus\left[  1\right]  }^{-1}\right)  ^{\alpha_{1}}
\label{F1def_inter_4}%
\end{align}
\textbf{Fifth step:} Using $\mathfrak{z}$ and $S$ in 
(\ref{F1def_inter_4}), we get\newline
$
 \sum_{\boldsymbol{\alpha}_{\left[  n\right]  \smallsetminus\left[
1\right]  }\in J_{\left[  n\right]  \smallsetminus\left[  1\right]  }\left(
\alpha_{1}\right)  }c_{\alpha_{1},\boldsymbol{\alpha}_{\left[  n\right]
\smallsetminus\left[  k\right]  },\lambda}\left(  R_{n}\right)  \left(
\boldsymbol{\theta}_{\left[  n\right]  }-\boldsymbol{\theta}_{P_{n}}\right)
_{\left[  n\right]  \smallsetminus\left[  1\right]  }^{-\boldsymbol{\alpha
}_{\left[  n\right]  \smallsetminus\left[  1\right]  }}=\\
 \left[  \frac{\left(  \boldsymbol{\theta}_{\left[  n\right]
}-\boldsymbol{\theta}_{P_{n}}\right)  _{\left[  n\right]  \smallsetminus
\left[  1\right]  }^{-\mathbf{1}_{n-1}}}{\frac{p_{\left[  n\right]  }}{p_{1}}%
}S\left(  \boldsymbol{\theta}_{\left[  n\right]  \smallsetminus\left[
1\right]  }\right)  \right]  ^{-\lambda}\frac{\left(  \lambda\right)
_{\alpha_{1}}}{\alpha_{1}!}[\mathfrak{z}\left(  \boldsymbol{\theta}_{\left[
n\right]  \smallsetminus\left[  1\right]  }\right)  ]^{\alpha_{1}},
$
 hence last Equality gives
\begin{equation}
\mathbf{F}_{1}\left(  \lambda,R_{n},x_{1},\boldsymbol{\theta}_{\left[
n\right]  \smallsetminus\left[  1\right]  }\right)  =[\tfrac{\left(
\boldsymbol{\theta}_{\left[  n\right]  }-\boldsymbol{\theta}_{P_{n}}\right)
_{\left[  n\right]  \smallsetminus\left[  1\right]  }^{-\mathbf{1}_{n-1}}%
}{\frac{p_{\left[  n\right]  }}{p_{1}}}S\left(  \boldsymbol{\theta}_{\left[
n\right]  \smallsetminus\left[  1\right]  }\right)  ]^{-\lambda}%
\mathbf{G}\left(  R_{n},\mathfrak{z}\left(  \boldsymbol{\theta}_{\left[
n\right]  \smallsetminus\left[  1\right]  }\right)  x_{1}\right)
\label{F1def_inter_6}%
\end{equation}
Substituting (\ref{F1def_inter_6}) in (\ref{L_Zk_xk}) for $k=1$ and
because $S\left(  \boldsymbol{0}_{\left[  n\right]  \smallsetminus\left[
1\right]  }\right)  =1$ from formula (\ref{S2}) and (\ref{Qn_k}), we get
\begin{equation}
L_{\mathbf{X}_{\left[  n\right]  \smallsetminus\left[  1\right]  }}%
^{X_{1}=x_{1}}\left(  \boldsymbol{\theta}_{\left[  n\right]  \smallsetminus
\left[  1\right]  }\right)  =[Q_{\left[  n\right]  \smallsetminus\left[
1\right]  }\left(  \boldsymbol{\theta}_{\left[  n\right]  \smallsetminus
\left[  1\right]  }\right)  ]^{-\lambda}\frac{[\tfrac{\left(
\boldsymbol{\theta}_{\left[  n\right]  }-\boldsymbol{\theta}_{P_{n}}\right)
_{\left[  n\right]  \smallsetminus\left[  1\right]  }^{-\mathbf{1}_{n-1}}%
}{\frac{p_{\left[  n\right]  }}{p_{1}}}S\left(  \boldsymbol{\theta}_{\left[
n\right]  \smallsetminus\left[  1\right]  }\right)  ]^{-\lambda}%
\mathbf{G}\left(  R_{n},\mathfrak{z}\left(  \boldsymbol{\theta}_{\left[
n\right]  \smallsetminus\left[  1\right]  }\right)  x_{1}\right)  }%
{[\tfrac{\left(  -\boldsymbol{\theta}_{P_{n}}\right)  _{\left[  n\right]
\smallsetminus\left[  1\right]  }^{-\mathbf{1}_{n-1}}}{\frac{p_{\left[
n\right]  }}{p_{1}}}S\left(  \boldsymbol{0}_{\left[  n\right]  \smallsetminus
\left[  1\right]  }\right)  ]^{-\lambda}\mathbf{G}\left(  R_{n},\mathfrak{z}%
\left(  \boldsymbol{0}_{\left[  n\right]  \smallsetminus\left[  1\right]
}\right)  x_{1}\right)  } \label{F1def_inter_7}%
\end{equation}
Finally (\ref{F1def_inter_7}) and (\ref{Qn_k}) gives (\ref{LXn_1_X1_Yn_1}).
Formulas (\ref{S2}), (\ref{B2}) and (\ref{S3}) remain to be proven.

First, according to (\ref{P_Taylor}), we have 
$
\frac{P_{n}\left(  \boldsymbol{\theta}_{\left[  n\right]  }\right)
}{p_{\left[  n\right]  }\left(  \theta_{1}-\widetilde{p}_{1}\right)
}=\left\{  1-R_{n}\left[  \left(  \boldsymbol{\theta}-\boldsymbol{\theta}%
_{P}\right)  ^{-1}\right]  \right\}  \left(  \boldsymbol{\theta}_{\left[
n\right]  }-\boldsymbol{\theta}_{P_{n}}\right)  _{\left[  n\right]
\smallsetminus\left[  1\right]  }^{\mathbf{1}_{n-1}}.
$
We make $\theta_{1}\rightarrow\infty$ in last Equality and we get%
\begin{align}
&  \lim_{\theta_{1}\rightarrow\infty}\frac{P_{n}\left(  \boldsymbol{\theta
}_{\left[  n\right]  }\right)  }{p_{\left[  n\right]  }\left(  \theta
_{1}-\widetilde{p}_{1}\right)  }
 =\left[1-R_{n}\left(  \mathbf{0},\left(  \boldsymbol{\theta}-\boldsymbol{\theta
}_{P}\right)  _{\left[  n\right]  \smallsetminus\left[  1\right]  }%
^{-1}\right)  \right]\left(  \boldsymbol{\theta}_{\left[  n\right]  }%
-\boldsymbol{\theta}_{P_{n}}\right)  _{\left[  n\right]  \smallsetminus\left[
1\right]  }^{\mathbf{1}_{n-1}}\label{BQ1}
\end{align}
and, because $P$ is an affine polynomial, we have\newline
$
 \lim_{\theta_{1}\rightarrow\infty}\frac{P_{n}\left(  \boldsymbol{\theta
}_{\left[  n\right]  }\right)  }{p_{\left[  n\right]  }\left(
\boldsymbol{\theta}_{\left[  n\right]  }-\boldsymbol{\theta}_{P_{n}}\right)
_{1}}
=   \lim_{\theta_{1}\rightarrow\infty}\frac{P\left(  0,\boldsymbol{\theta
}_{\left[  n\right]  \smallsetminus\left[  1\right]  }\right)  +\theta
_{1}\frac{\partial}{\partial\theta_{1}}\left(  0,\boldsymbol{\theta}_{\left[
n\right]  \smallsetminus\left[  1\right]  }\right)  }{p_{\left[  n\right]
}\left(  \theta_{1}-\widetilde{p}_{1}\right)  }
 =\frac{\frac{\partial}{\partial\theta_{1}}P_{n}\left(  \boldsymbol{\theta
}_{\left[  n\right]  }\right)  }{p_{\left[  n\right]  }}
.$ 
Finally last Equality and (\ref{BQ1}) prove (\ref{S2}).

Secondly, according to (\ref{P_Taylor}), with $\mathbf{z=}\left(
\boldsymbol{\theta}_{\left[  n\right]  }-\boldsymbol{\theta}_{P_{n}}\right)
^{-1},$ that is $z_{i}=\left(  \theta_{i}-\widetilde{p}_{i}\right)  ^{-1}$ or
$\theta_{i}=\widetilde{p}_{i}+z_{i}^{-1},i\in\left[  n\right]  ,$ we have 
$  
R_{n}\left(  \mathbf{z}_{\left[  n\right]  }\right)  =1-\frac{1}{p_{\left[
n\right]  }}\left(  \prod_{j=2}^{n}z_{j}\right)  z_{1}P_{n}\left(
\widetilde{p}_{1}+z_{1}^{-1},\ldots,\widetilde{p}_{n}+z_{n}^{-1}\right)  .
$
Deriving with respect to the variable $z_{1},$ we obtain $
 \frac{\partial}{\partial z_{1}}R_{n}\left(  z_{1},z_{2},\ldots,z_{n}\right)
=\\-\frac{1}{p_{\left[  n\right]  }}\left(  \prod_{j=2}^{n}z_{j}\right)
\left\lbrace  P_{n}\left(  \widetilde{p}_{1}+z_{1}^{-1},\ldots,\widetilde{p}%
_{n}+z_{n}^{-1}\right)  -z_{1}^{-1}\frac{\partial}{\partial\theta_{1}}\left[
P_{n}\left(  \widetilde{p}_{1}+z_{1}^{-1},\ldots,\widetilde{p}_{n}+z_{n}%
^{-1}\right)  \right]  \right\rbrace  .
$

Because $R_{n}$ is an affine polynomial with respect to the $n$ variables
$z_{1},z_{2},\ldots,z_{n},$ we know that $\frac{\partial}{\partial z_{1}}%
R_{n}\left(  z_{1},z_{2},\ldots,z_{n}\right)  $ is an affine polynomial with
respect to the $n-1$ variables $z_{2},\ldots,z_{n}.$ Putting $z_{1}=0$ in the
left-hand side of last Equality and making $z_{1}\rightarrow\infty$ in the
right-hand side of last Equality, we get\newline 
$
 \frac{\partial}{\partial z_{1}}R_{n}\left(  0,z_{2}\ldots,z_{n}\right)
 =-\frac{1}{p_{\left[  n\right]  }}\left(  \prod_{j=2}^{n}z_{j}\right)
\left[  P_{n}\left(  \widetilde{p}_{1},\widetilde{p}_{2}+z_{2}^{-1}%
,\ldots,\widetilde{p}_{n}+z_{n}^{-1}\right)  \right]  
$
and \newline
$
\left(  \prod_{j=2}^{n}z_{j}^{-1}\right)  \frac{\partial}{\partial z_{1}%
}R_{n}\left(  0,z_{2}\ldots,z_{n}\right) 
=-\frac{1}{p_{\left[  n\right]  }}\left[  P_{n}\left(  \widetilde{p}%
_{1},\widetilde{p}_{2}+z_{2}^{-1},\ldots,\widetilde{p}_{n}+z_{n}^{-1}\right)
\right]  .
$ 
Substituting $z_{j}=\left(  \theta_{j}-\widetilde{p}_{j}\right)  ^{-1},$
$j=2,\ldots,n,$ we obtain \newline
$
  \prod_{j=2}^{n}\left(  \theta_{j}-\widetilde{p}_{j}\right)  \frac{\partial
}{\partial z_{i}}R_{n}\left(  0,\left(  \theta_{2}-\widetilde{p}_{2}\right)
^{-1},\ldots,\left(  \theta_{n}-\widetilde{p}_{n}\right)  ^{-1}\right) 
  =-\frac{1}{p_{\left[  n\right]  }}\left[  P_{n}\left(  \widetilde{p}%
_{1},\theta_{2},\ldots,\theta_{n}\right)  \right]  .
$
Multiplying by $p_{\left[  n\right]  }/p_{\left[  1\right]  }$ we
obtain (\ref{B2}).

Thirdly, applying $\frac{1}{p_{\left[  1\right]  }}%
\frac{\partial}{\partial\theta_{1}}$ to the equality 
$
P_{n}\left(  \boldsymbol{\theta}\right)  =\sum_{T\subset\left(  2,\ldots
,n\right)  }p_{\left[  1\right]  \cup T}\boldsymbol{\theta}^{\left[  1\right]
}\boldsymbol{\theta}_{\left[  n\right]  \smallsetminus\left[  1\right]  }%
^{T}+\sum_{T\in\mathfrak{P}_{n},T\cap\left[  1\right]  \neq\left[  1\right]
}p_{T}\boldsymbol{\theta}^{T}%
$ we obtain (\ref{S3}).
\end{proof}

\begin{proof}
[Proof of Lemma \ref{I1(Rn)}]\textbf{Firstly.} If $\exists T\in\mathfrak{P}%
_{n},\left\vert T\right\vert =2,$ $r_{T}\neq0,$ then without loss of
generality, we can studies the case $T=\left[  2\right]  ,$ that is
$r_{1,2}\neq0.$ For $\mathbf{z=}\left(  z_{1},z_{2},\mathbf{0}_{n-2}\right)
,$ we get  
$
\left[  1-R_{n}\left(  z_{1},z_{2},\mathbf{0}_{n-2}\right)  \right]
^{-\lambda}   =\sum_{\boldsymbol{\alpha}\in\mathbb{N}^{2}}c_{\left(
\alpha_{1}\boldsymbol{,}\alpha_{2},\mathbf{0}_{n-2}\right)  ,\lambda}\left(
R_{n}\right)  z_{1}^{\alpha_{1}}z_{2}^{\alpha_{2}}
 =\left(  1-r_{1,2}z_{1}z_{2}\right)  ^{-\lambda}
 =\sum_{l=0}^{\infty}\frac{\left(  \lambda\right)  _{l}r_{1,2}^{l}}{l!}%
z_{1}^{l}z_{2}^{l}.
$
Then we have $c_{\left(  l,l,\mathbf{0}_{n-2}\right)  ,\lambda}=\frac{\left(
\lambda\right)  _{l}r_{1,2}^{l}}{l!}\neq0,l\in\mathbb{N}.$ Finally,
$\forall\alpha_{1}\in\mathbb{N},$ $\exists\mathbb{\alpha}_{\left[  n\right]
\smallsetminus\left\{  1\right\}  }=\left(  \alpha_{1},\mathbf{0}%
_{n-2}\right)  $ such that $c_{(\alpha_{1},\mathbb{\alpha}_{\left[  n\right]
\smallsetminus\left\{  1\right\}  }),\lambda}=\frac{\left(  \lambda\right)
_{\alpha_{1}}r_{1,2}^{\alpha_{1}}}{\alpha_{1}!}\neq0,$ and $I_{1}\left(
R_{n}\right)  =\mathbb{N}.$\newline\textbf{Secondly.} Let $k\in\mathbb{N},$ such
that $\forall T\in\mathfrak{P}_{n},\left\vert T\right\vert \leqslant k<n;$
$r_{T}=0,$ if $\exists T\in\mathfrak{P}_{n},\left\vert T\right\vert =k+1;$
$r_{T}\neq0,$ then without loss of generality, we can studies the case
$T=\left[  k+1\right]  ,$ that is $r_{1,\ldots,k+1}\neq0.$ For $\mathbf{z=}%
\left(  z_{1},\ldots,z_{k+1},\mathbf{0}_{n-k-1}\right)  ,$ we get
$
\left[  1-R_{n}\left(  z_{1},\ldots,z_{k+1,}\mathbf{0}_{n-k-1}\right)
\right]  ^{-\lambda}   =\\\sum_{\boldsymbol{\alpha}\in\mathbb{N}^{2}%
}c_{\left(  \alpha_{1}\boldsymbol{,}\ldots,\alpha_{k+1},\mathbf{0}%
_{n--k-1}\right)  ,\lambda}\left(  R_{n}\right)  z_{1}^{\alpha_{1}}\ldots
z_{k+1}^{\alpha_{k+1}}
 =\left(  1-r_{1,\ldots,k+1}z_{1}\ldots z_{k+1}\right)  ^{-\lambda}
 =\sum_{l=0}^{\infty}\frac{\left(  \lambda\right)  _{l}r_{1,\ldots,k+1}^{l}%
}{l!}z_{1}^{l}\ldots z_{k+1}^{l}.
$
Then we have $c_{\left(  l\mathbf{1}_{k+1},\mathbf{0}_{n-k+1}\right)
,\lambda}=\frac{\left(  \lambda\right)  _{l}r_{1,\ldots,k+1}^{l}}{l!}%
\neq0,l\in\mathbb{N}.$ Finally, $\forall\alpha_{1}\in\mathbb{N},$
$\exists\mathbb{\alpha}_{\left[  n\right]  \smallsetminus\left\{  1\right\}
}=\left(  \alpha_{1}\mathbf{1}_{k},\mathbf{0}_{n-k-1}\right)  $ such that
$c_{(\alpha_{1},\mathbb{\alpha}_{\left[  n\right]  \smallsetminus\left\{
1\right\}  }),\lambda}=\frac{\left(  \lambda\right)  _{l}r_{1,\ldots,k+1}^{l}%
}{l!}\neq0,$ and $I_{1}\left(  R_{n}\right)  =\mathbb{N}.$ \newline%
\textbf{Thirdly. }Now, if $\forall T\in\mathfrak{P}_{n},\left\vert T\right\vert
\leqslant n;$ $r_{T}=0,$ then $R_{n}=0,$and $P_{n}\left(  \boldsymbol{\theta
}_{n}\right)  =p_{\left[  n\right]  }\prod_{i=1}^{n}\left(  \theta
_{i}-\widetilde{p}_{i}\right)  =\prod_{i=1}^{n}\left(  1+\left(
-\widetilde{p}_{i}\right)  ^{-1}\theta_{i}\right)  ,$ because $P_{n}\left(
\boldsymbol{0}_{n}\right)  =1$. In this case we have $\boldsymbol{X}%
=(X_{1},\ldots,X_{n}),$ with $X_{i}\sim\gamma_{\left(  -\widetilde{p}%
_{i}\right)  ^{-1},\lambda},$ $X_{i},i\in\left[  n\right]  $ being independent
Therefore, $c_{\left(  \alpha_{1}\boldsymbol{,}\ldots,\alpha_{n},\right)
,\lambda}\left(  P_{n}\right)  =0$ unless $c_{\left(  \mathbf{0}_{n},\right)
,\lambda}\left(  P_{n}\right)  =1$, and $I_{1}\left(  R_{n}\right)  =\left\{
0\right\} . $  This completes the proof by finite induction.
\end{proof}

\begin{proof}
[Proof of Theorem \ref{ThLXn_1_X1}]We have by (\ref{S2}), (\ref{B2}) and
(\ref{dzeta_}): 
$
B\left(  \boldsymbol{\theta}_{\left[  n\right]  \smallsetminus\left[
1\right]  }\right)     =-\frac{1}{p_{1}}[P_{n}\left(  0,\boldsymbol{\theta
}_{\left[  n\right]  \smallsetminus\left[  1\right]  }\right)  +\widetilde{p}%
_{1}\frac{\partial P_{n}}{\partial\theta_{1}}\left(  0,\boldsymbol{\theta
}_{\left[  n\right]  \smallsetminus\left[  1\right]  }\right)  ]
  =-\widetilde{p_{1}}S_{\left[  n\right]  \smallsetminus\left[  1\right]
}\left(  \boldsymbol{\theta}_{\left[  n\right]  \smallsetminus\left[
1\right]  }\right)  -\frac{1}{p_{1}}P_{n}\left(  0,\boldsymbol{\theta
}_{\left[  n\right]  \smallsetminus\left[  1\right]  }\right).
$
We deduce 
$
\mathfrak{z}_{n-1}\left(  \boldsymbol{\theta}_{\left[  n\right]
\smallsetminus\left[  1\right]  }\right)  =-\widetilde{p_{1}}-\frac{1}{p_{1}%
}\frac{P_{n}\left(  0,\boldsymbol{\theta}_{\left[  n\right]  \smallsetminus
\left[  1\right]  }\right)  }{S_{\left[  n\right]  \smallsetminus\left[
1\right]  }\left(  \boldsymbol{\theta}_{\left[  n\right]  \smallsetminus
\left[  1\right]  }\right)  }.
$
Then, by (\ref{LXn_1_X1_Yn_1}), we get\newline
$
L_{\mathbf{X}_{\left[  n\right]  \smallsetminus\left[  1\right]  }}%
^{X_{1}=x_{1}}\left(  \boldsymbol{\theta}_{\left[  n\right]  \smallsetminus
\left[  1\right]  }\right)  =[S_{\left[  n\right]  \smallsetminus\left[
1\right]  }\left(  \boldsymbol{\theta}_{\left[  n\right]  \smallsetminus
\left[  1\right]  }\right)  ]^{-\lambda}\frac{\exp\{[-\widetilde{p_{1}}%
-\frac{1}{p_{1}}\frac{P_{n}\left(  0,\boldsymbol{\theta}_{\left[  n\right]
\smallsetminus\left[  1\right]  }\right)  }{S_{\left[  n\right]
\smallsetminus\left[  1\right]  }\left(  \boldsymbol{\theta}_{\left[
n\right]  \smallsetminus\left[  1\right]  }\right)  }]x_{1}\}}{\exp
\{[-\widetilde{p}_{1}-\frac{1}{p_{1}}]x_{1}\}},
$
which gives (\ref{LXn_1_Yn_1s}). By (\ref{P_nTaylor}) and (\ref{S_n_1_rT}),
expanding $P_{n}\left(  0,\boldsymbol{\theta}_{\left[  n\right]
\smallsetminus\left[  1\right]  }\right)  -S_{\left[  n\right]  \smallsetminus
\left[  1\right]  }\left(  \boldsymbol{\theta}_{\left[  n\right]
\smallsetminus\left[  1\right]  }\right)  ,$ we obtain (\ref{LXn_1_Yn_1s_d}%
). By (\ref{P_nTaylor}) and (\ref{S_n_1_rT}) we get\newline
$
 -(\tfrac{P_{n}\left(  0,\boldsymbol{\theta}_{\left[  n\right]
\smallsetminus\left[  1\right]  }\right)  }{S_{\left[  n\right]
\smallsetminus\left[  1\right]  }\left(  \boldsymbol{\theta}_{\left[
n\right]  \smallsetminus\left[  1\right]  }\right)  }-1)\frac{x_{1}}{p_{1}}
 =
(\{\widetilde{p}_{1}-\widetilde{p}_{1}\sum_{T\subset\left[  n\right]
\smallsetminus\left\{  1\right\}  ,\left\vert T\right\vert >1}r_{T}[\left(
\boldsymbol{\theta}_{\left[  n\right]  }-\boldsymbol{\theta}_{P_{n}}\right)
_{\left[  n\right]  \smallsetminus\left\{  1\right\}  }]^{-T}+\\\sum
_{T\subset\left[  n\right]  \smallsetminus\left\{  1\right\}  ,\left\vert
T\right\vert >0}r_{\left\{  1\right\}  \cup T}[\left(  \boldsymbol{\theta
}_{\left[  n\right]  }-\boldsymbol{\theta}_{P_{n}}\right)  _{\left[  n\right]
\smallsetminus\left\{  1\right\}  }]^{-T}\\
 +\frac{1}{p_{1}}-\frac{1}{p_{1}}\sum_{T\subset\left[  n\right]
\smallsetminus\left\{  1\right\}  ,\left\vert T\right\vert >1}r_{T}[\left(
\boldsymbol{\theta}_{\left[  n\right]  }-\boldsymbol{\theta}_{P_{n}}\right)
_{\left[  n\right]  \smallsetminus\left\{  1\right\}  }]^{-T}\})p_{\left[
n\right]  }\left(  \boldsymbol{\theta}_{\left[  n\right]  }-\boldsymbol{\theta
}_{P_{n}}\right)  ^{\left[  n\right]  \smallsetminus\left\{  1\right\}  }%
\frac{x_{1}}{p_{1}}S_{\left[  n\right]  \smallsetminus\left[  1\right]  }%
^{-1}\left(  \theta_{2},\ldots,\theta_{n}\right),
$ \newline
then we have

$
 -\left(  \frac{P_{n}\left(  0,\theta_{2},\ldots,\theta_{n}\right)
}{S_{\left[  n\right]  \smallsetminus\left[  1\right]  }\left(  \theta
_{2},\ldots,\theta_{n}\right)  }-1\right)  \frac{x_{1}}{p_{1}}\\
  =\left\{
\begin{array}
[c]{c}%
\left(  \widetilde{p}_{1}+\frac{1}{p_{1}}\right)  \left(  \boldsymbol{\theta
}_{\left[  n\right]  }-\boldsymbol{\theta}_{P_{n}}\right)  ^{\left[  n\right]
\smallsetminus\left\{  1\right\}  }+\sum_{k=2}^{n}r_{\left\{  1,k\right\}
}\left(  \boldsymbol{\theta}_{\left[  n\right]  }-\boldsymbol{\theta}_{P_{n}%
}\right)  ^{\left[  n\right]  \smallsetminus\left\{  1,k\right\}  }\\
+\sum_{T\subset\left[  n\right]  \smallsetminus\left\{  1\right\}  ,\left\vert
T\right\vert >1}\left[  r_{\left\{  1\right\}  \cup T}-\left(  \widetilde{p}%
_{1}+\frac{1}{p_{1}}\right)  r_{T}\right]  \left(  \boldsymbol{\theta
}_{\left[  n\right]  }-\boldsymbol{\theta}_{P_{n}}\right)  ^{\left[  n\right]
\smallsetminus\left\{  1\right\}  \smallsetminus T}%
\end{array}
\right\}  \frac{p_{\left[  n\right]  }}{p_{1}}x_{1}S_{\left[  n\right]
\smallsetminus\left[  1\right]  }^{-1}\left(  \theta_{2},\ldots,\theta
_{n}\right)
$
and because \newline
$
 S_{\left[  n\right]  \smallsetminus\left[  1\right]  }\left(  \theta
_{2},\ldots,\theta_{n}\right) 
  =\frac{p_{\left[  n\right]  }}{p_{1}}\left(  \boldsymbol{\theta}_{\left[
n\right]  }-\boldsymbol{\theta}_{P_{n}}\right)  ^{\left[  n\right]
\smallsetminus\left\{  1\right\}  }\{1-R_{n-1}(S_{\left[  n\right]
\smallsetminus\left[  1\right]  }^{-1})[\left(  \boldsymbol{\theta}_{\left[
n\right]  }-\boldsymbol{\theta}_{P_{n}}\right)  _{\left[  n\right]
\smallsetminus\left\{  1\right\}  }]^{-1}\}\\
 =\frac{p_{\left[  n\right]  }}{p_{1}}\{\left(  \boldsymbol{\theta}_{\left[
n\right]  }-\boldsymbol{\theta}_{P_{n}}\right)  ^{\left[  n\right]
\smallsetminus\left\{  1\right\}  }-\sum_{T\subset\left[  n\right]
\setminus\left\{  1\right\}  ,\left\vert T\right\vert >1}r_{T}\left(
\boldsymbol{\theta}_{\left[  n\right]  }-\boldsymbol{\theta}_{P_{n}}\right)
^{\left[  n\right]  \smallsetminus\left\{  1\right\}  \smallsetminus T}\},
$\newline
 we get%
\begin{align}
&  \left(  \boldsymbol{\theta}_{\left[  n\right]  }-\boldsymbol{\theta}%
_{P_{n}}\right)  ^{\left[  n\right]  \smallsetminus\left\{  1\right\}
} =\frac{p_{1}}{p_{\left[  n\right]  }}\left(  -\widetilde{p}_{2}\right)
^{-1}\ldots\left(  -\widetilde{p}_{n}\right)  ^{-1}+\nonumber\\
& \hspace{-0.3cm} \sum_{T\subset\left[  n\right]  \setminus\left\{  1\right\}  ,\left\vert
T\right\vert >1}r_{T}\left(  -\boldsymbol{\theta}_{P_{n}}\right)  ^{-T}\left(
1+\left(  -\widetilde{p}_{1}\right)  ^{-1}\theta_{1},\ldots,1+\left(
-\widetilde{p}_{n}\right)  ^{-1}\theta_{n}\right)  ^{\left\{  \left[
n\right]  \setminus\left\{  1\right\}  \right\}  \setminus T}S_{\left[
n\right]  \smallsetminus\left[  1\right]  }^{-1}\left(  \theta_{2}%
,\ldots,\theta_{n}\right)  . \label{S_n_1_1development}%
\end{align}
Now, we have 
$
 -\left(  \frac{P_{n}\left(  0,\theta_{2},\ldots,\theta_{n}\right)
}{S_{\left[  n\right]  \smallsetminus\left[  1\right]  }\left(  \theta
_{2},\ldots,\theta_{n}\right)  }-1\right)  \frac{x_{1}}{p_{1}}=\\
  x_{1}\left(  \widetilde{p}_{1}+\frac{1}{p_{1}}\right)  +\sum
_{T\subset\left[  n\right]  \smallsetminus\left\{  1\right\}  ,\left\vert
T\right\vert >0}r_{\left\{  1\right\}  \cup T}\frac{p_{\left[  n\right]  }%
}{p_{1}}x_{1}\left(  \boldsymbol{\theta}_{\left[  n\right]  }%
-\boldsymbol{\theta}_{P_{n}}\right)  ^{\left[  n\right]  \smallsetminus
\left\{  1\right\}  \smallsetminus T}S_{\left[  n\right]  \smallsetminus
\left[  1\right]  }^{-1}\left(  \theta_{2},\ldots,\theta_{n}\right)
$
and \newline
$
  L_{\left(  X_{2},\ldots,X_{n}\right)  }^{X_{1}=x_{1}}\left(  \theta
_{2},\ldots,\theta_{n}\right)
  =[S_{\left[  n\right]  \smallsetminus\left[  1\right]  }\left(  \theta
_{2},\ldots,\theta_{n}\right)  ]^{-\lambda}\times\\
  \exp\left\{  x_{1}\left(  \widetilde{p}_{1}+\frac{1}{p_{1}}\right)
+\sum_{T\subset\left[  n\right]  \smallsetminus\left\{  1\right\}  ,\left\vert
T\right\vert >0}r_{\left\{  1\right\}  \cup T}\frac{p_{\left[  n\right]  }%
}{p_{1}}x_{1}\left(  \boldsymbol{\theta}_{\left[  n\right]  }%
-\boldsymbol{\theta}_{P_{n}}\right)  ^{\left[  n\right]  \smallsetminus
\left\{  1\right\}  \smallsetminus T}S_{\left[  n\right]  \smallsetminus
\left[  1\right]  }^{-1}\left(  \theta_{2},\ldots,\theta_{n}\right)  \right\}
.$ \newline
Because
$
1   =L_{\left(  X_{2},\ldots,X_{n}\right)  }^{X_{1}=x_{1}}\left(
0,\ldots,0\right) =\\
 \exp\left\{  \left(  \widetilde{p}_{1}+\frac{1}{p_{1}}\right)
x_{1}\right\}  \exp\left\{  \sum_{T\subset\left[  n\right]  \smallsetminus
\left\{  1\right\}  ,\left\vert T\right\vert >0}r_{\left\{  1\right\}  \cup
T}\frac{p_{\left[  n\right]  }}{p_{1}}x_{1}\left(  -\widetilde{p}_{2}%
,\ldots,-\widetilde{p}_{n}\right)  ^{\left\{  \left[  n\right]  \setminus
\left\{  1\right\}  \right\}  \setminus T}\right\},
$ we have \newline
$
  L_{\left(  X_{2},\ldots,X_{n}\right)  }^{X_{1}=x_{1}}\left(
\boldsymbol{\theta}_{\left[  n\right]  \smallsetminus\left[  1\right]
}\right)=S_{\left[  n\right]  \smallsetminus\left[  1\right]  }^{-\lambda}\left(
\boldsymbol{\theta}_{\left[  n\right]  \smallsetminus\left[  1\right]
}\right)\times\\
 \mathbf{e}^{\{x_{1}(\widetilde{p}_{1}+\tfrac{1}{p_{1}})+\sum
_{T\subset\left[  n\right]  \ \smallsetminus\left\{  1\right\}  \ ,\left\vert
T\right\vert >0}r_{\left\{  1\right\}  \cup T}\tfrac{p_{\left[  n\right]  }%
}{p_{1}}x_{1}\left(  \boldsymbol{\theta}_{\left[  n\right]  }%
-\boldsymbol{\theta}_{P_{n}}\right)  ^{\left[  n\right]  \smallsetminus
\left\{  1\right\}  \smallsetminus T}S_{\left[  n\right]  \smallsetminus
\left[  1\right]  }^{-1}\left(  \theta_{2},\ldots,\theta_{n}\right)  \}}\\
  =S_{\left[  n\right]  \smallsetminus\left[  1\right]  }^{-\lambda}\left(
\boldsymbol{\theta}_{\left[  n\right]  \smallsetminus\left[  1\right]
}\right) \times \\
  \mathbf{e}^{\{\sum_{T\subset\left[  n\right]  \ \smallsetminus
\left\{  1\right\}  \ ,0<\left\vert T\right\vert }r_{\left\{  1\right\}  \cup
T}\tfrac{p_{\left[  n\right]  }}{p_{1}}x_{1}\left(  -\boldsymbol{\theta
}_{P_{n}}\right)  ^{\left\{  \left[  n\right]  \smallsetminus\left\{
1\right\}  \right\}  \smallsetminus T}[(\mathbf{1}_{n}+\left(
-\boldsymbol{\theta}_{P_{n}}\right)  ^{-1}\boldsymbol{\theta}_{\left[
n\right]  })^{\left[  n\right]  \smallsetminus\left\{  1\right\}
\smallsetminus T}S_{\left[  n\right]  \smallsetminus\left[  1\right]  }%
^{-1}\left(  \boldsymbol{\theta}_{\left[  n\right]  \smallsetminus\left[
1\right]  }\right)  -1]\}}.
$  \newline
Unless $Rn=0$, for $T\in\mathfrak{P([}n]\smallsetminus\lbrack1]),\overline
{T}=\mathfrak{[}n]\smallsetminus T,$ and if there is no ambiguity, for
simplicity we denote $S_{T}\left(  \boldsymbol{\theta}_{T}\right)  $ by
$S_{T},$ the polynomial defined by 
$
S_{T}\left(  \boldsymbol{\theta}_{T}\right)    =\frac{1}{p_{\overline{T}}%
}\left(  \frac{\partial}{\partial\boldsymbol{\theta}}\right)  ^{\overline{T}%
}\left(  P_{n}\left(  \boldsymbol{\theta}\right)  \right) 
 =\sum_{T^{\prime}\in P\left(  T\right)  }\frac{p_{\overline{T}\cup
T^{\prime}}}{p_{\overline{T}}}\boldsymbol{\theta}^{T^{\prime}}%
$

We have
$
S_{T}\left(  \boldsymbol{\theta}_{T}\right)  =\sum_{T^{\prime}\in
\mathfrak{P}\left(  T\right)  }\frac{p_{\overline{T}\cup T^{\prime}}%
}{p_{\overline{T}}}\boldsymbol{\theta}^{T^{\prime}}=\sum_{T^{\prime}\in
P\left(  T\right)  }q_{T^{\prime}}\boldsymbol{\theta}^{T^{\prime}},
$
with $q_{T^{\prime}}=\frac{p_{\overline{T}\cup T^{\prime}}}{p_{\overline{T}}%
},$ and for $T^{\prime}\in\mathfrak{P}\left(  T\right)  ,$ we have 
$
\widetilde{q}_{T^{\prime}}=-\frac{q_{T\smallsetminus T^{\prime}}}{q_{T}}%
=\frac{p_{\overline
{T^{\prime}}}}{p_{\left[  n\right]  }}=\widetilde{p}_{T^{\prime}}.
$
Therefore, we have 
$
\widetilde{b}_{T^{\prime}}\left(  S_{T}\right)  =\widetilde{b}_{T^{\prime}},
$
and if $\boldsymbol{\gamma}_{\left(  P,\lambda\right)  }$ is an infinitely
divisible gamma distribution, $\boldsymbol{\gamma}_{\left(  S_{T} 
,\lambda\right)  }$ is also an infinitely divisible gamma distribution.

\end{proof}

\begin{proof}
[Proof of Corollary \ref{cor1}]From Proof of Corollary (\ref{cor_cor}), we
have $c_{\boldsymbol{\alpha},\lambda}\left(  R_{n}\right)  =0$ unless
$\boldsymbol{\alpha}=k\mathbf{1}_{n},$ $k\in\mathbb{N}$ in which case
$c_{k\mathbf{1}_{n},\lambda}\left(  R_{n}\right)  =\frac{\left(
\lambda\right)  _{k}}{k!}\left(  qp^{-n}\right)  ^{k}.$ Therefore, we have \newline
$
L_{\mathbf{X}_{\left[  n\right]  \mathbf{\smallsetminus}\left[  k\right]  }%
}^{\mathbf{X}_{\left[  k\right]  }=\mathbf{x}_{\left[  k\right]  }}\left(
\boldsymbol{\theta}_{\left[  n\right]  \mathbf{\smallsetminus}\left[
k\right]  }\right)    =[\prod_{i=k+1}^{n}\left(  1+p\theta_{i}\right)
^{-\lambda}]\frac{\sum_{k=0}^{\infty}\frac{\left(  \mathbf{x}_{k}\right)
^{k\mathbf{1}_{k}}}{\left\{  \left(  \lambda\right)  _{\boldsymbol{k}%
}\right\}  ^{k}}\frac{\left(  \lambda\right)  _{k}}{k!}\left(  qp^{-n}\right)
^{k}\left(  \boldsymbol{\theta}_{\left[  n\right]  }+\frac{1}{p}\mathbf{1}%
_{n}\right)  _{n-k}^{-k\mathbf{1}_{n-k}}}{\sum_{k=0}^{\infty}\frac{\left(
\mathbf{x}_{k}\right)  ^{k\mathbf{1}_{k}}}{\left\{  \left(  \lambda\right)
_{\boldsymbol{k}}\right\}  ^{k}}\frac{\left(  \lambda\right)  _{k}}{k!}\left(
qp^{-n}\right)  ^{k}\left(  \frac{1}{p}\mathbf{1}_{n}\right)  _{n-k}%
^{-k\mathbf{1}_{n-k}}}\\
=[\prod_{i=k+1}^{n}\left(  1+p\theta_{i}\right)  ^{-\lambda}]\frac
{\sum_{k=0}^{\infty}\frac{1}{\left\{  \left(  \lambda\right)  _{\boldsymbol{k}%
}\right\}  ^{k-1}}\frac{1}{k!}\left(  qp^{-k}\mathbf{x}_{k}^{\left[  k\right]
}\prod_{i=k+1}^{n}\left(  1+p\boldsymbol{\theta}_{i}\right)  ^{-1}\right)
^{k}}{\sum_{k=0}^{\infty}\frac{1}{\left\{  \left(  \lambda\right)
_{\boldsymbol{k}}\right\}  ^{k}}\frac{\left(  \lambda\right)  _{k}}{k!}\left(
qp^{-k}\mathbf{x}_{k}^{\left[  k\right]  }\right)  ^{k}}%
$
and the definition of $F_{k-1}$ gives (\ref{LCp}).
\end{proof}

\begin{proof}
[Proof of Corollary \ref{cor1_k_1}]Doing $k=1$ in Equality (\ref{LCp}), we get
(\ref{LCp_k_1}). Equality (\ref{LCp_k_1_2}) comes from the definition of
$\exp.$ A second proof can be given by application of Theorem
\ref{L_X1_n_1}. We successively have $\mathbf{G}\left(  R_{n},u_{1}\right)
=\sum_{\alpha_{1}\in\mathbb{N}}\frac{u_{1}^{\alpha_{1}}}{\alpha_{1}!}%
=\exp\left(  u_{1}\right)  ,$ $S_{n-1}\left(  \boldsymbol{\theta}_{\left[
n\right]  \smallsetminus\left[  1\right]  }\right)  =\prod_{i=2}^{n}\left(
1+p\theta_{i}\right)  ,$ $B\left(  \boldsymbol{\theta}_{\left[  n\right]
\smallsetminus\left[  1\right]  }\right)  =qp^{-1}$ and 
$\mathfrak{z}\left(  \boldsymbol{\theta}_{\left[  n\right]  \smallsetminus
\left[  1\right]  }\right)  =qp^{-1}[\prod_{i=2}^{n}\left(  1+p\theta
_{i}\right)  ]^{-1}$. As a result, by application of (\ref{LXn_1_X1_Yn_1}) we
get (\ref{LCp_k_1}). Another proof of this result is given by
(\ref{LXn_1_X1_Yn_1s_d_main_s}) as follows. In this case, we have $P_{n}\left(
\boldsymbol{\theta}\right)  =\frac{-q}{p}+\frac{1}{p}\prod_{i=1}^{n}\left(
1+p\theta_{i}\right)  ,$ $p_{T}=p^{\left\vert T\right\vert -1},p_{\left[
n\right]  }=p^{n-1},\widetilde{p}_{T}=-p^{-\left\vert T\right\vert },$
$\widetilde{p}_{\left\{  i\right\}  }=-p^{-1},\boldsymbol{\theta}_{P_{n}%
}=\widetilde{\mathbf{p}}=-p^{-1}\mathbf{1}_{n}$,  $-\widetilde{p}_{\left[
n\right]  \mathbf{\smallsetminus}\left[  1\right]  }=p^{-(n-1)},S_{\left[
n\right]  \smallsetminus\left\{  1\right\}  }\left(  \boldsymbol{\theta
}_{\left[  n\right]  \smallsetminus\left[  1\right]  }\right)  =\prod
_{i=2}^{n}\left(  1+p\theta_{i}\right)  $, and \newline
$
P_{n}\left(  \boldsymbol{\theta}\right)  =p_{\left[  n\right]  }\prod
_{i=1}^{n}\left(  \theta_{i}+\left(  -p^{-1}\right)  \right)  [1-qp^{-n}%
\prod_{i=1}^{n}\left(  \theta_{i}+\left(  -p^{-1}\right)  \right)  ^{-1}]
.$
Hence we have $r_{T}=0,$ if $1<\left\vert T\right\vert <n$ and $r_{\left[
n\right]  }=qp^{-n}.$ We deduce from (\ref{LXn_1_X1_Yn_1s_d_main_s}):  
$
L_{\mathbf{X}_{\left[  n\right]  \smallsetminus\left[  1\right]  }}%
^{X_{1}=x_{1}}   =S_{\left[  n\right]  \smallsetminus\left[  1\right]
}^{-\lambda}\mathbf{e}^{\{qp^{-1}x_{1}[S_{\left[  n\right]  \smallsetminus
\left[  1\right]  }^{-1}-1]\}}
  =\sum_{k=0}^{\infty}\frac{(qp^{-1}x_{1})^{k}}{k!}\exp(-qp^{-1}%
x_{1})S_{\left[  n\right]  \smallsetminus\left[  1\right]  }^{-(\lambda+k)}.
$

\end{proof}

\begin{proof}
[Proof of Theorem \ref{Thequicor}]Indeed, $X_{1}\sim\gamma_{(p,\lambda)},$ let
$V_{1}\sim\mathcal{P}\left(  qp^{-1}X_{1}\right)  $, we have $P\left(
V_{1}=k|X_{1}=x_{1}\right)  =\frac{\left(  qp^{-1}x_{1}\right)  ^{k}}{k!}%
\exp\left(  -qp^{-1}x_{1}\right)  .$ Clearly the variable $X_{i},i=2\ldots,n$
are conditionally independent and $X_{i}|\left(  X_{1}=x_{1}\right)
\sim\gamma_{(p,\lambda+V_{1})}.$ We have\newline
$
L_{\mathbf{X}_{\left[  n\right]  \mathbf{\smallsetminus}\left[  1\right]  }%
}^{X_{1}=x_{1}}\left(  \boldsymbol{\theta}_{\left[  n\right]
\mathbf{\smallsetminus}\left[  1\right]  }\right)  =\sum_{v_{1}=0}^{\infty
}P\left(  V_{1}=v_{1}|X_{1}=x_{1}\right)  L_{\mathbf{X}_{\left[  n\right]
\mathbf{\smallsetminus}\left[  1\right]  }}^{X_{1}=x_{1}}\left(
\boldsymbol{\theta}_{\left[  n\right]  \mathbf{\smallsetminus}\left[
1\right]  }\right)  ,
$
and Formula (\ref{LCp_k_1_2}) is verified. Finally we have $\mathbf{X}%
_{\left[  n\right]  }\sim\boldsymbol{\gamma}_{(P_{n},\lambda)},$ with
$P_{n}\left(  \boldsymbol{\theta}_{\left[  n\right]  }\right)  =\frac{-q}%
{p}+\frac{1}{p}\prod_{i=1}^{n}\left(  1+p\theta_{i}\right)  .$
\end{proof}

\begin{proof}
[Proof of Theorem \ref{Th14Ber(2023)}]From (\ref{LXn_1_X1_Yn_1s_d_main_s}), we
deduce the Lt of the conditional distribution of $X_{2}|X_{1}=x_{1}$, for
$\left(  X_{1},X_{2}\right)  \sim\boldsymbol{\gamma}_{\left(  P_{2}%
,\lambda\right)  }$, with $P_{2}\left(  \theta_{1},\theta_{2}\right)
=1+p_{1}\theta_{1}+p_{2}\theta_{2}+p_{1,2}\theta_{1}\theta_{2}$ with
$p_{1}>0,$ $p_{2}>0$, $p_{1,2}>0$, $\widetilde{b}_{1,2}=\widetilde{b}%
_{1,2}\left(  P_{2}\right)  =p_{1}p_{2}/p_{1,2}^{2}-1/p_{1,2}>0$. We have
$S_{2}\left(  \theta_{2}\right)  =1+\frac{p_{1,2}}{p_{1}}\theta_{2}$ and%
\begin{equation}
L_{X_{2}}^{X_{1}=x_{1}}=S_{2}^{-\lambda}\exp\{\frac{\widetilde{b}_{1,2}%
}{(-\widetilde{p}_{2})}x_{1}(S_{2}^{-1}-1)\}. \label{L_X2_X1_n=2}%
\end{equation}
By expanding the exponential function, we obtain the following expansion:
\begin{equation}
L_{X_{2}}^{X_{1}=x_{1}}=\sum_{v_{1}\in\mathbb{N}}\mathbf{P}\left(  V_{1}%
=v_{1}\right)  S_{2}^{-(\lambda+v_{1})}. \label{LCn=2}%
\end{equation}
Equality (\ref{LCn=2}) proves that $X_{2}|X_{1}$ has distribution
$\gamma_{(S_{2},\lambda+V_{1})}$, then $\mathbf{X}_{\left[  2\right]
}=\left(  X_{1},X_{2}\right)  \sim\mathbf{\gamma}_{\left(  P_{2}%
,\lambda\right)  }$.
\end{proof}

\begin{proof}
[Proof of Theorem \ref{Th1_n=3_Bern2024}]Hence for $n=3$, we have from
definition of $S_{T},$ $S_{2,3}\left(  \theta_{2},\theta_{3}\right)
=1+\frac{p_{1,2}}{p_{1}}\theta_{2}+\frac{p_{1,3}}{p_{1}}\theta_{3}%
+\frac{p_{1,2,3}}{p_{1}}\theta_{2}\theta_{3},S_{2}\left(  \theta_{2}\right)
=1+\frac{p_{1,2,3}}{p_{1,3}}\theta_{2},S_{3}\left(  \theta_{3}\right)
=1+\frac{p_{1,2,3}}{p_{1,2}}\theta_{3},$ and from
(\ref{LXn_1_X1_Yn_1s_d_main_s_C}) and (\ref{rT3}), we get%
\begin{equation}
L_{(X_{2},X_{3})}^{X_{1}=x_{1}}=S_{2,3}^{-\lambda}\exp\{\left(  -\widetilde{p}%
_{2,3}\right)  ^{-1}x_{1}[\widetilde{b}_{\left\{  1,2\right\}  }\left(
-\widetilde{p}_{3}\right)  S_{3}S_{2,3}^{-1}+\widetilde{b}_{\left\{
1,3\right\}  }\left(  -\widetilde{p}_{2}\right)  S_{2}S_{2,3}^{-1}%
+\widetilde{b}_{\left\{  1,2,3\right\}  }S_{2,3}^{-1}-C]\}
\label{L_X2_X_3_X1_n=3}%
\end{equation}
To compute another expression for $L_{\left(  X_{2},X_{3}\right)  }%
^{X_{1}=x_{1}}$ we use (\ref{S^T}), then we have $\left(  -\widetilde{p}_{2}\right)  \left(  -\widetilde{p}_{3}\right)
S_{2}S_{3}=\left(  -\widetilde{p}_{2,3}\right)  S_{2,3}+\widetilde{b}_{2,3},
$

By respectively dividing the last equality by $S_{3}S_{2,3}$ and $S_{2}S_{2,3}$,
we successively obtain
$\left(  -\widetilde{p}_{3}\right)  \left(  -\widetilde{p}_{2}\right)
S_{2}S_{2,3}^{-1}  =\left(  -\widetilde{p}_{2,3}\right)  S_{3}
^{-1}+\widetilde{b}_{2,3}S_{3}^{-1}S_{2,3}^{-1}$ and
$\left(  -\widetilde{p}_{2}\right)  \left(  -\widetilde{p}_{3}\right)
S_{3}S_{2,3}^{-1}   =\left(  -\widetilde{p}_{2,3}\right)  S_{2}
^{-1}+\widetilde{b}_{2,3}S_{2}^{-1}S_{2,3}^{-1}.$
Using the two last equalities into (\ref{L_X2_X_3_X1_n=3}),
we get
$
L_{(X_{2},X_{3})}^{X_{1}=x_{1}}=S_{2,3}^{-\lambda}\exp\{x_{1}[\alpha_{1}%
S_{2}^{-1}+\alpha_{2}S_{3}^{-1}+\alpha_{3}S_{2,3}^{-1}+\alpha_{4}S_{2}%
^{-1}S_{2,3}^{-1}+\alpha_{5}S_{3}^{-1}S_{2,3}^{-1}-C)]\},
$
and by the condition $L_{(X_{2},X_{3})}^{X_{1}=x_{1}}\left(  0,0\right)  =1$,
we obtain \newline
$
L_{(X_{2},X_{3})}^{X_{1}=x_{1}}=S_{2,3}^{-\lambda}\exp\{x_{1}[\alpha_{1}%
(S_{2}^{-1}-1)+\alpha_{2}(S_{3}^{-1}-1)+\alpha_{3}(S_{2,3}^{-1}-1)+\alpha
_{4}(S_{2}^{-1}S_{2,3}^{-1}-1)+\alpha_{5}(S_{3}^{-1}S_{2,3}^{-1}-1)]\}.
$
Expanding $\exp$ in the last equality, we get (\ref{LCn=3}). Equality (\ref{LCn=3}) proves that
$\mathbf{X}_{\left[  3\right]  }
\sim\mathbf{\gamma}_{\left(  P_{3},\lambda\right)  }$.
\end{proof}

\begin{proof}
[Proof of Theorem \ref{Th2_n=3_Bern2024}]We use Theorem (\ref{Th14Ber(2023)}).
Let $X_{2}^{\prime}\sim\gamma_{\left(  \frac{p_{1,2}}{p_{1}},\lambda
+V_{3}+V_{4}+V_{5}\right)  },$ let $\alpha_{6}=\frac{\widetilde{b}%
_{2,3}\left(  S_{2,3}\right)  }{-\widetilde{p}_{3}\left(  S_{2,3}
\right)}=\frac{\widetilde{b}_{2,3}}{(-\widetilde{p}_{3})}$, and $V_{6}%
\sim\mathcal{P}\left(  \alpha_{6}X_{2}^{\prime}\right)  $, let $X_{3}^{\prime
}\sim\gamma_{\left(  S_{3},\lambda+V_{3}+V_{4}+V_{5}+V_{6}\right)  }$, then
$\left(  X_{2}^{\prime},X_{3}^{\prime}\right)  \sim\mathcal{\gamma}\left(
S_{2,3},\lambda+V_{3}+V_{4}+V_{5}\right)  .$
\end{proof}

\begin{proof}
[Proof of Theorem \ref{Th1_n=4_Bern2024}]For $n=4$, from
(\ref{LXn_1_X1_Yn_1s_d_main_s_C}), (\ref{rT4}) and (\ref{ST}), we get
$
\left(  -\widetilde{p}_{2,3}\right)  S_{2,3}=\left(  -\widetilde{p}%
_{2}\right)  \left(  -\widetilde{p}_{3}\right)  S_{2}S_{3}-\widetilde{b}%
_{2,3},$
$
\left(  -\widetilde{p}_{2,4}\right)  S_{2,4}=\left(  -\widetilde{p}%
_{2}\right)  \left(  -\widetilde{p}_{4}\right)  S_{2}S_{4}-\widetilde{b}%
_{2,4},$
$
\left(  -\widetilde{p}_{3,4}\right)  S_{3,4}=\left(  -\widetilde{p}%
_{3}\right)  \left(  -\widetilde{p}_{4}\right)  S_{3}S_{4}-\widetilde{b}%
_{3,4},$ and
\begin{align}
&  L_{\left(  X_{2},X_{3},X_{4}\right)  }^{X_{1}=x_{1}}=\nonumber\\
&  S_{2,3,4}^{-\lambda}\exp\{\left(  -\widetilde{p}_{2,3,4}\right)  ^{-1}%
x_{1}[\widetilde{b}_{1,2}(-\widetilde{p}_{3,4})S_{3,4}S_{2,3,4}^{-1}%
+\widetilde{b}_{1,3}(-\widetilde{p}_{2,4})S_{2,4}S_{2,3,4}^{-1}+\widetilde{b}%
_{1,4}(-\widetilde{p}_{2,3})S_{2,3}S_{2,3,4}^{-1}\nonumber\\
&  +\widetilde{b}_{1,2,3}\left(  -\widetilde{p}_{4}\right)  S_{4}%
S_{2,3,4}^{-1}+\widetilde{b}_{1,2,4}\left(  -\widetilde{p}_{3}\right)
S_{3}S_{2,3,4}^{-1}+\widetilde{b}_{1,3,4}\left(  -\widetilde{p}_{2}\right)
S_{2}S_{2,3,4}^{-1}+\widetilde{b}_{1,2,3,4}S_{2,3,4}^{-1}-C]\}
\label{LX2,3,4|X1}%
\end{align}
To compute $L_{\left(  X_{2},X_{3},X_{4}\right)  }^{X_{1}=x_{1}},$ we need to
express the following expressions in terms of inverses of $S_{T}
,T\in\mathfrak{P}\left(  \left\{  2,3,4\right\}  \right)  $ 
$
S_{3,4}S_{2,3,4}^{-1},S_{2,4}S_{2,3,4}^{-1},S_{2,3}S_{2,3,4}^{-1}%
,S_{4}S_{2,3,4}^{-1},S_{3}S_{2,3,4}^{-1},S_{2}S_{2,3,4}^{-1}.
$
From (\ref{S^T}), we then successively have
\begin{equation}
\left(  -\widetilde{p}_{2}\right)  \left(  -\widetilde{p}_{3}\right)
S_{2}S_{3}=\left(  -\widetilde{p}_{2,3}\right)  S_{2,3}+\widetilde{b}_{2,3},
\label{S^23}%
\end{equation}%

\begin{equation}
\left(  -\widetilde{p}_{2}\right)  \left(  -\widetilde{p}_{4}\right)
S_{2}S_{4}=\left(  -\widetilde{p}_{2,4}\right)  S_{2,4}+\widetilde{b}_{2,4},
\label{S^24}%
\end{equation}%

\begin{equation}
\left(  -\widetilde{p}_{3}\right)  \left(  -\widetilde{p}_{4}\right)
S_{3}S_{4}=\left(  -\widetilde{p}_{3,4}\right)  S_{3,4}+\widetilde{b}_{3,4},
\label{S^34}%
\end{equation}
According to (\ref{ST}), we also have successively \newline
$
S_{2,3,4}   =\left(  -\widetilde{p}_{2,3,4}\right)  ^{-1}\left[  \left(  -\widetilde{p}%
_{2,3}\right)  S_{2,3}\left(  -\widetilde{p}_{4}\right)  S_{4}-\widetilde{b}%
_{2,4}\left(  -\widetilde{p}_{3}\right)  S_{3}-\widetilde{b}_{3,4}\left(
-\widetilde{p}_{2}\right)  S_{2}-\widetilde{b}_{2,3,4}\right]  ,\\
S_{2,3,4} =\left(  -\widetilde{p}_{2,3,4}\right)  ^{-1}\left[  \left(  -\widetilde{p}%
_{2,4}\right)  S_{2,4}\left(  -\widetilde{p}_{3}\right)  S_{3}-\widetilde{b}%
_{2,3}\left(  -\widetilde{p}_{4}\right)  S_{4}-\widetilde{b}_{3,4}\left(
-\widetilde{p}_{2}\right)  S_{2}-\widetilde{b}_{2,3,4}\right]  ,\\
S_{2,3,4} =\left(  -\widetilde{p}_{2,3,4}\right)  ^{-1}\left[  \left(  -\widetilde{p}%
_{3,4}\right)  S_{3,4}\left(  -\widetilde{p}_{2}\right)  S_{2}-\widetilde{b}%
_{2,3}\left(  -\widetilde{p}_{4}\right)  S_{4}-\widetilde{b}_{2,4}\left(
-\widetilde{p}_{3}\right)  S_{3}-\widetilde{b}_{2,3,4}\right]  ,
$ and
\begin{equation}
\left(  -\widetilde{p}_{4}\right)  \left(  -\widetilde{p}_{2,3}\right)
S_{4}S_{2,3}=\left(  -\widetilde{p}_{2,3,4}\right)  S_{2,3,4}+\widetilde{b}%
_{2,4}\left(  -\widetilde{p}_{3}\right)  S_{3}+\widetilde{b}_{3,4}\left(
-\widetilde{p}_{2}\right)  S_{2}+\widetilde{b}_{2,3,4}, \label{S4S23}%
\end{equation}%
\begin{equation}
\left(  -\widetilde{p}_{3}\right)  \left(  -\widetilde{p}_{2,4}\right)
S_{3}S_{2,4}=\left(  -\widetilde{p}_{2,3,4}\right)  S_{2,3,4}+\widetilde{b}%
_{2,3}\left(  -\widetilde{p}_{4}\right)  S_{4}+\widetilde{b}_{3,4}\left(
-\widetilde{p}_{2}\right)  S_{2}+\widetilde{b}_{2,3,4}, \label{S3S24}%
\end{equation}%
\begin{equation}
\left(  -\widetilde{p}_{2}\right)  \left(  -\widetilde{p}_{3,4}\right)
S_{2}S_{3,4}=\left(  -\widetilde{p}_{2,3,4}\right)  S_{2,3,4}+\widetilde{b}%
_{2,3}\left(  -\widetilde{p}_{4}\right)  S_{4}+\widetilde{b}_{2,4}\left(
-\widetilde{p}_{3}\right)  S_{3}+\widetilde{b}_{2,3,4}, \label{S2S34}%
\end{equation}
Dividing (\ref{S^23}) by $S_{3}S_{2,3}$ and $S_{2}S_{2,3}$,
(\ref{S^24}) by $S_{4}S_{2,4}$ and $S_{2}S_{2,4}$, and (\ref{S^34}) by
$S_{4}S_{3,4}$ and $S_{3}S_{3,4}$, we obtain successively 
\begin{align}
\left(  -\widetilde{p}_{2}\right)  \left(  -\widetilde{p}_{3}\right)
\tfrac{S_{2}}{S_{2,3}}  &  =\left(  -\widetilde{p}_{2,3}\right)  \tfrac{1}%
{S_{3}}+\widetilde{b}_{2,3}\tfrac{1}{S_{3}S_{2,3}},\label{S2/S23}\\
\left(  -\widetilde{p}_{2}\right)  \left(  -\widetilde{p}_{3}\right)
\tfrac{S_{3}}{S_{2,3}}  &  =\left(  -\widetilde{p}_{2,3}\right)  \tfrac{1}%
{S_{2}}+\widetilde{b}_{2,3}\tfrac{1}{S_{2}S_{2,3}}, \label{S3/S23}%
\end{align}
\begin{align}
\left(  -\widetilde{p}_{2}\right)  \left(  -\widetilde{p}_{4}\right)
\tfrac{S_{2}}{S_{2,4}}  &  =\left(  -\widetilde{p}_{2,4}\right)  \tfrac{1}%
{S_{4}}+\widetilde{b}_{2,4}\tfrac{1}{S_{4}S_{2,4}},\label{S2/S24}\\
\left(  -\widetilde{p}_{2}\right)  \left(  -\widetilde{p}_{4}\right)
\tfrac{S_{4}}{S_{2,4}}  &  =\left(  -\widetilde{p}_{2,4}\right)  \tfrac{1}%
{S_{2}}+\widetilde{b}_{2,4}\tfrac{1}{S_{2}S_{2,4}}, \label{S4/S24}%
\end{align}
\begin{align}
\left(  -\widetilde{p}_{3}\right)  \left(  -\widetilde{p}_{4}\right)
\tfrac{S_{3}}{S_{3,4}}  &  =\left(  -\widetilde{p}_{3,4}\right)  \tfrac{1}%
{S_{4}}+\widetilde{b}_{3,4}\tfrac{1}{S_{4}S_{3,4}},\label{S3/S34}\\
\left(  -\widetilde{p}_{3}\right)  \left(  -\widetilde{p}_{4}\right)
\tfrac{S_{4}}{S_{3,4}}  &  =\left(  -\widetilde{p}_{3,4}\right)  \tfrac{1}%
{S_{3}}+\widetilde{b}_{3,4}\tfrac{1}{S_{3}S_{3,4}}. \label{S4/S34}%
\end{align}
Dividing (\ref{S4S23}) by $S_{2,3}S_{2,3,4}$ and
$S_{4}S_{2,3,4}$, (\ref{S3S24}) by $S_{2,4}S_{2,3,4}$ and $S_{3}S_{2,3,4}$,
and (\ref{S2S34}) by $S_{3,4}S_{2,3,4}$ and $S_{2}S_{2,3,4}$, we obtain successively 
\begin{align}
&\hspace{-0.5cm}  \left(  -\widetilde{p}_{4}\right)  \left(  -\widetilde{p}_{2,3}\right)
\tfrac{S_{4}}{S_{2,3,4}}  =\left(  -\widetilde{p}_{2,3,4}\right)  \tfrac{1}{S_{2,3}}+\widetilde{b}%
_{2,4}\left(  -\widetilde{p}_{3}\right)  \tfrac{1}{S_{2,3,4}}\tfrac{S_{3}%
}{S_{2,3}}+\widetilde{b}_{3,4}\left(  -\widetilde{p}_{2}\right)  \tfrac
{1}{S_{2,3,4}}\tfrac{S_{2}}{S_{2,3}}+\widetilde{b}_{2,3,4}\tfrac{1}%
{S_{2,3}S_{2,3,4}},\label{S4/S234}\\
&  \left(  -\widetilde{p}_{4}\right)  \left(  -\widetilde{p}_{2,3}\right)
\tfrac{S_{2,3}}{S_{2,3,4}} =\left(  -\widetilde{p}_{2,3,4}\right)  \tfrac{1}{S_{4}}+\widetilde{b}%
_{2,4}\left(  -\widetilde{p}_{3}\right)  \tfrac{1}{S_{4}}\tfrac{S_{3}}%
{S_{2,3,4}}+\widetilde{b}_{3,4}\left(  -\widetilde{p}_{2}\right)  \tfrac
{1}{S_{4}}\tfrac{S_{2}}{S_{2,3,4}}+\widetilde{b}_{2,3,4}\tfrac{1}{S_{4}%
S_{2,3,4}}, \label{S23/S234}%
\end{align}%
\begin{align}
&  \hspace{-0.5cm}\left(  -\widetilde{p}_{3}\right)  \left(  -\widetilde{p}_{2,4}\right)
\tfrac{S_{3}}{S_{2,3,4}} =\left(  -\widetilde{p}_{2,3,4}\right)  \tfrac{1}{S_{2,4}}+\widetilde{b}%
_{2,3}\left(  -\widetilde{p}_{4}\right)  \tfrac{1}{S_{2,3,4}}\tfrac{S_{4}%
}{S_{2,4}}+\widetilde{b}_{3,4}\left(  -\widetilde{p}_{2}\right)  \tfrac
{1}{S_{2,3,4}}\tfrac{S_{2}}{S_{2,4}}+\widetilde{b}_{2,3,4}\tfrac{1}%
{S_{2,4}S_{2,3,4}},\label{S3/S234}\\
&  \left(  -\widetilde{p}_{3}\right)  \left(  -\widetilde{p}_{2,4}\right)
\tfrac{S_{2,4}}{S_{2,3,4}} =\left(  -\widetilde{p}_{2,3,4}\right)  \tfrac{1}{S_{3}}+\widetilde{b}%
_{2,3}\left(  -\widetilde{p}_{4}\right)  \tfrac{1}{S_{3}}\tfrac{S_{4}}%
{S_{2,3,4}}+\widetilde{b}_{3,4}\left(  -\widetilde{p}_{2}\right)  \tfrac
{1}{S_{3}}\tfrac{S_{2}}{S_{2,3,4}}+\widetilde{b}_{2,3,4}\tfrac{1}{S_{3}%
S_{2,3,4}}\label{S24/S234}
\end{align}%
\begin{align}
&  \hspace{-0.5cm}\left(  -\widetilde{p}_{2}\right)  \left(  -\widetilde{p}_{3,4}\right)
\tfrac{S_{2}}{S_{2,3,4}} =\left(  -\widetilde{p}_{2,3,4}\right)  \tfrac{1}{S_{3,4}}+\widetilde{b}%
_{2,3}\left(  -\widetilde{p}_{4}\right)  \tfrac{1}{S_{2,3,4}}\tfrac{S_{4}%
}{S_{3,4}}+\widetilde{b}_{2,4}\left(  -\widetilde{p}_{3}\right)  \tfrac
{1}{S_{2,3,4}}\tfrac{S_{3}}{S_{3,4}}+\widetilde{b}_{2,3,4}\tfrac{1}%
{S_{3,4}S_{2,3,4}},\label{S2/S234}\\
&  \left(  -\widetilde{p}_{2}\right)  \left(  -\widetilde{p}_{3,4}\right)
\tfrac{S_{3,4}}{S_{2,3,4}} =\left(  -\widetilde{p}_{2,3,4}\right)  \tfrac{1}{S_{2}}+\widetilde{b}%
_{2,3}\left(  -\widetilde{p}_{4}\right)  \tfrac{1}{S_{2}}\tfrac{S_{4}}%
{S_{2,3,4}}+\widetilde{b}_{2,4}\left(  -\widetilde{p}_{3}\right)  \tfrac
{1}{S_{2}}\tfrac{S_{3}}{S_{2,3,4}}+\widetilde{b}_{2,3,4}\tfrac{1}{S_{2}%
S_{2,3,4}},\label{S34/S234}
\end{align}
Using (\ref{S2/S23}) and (\ref{S3/S23}) into (\ref{S4/S234}), we get%
\begin{align}
&  \left(  -\widetilde{p}_{4}\right)  \left(  -\widetilde{p}_{2,3}\right)
\tfrac{S_{4}}{S_{2,3,4}} =\left(  -\widetilde{p}_{2,3,4}\right)  \tfrac{1}{S_{2,3}}+\left(
-\widetilde{p}_{2,3}\right)  \left(  -\widetilde{p}_{2}\right)  ^{-1}%
\widetilde{b}_{2,4}\tfrac{1}{S_{2}S_{2,3,4}}+\left(  -\widetilde{p}%
_{2,3}\right)  \left(  -\widetilde{p}_{3}\right)  ^{-1}\widetilde{b}%
_{3,4}\tfrac{1}{S_{3}S_{2,3,4}}\nonumber\\
&  +\widetilde{b}_{2,3,4}\tfrac{1}{S_{2,3}S_{2,3,4}}+\left(  -\widetilde{p}%
_{2}\right)  ^{-1}\widetilde{b}_{2,3}\widetilde{b}_{2,4}\tfrac{1}{S_{2}%
S_{2,3}S_{2,3,4}}+\left(  -\widetilde{p}_{3}\right)  ^{-1}\widetilde{b}%
_{2,3}\widetilde{b}_{3,4}\tfrac{1}{S_{3}S_{2,3}S_{2,3,4}}.\label{S4/S2,3,4}
\end{align}
Using (\ref{S2/S24}) and (\ref{S4/S24}) into (\ref{S3/S234}), we get%
\begin{align}
&  \left(  -\widetilde{p}_{3}\right)  \left(  -\widetilde{p}_{2,4}\right)
\tfrac{S_{3}}{S_{2,3,4}} =\left(  -\widetilde{p}_{2,3,4}\right)  \tfrac{1}{S_{2,4}}+\left(
-\widetilde{p}_{2,4}\right)  \left(  -\widetilde{p}_{2}\right)  ^{-1}%
\widetilde{b}_{2,3}\tfrac{1}{S_{2}S_{2,3,4}}+\left(  -\widetilde{p}%
_{2,4}\right)  \left(  -\widetilde{p}_{4}\right)  ^{-1}\widetilde{b}%
_{3,4}\tfrac{1}{S_{4}S_{2,3,4}}\nonumber\\
&  +\widetilde{b}_{2,3,4}\tfrac{1}{S_{2,4}S_{2,3,4}}+\left(  -\widetilde{p}%
_{2}\right)  ^{-1}\widetilde{b}_{2,3}\widetilde{b}_{2,4}\tfrac{1}{S_{2}%
S_{2,4}S_{2,3,4}}+\left(  -\widetilde{p}_{4}\right)  ^{-1}\widetilde{b}%
_{2,4}\widetilde{b}_{3,4}\tfrac{1}{S_{4}S_{2,4}S_{2,3,4}}\label{S3/S2,3,4}
\end{align}
Using (\ref{S3/S34}) and (\ref{S4/S34}) into (\ref{S2/S234}), we get%
\begin{align}
&  \left(  -\widetilde{p}_{2}\right)  \left(  -\widetilde{p}_{3,4}\right)
\tfrac{S_{2}}{S_{2,3,4}}=\left(  -\widetilde{p}_{2,3,4}\right)  \tfrac{1}{S_{3,4}}+\left(
-\widetilde{p}_{3,4}\right)  \left(  -\widetilde{p}_{3}\right)  ^{-1}%
\widetilde{b}_{2,3}\tfrac{1}{S_{3}S_{2,3,4}}+\left(  -\widetilde{p}%
_{3,4}\right)  \left(  -\widetilde{p}_{4}\right)  ^{-1}\widetilde{b}%
_{2,4}\tfrac{1}{S_{4}S_{2,3,4}}\nonumber\\
&  +\widetilde{b}_{2,3,4}\tfrac{1}{S_{3,4}S_{2,3,4}}+\left(  -\widetilde{p}%
_{3}\right)  ^{-1}\widetilde{b}_{2,3}\widetilde{b}_{3,4}\tfrac{1}{S_{3}%
S_{3,4}S_{2,3,4}}+\left(  -\widetilde{p}_{4}\right)  ^{-1}\widetilde{b}%
_{2,4}\widetilde{b}_{3,4}\tfrac{1}{S_{4}S_{3,4}S_{2,3,4}}\label{S2/S2,3,4}
\end{align}
Using (\ref{S2/S2,3,4}) and (\ref{S3/S2,3,4}) into (\ref{S23/S234}), we get
\begin{align}
&  \left(  -\widetilde{p}_{4}\right)  \left(  -\widetilde{p}_{2,3}\right)
\tfrac{S_{2,3}}{S_{2,3,4}} =\left(  -\widetilde{p}_{2,3,4}\right)  \tfrac{1}{S_{4}}+\widetilde{b}%
_{2,3,4}\tfrac{1}{S_{4}S_{2,3,4}}+\left(  -\widetilde{p}_{2,3,4}\right)
\left(  -\widetilde{p}_{2,4}\right)  ^{-1}\widetilde{b}_{2,4}\tfrac{1}%
{S_{4}S_{2,4}}  +\left(  -\widetilde{p}_{2}\right)  ^{-1}\widetilde{b}_{2,3}\widetilde{b}%
_{2,4}\tfrac{1}{S_{2}S_{4}S_{2,3,4}}+\nonumber\\
&\left(  -\widetilde{p}_{4}\right)
^{-1}\widetilde{b}_{3,4}\widetilde{b}_{2,4}\tfrac{1}{S_{4}^{2}S_{2,3,4}%
}  +\left(  -\widetilde{p}_{2,4}\right)  ^{-1}\widetilde{b}_{2,3,4}%
\widetilde{b}_{2,4}\tfrac{1}{S_{4}S_{2,4}S_{2,3,4}}+\left(  -\widetilde{p}%
_{2}\right)  ^{-1}\left(  -\widetilde{p}_{2,4}\right)  ^{-1}\widetilde{b}%
_{2,3}\widetilde{b}_{2,4}\widetilde{b}_{2,4}\tfrac{1}{S_{2}S_{4}S_{2,4}%
S_{2,3,4}}+\nonumber\\
&  \left(  -\widetilde{p}_{4}\right)  ^{-1}\left(  -\widetilde{p}%
_{2,4}\right)  ^{-1}\widetilde{b}_{2,4}\widetilde{b}_{3,4}\widetilde{b}%
_{2,4}\tfrac{1}{S_{4}^{2}S_{2,4}S_{2,3,4}}+\left(  -\widetilde{p}%
_{2,3,4}\right)  \left(  -\widetilde{p}_{3,4}\right)  ^{-1}\widetilde{b}%
_{3,4}\tfrac{1}{S_{4}S_{3,4}} +\left(  -\widetilde{p}_{3}\right)  ^{-1}\widetilde{b}_{2,3}\widetilde{b}
_{3,4}\tfrac{1}{S_{3}S_{4}S_{2,3,4}}+\nonumber\\
& \left(  -\widetilde{p}_{4}\right)
^{-1}\widetilde{b}_{2,4}\widetilde{b}_{3,4}\tfrac{1}{S_{4}^{2}S_{2,3,4}%
} +\left(  -\widetilde{p}_{3,4}\right)  ^{-1}\widetilde{b}_{2,3,4}%
\widetilde{b}_{3,4}\tfrac{1}{S_{4}S_{3,4}S_{2,3,4}}+\left(  -\widetilde{p}
_{3}\right)  ^{-1}\left(  -\widetilde{p}_{3,4}\right)  ^{-1}\widetilde{b}%
_{2,3}\widetilde{b}_{3,4}\widetilde{b}_{3,4}\tfrac{1}{S_{3}S_{4}S_{3,4}%
S_{2,3,4}}+\nonumber\\
&  \left(  -\widetilde{p}_{4}\right)  ^{-1}\left(  -\widetilde{p}%
_{3,4}\right)  ^{-1}\widetilde{b}_{2,4}\widetilde{b}_{3,4}\widetilde{b}%
_{3,4}\tfrac{1}{S_{4}^{2}S_{3,4}S_{2,3,4}} \label{S2,3/S2,3,4}%
\end{align}
Using (\ref{S2/S2,3,4}) and (\ref{S4/S2,3,4}) into (\ref{S24/S234}), we get%
\begin{align}
&  \left(  -\widetilde{p}_{3}\right)  \left(  -\widetilde{p}_{2,4}\right)
\tfrac{S_{2,4}}{S_{2,3,4}} =\left(  -\widetilde{p}_{2,3,4}\right)  \tfrac{1}{S_{3}}+\widetilde{b}_{2,3,4}\tfrac{1}{S_{3}S_{2,3,4}}+\left(  -\widetilde{p}_{2,3,4}\right)  \left(  -\widetilde{p}%
_{2,3}\right)  ^{-1}\widetilde{b}_{2,3}\tfrac{1}{S_{3}S_{2,3}}+\left(
-\widetilde{p}_{2}\right)  ^{-1}\widetilde{b}_{2,4}\widetilde{b}_{2,3}\tfrac
{1}{S_{2}S_{3}S_{2,3,4}}+\nonumber\\
&  \left(  -\widetilde{p}_{3}\right)  ^{-1}%
\widetilde{b}_{3,4}\widetilde{b}_{2,3}\tfrac{1}{S_{3}^{2}S_{2,3,4}} +\left(  -\widetilde{p}_{2,3}\right)  ^{-1}\widetilde{b}_{2,3,4}%
\widetilde{b}_{2,3}\tfrac{1}{S_{3}S_{2,3}S_{2,3,4}}+\left(  -\widetilde{p}%
_{2}\right)  ^{-1}\left(  -\widetilde{p}_{2,3}\right)  ^{-1}\widetilde{b}%
_{2,3}\widetilde{b}_{2,4}\widetilde{b}_{2,3}\tfrac{1}{S_{2}S_{3}S_{2,3}%
S_{2,3,4}}+\nonumber\\
&  \left(  -\widetilde{p}_{3}\right)  ^{-1}\left(  -\widetilde{p}%
_{2,3}\right)  ^{-1}\widetilde{b}_{2,3}\widetilde{b}_{3,4}\widetilde{b}%
_{2,3}\tfrac{1}{S_{3}^{2}S_{2,3}S_{2,3,4}} +\left(  -\widetilde{p}_{2,3,4}\right)  \left(  -\widetilde{p}%
_{3,4}\right)  ^{-1}\widetilde{b}_{3,4}\tfrac{1}{S_{3}S_{3,4}}+\left(
-\widetilde{p}_{3}\right)  ^{-1}\widetilde{b}_{2,3}\widetilde{b}_{3,4}\tfrac
{1}{S_{3}^{2}S_{2,3,4}}+\nonumber\\
& \left(  -\widetilde{p}_{4}\right)  ^{-1}%
\widetilde{b}_{2,4}\widetilde{b}_{3,4}\tfrac{1}{S_{3}S_{4}S_{2,3,4}}+\left(  -\widetilde{p}_{3,4}\right)  ^{-1}\widetilde{b}_{2,3,4}%
\widetilde{b}_{3,4}\tfrac{1}{S_{3}S_{3,4}S_{2,3,4}}+\left(  -\widetilde{p}
_{3}\right)  ^{-1}\left(  -\widetilde{p}_{3,4}\right)  ^{-1}\widetilde{b}%
_{2,3}\widetilde{b}_{3,4}\widetilde{b}_{3,4}\tfrac{1}{S_{3}^{2}S_{3,4}%
S_{2,3,4}}\nonumber\\
&  +\left(  -\widetilde{p}_{4}\right)  ^{-1}\left(  -\widetilde{p}%
_{3,4}\right)  ^{-1}\widetilde{b}_{2,4}\widetilde{b}_{3,4}\widetilde{b}%
_{3,4}\tfrac{1}{S_{3}S_{4}S_{3,4}S_{2,3,4}} \label{S2,4/S2,3,4}
\end{align}
Using (\ref{S3/S2,3,4}) and (\ref{S4/S2,3,4}) into (\ref{S34/S234}), we get \newline
\begin{align}
&  \left(  -\widetilde{p}_{2}\right)  \left(  -\widetilde{p}_{3,4}\right)
\tfrac{S_{3,4}}{S_{2,3,4}}  =\left(  -\widetilde{p}_{2,3,4}\right)  \tfrac{1}{S_{2}}+\widetilde{b}_{2,3,4}\tfrac{1}{S_{2}S_{2,3,4}} +[\left(  -\widetilde{p}_{2,3,4}\right)  \left(  -\widetilde{p}_{2,3}\right)  ^{-1}\widetilde{b}_{2,3}\tfrac{1}{S_{2}S_{2,3}}+\left(
-\widetilde{p}_{2}\right)  ^{-1}\widetilde{b}_{2,4}\widetilde{b}_{2,3}\tfrac{1}{S_{2}^{2}S_{2,3,4}}+\nonumber\\
& \left(  -\widetilde{p}_{3}\right)  ^{-1}%
\widetilde{b}_{3,4}\widetilde{b}_{2,3}\tfrac{1}{S_{2}S_{3}S_{2,3,4}}  +\left(  -\widetilde{p}_{2,3}\right)  ^{-1}\widetilde{b}_{2,3,4}%
\widetilde{b}_{2,3}\tfrac{1}{S_{2}S_{2,3}S_{2,3,4}}+\left(  -\widetilde{p}_{2}\right)  ^{-1}\left(  -\widetilde{p}_{2,3}\right)  ^{-1}\widetilde{b}%
_{2,3}\widetilde{b}_{2,4}\widetilde{b}_{2,3}\tfrac{1}{S_{2}^{2}S_{2,3}%
S_{2,3,4}}+\nonumber\\
&  \left(  -\widetilde{p}_{3}\right)  ^{-1}\left(  -\widetilde{p}%
_{2,3}\right)  ^{-1}\widetilde{b}_{2,3}\widetilde{b}_{3,4}\widetilde{b}%
_{2,3}\tfrac{1}{S_{2}S_{3}S_{2,3}S_{2,3,4}}]+[\left(  -\widetilde{p}%
_{2,3,4}\right)  \left(  -\widetilde{p}_{2,4}\right)  ^{-1}\widetilde{b}%
_{2,4}\tfrac{1}{S_{2}S_{2,4}}+\left(  -\widetilde{p}_{2}\right)  ^{-1}\widetilde{b}_{2,3}\widetilde{b}%
_{2,4}\tfrac{1}{S_{2}^{2}S_{2,3,4}}+\nonumber\\
&  \left(  -\widetilde{p}_{4}\right)
^{-1}\widetilde{b}_{2,4}\widetilde{b}_{3,4}\tfrac{1}{S_{2}S_{4}S_{2,3,4}%
}+\left(  -\widetilde{p}_{2,4}\right)  ^{-1}\widetilde{b}_{2,3,4}%
\widetilde{b}_{2,4}\tfrac{1}{S_{2}S_{2,4}S_{2,3,4}} +\left(  -\widetilde{p}_{2}\right)  ^{-1}\left(  -\widetilde{p}%
_{2,4}\right)  ^{-1}\widetilde{b}_{2,3}\widetilde{b}_{2,4}\widetilde{b}%
_{2,4}\tfrac{1}{S_{2}^{2}S_{2,4}S_{2,3,4}}  +\nonumber\\
& \left(  -\widetilde{p}_{4}\right)  ^{-1}\left(  -\widetilde{p}%
_{2,4}\right)  ^{-1}\widetilde{b}_{2,4}\widetilde{b}_{3,4}\widetilde{b}%
_{2,4}\tfrac{1}{S_{2}S_{4}S_{2,4}S_{2,3,4}}] \label{S3,4/S2,3,4}%
\end{align}
Using (\ref{S2/S2,3,4}), (\ref{S3/S2,3,4}), (\ref{S4/S2,3,4}),
(\ref{S2,3/S2,3,4}),(\ref{S2,4/S2,3,4}) and (\ref{S3,4/S2,3,4}) into
(\ref{LX2,3,4|X1}), by grouping terms of the same total degree and using the
condition $L_{\left(  X_{2},X_{3},X_{4}\right)  }^{X_{1}=x_{1}}\left(
0,0,0\right)  =1$, we obtain\newline
$
  L_{\left(  X_{2},X_{3},X_{4}\right)  }^{X_{1}=x_{1}}\left(  \theta
_{2},\theta_{3},\theta_{4}\right)= \\
  S_{2,3,4}^{-\lambda}\exp(x_{1}\{\alpha_{1}[\tfrac{1}{S_{2}}-1]+\alpha
_{2}[\tfrac{1}{S_{3}}-1]+\alpha_{3}[\tfrac{1}{S_{4}}-1]+\alpha_{4}[\tfrac
{1}{S_{2,3}}-1]+\alpha_{5}[\tfrac{1}{S_{2,4}}-1]+
\alpha_{6}[\tfrac{1}{S_{3,4}}-1]+\alpha_{7}[\tfrac{1}{S_{2}S_{2,3}}-1]+\alpha_{8}[\tfrac{1}{S_{2}S_{2,4}}-1]+\alpha_{9}[\tfrac{1}{S_{3}S_{2,3}}-1]+\alpha_{10}[\tfrac{1}{S_{3}S_{3,4}}-1]+\alpha_{11}[\tfrac{1}{S_{4}S_{2,4}}-1]+\alpha_{12}[\tfrac{1}{S_{4}S_{3,4}}-1]+\alpha_{13}[\tfrac{1}{S_{2,3,4}}-1]+\alpha_{14}[\tfrac{1}{S_{2}S_{2,3,4}}-1]+\\
\alpha_{15}[\tfrac{1}{S_{3}S_{2,3,4}}-1]+\alpha_{16}[\tfrac{1}{S_{4}S_{2,3,4}}-1]+\alpha_{17}[\tfrac{1}{S_{2}^{2}S_{2,3,4}}-1]+\alpha_{18}[\tfrac{1}{S_{3}^{2}S_{2,3,4}}-1]+\\
\alpha_{19}[\tfrac{1}{S_{4}^{2}S_{2,3,4}}-1]+\alpha_{20}[\tfrac{1}{S_{2}S_{3}S_{2,3,4}}-1]+\alpha_{21}[\tfrac{1}%
{S_{2}S_{4}S_{2,3,4}}-1] +\alpha_{22}[\tfrac{1}{S_{3}S_{4}S_{2,3,4}}-1]+\alpha_{23}[\tfrac{1}{S_{2,3}S_{2,3,4}}-1]+\alpha_{24}[\tfrac{1}{S_{2,4}S_{2,3,4}}-1]+\alpha_{25}[\tfrac{1}{S_{3,4}S_{2,3,4}}-1]+\alpha_{26}[\tfrac{1}{S_{2}S_{2,3}S_{2,3,4}}-1]+\alpha_{27}[\tfrac
{1}{S_{2}S_{2,4}S_{2,3,4}}-1]+\alpha_{28}[\tfrac{1}{S_{3}S_{2,3}S_{2,3,4}%
}-1]+\alpha_{29}[\tfrac{1}{S_{3}S_{3,4}S_{2,3,4}}-1]\\
  +\alpha_{30}[\tfrac{1}{S_{4}S_{2,4}S_{2,3,4}}-1]+\alpha_{31}[\tfrac
{1}{S_{4}S_{3,4}S_{2,3,4}}-1]+\alpha_{32}[\tfrac{1}{S_{2}^{2}S_{2,3}S_{2,3,4}}-1]+\\
\alpha_{33}[\tfrac{1}{S_{2}^{2}S_{2,4}S_{2,3,4}}-1]+\alpha_{34}[\tfrac{1}{S_{3}^{2}S_{2,3}S_{2,3,4}}-1]+\alpha_{35}[\tfrac{1}{S_{3}^{2}S_{3,4}S_{2,3,4}}-1]+\\
\alpha_{36}[\tfrac{1}{S_{4}^{2}S_{2,4}S_{2,3,4}}-1]+\alpha_{37}[\tfrac{1}{S_{4}^{2}S_{3,4}S_{2,3,4}}-1]+\alpha_{38}[\tfrac{1}{S_{2}S_{3}S_{2,3}S_{2,3,4}}-1]+\\
\alpha_{39}[\tfrac
{1}{S_{2}S_{4}S_{2,4}S_{2,3,4}}-1]+\alpha_{40}[\tfrac{1}{S_{3}S_{4}%
S_{3,4}S_{2,3,4}}-1]\})
$ \newline
Combining, we obtain (\ref{LCn=4}). Definitions (\ref{X1}), (\ref{Y2}),
(\ref{Y3}), (\ref{Y4}), (\ref{U1,2,U1,3}), (\ref{U2,2,U2,4}), (\ref{U3,3,U3,4}), (\ref{W2,W3,W4}), and (\ref{X2}), (\ref{X3}), (\ref{X4}) with Equality
(\ref{LCn=4}) give (\ref{X[4]}). This completes the proof of Theorem
(\ref{Th1_n=4_Bern2024}).
\end{proof}

\bibliographystyle{elsarticle-num}
\bibliography{SPL_DEOLT2_1}

\begin{thebibliography}{10}
\expandafter\ifx\csname url\endcsname\relax
  \def\url#1{\texttt{#1}}\fi
\expandafter\ifx\csname urlprefix\endcsname\relax\def\urlprefix{URL }\fi
\expandafter\ifx\csname href\endcsname\relax
  \def\href#1#2{#2} \def\path#1{#1}\fi

\bibitem{Bernardoff(2023)}
P.~Bernardoff,
  \href{https://www.sciencedirect.com/science/article/pii/S016771522200222X}{Domain
  of existence of the laplace transform of negative multinomial distributions
  and simulations}, Statistics \& Probability Letters 193 (2023) 109709.
\newblock \href {http://dx.doi.org/https://doi.org/10.1016/j.spl.2022.109709}
  {\path{doi:https://doi.org/10.1016/j.spl.2022.109709}}.
\newline\urlprefix\url{https://www.sciencedirect.com/science/article/pii/S016771522200222X}

\bibitem{Bernardoff(2018)}
P.~Bernardoff, Marshall-{O}lkin {L}aplace transform copulas of multivariate
  gamma distributions., Communications in Statistics - Theory and Methods
  47~(3) (2018) 655--670.

\bibitem{Bar-Lev(1994)}
S.-K. Bar-Lev, D.~Bshouty, P.~Enis, G.~Letac, I.~Li-Lu, D.~Richards, The
  diagonal multivariate natural exponential families and their classification,
  Journal of Theoretical Probability 7 (1994) 883--929.

\bibitem{JKB(2000)}
N.~L. Johnson, S.~Kotz, N.~Balakrishnan, Continuous Multivariate Distributions,
  Vol.~1, Wiley, New York, 2000.

\bibitem{Bernardoff(2006)}
P.~Bernardoff, Which multivariate gamma distributions are infinitely
  divisible?, Bernoulli 12~(1) (2006) 169--189.

\bibitem{Dussauchoy(1972)}
A.~Dussauchoy, R.~Berland, Lois gamma \`a deux dimensions, Comptes-rendus Acad.
  des Sciences de Paris 274 (1972) 1946--9.

\bibitem{Slater(1966)}
L.~J. Slater, Generalized Hypergeometric Functions, Cambridge University Press,
  Cambridge, 1966.

\bibitem{Chatelainetal2008}
F.~Chatelain, J.-Y. Tourneret, J.~Inglada, Change detection in multisensor sar
  images using bivariate gamma distributions, IEEE transactions on image
  processing : a publication of the IEEE Signal Processing Society 17 (2008)
  249--58.
\newblock \href {http://dx.doi.org/10.1109/TIP.2008.916047}
  {\path{doi:10.1109/TIP.2008.916047}}.

\bibitem{Panda1973}
R.~Panda, Some integrals associated with the generalized lauricella functions,
  Publications de l'Institut de Statistique de l'Universit\'e (Beograd) (N.S)
  16~(30) (1973) 115--33.

\bibitem{Bernardoff(2003)}
P.~Bernardoff, Which negative multinomial distributions are infinitely
  divisible?, Bernoulli 9~(5) (2003) 877--893.

\bibitem{LetacWesolowsky2008}
G.~Letac, J.~Wesolowski, Laplace transforms which are negative powers of
  quadratic polynomials, Transactions of The American Mathematical Society -
  TRANS AMER MATH SOC 360 (2008) 6475--6496.
\newblock \href {http://dx.doi.org/10.1090/S0002-9947-08-04463-2}
  {\path{doi:10.1090/S0002-9947-08-04463-2}}.

\bibitem{Walker(2021)}
S.~G. Walker, On infinitely divisible multivariate gamma distributions (in
  press, https://doi.org/10.1080/03610926.2021.1995431), Communications in
  Statistics - Theory and Methods (2021) 1--7.

\bibitem{RCoreTeam}
{R Core Team}, \href{https://www.R-project.org/}{R: A language and environment
  for statistical computing. R Foundation for Statistical Computing}, Vienna,
  Austria., 2023.
\newline\urlprefix\url{https://www.R-project.org/}

\bibitem{DwassTeicher(1957)}
M.~Dwass, H.~Teicher, On infinitely divisible random vectors, The Ann. of Math.
  Stat.~(28) (1957) 461--470.

\bibitem{Chatelainetal2009}
F.~Chatelain, J.-Y. Tourneret, N.~Martin, Multivariate markovian gamma
  distributions for multitemporal sequences of sar images, in: 2009 IEEE/SP
  15th Workshop on Statistical Signal Processing, 2009, pp. 337--340.
\newblock \href {http://dx.doi.org/10.1109/SSP.2009.5278569}
  {\path{doi:10.1109/SSP.2009.5278569}}.

\bibitem{Comtet(1974)}
L.~Comtet, \href{https://cds.cern.ch/record/104089}{Advanced Combinatorics :
  The Art of Finite and Infinite Expansions}, Reidel, Dordrecht, 1974.
\newblock \href {http://dx.doi.org/10.1007/978-94-010-2196-8}
  {\path{doi:10.1007/978-94-010-2196-8}}.
\newline\urlprefix\url{https://cds.cern.ch/record/104089}

\end{thebibliography}

\pagebreak

\end{document}